\numberwithin{equation}{section}%
\newtheorem{theorem}{Theorem}[section]%
\newtheorem{proposition}[theorem]{Proposition}%
\newtheorem{definition}[theorem]{Definition}%
\newtheorem{remark}[theorem]{Remark}%
\newtheorem{corollary}[theorem]{Corollary}%
\begin{document}

\title[Unitary rational best approximations to the exponential function]{Unitary rational best approximations to the exponential function}

\author*[1]{\fnm{Tobias} \sur{Jawecki}}\email{tobias.jawecki@gmail.com}

\author[2]{\fnm{Pranav} \sur{Singh}}\email{ps2106@bath.ac.uk}

\affil*[1]{\orgdiv{Institute for Theoretical Physics}, \orgname{TU-Wien}, \orgaddress{\street{Wiedner Hauptstra{\ss}e 8-10}, \postcode{A-1040 Wien}, \country{Austria}}}

\affil[2]{\orgdiv{Department of Mathematical Sciences}, \orgname{University of Bath}, \orgaddress{\postcode{Bath BA2 7AY}, \country{UK}}}

\abstract{Rational best approximations (in a Chebyshev sense) to real functions are characterized by an equioscillating approximation error. Similar results do not hold true for rational best approximations to complex functions in general. In the present work, we consider unitary rational approximations to the exponential function on the imaginary axis, which map the imaginary axis to the unit circle. In the class of unitary rational functions, best approximations are shown to exist, to be uniquely characterized by equioscillation of a phase error, and to possess a super-linear convergence rate. Furthermore, the best approximations have full degree (i.e., non-degenerate), achieve their maximum approximation error at points of equioscillation, and interpolate at intermediate points. Asymptotic properties of poles, interpolation nodes, and equioscillation points of these approximants are studied. Three algorithms, which are found very effective to compute unitary rational approximations including candidates for best approximations, are sketched briefly. Some consequences to numerical time-integration are discussed. In particular, time propagators based on unitary best approximants are unitary, symmetric and A-stable.}

\keywords{unitary, exponential function, rational approximation, best approximation, equioscillation}

\pacs[MSC2020]{30E10, 33B10, 41A05, 41A20, 41A25, 41A50, 41A52}

\maketitle

\section{Introduction and overview}\label{sec:intro}

We consider unitary rational approximations to the exponential function on the imaginary axis, i.e.,
\begin{equation}\label{eq:rpqapproxexp}
r(\mathrm{i} x) = \frac{p(\mathrm{i} x) }{q(\mathrm{i} x)} \approx \mathrm{e}^{\mathrm{i} \omega x},~~~x\in [-1,1]\subset\mathbb{R},~~ \omega>0,~~ q\not\equiv0,
\end{equation}
where $\omega$ is the {\em frequency}, $p$ and $q$ are polynomials and $r$ is {\em unitary}, i.e.,
\begin{equation}\label{eq:runitary}
|r(\mathrm{i}x)| = 1,~~~x\in\mathbb{R}.
\end{equation}
If $p$ and $q$ are of degree $\leq n$, the rational function $r$ is considered to be of degree $(n,n)$.

In this manuscript we are interested in {\em unitary rational best approximation}, i.e., the unitary rational function of degree $(n,n)$ that minimizes the approximation error of~\eqref{eq:rpqapproxexp} in a Chebyshev sense, i.e.,
\begin{equation}\label{eq:maxerrinSec1}
\max_{x\in[-1,1]} |r(\mathrm{i}x) - \mathrm{e}^{\mathrm{i}\omega x} |.
\end{equation}

\begin{remark}
\label{rmk:unitary best approx}
Since the only unitary approximations of interest in this manuscript are rational functions, we will frequently use the term `unitary best approximations' as a shorter form of `unitary rational best approximations'.
\end{remark}

\begin{remark}
\label{rmk:eiwx vs ewz}
While $r$ in~\eqref{eq:rpqapproxexp} might be most accurately referred to as an approximation to $\mathrm{e}^{\omega z}$ for $z\in\mathrm{i}[-1,1]$, 
we abuse notation slightly and refer to $r$ as an approximation to $\mathrm{e}^{\mathrm{i}\omega x}$ and $\mathrm{e}^{\omega z}$ interchangeably throughout the present work, as convenient in a particular context.    
\end{remark}

\begin{remark}
\label{rmk:ix}
We consider the approximation problem~\eqref{eq:rpqapproxexp} as $r(z)\approx\mathrm{e}^{\omega z}$ over an interval of the imaginary axis, $z \in \mathrm{i}[-1,1]$, throughout this paper. Thus, interpolation nodes and points where the approximation error attains a maximum in magnitude are considered as imaginary, i.e., $\mathrm{i}x$ for some $x\in\mathbb{R}$. However, for convenience, we occasionally use the real-valued $x\in \mathbb{R}$ to refer to the corresponding interpolation nodes and points of maximum error.
\end{remark}

\subsection{Contributions of the present work} 
A prominent feature of polynomial best approximations and rational best approximations to real-valued functions is the equioscillatory nature of the approximation error. This property characterizes the best approximations in various settings, provides important theoretical properties, e.g., uniqueness of best approximations, and motivates the use of Remez and rational Remez algorithms~\cite{PT09} to compute the best approximation in practice. For rational approximations to complex-valued functions, however, there are no equivalent results for equioscillation of the approximation error in general. In fact, uniqueness of best approximations, a typical consequence of equioscillation, has even shown to be false in some complex settings~\cite{GT83,RV89}.

{\bf Equioscillation and best approximation.}
In this manuscript we prove that unitary rational best approximations exist and are characterized by an equioscillation property of the {\em phase error} which guarantees uniqueness. 
The phase error has a zero between each pair of equioscillation points. 
The equioscillation points and zeros of the phase error (cf. Remark~\ref{rmk:ix}) correspond to points where the {\em approximation error} is maximal in magnitude and nodes where the best approximant interpolates $\mathrm{e}^{\mathrm{i} \omega x}$, respectively. The unitary best approximation of degree $(n,n)$ has distinct poles and minimal degree $n$ (i.e., for $r=p/q$ the polynomials $p$ and $q$ have degree exactly $n$ and no common zeros). The interpolatory property is in contrast to some complex rational best approximations that do not have any interpolation nodes on the approximation interval and show circular error curves~\cite{Tre81,NT20}. In particular, the error curve of unitary best approximants of degree $(n,n)$ have a floral pattern similar to {\em rose} curves with $2n$ petals~\cite{La95}, albeit with non-uniformly spaced and (possibly) nested petals.

{\bf Algorithms.} While the contributions of this manuscript are primarily of a theoretical nature and a full treatment of an algorithm for computing the unitary best approximation is beyond the scope of the present work, we briefly sketch three algorithms to compute approximants that are found to be good in practice:~(i) unitary rational interpolation at Chebyshev nodes (cf. Remark~\ref{rmk:ix}) using the rotated Loewner matrix approach of \cite{JS23}, (ii) a modified \cite{JS23} version of the AAA--Lawson algorithm \cite{NT20} with Chebyshev support nodes, and~(iii) a modified version of the BRASIL algorithm \cite{Ho21} initialized with Chebyshev interpolation nodes. While the algorithm~(i) is a very simple but effective algorithm for producing good approximants, the algorithms~(ii) and~(iii) produce approximants that seem to have an equioscillatory property in practice.
In particular, algorithm~(iii) is used in this manuscript to produce numerical illustrations to accompany theoretical results such as convergence rate, equioscillation property etc..

{\bf Convergence.} We show that unitary rational best approximations achieve super-linear convergence in the degree $n$. Our convergence proof is based on $(n,n)$-Pad\'e approximations, which are a prominent example of unitary rational approximations to $\mathrm{e}^z$. Pad\'e approximations are asymptotic in nature and are derived by matching the Taylor series expansion of $\mathrm{e}^z$ to the highest degree possible. In practice, for uniform approximation to $\mathrm{e}^{\mathrm{i}\omega x}$ on the interval $[-1,1]$, unitary rational best approximations of degree $(n,n)$ are found to be superior, achieving an error which is $2^{2n}$ times smaller for a fixed $n$ and $\omega$, or achieving the same accuracy for a given $\omega$ as an $(n,n)$-Pad\'e approximation achieves for $\omega/2$. While in practice this advantage seems to be maintained at reasonably large $\omega$, we are only able to prove this property in the asymptotic regime, $\omega \rightarrow 0^+$, where the behavior of the best approximations approaches that of the unitary rational interpolation at Chebyshev nodes. To this end we analyze the asymptotic errors of rational interpolants and the unitary best approximation for $\omega\to0^+$. For the latter, the leading-order term in its asymptotic expansion is also found to be a practical error bound in our numerical tests, even outside of an asymptotic regime.

A mild condition for the results on unitary best approximation presented in this manuscript to hold is that the approximation error needs to be strictly smaller than two, which certainly holds when considering accurate approximations. In particular, for a given $n$ it holds true if and only if $\omega \in (0, (n+1)\pi)$. Since $\mathrm{e}^{\mathrm{i}\omega x}$ for $x\in [-1,1]$ has a winding number of $\frac{\omega}{\pi}$, i.e. $\frac{\omega}{\pi}$ full oscillations or {\em wavelengths} in the interval $[-1,1]$, this corresponds to the restriction that a degree $(n,n)$ unitary rational approximant should not be used for approximating $n+1$ or more wavelengths. We prove that super-linear convergence begins for $\omega \leq 1.47(n+1/2)$, but in practice super-linear convergence seems to begin around $\omega \leq 2.94(n+1/2)$. Effectively, the best approximation goes from having the maximal error of two to super-linear convergence very shortly after gaining any amount of accuracy. For $\omega\geq (n+1)\pi$ all unitary rational functions are best approximations but achieve the maximum error of two. 

{\bf Continuity.} The unitary best approximation to $\mathrm{e}^{\mathrm{i}\omega x}$ is continuous in $\omega$ for $\omega \in (0, (n+1)\pi)$. Continuity of the rational approximation carries over to the poles, interpolation nodes and equioscillation points. In the limit $\omega \to 0^+$, the unitary best approximation converges to $r\equiv 1$. For $\omega=0$, this approximation is exact but degenerate, i.e. it has a degree $0$ which is strictly smaller than $n$. Due to the degeneracy at $\omega=0$, the poles, interpolation nodes and equioscillation points for $\omega \to 0^+$ cannot be studied directly by considering the $\omega=0$ case,  $r\equiv 1$. A similar observation holds for the limit $\omega \to (n+1)\pi^-$. The behavior of the poles, interpolation nodes and equioscillation points for $\omega \to (n+1)\pi^-$ cannot be studied directly by considering $\omega=(n+1)\pi$ since unitary best approximations are not uniquely defined in this case.

{\bf Poles, interpolation nodes and equioscillation points.} 
Poles of unitary best approximations are either real or come in complex conjugate pairs, are distinct, and live in the right complex plane, i.e., $\{z\in \mathbb{C}: \operatorname{Re} z >0\}$, for $\omega \in (0, (n+1)\pi)$, while its interpolation and equioscillation points are mirrored around the origin.

In the asymptotic limit $\omega \to 0^+$, the poles of any rational interpolant to $\mathrm{e}^{\mathrm{i} \omega x}$ (i.e. to $\mathrm{e}^{\omega z}$ with interpolation nodes on the imaginary axis) are unbounded, however, re-scaled by the factor $\omega$, they converge to the poles of the Pad\'{e} approximation. Since the unitary best approximation is also interpolatory at intermediate points between the points of equioscillation, this also applies to the poles of the unitary best approximations. We show that the interpolation nodes of the unitary best approximation converge to Chebyshev nodes (cf. Remark~\ref{rmk:ix}) in the limit $\omega \to 0^+$, justifying rational interpolation at Chebyshev nodes as an effective algorithm for $\omega$ in an asymptotic regime, i.e. the algorithm (i), as well as their use in algorithms (ii) and (iii) as support nodes and initial guess for interpolation nodes, respectively. 

In the other extreme, $\omega \to (n+1)\pi^-$, poles of the unitary best approximation converge to $\mathrm{i}(-1+2j/(n+1))$, for $j=1,\ldots,n$, while staying in the right complex plane. The zeros of the phase error and, equivalently the interpolation nodes for $r(\mathrm{i}x)\approx \mathrm{e}^{\mathrm{i}\omega x}$, converge to $-1+j/(n+1)$ for $j=1,\ldots,2n+1$. Of the $2n+2$ equioscillation points, two are at the boundaries $x=-1$ and $x=1$, and the remaining $2n$ are split into $n$ pairs, with the $j$th pair approaching $-1+2j/(n+1)$ while enclosing the $2j$th zero of the phase error from below and above. With two equioscillation points and a zero converging to $-1+2j/(n+1)$, for $j=1,\ldots,n$, the phase error approaches a sawtooth function. 

{\bf Symmetry and stability.}
The unitary best approximation has the property of symmetry,
\begin{equation}\label{eq:defsym}
r(-z) = r(z)^{-1},~~~ z\in \mathbb{C},
\end{equation}
and stability,
\begin{equation}\label{eq:rzless1leftplane}
|r(z)| < 1,~~~\text{for $z\in\mathbb{C}$ with $\operatorname{Re} z < 0$}. 
\end{equation}

\subsection{Relevance of unitary rational best approximations}

{\bf Geometric properties for time-integration.} 
An important application for the approximation to the exponential function is in the time-propagation of ordinary differential equations (ODEs), cf.~\cite{ML03}, including those arising from spatial discretization of initial-value problems in partial differential equations (PDEs). Rational approximations have some advantages over polynomial approximations in this context. Of particular interest to us is the fact that unitary rational functions exist. Being unitary~\eqref{eq:runitary} by design and {\em symmetric}~\cite{HW02} due to~\eqref{eq:defsym}, time-integrators based on unitary best approximations to the exponential function are suitable for geometric numerical integration~\cite{HWL02}. In particular, these are relevant for the time-integration of skew-Hermitian systems that occur in equations of quantum mechanics such as the Schr{\"o}dinger equation~\cite{Lu08} where they lead to conservation of unitarity, norm and energy.

Time-integrators based on unitary best approximations are also {\em A-stable} due to~\eqref{eq:rzless1leftplane}, making them suitable for weakly dissipative near-skew-Hermitian problems such as open quantum systems~\cite{DLTCZ19,RJ19} and the Schr{\"o}dinger equation in the presence of certain artificial boundary conditions~\cite{MPNE04}. 

Arguably the most well known unitary rational approximations are Pad{\'e} approximations to the exponential function. These are utilized widely in applications to skew-Hermitian cases, weakly dissipative cases, as well as dissipation dominated cases. Pad\'{e} approximations share all the geometric properties of unitary best approximations stated above. 

{\bf Uniform approximation.} 
While Pad{\'e} approximations are asymptotic in nature, with good approximation properties close to the origin at the expense of a poorer performance further from the origin, unitary best approximations are designed to achieve a prescribed level of error uniformly over a specified interval.

An alternative approach for achieving uniform error on an interval of interest is provided by Chebyshev polynomial approximations. However, polynomial approximations cannot conserve unitarity, norm or energy. Moreover, they are necessarily unbounded on the full imaginary axis, making them unstable in the presence of noise outside the interval of interest (due to numerical rounding errors, for instance).
In this sense, unitary best approximations combine the geometric properties of Pad\'{e} approximations with the flexibility of uniform approximation offered by Chebyshev polynomial approximations.

We emphasize that rational best approximation to $\mathrm{e}^{\mathrm{i}\omega x}$ on $x \in [-1,1]$ as well as rational interpolation to it at Chebyshev nodes should not be confused with Chebyshev rational approximation~\cite{CMV69} which yields uniform approximations to $\mathrm{e}^{-x}$ for $x\geq 0$. While the exponential on the left half of the real axis naturally decays for $x\to \infty$, and this behavior can be imitated by rational approximations, the exponential oscillates on the imaginary axis. Thus, in contrast to the real case where it is possible to find a best approximation to $\mathrm{e}^{-x}$ with uniform error on the half-line $x\in [0,\infty)$, we need to restrict ourselves to approximating the exponential $\mathrm{e}^z$ on a sub-interval of the imaginary axis, $z \in \mathrm{i}\omega[-1,1]$.

\medskip
{\bf Unitarity as a natural consequence to rational best approximation?}
While unitary rational approximation to the exponential on an interval on the imaginary axis are clearly beneficial for applications, we argue that restriction to unitarity is not critical in practice. Even when considering asymptotic best approximation to $\mathrm{e}^z$ around the origin, Pad\'e approximations end up being unitary without a specific restriction. A similar observation is made in the context of approximations to $\mathrm{e}^{\mathrm{i}\omega x}$ that  minimize a linearized error  or when considering rational interpolation at a specific number of points.
For instance, the AAA~\cite{NST18} and AAA--Lawson methods~\cite{NT20}, which (among other applications) aim to find uniform best approximations and were not designed with any considerations of unitarity, have been shown to generate unitary approximations~\cite{JS23}. The same has been shown for interpolation at $2n+1$ points by a degree $(n,n)$ rational function~\cite{JS23}. These observations suggest that unitarity might be a natural consequence of rational best approximation to $\mathrm{e}^{\mathrm{i}\omega x}$ on $x\in [-1,1]$.  

Moreover, we briefly sketch algorithms (i)--(iii) that make unitary rational approximations practical, achieving unitarity to machine precision and super-linear convergence rates. 

\subsection{Outline}

In Section~\ref{sec:unitaryratfct} we introduce the set of unitary rational functions $\mathcal{U}_n$ and show in Proposition~\ref{prop:unitaryridentities} their equivalence with rational functions that satisfy~\eqref{eq:runitary}.
In Subsection~\ref{subsec:someunitaryratfct}, we give some candidates for unitary rational approximations to $\mathrm{e}^{\mathrm{i} \omega x}$, including algorithms~(i)--(iii). The unitary best approximation in $\mathcal{U}_n$ is defined in Subsection~\ref{subsec:unitarybest}.
In Section~\ref{sec:exist} we show the existence of unitary best approximations.

In Section~\ref{sec:phaserror} we introduce the {\em phase function} and {\em phase error}, and establish some correspondences with the approximation error. In particular, in propositions~\ref{prop:approxertophaseerrinequ} and \ref{prop:omega0} we show that the approximation error of the best approximation in $\mathcal{U}_n$ to $\textrm{e}^{\textrm{i}\omega x}$ is non-maximal for frequencies $\omega\in(0,(n+1)\pi)$, and, in this frequency range, minimizing the approximation error corresponds to minimizing the phase error. 

In Section~\ref{sec:main} we state the equioscillation theorem, Theorem~\ref{thm:bestapprox}, which shows that the unitary best approximation is uniquely characterized by an equioscillating phase error, provided $\omega\in(0,(n+1)\pi)$. Moreover, Corollary~\ref{cor:errattainsmax} provides some results on the interpolation property of the unitary best approximation.

In Section~\ref{sec:symmetry} we show that unitary best approximations are symmetric (Proposition~\ref{prop:sym}), i.e., satisfy~\eqref{eq:defsym}, and state some consequences for its poles and the phase function in Corollary~\ref{cor:gsym}.
Moreover, we show that its interpolation and equioscillation points are mirrored around the origin due to symmetry (Proposition~\ref{prop:inodessym}).

In Section~\ref{sec:cont} we show that the unitary best approximation to $\mathrm{e}^{\mathrm{i} \omega x}$ depends continuously on $\omega $ for $\omega\in(0,(n+1)\pi)$ (Proposition~\ref{prop:minimizercontinuousonw}). Moreover, continuity carries over to the poles (Proposition~\ref{prop:polescont}), the phase function and phase error (Corollary~\ref{cor:phaseerrcont}), and the interpolation and equioscillation points (Proposition~\ref{prop:inodescontinuous}). Furthermore in Proposition~\ref{prop:eo1max} we show that, as a consequence of continuity, the phase error attains a maximum (i.e. with a positive sign) at the first equioscillation point.

In subsections~\ref{subsec:errbound} and \ref{subsec:asymerr} we introduce an error bound based on Pad\'e approximation (propositions~\ref{prop:padeerrorbound} and~\ref{prop:errorbound}), an asymptotically correct expression for the error under $\omega\to0^+$ based on rational interpolation at Chebyshev nodes (propositions~\ref{prop:asymerrcheb} and~\ref{prop:asymerrorbest}), and a practical error estimate~\eqref{eq:asymerrest}. Convergence results are illustrated with numerical examples in Subsection~\ref{subsec:plots}.

In Section~\ref{sec:asymprop} we study the poles, equioscillation points and interpolation nodes of unitary best approximations under the limits $\omega\to0^+$ (Subsection~\ref{subsec:limitwtozero}) and $\omega\to(n+1)\pi^-$ (Subsection~\ref{subsec:limitwnpipi}).  These properties are illustrated by numerical experiments in Subsection~\ref{subsec:plotslimit}.
In Section~\ref{sec:poles} we show that poles reside in the right complex plane for $\omega \in (0,(n+1)\pi)$ (Corollary~\ref{cor:polestoinf}), which results in stability~\eqref{eq:rzless1leftplane} (Proposition~\ref{prop:stable}). 

In Appendix~\ref{sec:appendixA} we state some results for rational interpolation to $\mathrm{e}^{\mathrm{i} \omega x}$ in the limit $\omega\to0^+$. These results are required to derive asymptotic error representations (propositions~\ref{prop:asymerrcheb} and~\ref{prop:asymerrorbest}), and results in Subsection~\ref{subsec:limitwtozero}.
In Appendix~\ref{sec:proofswtonp1pi} we prove propositions~\ref{prop:rforwtonp1pi},~\ref{prop:polesconvwtonp1pi} and~\ref{prop:nodesconvwtonp1pi}. These proofs are closely related to the convergence of the arc tangent function to a step function.

{\bf Equioscillation points and interpolation nodes.}
While properties of the poles are summarized in Section~\ref{sec:poles}, the interpolation nodes and equioscillation points have no section for their own. We provide a brief outline of the relevant results here. 
Existence of these points is due to the equioscillation theorem, Theorem~\ref{thm:bestapprox}, and Corollary~\ref{cor:errattainsmax}. 
In Proposition~\ref{prop:eo1max} we show that the phase error attains a maximum at the first equioscillation point, which further specifies the equioscillation property~\eqref{eq:defeo2}, which supersedes~\eqref{eq:defeo}. 
The interpolation and equioscillation points are mirrored around the origin due to symmetry (Proposition~\ref{prop:inodessym}), depend continuously on $\omega$ (Proposition~\ref{prop:inodescontinuous}), and their convergence in the limits $\omega\to0^+$ and $\omega\to(n+1)\pi^-$ is investigated in propositions~\ref{prop:inodestoCheb} and~\ref{prop:nodesconvwtonp1pi}, respectively.

\section{Unitary rational functions}\label{sec:unitaryratfct}
We consider rational functions of the form
\begin{equation}\label{eq:pdagpintro}
r(z) = 
\frac{p^\dag(z) }{p(z)},~~~\text{where $p^\dag(z):=\overline{p}(-z)$ for $z\in\mathbb{C}$}, \quad p \not \equiv 0,
\end{equation}
and $p$ is a complex polynomial of degree $\leq n$.
In particular, when $p$ is a polynomial of degree exactly $m\leq n$, and~$c\in\mathbb{C}$ and~$s_1,\ldots,s_m\in\mathbb{C}$ denote the pre-factor and zeros of $p$, respectively, then $p$ and~$p^\dag$ have the following forms:
\begin{equation}\label{eq:pandpdag}
p(z) = c \prod_{j=1}^m (z-s_j),~~~\text{and}~~
p^\dag(z)=\overline{p}(-z) = (-1)^m \overline{c} \prod_{j=1}^m (z+\overline{s}_j),~~~z\in\mathbb{C}.
\end{equation}

\begin{proposition}\label{prop:unitaryridentities}
The following statements are equivalent.
\begin{enumerate}[label={(\roman*)}]
\item\label{item:1requiv} $r\in\mathcal{R}_n$ is unitary, i.e., it satisfies~\eqref{eq:runitary}, where 
\begin{equation*}
\mathcal{R}_n := \left\{ \frac{p}{q} ~:~ \text{where $p$ and $q$ are polynomials of degree $\leq n$ and $q\not\equiv 0$}\right\}
\end{equation*}
denotes the set of all degree $(n,n)$ rational functions, 
\item\label{item:2requiv} $r\in \mathcal{U}_n$, where
\begin{equation*}
\mathcal{U}_{n} := \left\{\frac{p^\dag}{p} ~:~ \text{where $p$ is a polynomial of degree $\leq n$ and $p\not\equiv 0$}\right\},
\end{equation*}
and
\item\label{item:3requiv} $r$ is a rational function of the form
\begin{equation}\label{eq:defrbyp}
r(z) = (-1)^m \mathrm{e}^{\mathrm{i}\theta}  \prod_{j=1}^{m} \frac{z+\overline{s}_j}{z-s_j},~~~z\in\mathbb{C},
\end{equation}
where $\theta\in\mathbb{R}$ corresponds to the phase, and $s_1,\ldots,s_m\in\mathbb{C}$ denote the poles of $r$ with~$m\leq n$.
\end{enumerate}
In particular, $\mathcal{U}_n \subseteq \mathcal{R}_n$ is the set of all unitary rational functions.
\end{proposition}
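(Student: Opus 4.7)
The plan is to prove the equivalence via the cycle (iii)$\Rightarrow$(ii)$\Rightarrow$(i)$\Rightarrow$(iii); the inclusion $\mathcal{U}_n\subseteq\mathcal{R}_n$ is then immediate from the definitions, since $p^\dag/p$ is literally of the form (numerator)/(denominator) with both of degree $\leq n$. The implications (ii)$\Leftrightarrow$(iii) are direct bookkeeping with formula~\eqref{eq:pandpdag}, and (ii)$\Rightarrow$(i) is a one-line conjugation computation, so essentially all the content lives in (i)$\Rightarrow$(ii).

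For (iii)$\Rightarrow$(ii), I would exhibit the witness polynomial $p(z) = \mathrm{e}^{-\mathrm{i}\theta/2}\prod_{j=1}^{m}(z-s_j)$; then~\eqref{eq:pandpdag} gives $p^\dag(z) = (-1)^m\mathrm{e}^{\mathrm{i}\theta/2}\prod_{j=1}^{m}(z+\bar{s}_j)$, so $p^\dag/p$ reproduces~\eqref{eq:defrbyp}. For (ii)$\Rightarrow$(i), the key observation is that for a polynomial $p$ with complex coefficients and real $x$ one has $\overline{p(\mathrm{i}x)} = \bar{p}(\overline{\mathrm{i}x}) = \bar{p}(-\mathrm{i}x) = p^\dag(\mathrm{i}x)$. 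Consequently $|p^\dag(\mathrm{i}x)| = |p(\mathrm{i}x)|$, which gives $|r(\mathrm{i}x)| = 1$ at every $x\in\mathbb{R}$ where the denominator does not vanish (and hence on all of $\mathbb{R}$ up to the finitely many zeros of $p$, which is what~\eqref{eq:runitary} requires).

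The main obstacle, and the place where the unitarity hypothesis really gets used, is (i)$\Rightarrow$(ii). Writing $r = p/q$ in lowest terms with $\deg p, \deg q \leq n$ and setting $r^\dag := p^\dag/q^\dag$, the identity just used upgrades to $\overline{r(\mathrm{i}x)} = r^\dag(\mathrm{i}x)$, so~\eqref{eq:runitary} becomes $p(\mathrm{i}x)\,p^\dag(\mathrm{i}x) = q(\mathrm{i}x)\,q^\dag(\mathrm{i}x)$ for almost every real $x$, and agreement on an infinite set forces the polynomial identity $p\,p^\dag = q\,q^\dag$. The map $p\mapsto p^\dag$ sends a zero $s$ of multiplicity $k$ to a zero $-\bar{s}$ of the same multiplicity, so it is an involution that preserves coprimality; in particular $\gcd(p^\dag,q^\dag)=1$. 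From $p\mid q\,q^\dag$ with $\gcd(p,q)=1$ one deduces $p\mid q^\dag$, and symmetrically $q^\dag\mid p$, whence $p = c\,q^\dag$ for some nonzero $c\in\mathbb{C}$. The modulus $|c|$ is then pinned down by $|r(\mathrm{i}x)| = |c|\cdot|q^\dag(\mathrm{i}x)/q(\mathrm{i}x)| = |c| = 1$, so $c = \mathrm{e}^{\mathrm{i}\theta'}$ for some $\theta'\in\mathbb{R}$. Finally, setting $\tilde{p}(z) := \mathrm{e}^{-\mathrm{i}\theta'/2}\,q(z)$ yields $\tilde{p}^\dag/\tilde{p} = \mathrm{e}^{\mathrm{i}\theta'}\,q^\dag/q = c\,q^\dag/q = r$, so $r = \tilde{p}^\dag/\tilde{p} \in \mathcal{U}_n$ with $\deg \tilde{p} = \deg q \leq n$, closing the cycle.
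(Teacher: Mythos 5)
Your proof is correct and follows essentially the same route as the paper: the easy implications rest on the same bookkeeping with~\eqref{eq:pandpdag} and the identity $p^\dag(\mathrm{i}x)=\overline{p(\mathrm{i}x)}$, and the substantive direction \ref{item:1requiv}$\Rightarrow$\ref{item:2requiv} reduces unitarity to the polynomial identity $p\,p^\dag=q\,q^\dag$ exactly as the paper does, your $\gcd$/divisibility argument being a slightly cleaner, multiplicity-aware version of the paper's zero-by-zero comparison, and your normalization $\tilde{p}=\mathrm{e}^{-\mathrm{i}\theta'/2}q$ matching the paper's choice of pre-factor $\mathrm{e}^{-\mathrm{i}\psi/2}$. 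One cosmetic remark: the implications you actually establish are \ref{item:3requiv}$\Rightarrow$\ref{item:2requiv}, \ref{item:2requiv}$\Rightarrow$\ref{item:1requiv} and \ref{item:1requiv}$\Rightarrow$\ref{item:2requiv}, which yield the full equivalence only because you also assert the (genuinely immediate) direction \ref{item:2requiv}$\Rightarrow$\ref{item:3requiv} as ``direct bookkeeping''; the announced cycle \ref{item:3requiv}$\Rightarrow$\ref{item:2requiv}$\Rightarrow$\ref{item:1requiv}$\Rightarrow$\ref{item:3requiv} is not literally what is executed, so the closing phrase should really say that \ref{item:1requiv}$\Rightarrow$\ref{item:2requiv} together with \ref{item:2requiv}$\Leftrightarrow$\ref{item:3requiv} completes the proof.
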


\medskip
We briefly introduce some notation before providing a proof to this proposition.
The zeros and poles of a rational function $r=p/q$ refer to the zeros of $p$ and $q$, respectively.
In the sequel we refer to a rational function $r$ as \textit{irreducible} when its numerator and denominator have no common zeros. Since such zeros correspond to removable singularities of $r$, we may assume rational functions to be given in irreducible form throughout the present work. 
Furthermore, the \textit{minimal degree} of $r\in\mathcal{R}_n$ refers to the smallest degree $m\leq n$ with~$r\in\mathcal{R}_m$ (thus,~$r\notin \mathcal{R}_{m-1}$).
In particular, any~$r\in\mathcal{R}_n$ with minimal degree~$n$ is irreducible.

\begin{proof}[Proof of Proposition~\ref{prop:unitaryridentities}] We first show that~\ref{item:2requiv} implies~\ref{item:3requiv}. Let $r=p^\dag/p\in\mathcal{U}_n$ where $p$ is a polynomial of degree exactly $m\leq n$. Let~$c\in\mathbb{C}$ and~$s_1,\ldots,s_m\in\mathbb{C}$ denote the pre-factor and zeros of $p$, respectively. Then $p$ and~$p^\dag$ correspond to~\eqref{eq:pandpdag} and $r=p^\dag/p$ corresponds to~\eqref{eq:defrbyp}, with $ \mathrm{e}^{\mathrm{i}\theta} =  \overline{c}/c $ for $\theta \in \mathbb{R}$. In particular, $p$ and $p^\dag$ are both polynomials of degree $m \leq n$ and $r=p^\dag/p$ is a rational function.

We proceed to show that~\ref{item:3requiv} implies~\ref{item:1requiv}, i.e., rational functions $r$ of the form~\eqref{eq:defrbyp} are unitary. For $z=\mathrm{i}x$ each fraction in~\eqref{eq:defrbyp} maps to the unit circle since $|\mathrm{i}x+\overline{s}_j| = |\mathrm{i}x - s_j|$, which proves this assertion.

We complete our proof by showing that~\ref{item:1requiv} implies~\ref{item:2requiv}.
Let $r=\widetilde{p}/\widetilde{q}\in\mathcal{R}_n$ be unitary.
We recall $ \overline{r}(-\mathrm{i}x)= \overline{r(\mathrm{i}x)}$ for $x\in\mathbb{R}$ which implies $r^\dag(\mathrm{i}x) = \overline{r(\mathrm{i}x)}$, where $r^\dag = \widetilde{p}^\dag/\widetilde{q}^\dag$. From unitarity \eqref{eq:runitary} we conclude that $r$ has no non-removable singularities on the imaginary axis, and since $r$ may be assumed to be given in an irreducible form, $\widetilde{q}$ has no zeros on the imaginary axis. This carries over to $\widetilde{q}^\dag$ as well.
Thus, unitarity corresponds to $r^\dag r =1$ and yields
\begin{equation}\label{eq:unitarytopdp}
\widetilde{p}^\dag(\mathrm{i}x) \widetilde{p}(\mathrm{i}x) = \widetilde{q}^\dag(\mathrm{i}x) \widetilde{q}(\mathrm{i}x),~~~\text{for $x\in\mathbb{R}$.}
\end{equation}
We assume that $\widetilde{p}$ and $\widetilde{q}$ have no common zeros. Any point $z\in\mathbb{C}$ is a zero of $\widetilde{p}$ if and only if $-\overline{z}$ is a zero of $\widetilde{p}^\dag$, which directly follows from~\eqref{eq:pandpdag}. The zeros of $\widetilde{q}$ and $\widetilde{q}^\dag$ satisfy a similar relation. Thus, it carries over from $\widetilde{p}$ and $\widetilde{q}$ that $\widetilde{p}^\dag$ and $\widetilde{q}^\dag$ have no common zeros.

The identity~\eqref{eq:unitarytopdp} entails that any zero of $\widetilde{p}$ is either a zero of $\widetilde{q}$ or $\widetilde{q}^\dag$. Since we assume that $\widetilde{p}$ and $\widetilde{q}$ have no common zeros, the latter holds true. In a similar manner, any zero of $\widetilde{q}^\dag$ is either a zero of $\widetilde{p}^\dag$ or $\widetilde{p}$, and with absence of common zeros of $\widetilde{p}^\dag$ and $\widetilde{q}^\dag$, the latter holds true. Thus, the zeros of $\widetilde{p}$ are exactly the zeros of $\widetilde{q}^\dag$. 
Let $s_1,\ldots,s_m$ denote the zeros of the denominator $\widetilde{q}$ for $m\leq n$, then the poles and zeros of $r=\widetilde{p}/\widetilde{q}$ correspond to $s_1,\ldots,s_m$ and $-\overline{s}_1,\ldots,-\overline{s}_m$, respectively.

Let $\widetilde{c}_1,\widetilde{c}_2 \in \mathbb{C}$ be the pre-factors of $\widetilde{p}$ and $\widetilde{q}$, respectively, i.e., $\widetilde{p}(z) =  \widetilde{c}_1 \prod_{j=1}^m (z+\overline{s}_j)$ and $\widetilde{q}(z)= \widetilde{c}_2 \prod_{j=1}^m (z-s_j)$. Substituting $z=0$, we observe $\widetilde{p}(0) =  \widetilde{c}_1 \prod_{j=1}^m \overline{s}_j$ and $\widetilde{q}(0)= (-1)^m \widetilde{c}_2 \prod_{j=1}^m s_j$. Unitarity of $r$ and the fact that $\widetilde{q}$ has no zeros on the imaginary axis implies that $|\widetilde{p}(0)|=|\widetilde{q}(0)| \neq 0$, and thus, $|\widetilde{c}_1|=|\widetilde{c}_2|$. As a consequence there exists $\psi\in\mathbb{R}$ with $\mathrm{e}^{\mathrm{i}\psi }= c_1/c_2$. Then, we may define a polynomial $p$ with zeros $s_1,\ldots,s_m$ and a pre-factor $c = \mathrm{e}^{-\mathrm{i}\psi/2}$, such that $r=\widetilde{p}/\widetilde{q} = p^\dag/ p \in\mathcal{U}_n$.
\end{proof}

\subsection*{Poles and zeros of unitary rational functions}
Whether a unitary rational function is irreducible is already specified by its poles.
\begin{proposition}\label{prop:fulldegree}
A unitary rational function $r\in \mathcal{U}_n$ is irreducible if and only if its poles satisfy $\overline{s}_j \neq -s_\ell$ for $j,\ell \in\{ 1,\ldots,m\}$ where $m\leq n$ denotes the number of poles of $r$.

In particular, a point $z\in\mathbb{C}$ is a pole of $r$ if and only if $-\overline{z}$ is a zero of $r$, and all poles of $r$ on the imaginary axis correspond to removable singularities.
\end{proposition}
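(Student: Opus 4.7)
The plan is to exploit the $\mathcal{U}_n$-representation $r = p^\dag/p$ together with the observation that the map $\phi(z) := -\overline{z}$ is an involution which, by~\eqref{eq:pandpdag}, sends the zero set of $p$ onto the zero set of $p^\dag$. First I would fix a representation $r = p^\dag/p$ with $p$ of degree $m \leq n$ and zeros $s_1, \dots, s_m$ (counted with multiplicity; these are the ``poles'' $s_j$ appearing in the statement). From~\eqref{eq:pandpdag} the zeros of $p^\dag$ are precisely $-\overline{s}_1, \dots, -\overline{s}_m$, so the formal zeros of $r$ are exactly the $\phi$-images of the formal poles.

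For the first claim, irreducibility of $r$ is by definition the absence of common zeros of $p$ and $p^\dag$, which is equivalent to $s_j \neq -\overline{s}_\ell$ for every $j, \ell \in \{1, \dots, m\}$. Conjugating both sides of the equation $s_j = -\overline{s}_\ell$ yields the equivalent equation $\overline{s}_j = -s_\ell$, so absence of common zeros is exactly the condition $\overline{s}_j \neq -s_\ell$ appearing in the proposition.

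For the ``in particular'' statement I would work with the multisets $S := \{s_1, \dots, s_m\}$ and $\phi(S) = \{-\overline{s}_1, \dots, -\overline{s}_m\}$. Since $\phi \circ \phi = \mathrm{id}$, $\phi$ maps $S \cap \phi(S)$ (the set of removable singularities) to itself, and therefore bijects the genuine poles $S \setminus \phi(S)$ with the genuine zeros $\phi(S) \setminus S$. This yields the pole--zero symmetry: $z$ is a pole of $r$ if and only if $-\overline{z}$ is a zero of $r$. For the imaginary-axis assertion, any $z = \mathrm{i} x$ with $x \in \mathbb{R}$ is a fixed point of $\phi$, so if $z$ lies in $S$ then $z = \phi(z)$ also lies in $\phi(S)$; hence $z \in S \cap \phi(S)$ and $z$ corresponds to a removable singularity.

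I do not anticipate a significant obstacle: the argument is essentially combinatorial bookkeeping around the involution $\phi$. The only care required is to track multisets rather than sets so that multiplicities of poles and zeros are handled correctly, but this does not affect the reasoning.
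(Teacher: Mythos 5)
Your proposal is correct and takes essentially the same route as the paper, whose entire proof is the one-liner ``this follows directly from~\eqref{eq:defrbyp}'': you simply spell out the bookkeeping via the involution $z\mapsto -\overline{z}$ and the zero sets of $p$ and $p^\dag$ from~\eqref{eq:pandpdag}, including the multiplicity count on the imaginary axis that makes the cancellation complete.
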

\begin{proof}
This follows directly from~\eqref{eq:defrbyp}.
\end{proof}

\begin{remark}\label{rmk:cayley}
In the simplest case a unitary rational function $r\in\mathcal{U}_n$ with minimal degree $n=1$ is a $(1,1)$ rational function of the form
\begin{subequations}\label{eq:cayley12}
\begin{equation}
\label{eq:cayley1}
r(z) = -\frac{z+\overline{s}}{z-s}, \qquad \overline{s} \neq -s,
\end{equation}
which corresponds to a Cayley transform.
Considering $s=\xi+\textrm{i}\mu$, an equivalent representation for $r$ is
\begin{equation}
\label{eq:cayley2}
r(z) = \frac{1+(z-\textrm{i}\mu)/\xi}{1-(z-\textrm{i}\mu)/\xi}, \qquad \overline{s} \neq -s.
\end{equation}
\end{subequations}
In the context of Lie group methods for time-integration of skew-Hermitian problems, the Cayley transform commonly appears with $s=2$, i.e.  $\xi=2$ and $\mu=0$~\cite{Is01,MO01}.

Thus, due to~\eqref{eq:defrbyp} in Proposition~\ref{prop:unitaryridentities}, any unitary rational function of minimal degree $m$ can be expressed as a product of $m$ Cayley transforms~\eqref{eq:cayley1}, with an additional phase factor.  
\end{remark}

\subsection{Some unitary rational approximations}\label{subsec:someunitaryratfct}

\subsubsection{Diagonal Pad\'e approximation}\label{subsec:intropade}

We recall some classical results and notation concerning the diagonal Pad\'{e} approximation to $\mathrm{e}^{z}$, mostly referring to~\cite{BG96}. 
The diagonal Pad\'{e} approximation of degree $n$ to $\mathrm{e}^{z}$, i.e., the $(n,n)$-Pad\'{e} approximation, is a rational function $\widehat{r}=p/q$, where $p$ and $q$ are polynomials of degree~$\leq n$, s.t.\ $\widehat{r}(z)$ and $\mathrm{e}^{z}$ agree with the highest possible asymptotic order at $z=0$, namely,
\begin{equation}\label{eq:defPadepq}
\widehat{r}(z) := \frac{p(z)}{q(z)} = \mathrm{e}^{z} + \mathcal{O}(|z|^{2n+1}),~~~|z|\to 0.
\end{equation}
This approximation exists for all degrees $n$ and, when $q$ is normalized, e.g.,~$q(0)=1$, the diagonal Pad\'e approximation is unique,~\cite[Section~1.2]{BG96} for instance. For an explicit representation of $p$ and $q$ we refer to~\cite[eq.~(10.23)]{Hi08} or~\cite[Theorem~3.11]{HW02}. Since $p$ and $q$ are real polynomials with $p(z)=q(-z)$ we may write $\widehat{r}=\widehat{p}^\dag/\widehat{p}$, where $\widehat{p} := q$.
Thus,~\eqref{eq:defPadepq} corresponds to
\begin{equation}\label{eq:defPade}
\widehat{r}(z) = \frac{\widehat{p}^\dag(z)}{\widehat{p}(z)} = \mathrm{e}^{z} + \mathcal{O}(|z|^{2n+1}),~~~|z|\to 0.
\end{equation}
In particular, $\widehat{r}\in\mathcal{U}_n$ and $\widehat{r}$ has minimal degree $n$.

\subsubsection{Rational approximations based on polynomial approximations}\label{subsec:ratapproxfrompolcheb}
When considering uniform approximations to $\mathrm{e}^{\mathrm{i} \omega x}$ over $x\in[-1,1]$, the polynomial Chebyshev approximation might be the most common approach, cf.~\cite[Subsection~III.2.1]{Lu08}. While polynomial methods are not directly suitable when unitarity or related stability properties are desired, quotients of polynomial approximations can yield a unitary rational approximation. As an example, we consider polynomial Chebyshev approximations. In particular, a candidate for a unitary approximation to $\mathrm{e}^{\mathrm{i} \omega x}$ is
\begin{equation}\label{eq:rfrompcheb}
\check{r}:=\frac{p^\dag}{p},~~~\text{where $p$ is the polynomial Chebyshev approximation $p(\mathrm{i} x)\approx\mathrm{e}^{-\mathrm{i} \omega/2 x}$.}
\end{equation}
Certainly, when $p$ is the polynomial Chebyshev approximation of degree $n$, then $\check{r}=p^\dag/p\in\mathcal{U}_n$. Furthermore, the polynomial $p^\dag$ approximates $p^\dag(\mathrm{i} x) = \overline{p(\mathrm{i} x)}\approx\mathrm{e}^{\mathrm{i} \omega/2 x}$ and thus, $\check{r}=p^\dag/p$ approximates $\check{r}(\mathrm{i} x) \approx \mathrm{e}^{\mathrm{i} \omega x}$. While~\eqref{eq:rfrompcheb} provides a straightforward way for constructing unitary rational approximations with uniform error on an interval on the imaginary axis, it performs poorly compared to other rational approximations discussed in the present work, as evident through the numerical experiments in Section~\ref{sec:convergence}.

\subsubsection{Rational interpolation}\label{subsec:ratint}

Rational interpolation is also referred to as multipoint Pad\'e or Newton--Pad\'e approximation in the literature.
We first consider the case of distinct interpolation nodes, which is most relevant in the present work with the exception of Appendix~\ref{sec:appendixA}, and we recall some classical results which are thoroughly discussed in~\cite[Section~2]{Be70} and~\cite{MW60b, Gu90}, among others. Then, we briefly generalize these results allowing interpolation nodes of higher multiplicity, mostly referring to~\cite{Sa62, Cla78}. We refer to the general setting as {\em osculatory} rational interpolation, and we show that interpolants are unitary at the end of the present subsection.

\smallskip
{\bf Rational interpolation with distinct nodes.} We recall the previously introduced notation $\mathcal{R}_n$ for rational functions $r=p/q$ where $p$ and $q$ are polynomials of degree $\leq n$. Since functions in $\mathcal{R}_n$ are specified by $2n+1$ degrees of freedom, considering candidates in $\mathcal{R}_n$ for interpolation at $2n+1$ nodes seems natural. Let $\mathrm{i} x_1,\ldots,\mathrm{i} x_{2n+1}\in\mathrm{i}\mathbb{R}$ denote distinct and given interpolation nodes for the respective rational interpolation problem to $\mathrm{e}^{\omega z}$, i.e. finding $r\in\mathcal{R}_n$ s.t.
\begin{subequations}\label{eq:ratintdistinctnodes}
\begin{equation}\label{eq:ratintpq}
r(\mathrm{i} x_j)=\mathrm{e}^{\mathrm{i} \omega x_j},~~~j=1,\ldots,2n+1.
\end{equation}
Provided such $r$ exists, the solution is unique in $\mathcal{R}_n$.
However, this interpolation problem may have no solution.
To overcome this issue it is a common approach to consider a linearized interpolation problem instead, i.e., find polynomials $p$ and $q$ of degree $\leq n$ s.t.
\begin{equation}\label{eq:ratintpqlin}
p(\mathrm{i} x_j) = \mathrm{e}^{\mathrm{i} \omega x_j }q(\mathrm{i} x_j),~~~~j=1,\ldots,2n+1.
\end{equation}
\end{subequations}
This formulation originates from~\eqref{eq:ratintpq} when substituting $r=p/q$ therein and multiplying by $q$.
For a solution $r=p/q$ of~\eqref{eq:ratintpq}, the polynomials $p$ and $q$ also solve the linearized formulation~\eqref{eq:ratintpqlin}. 

Since the linearized system~\eqref{eq:ratintpqlin} has one more degree of freedom than equations, we find at least one non-trivial pair of polynomials $p$ and $q$ which solves the linearized interpolation problem. While $p$ and $q$ might not be a unique solution, for all solutions of~\eqref{eq:ratintpqlin} the quotient $p/q$ represents the same rational function. In particular, $p/q$ from~\eqref{eq:ratintpqlin} coincides with the solution of~\eqref{eq:ratintpq} if the latter exists, in which case these problems are equivalent. Otherwise, there exists at least one interpolation node for which the rational function $p/q$ from~\eqref{eq:ratintpqlin} does not satisfy~\eqref{eq:ratintpq}, i.e., $p(\mathrm{i} x_j)/q(\mathrm{i} x_j)\neq \mathrm{e}^{\mathrm{i}\omega x_j}$, and these nodes are referred to as {\em unattainable nodes}. Moreover, for all solutions $p$ and $q$ of~\eqref{eq:ratintpqlin}, the unattainable nodes are common zeros of $p$ and $q$.

\medskip
{\bf Osculatory rational interpolation.}
We briefly remark that rational interpolation with interpolation nodes of higher multiplicity may be treated in a similar way as the distinct nodes case.
Let $\mathrm{i} x_1,\ldots,\mathrm{i} x_{2n+1}$ denote interpolation nodes with $x_j\in\mathbb{R}$ for rational interpolation to $\mathrm{e}^{\omega z}$, allowing nodes of higher multiplicity.
In particular, assume this sequence of nodes consists of $m\leq 2n+1$ distinct nodes. To simplify the notation we introduce a mapping $\iota$ s.t.\ the index $\iota(j)$ corresponds to the $j$th distinct node which has multiplicity $n_j\geq 1$, i.e.,
\begin{equation*}
\{x_1,\ldots,x_{2n+1}\} = \{
\underbrace{x_{\iota(1)},\ldots,x_{\iota(1)}}_{\text{$n_1$ many}},
\underbrace{x_{\iota(2)},\ldots,x_{\iota(2)}}_{\text{$n_2$ many}},\ldots, \underbrace{x_{\iota(m)},\ldots,x_{\iota(m)}}_{\text{$n_m$ many}}\},
\end{equation*}
and $n_1+\ldots+n_m =2n+1$. Thus, this notation also covers the case of distinct interpolation nodes, i.e., $m=2n+1$ and $n_j=1$, and the following formulations for the interpolation problem have to be understood as a generalization of~\eqref{eq:ratintdistinctnodes}. The osculatory interpolation problem to $\mathrm{e}^{\omega z}$ consists of finding $r\in\mathcal{R}_n$ s.t.
\begin{subequations}\label{eq:ratintcondconfluentboth}
\begin{equation}\label{eq:ratintcondconfluent}
r^{(\ell)}\left(\mathrm{i} x_{\iota(j)}\right) = 
 \left.\frac{\mathrm{d}^{\ell} \mathrm{e}^{\omega z}}{\mathrm{d} z^{\ell}}\right|_{z=\mathrm{i} x_{\iota(j)}},
~~~\ell=0,\ldots,n_j-1,~~~j=1,\ldots,m.
\end{equation}
Moreover, the linearized interpolation problem consists of finding polynomials $p$ and $q$ of degree $\leq n$ s.t.
\begin{equation}\label{eq:ratintcondconfluentlin}
p^{(\ell)}\left(\mathrm{i} x_{\iota(j)}\right) = \left.\frac{\mathrm{d}^{\ell} (q(z) \mathrm{e}^{\omega z})}{\mathrm{d} z^{\ell}}\right|_{z=\mathrm{i} x_{\iota(j)}},~~~\ell=0,\ldots,n_j-1,~~~j=1,\ldots,m.
\end{equation}
Equivalently, we may also replace~\eqref{eq:ratintcondconfluentlin} by
\begin{equation}\label{eq:ratintcondconfluentlin2}
p(z) - q(z) \mathrm{e}^{\omega z} = \prod_{j=1}^{2n+1}(z-\mathrm{i} x_j) v(z),
\end{equation}
\end{subequations}
where $v$ denotes an analytic function. Following~\cite{Sa62,Cla78}, the interpolation problems in~\eqref{eq:ratintcondconfluentboth} are equivalent.
Similar to the case of distinct interpolation nodes, unattainable nodes (possibly of higher multiplicity) may occur and~\eqref{eq:ratintcondconfluent} has no solution in this case, however, a unique solution may be defined via the linearized interpolation problem.

\medskip
{\bf Unitarity.}
In the sequel, rational interpolation refers to the unique rational function $p/q$ which originates from solutions of the linearized problem~\eqref{eq:ratintcondconfluentlin2}, or equivalently~\eqref{eq:ratintcondconfluentlin} which simplifies to~\eqref{eq:ratintpqlin} in case of distinct nodes, possibly featuring unattainable points. As shown in the following proposition, the rational interpolant to $\mathrm{e}^{\omega z}$ is unitary, i.e., $r=p/q\in\mathcal{U}_n$. For the case of distinct interpolation nodes, a similar result also appeared in~\cite[Proposition~2.1]{JS23} earlier.
\begin{proposition}\label{prop:ratintconfluentunitary}
Provided the interpolation nodes $\mathrm{i} x_1,\ldots,\mathrm{i} x_{2n+1}$ are on the imaginary axis, the rational interpolant $r\in\mathcal{R}_n$ to $\mathrm{e}^{\omega z}$ in the sense of~\eqref{eq:ratintcondconfluentboth} is unitary, i.e., $r\in\mathcal{U}_n$.
\end{proposition}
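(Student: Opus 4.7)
The plan is to use the characterization of $\mathcal{U}_n$ via condition~\ref{item:1requiv} in Proposition~\ref{prop:unitaryridentities}, namely to show that the interpolant $r=p/q$ constructed from~\eqref{eq:ratintcondconfluentlin2} satisfies $|r(\mathrm{i}x)|=1$ for every $x\in\mathbb{R}$. The key idea is a symmetry argument: I will show that the pair $(q^\dag,p^\dag)$ also solves the linearized osculatory interpolation problem~\eqref{eq:ratintcondconfluentlin2} with the same nodes, so that uniqueness of the resulting rational function forces the identity $p\,p^\dag \equiv q\,q^\dag$, from which unitarity follows immediately.

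First, starting from a solution pair $(p,q)$ of~\eqref{eq:ratintcondconfluentlin2} for some analytic $v$, I would apply the combined map $z\mapsto -\bar z$ followed by complex conjugation to the whole identity. Using $\overline{p(-\bar z)}=p^\dag(z)$, $\overline{q(-\bar z)}=q^\dag(z)$, together with $\overline{\mathrm{e}^{-\omega\bar z}}=\mathrm{e}^{-\omega z}$ (since $\omega\in\mathbb{R}$) and $\overline{(-\bar z-\mathrm{i}x_j)}=-(z-\mathrm{i}x_j)$ (since $x_j\in\mathbb{R}$), the conjugate-reflected equation becomes
\begin{equation*}
p^\dag(z) - q^\dag(z)\,\mathrm{e}^{-\omega z} = -\prod_{j=1}^{2n+1}(z-\mathrm{i}x_j)\,v^\sharp(z),
\end{equation*}
where $v^\sharp(z):=\overline{v(-\bar z)}$ is analytic. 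Multiplying through by $-\mathrm{e}^{\omega z}$ rearranges this to
\begin{equation*}
q^\dag(z) - p^\dag(z)\,\mathrm{e}^{\omega z} = \prod_{j=1}^{2n+1}(z-\mathrm{i}x_j)\,\tilde v(z),\qquad \tilde v(z):=v^\sharp(z)\,\mathrm{e}^{\omega z},
\end{equation*}
with $\tilde v$ analytic. Thus $(q^\dag,p^\dag)$ is a valid solution pair of the same linearized problem~\eqref{eq:ratintcondconfluentlin2}. Since $p^\dag$ and $q^\dag$ have degree $\leq n$, they define a candidate rational interpolant $q^\dag/p^\dag \in \mathcal{R}_n$.

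Invoking the uniqueness of the rational function produced by the linearized osculatory problem (stated in the text just after~\eqref{eq:ratintcondconfluentboth}), I then conclude $p/q = q^\dag/p^\dag$, i.e.\ the polynomial identity $p\,p^\dag \equiv q\,q^\dag$. Consequently, for every $x\in\mathbb{R}$ with $q(\mathrm{i}x)\neq 0$,
\begin{equation*}
|r(\mathrm{i}x)|^2 = r(\mathrm{i}x)\,\overline{r(\mathrm{i}x)} = \frac{p(\mathrm{i}x)\,p^\dag(\mathrm{i}x)}{q(\mathrm{i}x)\,q^\dag(\mathrm{i}x)} = 1,
\end{equation*}
using $p^\dag(\mathrm{i}x)=\overline{p(\mathrm{i}x)}$ and analogously for $q$. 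Any imaginary zero of $q$ must, by the identity $p\,p^\dag = q\,q^\dag$ and the fact that $q^\dag$ also vanishes there, be a common zero of $p$ and $q$ and hence a removable singularity of $r$; passing to the irreducible form gives $r\in\mathcal{R}_n$ with $|r(\mathrm{i}x)|=1$ throughout $\mathbb{R}$, so $r\in\mathcal{U}_n$ by Proposition~\ref{prop:unitaryridentities}.

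The only subtle point — and the one I would be most careful about — is justifying the conjugate-reflection step for the analytic remainder $v$ in~\eqref{eq:ratintcondconfluentlin2}, and ensuring the uniqueness-of-the-rational-function result applies to the osculatory case; both are essentially bookkeeping (the former because $z\mapsto \overline{v(-\bar z)}$ preserves analyticity, the latter because it is explicitly cited from~\cite{Sa62,Cla78}). The case of distinct nodes follows as the specialization $m=2n+1$, $n_j=1$, recovering~\cite[Proposition~2.1]{JS23}.
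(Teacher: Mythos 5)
Your proof is correct, and it takes a genuinely different route from the paper's. The paper works directly with the polynomial $\chi:=p^\dag p-q^\dag q$: it substitutes the remainder representation $p(\mathrm{i}x)=\mathrm{e}^{\mathrm{i}\omega x}q(\mathrm{i}x)+h(\mathrm{i}x)$ into $\chi(\mathrm{i}x)=|p(\mathrm{i}x)|^2-|q(\mathrm{i}x)|^2$, observes that every surviving term carries a factor $h$ or $\overline{h}$ vanishing to total order $2n+1$ at the nodes, and concludes $\chi\equiv0$ because $\deg\chi\leq 2n$. You instead exhibit a symmetry of the linearized problem: the conjugate--reflected pair $(q^\dag,p^\dag)$ solves~\eqref{eq:ratintcondconfluentlin2} with the same nodes (your algebra here is right: $\omega\in\mathbb{R}$, $x_j\in\mathbb{R}$, and $z\mapsto\overline{v(-\bar z)}$ preserves analyticity), and uniqueness of the rational function attached to the linearized problem then forces $p\,p^\dag\equiv q\,q^\dag$. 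Your version is shorter and makes the structural reason for unitarity transparent (invariance of the interpolation data under $z\mapsto-\bar z$ combined with $\mathrm{e}^{\omega z}\mapsto\mathrm{e}^{-\omega z}$), and it is the natural osculatory generalization of the argument in~\cite[Proposition~2.1]{JS23}. What the paper's version buys is self-containedness: it never needs the uniqueness of the linearized interpolant as an input, whereas you lean on that fact — which the paper does assert (citing~\cite{Sa62,Cla78}), and which is itself proved by exactly the degree-counting you would otherwise do by hand ($q_2p_1-q_1p_2$ has degree $\leq 2n$ but is divisible by $\prod_j(z-\mathrm{i}x_j)$), so the two proofs are the same counting argument packaged differently. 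Two small points you should make explicit: the swapped pair must be non-trivial with $p^\dag\not\equiv0$ (immediate, since $p\equiv0$ would force $q\equiv0$ by the same divisibility argument), and the passage from the rational-function identity $p/q=q^\dag/p^\dag$ to the polynomial identity $p\,p^\dag\equiv q\,q^\dag$ is by cross-multiplication on the cofinite set where denominators are nonzero. Neither is a gap.
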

\begin{proof}
The proof of this proposition is provided in Appendix~\ref{sec:appendixA}.
\end{proof}

\subsubsection{Rational interpolation at Chebyshev nodes -- algorithm (i)}
\label{subsec:ratintcheb}
We introduce $\mathring{r}\in\mathcal{U}_n$ as the rational interpolant to $\mathrm{e}^{\mathrm{i} \omega x}$ at $2n+1$ Chebyshev nodes, i.e.,
\begin{equation}\label{eq:ratinterpolateCheb}
\mathring{r}(\mathrm{i} \tau_j) = \mathrm{e}^{\mathrm{i} \omega \tau_j},~~~\text{where}~~\tau_j = \cos\left(\frac{(2j-1)\pi}{2(2n+1)}\right),~~~j=1,\ldots,2n+1.
\end{equation}
In practice, $\mathring{r}$ can be computed in barycentric rational form with a coefficient vector in the null-space of a $n\times (n+1)$-dimensional {\em Loewner matrix}~\cite{Be70,AA86,Kno08}. We propose a modified procedure which achieves unitarity to machine precision. Namely, the desired rational interpolant is found by dividing the $2n+1$ Chebyshev nodes into $n+1$ {\em support nodes} and $n$ {\em test nodes} and utilizing the rotated Loewner matrix approach outlined in~\cite{JS23}. For a Python implementation we refer to~\cite{Ja23}.
For an algorithm which computes rational interpolants at Chebyshev nodes utilizing Fourier transform see~\cite{PGD12}.

Rational interpolation at Chebyshev nodes has some relevance for our convergence results for unitary best approximations in Section~\ref{sec:convergence}, and potentially yields a simple but effective approach to approximating $\mathrm{e}^{\mathrm{i} \omega x}$.

\subsubsection{A modified BRASIL algorithm -- algorithm (iii)}\label{subsec:brasil}

The BRASIL (best rational approximation by successive interval length adjustment) algorithm~\cite{Ho21} aims to compute rational best approximations to real functions. Such approximations have equioscillating approximation errors and attain interpolation nodes. The original BRASIL algorithm starts with an initial guess for the interpolation nodes and computes an approximation via rational interpolation. Then, the nodes (respectively, the intervals spanned by neighboring interpolation nodes) are adjust s.t.\ the approximation error approaches an equioscillatory behavior, and this process is repeated iteratively.

We briefly sketch here a modified BRASIL algorithm to compute the unitary best approximation to $\mathrm{e}^{\mathrm{i} \omega x}$. This algorithm is introduced in full detail in~\cite{Ja23}. In the present work (in particular, Theorem~\ref{thm:bestapprox} and Corollary~\ref{cor:errattainsmax}) we show that the unitary best approximation to $\mathrm{e}^{\mathrm{i} \omega x}$ has an equioscillating {\em phase error} and is interpolatory. Similar to the BRASIL algorithm, we use an initial guess for the interpolation nodes, compute an approximation by solving the respective rational interpolation problem, and adjust the interpolation nodes s.t.\ the phase error approaches an equioscillatory behavior. This process is repeated iteratively until the phase error is nearly equioscillatory. In particular, for numerical tests in the present work we compute unitary best approximations which have an equioscillating phase error up to a relative deviation of $\leq 10^{-3}$. The generated approximations correspond to rational interpolants, and thus, are unitary (Subsection~\ref{subsec:ratint}). In particular, we use rotated Loewner matrices~\cite{JS23} to conserve unitarity to machine precision. Moreover, some modifications can be applied to conserve symmetry properties which are discussed in Section~\ref{sec:poles}.

Furthermore, in Proposition~\ref{prop:inodestoCheb} we show that interpolation nodes of the unitary best approximation converge to Chebyshev nodes in the limit $\omega\to0^+$. This motivates the use of Chebyshev nodes as initial guess for the interpolation nodes, and results in fast convergence for sufficiently small $\omega$. In a similar manner, uniformly spread nodes can be used as an initial guess near the other extreme $\omega\to(n+1)\pi^-$, see Proposition~\ref{prop:nodesconvwtonp1pi}. Moreover, when computing unitary best approximations for different values of $\omega$, the interpolation nodes of the best approximation for a nearby value of $\omega$ can be used as initial guesses since interpolation nodes depend continuously on $\omega$, as shown in Proposition~\ref{prop:inodescontinuous}.

\subsubsection{AAA and AAA--Lawson algorithms}

The AAA algorithm \cite{NST18} computes rational approximations in a barycentric form by minimizing a linearized error over a set of {\em test nodes} using a least squares procedure. It adaptively chooses {\em support nodes} in a greedy way to keep a uniform error small. The AAA--Lawson algorithm~\cite{NT20} first runs the AAA algorithm to choose support nodes. Then it utilizes a Lawson iteration by minimizing weighted least square errors in order to minimize an error over test nodes, which in the present setting can be understood as a discretized version of a uniform error over an interval.

When used in the context of approximating the exponential function on an interval on the imaginary axis \eqref{eq:rpqapproxexp}, AAA and AAA--Lawson methods produce unitary approximations~\cite{JS23}. In practice, due to finite precision arithmetic, the unitarity of these approximations deteriorates and we suggest using modified versions of AAA and AAA--Lawson outlined in~\cite{JS23} which achieve unitarity to machine precision.

\subsubsection{AAA--Lawson with Chebyshev support nodes -- algorithm (ii)}\label{subsec:LawsonCheb}

The AAA--Lawson method~\cite{NT20} aims to minimize a uniform error over an interval, and in practice comes reasonably close to best approximations, which are the topic of the present work. However, the adaptive choice of support nodes through the AAA iterations can violate certain symmetries of the poles of the best approximations that are shown later in Section~\ref{sec:poles}.  

We propose another variant of the AAA--Lawson method for~\eqref{eq:rpqapproxexp}. In addition to using the modified (unitary) algorithm from~\cite{JS23}, for an approximant of degree $(n,n)$ we fix the $n+1$ support nodes to be Chebyshev nodes. Thus, in contrast to the original AAA--Lawson method the support nodes are not computed one-by-one by initial AAA iterations. For a Python implementation see~\cite{Ja23}.

The approximants generated by this procedure seem to demonstrate an equi\-os\-cil\-la\-tion property in practice (superior to the AAA--Lawson method without pre-assigning support nodes), which characterizes best approximations, as shown in Theorem~\ref{thm:bestapprox}. 

\subsection{Unitary rational best approximations}\label{subsec:unitarybest}
In the present work we are concerned with unitary rational approximations to the exponential function on the imaginary axis, 
\begin{equation*} 
\tag{\ref{eq:rpqapproxexp}}
r(\mathrm{i} x) = \frac{p^\dag(\mathrm{i} x) }{p(\mathrm{i} x)} \approx \mathrm{e}^{\mathrm{i} \omega x},~~~x\in [-1,1]\subset\mathbb{R},~~ \omega>0,~~p\not\equiv 0.
\end{equation*}
To simplify our notation, we introduce the norm
\begin{equation*}
\|f\| := \max_{x\in[-1,1]} |f(\mathrm{i} x)|,
\end{equation*}
where $f$ is a complex function. In particular, for $r(\mathrm{i} x) \approx \mathrm{e}^{\mathrm{i} \omega x}$ as in~\eqref{eq:rpqapproxexp} the uniform error reads
\begin{equation}\label{eq:errnormnotation}
\|r-\exp(\omega \cdot) \| = \max_{x\in[-1,1]} |r(\mathrm{i} x)-\mathrm{e}^{\mathrm{i} \omega x}|.
\end{equation}
The main topic of the present work are unitary rational best approximations, i.e., rational functions $r\in\mathcal{U}_n$ that minimize the error 
\begin{equation}\label{eq:unitaryChebbestapprox}
\|r-\exp(\omega \cdot) \|
= \inf_{u\in \mathcal{U}_n} \|u-\exp(\omega \cdot) \|,
\end{equation}
where $n$ is a given degree and $\omega>0$.
Best approximations of the type~\eqref{eq:unitaryChebbestapprox} are also known as Chebyshev approximations in the literature.

For the exponential function this also provides best approximations~$r(\mathrm{i} y) \approx \mathrm{e}^{\mathrm{i} y}$ for~$y\in[a,b]$ where~$[a,b]\subset\mathbb{R}$ is an arbitrary interval. In particular, let~$\omega = (b-a)/2$, $\gamma=(a+b)/2$, and~$\widetilde{r}(\mathrm{i} y ) := \mathrm{e}^{\mathrm{i} \gamma}r(\mathrm{i} (y-\gamma)/\omega)$ for a given~$r\in\mathcal{U}_n$. Then
\begin{equation}\label{eq:fromm11toab}
\max_{y\in [a,b]} | \widetilde{r}(\mathrm{i} y ) - \mathrm{e}^{\mathrm{i} y}|
= \max_{y\in [a,b]} | \mathrm{e}^{\mathrm{i} \gamma} r(\mathrm{i} (y-\gamma)/\omega) - \mathrm{e}^{\mathrm{i} y}|
= \max_{x\in [-1,1]} | r(\mathrm{i} x) - \mathrm{e}^{\mathrm{i} \omega x}|,
\end{equation}
since $(y-\gamma)/\omega=x\in[-1,1]$ for $y\in[a,b]$. The approximation $\widetilde{r}(\mathrm{i} y) \approx \mathrm{e}^{\mathrm{i} y}$ is a unitary best approximation for $y\in[a,b]$ if and only if the underlying $r$ is a unitary best approximation~$r(\mathrm{i} x)\approx \mathrm{e}^{\mathrm{i} \omega x}$ for~$x\in[-1,1]$. 

\section{Existence}\label{sec:exist}

\begin{proposition}\label{prop:convsubsequence} 
For a fixed degree $n$, every sequence $\{r_j\in\mathcal{U}_n\}_{j\in\mathbb{N}}$ has a sub-sequence which converges to some $r\in\mathcal{U}_n$ in the sense of $|r_{j_k}(\mathrm{i} x) - r(\mathrm{i} x)|\to 0$ for $x\in[-1,1]$ point-wise except for at most $n$ points.
\end{proposition}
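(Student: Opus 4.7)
The plan is to use the parameterization $r = p^\dag/p$ furnished by Proposition~\ref{prop:unitaryridentities} and to reduce the claim to the compactness of the unit sphere in $\mathbb{C}^{n+1}$.

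First, for each $j$, pick a representative $p_j$ of degree $\leq n$ with $r_j = p_j^\dag/p_j$. Observe that for any real scalar $c \neq 0$, the scaled polynomial $cp_j$ satisfies $(cp_j)^\dag/(cp_j) = (\overline{c}/c)(p_j^\dag/p_j) = p_j^\dag/p_j = r_j$, so rescaling by a nonzero real does not change $r_j$. This freedom lets me normalize $p_j$ so that its coefficient vector $\mathbf{c}_j = (c_j^{(0)},\ldots,c_j^{(n)})\in\mathbb{C}^{n+1}$, defined by $p_j(z) = \sum_{k=0}^n c_j^{(k)} z^k$, satisfies $\|\mathbf{c}_j\|_2 = 1$.

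Next, since the unit sphere in $\mathbb{C}^{n+1}$ is compact, Bolzano--Weierstrass yields a subsequence $\{\mathbf{c}_{j_k}\}$ converging to some $\mathbf{c} = (c^{(0)},\ldots,c^{(n)})$ with $\|\mathbf{c}\|_2 = 1$. Define $p(z) = \sum_{k=0}^n c^{(k)} z^k$; then $p$ is a polynomial of degree $\leq n$ and $p \not\equiv 0$ because $\|\mathbf{c}\|_2 = 1$. Coefficient-wise convergence $p_{j_k}\to p$ translates immediately into uniform convergence on the compact set $\mathrm{i}[-1,1]$, and similarly $p_{j_k}^\dag \to p^\dag$ uniformly there, since conjugation and reflection act continuously on coefficients.

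Set $r := p^\dag/p \in \mathcal{U}_n$. For every $x \in [-1,1]$ with $p(\mathrm{i} x) \neq 0$, the uniform convergence of numerators and denominators and the nonvanishing of the limit denominator give
\begin{equation*}
r_{j_k}(\mathrm{i}x) = \frac{p_{j_k}^\dag(\mathrm{i}x)}{p_{j_k}(\mathrm{i}x)} \longrightarrow \frac{p^\dag(\mathrm{i}x)}{p(\mathrm{i}x)} = r(\mathrm{i}x).
\end{equation*}
The exceptional set consists of those $x\in[-1,1]$ with $p(\mathrm{i}x)=0$; since $p$ has degree $\leq n$ and is not identically zero, this set contains at most $n$ points.

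The only slightly delicate step is the normalization argument at the start: one must be careful that rescaling $p_j$ by a complex constant generally changes the ratio $p_j^\dag/p_j$ by a unimodular factor, so only positive-real rescalings are available. This suffices because $\|\mathbf{c}_j\|_2$ is itself a positive real, making normalization to the unit sphere legitimate. Everything else is routine compactness plus the fact that a nonzero polynomial of degree $\leq n$ has at most $n$ zeros.
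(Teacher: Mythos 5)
Your proof is correct and follows essentially the same route as the paper: normalize the representatives $p_j$ (the paper uses $\|p_j\|=1$ in the sup norm on $\mathrm{i}[-1,1]$ rather than the coefficient $2$-norm, but these are equivalent norms on the finite-dimensional space of polynomials of degree $\leq n$), extract a convergent subsequence by compactness, and pass to the limit in the quotient $p^\dag/p$ away from the at most $n$ zeros of the limit polynomial. Your explicit observation that only positive-real rescalings leave $r_j$ unchanged is a point the paper glosses over, but it does not change the argument.
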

\begin{proof} The proof for this proposition follows along the lines of~\cite{Wa31,Tre13}. There exist polynomials $p_j$ of degree $\leq n$ with $\|p_j\|=1$ and $r_j=p_j^\dag/p_j$. Since the sequence $p_j$ is bounded in the set of polynomials, there exists a convergent sub-sequence $p_{j_k}\to \widetilde{p}$ where $\widetilde{p}$ is also a polynomial of degree $\leq n$ with $\|\widetilde{p}\|=1$. We define $\widetilde{r} := \widetilde{p}^\dag/\widetilde{p}\in\mathcal{U}_n$. For $x\in[-1,1]$ s.t.\ $\widetilde{p}(\mathrm{i} x)\neq 0$, we get
\begin{equation*}
|r_{j_k}(\mathrm{i} x) - \widetilde{r}(\mathrm{i} x)|
= \left| \frac{p_{j_k}^\dag(\mathrm{i} x)}{p_{j_k}(\mathrm{i} x)} - \frac{\widetilde{p}^\dag(\mathrm{i} x)}{\widetilde{p}(\mathrm{i} x)} \right|\to 0,
\end{equation*}
which shows point-wise convergence for $x\in[-1,1]$ except for the zeros of $\widetilde{p}$, i.e., with the exception of at most $n$ points.
While $r_{j_k}$ might not convergence to $\widetilde{r}$ at the zeros of $\widetilde{p}$, the zeros of $\widetilde{p}$ on the real axis correspond to removable singularities of $\widetilde{r}$. 
\end{proof}

\begin{proposition}[Existence]\label{prop:bestapproxinUnexists}
For given degree $n$ and $\omega>0$, there exists a unitary best approximation $r\in\mathcal{U}_n$, i.e.,
\begin{equation*}
\| r - \exp(\omega \cdot)\|
= \inf_{u \in\mathcal{U}_n} \| u - \exp(\omega \cdot)\|.
\end{equation*}
Due to the existence of the best approximation, we also refer to the infimum as minimum in the sequel.
\end{proposition}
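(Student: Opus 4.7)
The plan is the standard compactness argument: extract a pointwise limit of a minimizing sequence via Proposition~\ref{prop:convsubsequence}, and then promote the pointwise bound on the limit to a uniform bound using continuity of the error on the imaginary axis.

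First I would fix a minimizing sequence $\{r_j\}_{j\in\mathbb{N}} \subset \mathcal{U}_n$ with
\begin{equation*}
\|r_j - \exp(\omega\cdot)\| \;\longrightarrow\; d_n := \inf_{u \in \mathcal{U}_n} \|u - \exp(\omega\cdot)\|.
\end{equation*}
The infimum is finite (in fact $d_n \leq 2$, since the constant $r \equiv 1$ lies in $\mathcal{U}_n$). Applying Proposition~\ref{prop:convsubsequence} yields a subsequence $r_{j_k}$ and a limit $r \in \mathcal{U}_n$ such that $r_{j_k}(\mathrm{i}x) \to r(\mathrm{i}x)$ for every $x \in [-1,1]$ outside an exceptional set $E$ of cardinality at most $n$.

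Next I would pass the error bound through the limit. For $x \in [-1,1] \setminus E$, pointwise convergence gives
\begin{equation*}
|r(\mathrm{i}x) - \mathrm{e}^{\mathrm{i}\omega x}|
= \lim_{k\to\infty} |r_{j_k}(\mathrm{i}x) - \mathrm{e}^{\mathrm{i}\omega x}|
\leq \liminf_{k\to\infty} \|r_{j_k} - \exp(\omega\cdot)\| = d_n.
\end{equation*}
The key step is to extend this inequality to all of $[-1,1]$. Since $r \in \mathcal{U}_n$ is unitary on the imaginary axis (Proposition~\ref{prop:unitaryridentities}) and any of its poles on $\mathrm{i}\mathbb{R}$ are removable (Proposition~\ref{prop:fulldegree}), the function $x \mapsto |r(\mathrm{i}x) - \mathrm{e}^{\mathrm{i}\omega x}|$ is continuous on $[-1,1]$. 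Because $[-1,1] \setminus E$ is dense in $[-1,1]$, the inequality extends by continuity to the full interval, giving $\|r - \exp(\omega\cdot)\| \leq d_n$. Since $r \in \mathcal{U}_n$ the reverse inequality is automatic, so equality holds and the infimum is attained.

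The main obstacle is the potential loss of control at the exceptional points in $E$, where the subsequence need not converge (the polynomials $p_{j_k}$ may converge to a $\widetilde{p}$ that vanishes at some imaginary points not shared by $\widetilde{p}^\dag$ in the limit, while for each $k$ no such cancellation occurs). The observation that the limiting $r$ nevertheless lives in $\mathcal{U}_n$ and is therefore continuous on $\mathrm{i}[-1,1]$ is exactly what closes this gap, turning pointwise-almost-everywhere control into the required uniform bound.
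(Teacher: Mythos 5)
Your proposal is correct and follows essentially the same route as the paper: a minimizing sequence, the compactness result of Proposition~\ref{prop:convsubsequence} to extract a pointwise limit $r\in\mathcal{U}_n$ off a finite exceptional set, and continuity plus density to upgrade the pointwise bound to $\|r-\exp(\omega\cdot)\|\leq d_n$. You merely spell out the continuity step (unitarity and removability of imaginary poles of the limit) that the paper compresses into ``due to continuity,'' which is a welcome clarification but not a different argument.
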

\begin{proof}
Since $\|r-\exp(\omega\cdot)\|$ for $r\in\mathcal{U}_n$ is bounded from below, the following infimum exists,
\begin{equation*}
E := \inf_{u\in U_k} \| u - \exp(\omega \cdot)\|.
\end{equation*}
Consider a minimizing sequence $\{r_j\in\mathcal{U}_n\}_{j\in\mathbb{N}}$, i.e.,
\begin{equation*}
\lim_{j\to \infty} \| r_j - \exp(\omega \cdot)\|  = E.
\end{equation*}
Following Proposition~\ref{prop:convsubsequence}, the sequence $r_j$ has a sub-sequence~$r_{j_k}$ which converges to some~$r\in\mathcal{U}_n$ point-wise with the exception of at most $n$ points. For the points~$x$ where $r_{j_k}$ converges, we get
\begin{equation*}
|r(\mathrm{i} x) - \mathrm{e}^{\mathrm{i} \omega x}|
= \lim_{k\to \infty} |r_{j_k}(\mathrm{i} x) - \mathrm{e}^{\mathrm{i} \omega x}|
 \leq E. 
\end{equation*}
Due to continuity we conclude $\| r - \exp(\omega\, \cdot) \| = E$, and thus, $r$ attains the minimal error.
\end{proof}

\section{Approximation error and phase error}\label{sec:phaserror}

In the present section we first show that $r\in\mathcal{U}_n$ satisfies the representation $r(\mathrm{i} x) = \mathrm{e}^{\mathrm{i} g(x)}$ for the {\em phase function} $g$ which is specified below. This representation has also appeared in a previous work on unitary approximations~\cite{Hu10}.
\begin{definition}\label{def:errors}
For the unitary approximation, $r(\mathrm{i} x)= \mathrm{e}^{\mathrm{i} g(x)}\approx \mathrm{e}^{\mathrm{i} \omega x}$, we define the  approximation error as $r(\mathrm{i} x)-\mathrm{e}^{\mathrm{i} \omega x}$ and the phase error as $g(x) - \omega x$. We say that the approximation error is maximal if  $\|r -\exp(\omega \cdot)\|=2$.
\end{definition}

The connection between the {approximation error} and the {phase error}, which will be established in this section, is a crucial tool in our study of unitary best approximations. In particular, we show that the phase is uniquely defined and a smaller phase error corresponds to a smaller approximation error, provided the approximation error is not maximal, or equivalently, the phase error satisfies $\max_{x \in [-1,1]} |g(x) - \omega x|<\pi$ (cf. Proposition~\ref{prop:approxertophaseerrinequ}), which holds for a large range of frequencies, $\omega \in (0, (n+1)\pi)$ (cf. Proposition~\ref{prop:omega0}). Moreover, we prove that the error of the unitary best approximation is continuous in the frequency $\omega$ (cf. Proposition~\ref{prop:wtomincontinuous}).

We start by considering the relation between the phase function $g(x)$ and the unitary approximation $r(\mathrm{i} x)$ by studying the simplest unitary function -- the Cayley transform~\eqref{eq:cayley12}, which is a degree $(1,1)$ rational function. The arc tangent function satisfies the basic identity
\begin{equation}\label{eq:fractiontoarctan}
\log\left(\frac{1+\mathrm{i} x}{1-\mathrm{i} x}\right)  = 2\mathrm{i} \arctan x,~~~\text{and thus},~~~\frac{1+\mathrm{i} x}{1-\mathrm{i} x}
= \mathrm{e}^{2\mathrm{i} \arctan x},~~~\text{for $x\in\mathbb{R}$},
\end{equation}
which establishes that the Cayley transform ${(1+\mathrm{i} x)/(1-\mathrm{i} x)}$ has the phase function $g(x)={2 \arctan(x)}$.

Let $r\in\mathcal{U}_n$ with minimal degree $m\leq n$ and poles $s_1,\ldots,s_m$ given by $s_j=\xi_j + \mathrm{i} \mu_j$. In particular, $r$ can be understood as an irreducible rational function $r\in\mathcal{U}_m$ with~$\xi_j\neq 0$ for $j=1,\ldots,m$, due to Proposition~\ref{prop:fulldegree}.
For $z=\mathrm{i} x$ the representation~\eqref{eq:defrbyp} yields
\begin{equation}\label{eq:defr2}
r(\mathrm{i} x)
= \mathrm{e}^{\mathrm{i} \theta}  \prod_{j=1}^m \frac{1 + \mathrm{i} (x-\mu_j)/\xi_j}{1 - \mathrm{i} (x-\mu_j)/\xi_j},~~~x\in\mathbb{R}.
\end{equation}
Applying~\eqref{eq:fractiontoarctan} term-wise to~\eqref{eq:defr2}, we arrive at $r(\mathrm{i} x) = \mathrm{e}^{\mathrm{i} g(x)}$ with
\begin{equation}\label{eq:defg}
g(x) := \theta+2 \sum_{j=1}^m\arctan\left( \frac{x-\mu_j}{\xi_j}\right),~~~x\in\mathbb{R}.
\end{equation}
For the special case that~$r$ has a minimal degree~$m=0$, this representation remains valid with~$g=\theta$. However, for a given~$r\in\mathcal{U}_n$ the phase~$\theta$ in~\eqref{eq:defr2}, and respectively in~$g$~\eqref{eq:defg}, is not uniquely defined without further considerations, since~$\mathrm{e}^{\mathrm{i} \theta} = \mathrm{e}^{\mathrm{i} (\theta+2k\pi)}$ for~$k\in\mathbb{N}$.

\begin{proposition}\label{prop:approxertophaseerrinequ}
The following results hold true. 
\begin{enumerate}[label=(\roman*)]
\item\label{item:approxandphaseerrlesstwo} The approximation error of a unitary function $r\in\mathcal{U}_n$ is not maximal, i.e., $\|r - \exp(\omega\cdot)\|<2$, if and only if there exists $g$ of the form~\eqref{eq:defg} with $r(\mathrm{i} x) = \mathrm{e}^{\mathrm{i} g(x)}$ and $\max_{x\in[-1,1]}  | g(x) - \omega x | < \pi$.
\item\label{item:gunique} The phase function $g$ in~\ref{item:approxandphaseerrlesstwo} is unique.
\item\label{item:riqtogiq} Let $r_1\in\mathcal{U}_n$ with $\|r_1 - \exp(\omega\cdot)\|<2$ and $r_1(\mathrm{i} x)=\mathrm{e}^{\mathrm{i} g_1(x)}$. Then $r_2\in\mathcal{U}_n$ with $r_2(\mathrm{i} x)=\mathrm{e}^{\mathrm{i} g_2(x)}$ has a smaller approximation error than $r_1$,
\begin{subequations}\label{eq:r12inequtog12inequ}
\begin{equation}\label{eq:r12inequtog12inequ1}
\| r_2 - \exp(\omega \cdot)\| < \| r_1 - \exp(\omega \cdot)\|,
\end{equation}
if and only if it has a smaller phase error than $r_1$,
\begin{equation}\label{eq:r12inequtog12inequ2}
\max_{x\in[-1,1]} |g_2(x)-\omega x| < \max_{x\in[-1,1]} |g_1(x)-\omega x|.
\end{equation}
\end{subequations}
\item\label{item:xmaxerr} Let $r\in\mathcal{U}_n$ with $\| r - \exp(\omega \cdot)\| <2$ and $r(\mathrm{i} x)=\mathrm{e}^{\mathrm{i} g(x)}$, then a point $y\in[-1,1]$ satisfies $|r(\mathrm{i} y) - \mathrm{e}^{\mathrm{i} \omega y}| = \| r - \exp(\omega \cdot)\|$ if and only if $|g(y)-\omega y| = \max_{x\in[-1,1]} |g(x)-\omega x|$.
\end{enumerate}
\end{proposition}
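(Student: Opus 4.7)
The whole proposition hinges on the identity
\[
\bigl|r(\mathrm{i}x) - \mathrm{e}^{\mathrm{i}\omega x}\bigr|
= \bigl|\mathrm{e}^{\mathrm{i}g(x)} - \mathrm{e}^{\mathrm{i}\omega x}\bigr|
= 2\,\bigl|\sin\!\bigl((g(x)-\omega x)/2\bigr)\bigr|,
\]
which is immediate from $|\mathrm{e}^{\mathrm{i}a}-\mathrm{e}^{\mathrm{i}b}|=2|\sin((a-b)/2)|$ once any representation $r(\mathrm{i}x)=\mathrm{e}^{\mathrm{i}g(x)}$ with $g$ as in~\eqref{eq:defg} is available (and the existence of such $g$ has just been derived from~\eqref{eq:defr2} and~\eqref{eq:fractiontoarctan}). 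I would then deduce the four assertions from this identity together with properties of the map $t\mapsto 2|\sin(t/2)|$, which is strictly increasing on $[0,\pi]$ with $2|\sin(t/2)|<2$ exactly when $t\notin \pi+2\pi\mathbb{Z}$.

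For part~\ref{item:approxandphaseerrlesstwo}, the ``if'' direction is one line: $\max_{x}|g(x)-\omega x|<\pi$ plugged into the identity gives $\|r-\exp(\omega\cdot)\|=2\sin(\tfrac{1}{2}\max_x|g(x)-\omega x|)<2$. For ``only if'', I would fix any representation~\eqref{eq:defg} and set $h(x):=g(x)-\omega x$; the hypothesis forces $h(x)\notin(2\mathbb{Z}+1)\pi$ for every $x\in[-1,1]$. Since $h$ is continuous and $[-1,1]$ connected, the image $h([-1,1])$ is a closed interval avoiding $(2\mathbb{Z}+1)\pi$, hence lies in some single strip $((2k_0-1)\pi,(2k_0+1)\pi)$. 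Shifting $\theta\mapsto\theta-2k_0\pi$ (which neither changes $\mathrm{e}^{\mathrm{i}\theta}$ nor the form~\eqref{eq:defg}) yields the required $g$ with $\max|g-\omega x|<\pi$.

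Part~\ref{item:gunique} follows because any two phase functions associated with the same $r\in\mathcal{U}_n$ differ by an integer multiple of $2\pi$; if two such representatives $g$ and $g+2k\pi$ both satisfy $\max|g-\omega x|<\pi$, the triangle inequality forces $2|k|\pi<2\pi$, hence $k=0$. For parts~\ref{item:riqtogiq} and~\ref{item:xmaxerr} I would first observe that the inequality $\|r_2-\exp(\omega\cdot)\|<\|r_1-\exp(\omega\cdot)\|<2$ together with~\ref{item:approxandphaseerrlesstwo} applied to both $r_1$ and $r_2$ puts both phase errors in $[0,\pi)$; then the strict monotonicity of $t\mapsto 2\sin(t/2)$ on $[0,\pi]$ transfers an inequality between $\max|g_1-\omega x|$ and $\max|g_2-\omega x|$ to the same inequality between the uniform errors, and vice versa. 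The pointwise statement~\ref{item:xmaxerr} is the same monotonicity applied not to the maxima but to the pointwise values, using that $|g(y)-\omega y|\in[0,\pi)$.

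The only non-trivial step is the lifting argument in~\ref{item:approxandphaseerrlesstwo}: one must combine continuity of $h$, connectedness of $[-1,1]$, and the structure of the bad set $(2\mathbb{Z}+1)\pi$ to relocate the branch of $g$ by an appropriate shift of $\theta$. Everything else reduces to the elementary fact that $2|\sin(\cdot/2)|$ is a strictly increasing bijection $[0,\pi]\to[0,2]$.
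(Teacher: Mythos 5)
Your proposal is correct and follows essentially the same route as the paper: the identity $|\mathrm{e}^{\mathrm{i} g(x)}-\mathrm{e}^{\mathrm{i}\omega x}|=2|\sin((g(x)-\omega x)/2)|$, a continuity/connectedness argument to relocate the phase error into the strip $(-\pi,\pi)$ by shifting $\theta$ by a multiple of $2\pi$, and strict monotonicity of $t\mapsto 2\sin(t/2)$ on $[0,\pi]$ for parts (iii) and (iv). Your uniqueness argument via the triangle inequality forcing $2|k|\pi<2\pi$ is a slightly more explicit version of the paper's remark that the bound corresponds to a unique choice of $\theta$, but it is the same idea.
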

\begin{proof}
Let $r\in\mathcal{U}_n$ be given and let $g$ correspond to~\eqref{eq:defg} s.t.\ $r(\mathrm{i} x) = \mathrm{e}^{\mathrm{i} g(x)}$. 
The point-wise deviation of $\mathrm{e}^{\mathrm{i} g(x)}\approx \mathrm{e}^{\mathrm{i} \omega x}$ satisfies the identity
\begin{equation}\label{eq:errortosinphasepointwise}
|\mathrm{e}^{\mathrm{i} g(x)} - \mathrm{e}^{\mathrm{i} \omega x}|  =  |\mathrm{e}^{\mathrm{i} (g(x) - \omega x)/2} - \mathrm{e}^{-\mathrm{i} (g(x) - \omega x)/2} |
= 2  | \sin((g(x) - \omega x)/2 ) |.
\end{equation}
Considering the notation~\eqref{eq:errnormnotation} and $r(\mathrm{i} x) = \mathrm{e}^{\mathrm{i} g(x)}$ we observe
\begin{equation}\label{eq:errortosinphase}
\|r - \exp(\omega\cdot)\|
= 2 \max_{x\in[-1,1]} | \sin((g(x) - \omega x)/2 ) |.
\end{equation}
Thus, the approximation error is not maximal,
\begin{subequations}\label{eq:approxerrorlesstwoequiv}
\begin{equation}\label{eq:approxerrorlesstwoinproof}
\|r - \exp(\omega\cdot)\|<2,
\end{equation}
if and only if
\begin{equation}\label{eq:non-maximal-sin-error}
\max_{x\in[-1,1]} |\sin((g(x) - \omega x)/2 )| < 1.
\end{equation}
Furthermore, since the phase error is continuous,~\eqref{eq:non-maximal-sin-error} holds true if and only if
\begin{equation}\label{eq:phaseerrorinterval}
g(x) - \omega x  \in (-\pi + 2\kappa\pi,\pi+2\kappa\pi),~~~\text{for $x\in[-1,1]$ and some $\kappa\in\mathbb{N}$}.
\end{equation}
\end{subequations}
The complex phase of $r$, i.e., $\mathrm{e}^{\mathrm{i} \theta}$ in~\eqref{eq:defr2}, is uniquely defined. However, the phase $\theta$, which also corresponds to a shift in $g$ as in~\eqref{eq:defg}, can be replaced by $\theta+2k\pi$ for $k\in\mathbb{N}$ without changing $\mathrm{e}^{\mathrm{i}  \theta}$.
Thus, if~\eqref{eq:approxerrorlesstwoinproof} holds true for a $\theta \in \mathbb{R}$, it remains true when shifting $\theta$ by multiples of $2\pi$ -- in particular, this allows us to shift $g$ by multiples of $2\pi$ to attain $\kappa=0$ in~\eqref{eq:phaseerrorinterval}, leading to $\max_{x\in[-1,1]}  | g(x) - \omega x | < \pi$ under the assumption that~\eqref{eq:approxerrorlesstwoinproof} holds true.
Conversely, $\max_{x\in[-1,1]}  | g(x) - \omega x | < \pi$ leads to~\eqref{eq:phaseerrorinterval} holding for $\kappa=0$, and consequently to the non-maximality of the approximation error~\eqref{eq:approxerrorlesstwoequiv}, completing the assertion~\ref{item:approxandphaseerrlesstwo}. Since this corresponds to a unique choice of $\theta$, the statement of~\ref{item:gunique} follows.

We proceed to show~\ref{item:riqtogiq}. If~\eqref{eq:r12inequtog12inequ1} holds, then the non-maximality of the approximation error for $r_1$ implies the same for $r_2$, i.e., $\|r_2-\exp(\omega \cdot)\| \leq \|r_1-\exp(\omega \cdot)\| < 2$. It follows from~\ref{item:approxandphaseerrlesstwo} and~\ref{item:gunique} that the phase functions $g_1$ and $g_2$ with $r_j(\mathrm{i} x)=\mathrm{e}^{\mathrm{i} g_j(x)}$ and $\max_{x\in[-1,1]}|g_j(x) -\omega x| < \pi$, for $j=1,2$, are uniquely defined.
Since the function $\sin(y)$ is strictly monotonic for $y\in(-\pi/2,\pi/2)$, the phase errors of $g_1$ and $g_2$ satisfy
\begin{equation}\label{eq:errortosinphase2}
\max_{x\in[-1,1]} | \sin((g_j(x) - \omega x)/2 ) | = \sin \Big( \max_{x\in[-1,1]} | g_j(x) - \omega x |/2 \Big),~~~j=1,2.
\end{equation}
Combining~\eqref{eq:errortosinphase} and~\eqref{eq:errortosinphase2}, the inequality~\eqref{eq:r12inequtog12inequ1} corresponds to 
\begin{equation}\label{eq:errortosinphase3}
\sin \Big( \max_{x\in[-1,1]} | g_2(x) - \omega x |/2 \Big) 
< \sin \Big( \max_{x\in[-1,1]} | g_1(x) - \omega x |/2 \Big).
\end{equation}
Again, since the arguments of the sinus functions on both sides of this inequality are in the interval $(-\pi/2,\pi/2)$, the sinus function is strictly monotonic in~\eqref{eq:errortosinphase3}. Thus, the inequality~\eqref{eq:r12inequtog12inequ1} implies~\eqref{eq:r12inequtog12inequ2}.

We proceed to show that~\eqref{eq:r12inequtog12inequ2} also implies~\eqref{eq:r12inequtog12inequ1}.
Assume~\eqref{eq:r12inequtog12inequ2} holds true, then the phase error for $r_2$ satisfies $\max_{x\in[-1,1]} |g_2(x)-\omega x|< \pi$.
Similar to above, since the sinus is strictly monotonic for arguments in $(-\pi/2,\pi/2)$, the inequality~\eqref{eq:r12inequtog12inequ2} implies~\eqref{eq:errortosinphase3}.
Furthermore,~\eqref{eq:errortosinphase2} holds true and together with~\eqref{eq:errortosinphase} this completes the proof of~\ref{item:riqtogiq}. 

We proceed to show~\ref{item:xmaxerr}. The identity~\eqref{eq:errortosinphasepointwise} implies that a point $y\in[-1,1]$ satisfies
\begin{equation*}
|r(\mathrm{i} y) - \mathrm{e}^{\mathrm{i} \omega y}| = \| r - \exp(\omega \cdot)\|
\end{equation*}
if and only if
\begin{equation}\label{eq:sinphaseerroyismax}
| \sin((g(y) - \omega y)/2 ) | = \max_{x\in[-1,1]} | \sin((g(x) - \omega x)/2 ) |.
\end{equation}
Since we assume $\| r - \exp(\omega \cdot)\| <2$, the phase error $g(x)-\omega x$ is in $[-\pi,\pi]$ for $x\in[-1,1]$ and the sinus functions in~\eqref{eq:sinphaseerroyismax} are strictly monotonic. Thus, a point $y$ attains the maximum in~\eqref{eq:sinphaseerroyismax} if and only if it attains the maximum of $|g(x)-\omega x|$ over $[-1,1]$, which completes our proof.
\end{proof}

In the sequel, for $r\in\mathcal{U}_n$ with non-maximal approximation error, i.e.,\ $\|r - \exp(\omega\cdot)\|<2$, the representation $r(\mathrm{i} x)=\mathrm{e}^{\mathrm{i} g(x)}$ refers to $g$ uniquely defined by Proposition~\ref{prop:approxertophaseerrinequ}, assertions~\ref{item:approxandphaseerrlesstwo} and~\ref{item:gunique}.
While this condition might seem restrictive for the definition of the phase function, in Proposition~\ref{prop:omega0} we show that it is satisfied by the unitary best  approximation for a large range of frequencies. Moreover, when it is not satisfied, every unitary function is a best approximation, i.e., we have a trivial case which is not of practical interest. We proceed by stating some auxiliary results first.
\begin{proposition}\label{prop:wtomincontinuous}
For a given degree $n$, the following regularity results hold true.
\begin{enumerate}[label=(\roman*)]
\item\label{item:erroratxiscont} The point-wise deviation $|r(x)-\mathrm{e}^{\mathrm{i} \omega x}|$ is Lipschitz continuous in $r$ and $\omega$ with Lipschitz constant $1$ for $x\in[-1,1]$. In particular, for~$r_1,r_2\in\mathcal{U}_n$ and~$\omega_1,\omega_2>0$ we have
\begin{equation}\label{eq:erratxcontinrw}
\big||r_2(x)-\mathrm{e}^{\mathrm{i} \omega_2 x}|-|r_1(x)-\mathrm{e}^{\mathrm{i} \omega_1 x}|\big|
\leq |\omega_2-\omega_1| + |r_{2}(x)-r_1(x)|,~~~x\in[-1,1].
\end{equation}
\item\label{item:minecontinw} The minimal error $\min_{r\in \mathcal{U}_n} \| r - \exp(\omega\cdot)\|$ is Lipschitz continuous in $\omega>0$ with Lipschitz constant $1$.
\end{enumerate}
\end{proposition}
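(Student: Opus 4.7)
The plan is to prove assertion~\ref{item:erroratxiscont} by a reverse triangle inequality, and then to deduce assertion~\ref{item:minecontinw} from~\ref{item:erroratxiscont} by a standard argument comparing the best approximations for two frequencies.

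For~\ref{item:erroratxiscont}, I would apply the reverse triangle inequality $\big||a|-|b|\big|\leq |a-b|$ with $a = r_2(\mathrm{i}x)-\mathrm{e}^{\mathrm{i}\omega_2 x}$ and $b = r_1(\mathrm{i}x)-\mathrm{e}^{\mathrm{i}\omega_1 x}$, which gives
\begin{equation*}
\big||r_2(\mathrm{i}x)-\mathrm{e}^{\mathrm{i}\omega_2 x}|-|r_1(\mathrm{i}x)-\mathrm{e}^{\mathrm{i}\omega_1 x}|\big|
\leq |r_2(\mathrm{i}x)-r_1(\mathrm{i}x)| + |\mathrm{e}^{\mathrm{i}\omega_1 x}-\mathrm{e}^{\mathrm{i}\omega_2 x}|.
\end{equation*}
The exponential term is then bounded by the elementary identity $|\mathrm{e}^{\mathrm{i}\omega_1 x}-\mathrm{e}^{\mathrm{i}\omega_2 x}| = 2|\sin((\omega_2-\omega_1)x/2)| \leq |\omega_2-\omega_1|\cdot|x| \leq |\omega_2-\omega_1|$ for $x\in[-1,1]$. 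Combined, these two estimates yield~\eqref{eq:erratxcontinrw}.

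For~\ref{item:minecontinw}, let $\omega_1,\omega_2>0$, and denote $E_j := \min_{r\in\mathcal{U}_n}\|r-\exp(\omega_j\cdot)\|$, which exists by Proposition~\ref{prop:bestapproxinUnexists}. Let $r_1\in\mathcal{U}_n$ be a best approximation at frequency $\omega_1$, so that $\|r_1-\exp(\omega_1\cdot)\|=E_1$. Applying~\ref{item:erroratxiscont} with $r_2=r_1$ point-wise and taking the maximum over $x\in[-1,1]$ yields
\begin{equation*}
\|r_1-\exp(\omega_2\cdot)\| \leq \|r_1-\exp(\omega_1\cdot)\| + |\omega_2-\omega_1| = E_1 + |\omega_2-\omega_1|.
\end{equation*}
By minimality of $E_2$ over $\mathcal{U}_n$, we get $E_2 \leq E_1 + |\omega_2-\omega_1|$. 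Swapping the roles of $\omega_1$ and $\omega_2$ gives the reverse inequality, and together they yield $|E_2-E_1|\leq|\omega_2-\omega_1|$.

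Neither step poses a real obstacle: assertion~\ref{item:erroratxiscont} is essentially the reverse triangle inequality combined with $|\sin y|\leq |y|$, while assertion~\ref{item:minecontinw} is the familiar trick that pointwise Lipschitz estimates on the error transfer to the infimum whenever the infimum is attained (here via Proposition~\ref{prop:bestapproxinUnexists}). The only thing to watch is that $x\in[-1,1]$ is needed precisely to make the factor $|x|$ harmless in bounding $|\mathrm{e}^{\mathrm{i}\omega_1 x}-\mathrm{e}^{\mathrm{i}\omega_2 x}|$, which is what produces the Lipschitz constant~$1$.
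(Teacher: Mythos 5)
Your proof is correct and follows essentially the same route as the paper: the reverse triangle inequality combined with the bound $|\mathrm{e}^{\mathrm{i}\omega_2 x}-\mathrm{e}^{\mathrm{i}\omega_1 x}|\leq|\omega_2-\omega_1|$ for part~(i), and the standard swap of the two attained minimizers (via Proposition~\ref{prop:bestapproxinUnexists}) for part~(ii). Your version is slightly more streamlined --- a single application of the reverse triangle inequality and an elementary sine estimate where the paper splits the argument into two steps and cites~\eqref{eq:boundonexpiw}, and taking the maximum of the pointwise inequality directly where the paper introduces explicit maximizing points $x_1,x_2$ --- but the substance is identical.
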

\begin{proof}
We prove~\ref{item:erroratxiscont} and~\ref{item:minecontinw} sequentially.
\begin{enumerate}[label=(\roman*)]
\item For a fixed $r\in\mathcal{U}_n$ and $x\in[-1,1]$ the reverse triangular inequality yields
\begin{equation}\label{eq:krcontfirststep}
\begin{aligned}
\big||r(\mathrm{i} x) - \mathrm{e}^{\mathrm{i} \omega_2 x}|-|r(\mathrm{i} x) - \mathrm{e}^{\mathrm{i} \omega_1 x}|\big|
&\leq |(r(\mathrm{i} x) - \mathrm{e}^{\mathrm{i} \omega_2 x})-(r(\mathrm{i} x) - \mathrm{e}^{\mathrm{i} \omega_1 x})|\\
&\qquad= |\mathrm{e}^{\mathrm{i} \omega_2 x} - \mathrm{e}^{\mathrm{i} \omega_1 x}|  \leq |\omega_2-\omega_1|,
\end{aligned}
\end{equation}
where the last inequality follows from the upper bound~\eqref{eq:boundonexpiw}. Similarly, for a fixed $\omega>0$ we get
\begin{equation}\label{eq:krboundedbyr}
\big||r_2(\mathrm{i} x) - \mathrm{e}^{\mathrm{i} \omega x}|-|r_1(\mathrm{i} x) - \mathrm{e}^{\mathrm{i} \omega x}|\big|
\leq |r_2(\mathrm{i} x)-r_1(\mathrm{i} x)|.
\end{equation}
Furthermore, the left-hand side of~\eqref{eq:erratxcontinrw} expands to
\begin{equation*}
\begin{aligned}
\big||r_2(\mathrm{i} x)-\mathrm{e}^{\mathrm{i} \omega_2 x}|-|r_1(\mathrm{i} x)-\mathrm{e}^{\mathrm{i} \omega_1 x}|\big|
&\leq  \big||r_2(\mathrm{i} x)-\mathrm{e}^{\mathrm{i} \omega_2 x}|-|r_2(\mathrm{i} x)-\mathrm{e}^{\mathrm{i} \omega_1 x}|\big|\\
&~~~~ + \big||r_2(\mathrm{i} x)-\mathrm{e}^{\mathrm{i} \omega_1 x}|-|r_1(\mathrm{i} x)-\mathrm{e}^{\mathrm{i} \omega_1 x}|\big|.
\end{aligned}
\end{equation*}
Substituting~\eqref{eq:krcontfirststep} and~\eqref{eq:krboundedbyr} therein, we conclude~\eqref{eq:erratxcontinrw}.
\item
We recall the notation
\begin{equation*} \tag{\ref{eq:errnormnotation}}
\|r-\exp(\omega\cdot)\| = \max_{x\in[-1,1]} |r(\mathrm{i} x) - \mathrm{e}^{\mathrm{i} \omega x}|.
\end{equation*}
For given $\omega_1,\omega_2>0$ there exist points~$x_1$ and~$x_2\in[-1,1]$ with
\begin{equation}\label{eq:krx1x2max}
\|r-\exp(\omega_1\cdot)\| = |r(\mathrm{i} x_1)-\mathrm{e}^{\mathrm{i} \omega_1 x_1}|,~~~\text{and}~~
\|r-\exp(\omega_2\cdot)\| = |r(\mathrm{i} x_2)-\mathrm{e}^{\mathrm{i} \omega_2 x_2}|.
\end{equation}
In particular, $|r(\mathrm{i} x_1)-\mathrm{e}^{\mathrm{i} \omega_1 x_1}| \geq |r(\mathrm{i} x_2)-\mathrm{e}^{\mathrm{i} \omega_1 x_2}| $, and thus,
\begin{subequations}\label{eq:krboundedstepx}
\begin{equation}
|r(\mathrm{i} x_2)-\mathrm{e}^{\mathrm{i} \omega_2 x_2}|-|r(\mathrm{i} x_1)-\mathrm{e}^{\mathrm{i} \omega_1 x_1}|
\leq |r(\mathrm{i} x_2)-\mathrm{e}^{\mathrm{i} \omega_2 x_2}|-|r(\mathrm{i} x_2)-\mathrm{e}^{\mathrm{i} \omega_1 x_2}|.
\end{equation}
In a similar manner, $|r(\mathrm{i} x_2)-\mathrm{e}^{\mathrm{i} \omega_2 x_2}| \geq |r(\mathrm{i} x_1)-\mathrm{e}^{\mathrm{i} \omega_2 x_1}|$ implies
\begin{equation}
|r(\mathrm{i} x_2)-\mathrm{e}^{\mathrm{i} \omega_2 x_2}|-|r(\mathrm{i} x_1)-\mathrm{e}^{\mathrm{i} \omega_1 x_1}|
\geq |r(\mathrm{i} x_1)-\mathrm{e}^{\mathrm{i} \omega_2 x_1}|-|r(\mathrm{i} x_1)-\mathrm{e}^{\mathrm{i} \omega_1 x_1}|.
\end{equation}
\end{subequations}
Substituting $x_1$ and $x_2$ for $x$ and setting $r=r_1=r_2$ in~\eqref{eq:erratxcontinrw}, we observe that the right-hand sides of~\eqref{eq:krboundedstepx} are bounded in absolute value by $|\omega_2-\omega_1|$. Thus, combining~\eqref{eq:krboundedstepx} and~\eqref{eq:krx1x2max} we arrive at
\begin{equation}\label{eq:krtoeboundedbyomega}
\big| \|r-\exp(\omega_2\cdot)\| - \|r-\exp(\omega_1\cdot)\|\big|
\leq |\omega_2-\omega_1|.
\end{equation}

We proceed to show that the minimum over $r\in\mathcal{U}_n$ satisfies a similar upper bound.
Let $\omega_1$ and $\omega_2>0$ be given frequencies, then, as shown in Proposition~\ref{prop:bestapproxinUnexists}, there exist unitary best approximations $r_1$ and $r_2\in\mathcal{U}_n$ with
\begin{equation}\label{eq:minuerrattainedr1r2}
\begin{aligned}
&\min_{u\in\mathcal{U}_n} \|u-\exp(\omega_1\cdot)\|= \|r_1-\exp(\omega_1\cdot)\|,~~~
 \text{and}\\
&\min_{u\in\mathcal{U}_n} \|u-\exp(\omega_2\cdot)\|= \|r_2-\exp(\omega_2\cdot)\|.
\end{aligned}
\end{equation}
In particular, $\|r_2-\exp(\omega_2\cdot)\| \leq \|r_1-\exp(\omega_2\cdot)\|$, and thus,
\begin{subequations}\label{eq:krboundedstepx2}
\begin{equation}
\|r_2-\exp(\omega_2\cdot)\|-\|r_1-\exp(\omega_1\cdot)\|
\leq \|r_1-\exp(\omega_2\cdot)\|-\|r_1-\exp(\omega_1\cdot)\|.
\end{equation}
In a similar manner, $\|r_1-\exp(\omega_1\cdot)\| \leq \|r_2-\exp(\omega_1\cdot)\|$ implies
\begin{equation}
\|r_2-\exp(\omega_2\cdot)\|-\|r_1-\exp(\omega_1\cdot)\|
\geq \|r_2-\exp(\omega_2\cdot)\|-\|r_2-\exp(\omega_1\cdot)\|.
\end{equation}
\end{subequations}
Substituting $r_1$ and $r_2$ for $r$ in~\eqref{eq:krtoeboundedbyomega}, we observe that the right-hand sides of~\eqref{eq:krboundedstepx2} are bounded by $|\omega_2-\omega_1| $ is absolute value. Thus, combining~\eqref{eq:krtoeboundedbyomega},~\eqref{eq:minuerrattainedr1r2} and~\eqref{eq:krboundedstepx2} we observe
\begin{equation*}
\left| \min_{u\in\mathcal{U}_n} \|u-\exp(\omega_2\cdot)\|-  \min_{u\in\mathcal{U}_n} \|u-\exp(\omega_1\cdot)\|\right|
 \leq  |\omega_2-\omega_1|.
\end{equation*}
This implies that $\min_{u\in\mathcal{U}_n} \|u-\exp(\omega\cdot)\|$ is Lipschitz continuous in $\omega>0$ with Lipschitz constant $1$.
\end{enumerate}
\end{proof}

\begin{proposition}\label{prop:rforwtonp1pi}
For a sufficiently small $\varepsilon>0$ and $\omega = (1-n\varepsilon)(n+1)\pi$, the unitary function $\widetilde{r}\in\mathcal{U}_n$ defined via the poles
\begin{equation}\label{eq:ip.rtildewtonp1pipoles}
\widetilde{s}_j = \frac{\varepsilon}{1-n\varepsilon} + \mathrm{i}\,\frac{1}{1-n\varepsilon}\left(-1 + \frac{2j}{n+1}\right),~~~j=1,\ldots,n,
\end{equation}
and the complex phase $\mathrm{e}^{\mathrm{i} \phi}=1$, satisfies $\| \widetilde{r} - \exp(\omega\cdot)\|<2  $.
\end{proposition}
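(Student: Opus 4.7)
The plan is to invoke Proposition~\ref{prop:approxertophaseerrinequ}\ref{item:approxandphaseerrlesstwo}: it suffices to find a phase function $\widetilde{g}$ of the form~\eqref{eq:defg} with $\widetilde{r}(\mathrm{i}x)=\mathrm{e}^{\mathrm{i}\widetilde{g}(x)}$ satisfying $\max_{x\in[-1,1]}|\widetilde{g}(x)-\omega x|<\pi$. Choosing $\theta=0$ in~\eqref{eq:defg} (compatible with $\mathrm{e}^{\mathrm{i}\phi}=1$) and noting that all $n$ poles share the same real part $\xi:=\varepsilon/(1-n\varepsilon)$, this phase function reads
\[
\widetilde{g}(x) = 2\sum_{j=1}^{n}\arctan\!\bigl((x-\mu_j)/\xi\bigr),\qquad \mu_j=\frac{-1+2j/(n+1)}{1-n\varepsilon}.
\]
The choice of $\widetilde{s}_j$ is calibrated so that $\omega\mu_j=(2j-n-1)\pi$ and $\omega\xi=(n+1)\pi\varepsilon$; in the formal limit $\varepsilon\to 0^+$, $\widetilde{g}$ degenerates into a step function with jumps of height $2\pi$ at the positions $-1+2j/(n+1)$, aligned with the values of $\omega x$ modulo $2\pi$, so the phase error $h(x):=\widetilde{g}(x)-\omega x$ tends to a sawtooth of amplitude $\pi$. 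The aim is to show that for $\varepsilon>0$ the extrema of $h$ stay strictly inside $(-\pi,\pi)$.

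At the endpoints I would expand each $\arctan((\pm 1-\mu_j)/\xi)$ around $\pm\pi/2$ via $\arctan(y)=\mp\pi/2-1/y+\mathcal{O}(1/y^3)$ for $y\to\mp\infty$; this yields
\[
h(\pm 1) = \pm\pi \mp (n+1)\varepsilon(n\pi-H_n) + \mathcal{O}(\varepsilon^2),
\]
where $H_n:=\sum_{j=1}^{n}1/j$. Since $H_n\leq n<n\pi$ for every $n\geq 1$, both endpoint values lie strictly inside $(-\pi,\pi)$ for sufficiently small $\varepsilon$. For $x$ in the interior of each $(\mu_k,\mu_{k+1})$ at distance $\gg\xi$ from both neighbouring poles, $\widetilde{g}'(x)$ is small compared to $\omega$ and $h$ varies essentially linearly, also staying well inside $(-\pi,\pi)$.

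The crucial and most technical step is the local analysis near each pole $\mu_k$. Introducing the scaled variable $t=(x-\mu_k)/\xi$ and using $(\mu_k-\mu_j)/\xi=2(k-j)/((n+1)\varepsilon)$ for $j\ne k$, I would expand every $\arctan$ term of index $j\ne k$ around $\pm\pi/2$ and combine with $\omega(\mu_k+\xi t)=(2k-n-1)\pi+(n+1)\pi\varepsilon t$ to obtain $h(\mu_k+\xi t) = 2\arctan(t) - (n+1)\pi\varepsilon t + \mathcal{O}(\varepsilon)$, uniformly for $|t|\leq\mathcal{O}(\varepsilon^{-1/2})$. For $t>0$, the identity $2\arctan(t)=\pi-2\arctan(1/t)$ combined with the Taylor bound $\arctan(y)\geq y-y^3/3$ gives $2\arctan(t)\leq\pi-2/t+\mathcal{O}(t^{-3})$; maximizing $\pi-2/t-(n+1)\pi\varepsilon t$ over $t>0$ at the balance point $t_\star=\sqrt{2/((n+1)\pi\varepsilon)}$ yields
\[
h(\mu_k+\xi t) \leq \pi - 2\sqrt{2(n+1)\pi\varepsilon} + \mathcal{O}(\varepsilon),
\]
strictly below $\pi$ for sufficiently small $\varepsilon>0$, and a symmetric argument for $t<0$ supplies the matching lower bound. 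The main obstacle is consolidating the regime-specific bounds into a single uniform estimate $\max_{x\in[-1,1]}|h(x)|<\pi$; this requires partitioning $[-1,1]$ into the endpoint regions, the $|x-\mu_k|\leq C\sqrt{\varepsilon}$ neighborhoods of each pole, and the intermediate linear regions, and verifying that the resulting bounds overlap and are uniformly controlled in $k$.
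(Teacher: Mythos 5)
Your proposal follows the same overall route as the paper's proof in Appendix~\ref{sec:proofswtonp1pi}: reduce to the phase error via Proposition~\ref{prop:approxertophaseerrinequ}, exploit the degeneration of each $\arctan((x-\mu_j)/\xi)$ into a step function, and partition $[-1,1]$ into endpoint regions, pole neighborhoods, and intermediate zones where the two-sided squeeze $|g(x)-\omega x|<\pi$ is verified separately. The differences are in the local analysis. The paper works with the unscaled $g_\varepsilon$ on $[-1+n\varepsilon,1-n\varepsilon]$ and rescales only at the end, and it cuts each pole neighborhood at the fixed distance $n\varepsilon$ (i.e., $|t|\leq n$ in your scaled variable); there $2\arctan(t)$ differs from $\pm\pi$ by the \emph{constant} $\pi-2\arctan n>0$, so the gap to $\pm\pi$ is a fixed constant minus $\mathcal{O}(\varepsilon)$ corrections (Proposition~\ref{prop:appendixBauxgeps1} and conditions~\eqref{eq:ip.wnp1c1entails}), and no balancing is needed. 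Your $\sqrt{\varepsilon}$-scale analysis is sharper --- it locates the actual near-pole extrema at $t_\star=\sqrt{2/((n+1)\pi\varepsilon)}$ with value $\pi-2\sqrt{2(n+1)\pi\varepsilon}$ --- but it buys nothing for the qualitative claim and makes the matching of bounds across region boundaries (which you correctly flag as the remaining work) slightly more delicate, since in the intermediate zones $h$ does not stay ``well inside'' $(-\pi,\pi)$ but sweeps from within $\mathcal{O}(\sqrt{\varepsilon})$ of $+\pi$ down to within $\mathcal{O}(\sqrt{\varepsilon})$ of $-\pi$; the squeeze there needs both the monotonicity of $\widetilde{g}$ and the pinning $\widetilde{g}(\mu_k)=(2k-n-1)\pi+\mathcal{O}(\varepsilon)$. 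One small slip: your endpoint formula has the signs reversed --- since $\widetilde{g}(1)\approx n\pi$ while $\omega\approx(n+1)\pi$, one gets $h(1)=-\pi+(n+1)\varepsilon(n\pi-H_n)+\mathcal{O}(\varepsilon^2)$ and $h(-1)=+\pi-(n+1)\varepsilon(n\pi-H_n)+\mathcal{O}(\varepsilon^2)$; the magnitudes, and hence your conclusion, are unaffected.
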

\begin{proof}
The proof of this proposition is provided in Appendix~\ref{sec:proofswtonp1pi}.
\end{proof}

\begin{proposition}\label{prop:omega0}
For a fixed degree $n$ and $\omega>0$,
\begin{equation}\label{eq:omega0}
\min_{r\in\mathcal{U}_n} \| r - \exp(\omega \cdot)\| < 2,~~~\text{if and only if $\omega\in(0,(n+1)\pi )$.}
\end{equation}
Moreover, for $\omega\geq (n+1)\pi$ all $r\in\mathcal{U}_n$ attain the error $\| r - \exp(\omega \cdot)\| = 2$.
\end{proposition}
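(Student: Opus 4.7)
The plan is to handle the two directions of the equivalence separately. The ``only if'' direction together with the ``moreover'' claim will follow from an elementary phase-range argument, while the ``if'' direction will follow from a scaling/monotonicity principle combined with the explicit construction of Proposition~\ref{prop:rforwtonp1pi}.

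First I would record the trivial upper bound $\|r-\exp(\omega\cdot)\|\leq 2$ valid for every $r\in\mathcal{U}_n$ and all $\omega>0$. With this in hand, the ``moreover'' claim reduces to showing that no $r\in\mathcal{U}_n$ can achieve error strictly less than $2$ when $\omega\geq(n+1)\pi$. For this I would invoke Proposition~\ref{prop:approxertophaseerrinequ}\ref{item:approxandphaseerrlesstwo}: if $r\in\mathcal{U}_n$ has $\|r-\exp(\omega\cdot)\|<2$, then $r(\mathrm{i}x)=\mathrm{e}^{\mathrm{i}g(x)}$ for some $g$ of the form~\eqref{eq:defg} with minimal degree $m\leq n$ and $\max_{x\in[-1,1]}|g(x)-\omega x|<\pi$. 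Evaluating this phase-error bound at $x=\pm 1$ produces $g(1)-g(-1)>2\omega-2\pi$, whereas the arctangent structure of $g(1)-g(-1)$ forces $|g(1)-g(-1)|<2m\pi\leq 2n\pi$, since each of the $m$ summands $2[\arctan((1-\mu_j)/\xi_j)-\arctan((-1-\mu_j)/\xi_j)]$ lies strictly in $(-2\pi,2\pi)$. Comparing the two estimates yields $\omega<(n+1)\pi$, which is the desired contrapositive.

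For the ``if'' direction, set $E(\omega):=\min_{u\in\mathcal{U}_n}\|u-\exp(\omega\cdot)\|$. The key step is a scaling lemma: if $r\in\mathcal{U}_n$ satisfies $\|r-\exp(\omega_0\cdot)\|<2$ and $0<\omega<\omega_0$, then $r'(z):=r((\omega/\omega_0)z)$ again lies in $\mathcal{U}_n$ (unitarity is immediate, and writing $r=p^\dag/p$ one has $r'=q^\dag/q$ with $q(z):=p((\omega/\omega_0)z)$ of the same degree), and after the substitution $x=(\omega/\omega_0)y$ the error at frequency $\omega$ rewrites as a maximum over the sub-interval $[-\omega/\omega_0,\omega/\omega_0]\subseteq[-1,1]$:
\begin{equation*}
\|r'-\exp(\omega\cdot)\|
=\max_{x\in[-\omega/\omega_0,\omega/\omega_0]}|r(\mathrm{i}x)-\mathrm{e}^{\mathrm{i}\omega_0 x}|
\leq\|r-\exp(\omega_0\cdot)\|<2.
\end{equation*}
Hence the set $\{\omega>0:E(\omega)<2\}$ is downward closed. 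Since Proposition~\ref{prop:rforwtonp1pi} supplies, for every sufficiently small $\varepsilon>0$, an $\widetilde{r}\in\mathcal{U}_n$ with error below $2$ at $\omega_\varepsilon=(1-n\varepsilon)(n+1)\pi$, and $\omega_\varepsilon\to(n+1)\pi^-$ as $\varepsilon\to 0^+$, the downward-closedness yields $(0,(n+1)\pi)\subseteq\{E<2\}$. Together with the necessity direction, this delivers the full equivalence.

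The step I expect to demand the most care is the scaling observation, specifically the identification that rescaling the argument by $\alpha=\omega/\omega_0\in(0,1)$ simultaneously shrinks the evaluation interval on the imaginary axis and converts $\mathrm{e}^{\mathrm{i}\omega y}$ into $\mathrm{e}^{\mathrm{i}\omega_0 x}$, so that the error of the rescaled approximant at the lower frequency is controlled by the error of the original approximant at the higher frequency. Everything else is a direct application of Proposition~\ref{prop:approxertophaseerrinequ} or of the boundary construction of Proposition~\ref{prop:rforwtonp1pi}.
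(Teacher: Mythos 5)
Your proposal is correct and follows essentially the same route as the paper: the necessity direction rests on the same phase-range bound (each arctangent contributes strictly less than $2\pi$ to $g(1)-g(-1)$, versus the lower bound $2\omega-2\pi$ forced by $\max_{x\in[-1,1]}|g(x)-\omega x|<\pi$), and the sufficiency direction uses the same argument-rescaling monotonicity combined with Proposition~\ref{prop:rforwtonp1pi}. The only (harmless) streamlining is that you obtain $\omega<(n+1)\pi$ directly from a strict inequality for all $\omega$ at once, and you phrase monotonicity as downward-closedness of $\{E<2\}$, which lets you bypass the paper's appeal to continuity of the minimal error in $\omega$.
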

\begin{proof}
We first show that there exists a $\widehat{\omega}_n>0$ s.t.\ for $\omega>0$,
\begin{equation}\label{eq:ip.widehatomega}
\min_{r\in\mathcal{U}_n} \| r - \exp(\omega \cdot)\| < 2,~~~\text{if and only if $\omega\in(0, \widehat{\omega}_n )$.}
\end{equation}
To this end, we first show that the minimum in~\eqref{eq:omega0} is monotonically increasing in $\omega$. Let $r_1\in\mathcal{U}_n$ denote the unitary best approximation for a given $\omega_1$, and let $\omega_2 \leq \omega_1$ be fixed. For~$\omega_2$, the unitary function~$r(\mathrm{i} x) = r_1(\mathrm{i} \tfrac{\omega_2}{\omega_1} x)$ attains the approximation error
\begin{equation*}
\max_{x\in[-1,1]} |r(\mathrm{i} x)-\mathrm{e}^{\mathrm{i} \omega_2 x}|
= \max_{x\in[-1,1]} |r_1(\mathrm{i} \tfrac{\omega_2}{\omega_1} x)-\mathrm{e}^{\mathrm{i} \omega_2 x}|
= \max_{y\in\tfrac{\omega_2}{\omega_1}[-1,1]\subset[-1,1]} |r_1(\mathrm{i} y  )-\mathrm{e}^{\mathrm{i} \omega_1 y}|,
\end{equation*}
which implies
\begin{equation*}
\min_{u\in\mathcal{U}_n} \| u - \exp(\omega_2 \cdot)\|
\leq \| r_1 - \exp(\omega_1 \cdot)\|
= \min_{u\in\mathcal{U}_n} \| u - \exp(\omega_1 \cdot)\|,~~~\text{for $\omega_2\leq \omega_1$}.
\end{equation*}

Since $\mathrm{e}^{\mathrm{i} \omega x} \approx 1+\mathrm{i}\omega x$ for $\omega\to 0$, and since the constant function $r\equiv 1$ is included in~$\mathcal{U}_n$, the approximation error~$\min_{r\in\mathcal{U}_n}\|r-\exp(\omega\cdot)\|$ converges to zero for~$\omega\to0$. In particular,~$\min_{r\in\mathcal{U}_n}\|r-\exp(\omega\cdot)\|<2$ for sufficiently small $\omega$. More precise error bounds are provided in Proposition~\ref{prop:errorbound}.
Following Proposition~\ref{prop:wtomincontinuous}~\ref{item:minecontinw}, the mapping from $\omega>0$ to the minimal approximation error, i.e.,  $ \omega \mapsto \min_{r\in\mathcal{U}_n} \| r - \exp(\omega \cdot)\|$ is continuous, and thus, the set of $\omega>0$ with $\min_{r\in\mathcal{U}_n} \| r - \exp(\omega \cdot)\| \in (0,2)$ is an open set. Convergence, monotonicity and continuity imply that this open set is also an interval $(0,\widehat{\omega}_n)$, as required in~\eqref{eq:ip.widehatomega}.

In case of $\omega\geq \widehat{\omega}_n$ this implies $\min_{u\in\mathcal{U}_n}\| u-\exp(\omega \cdot) \| = 2$, and all $r\in\mathcal{U}_n$ attain an error of {two} since unitarity implies $\| r-\exp(\omega \cdot) \|\leq 2$.

\medskip
We proceed to show that $\widehat{\omega}_n$ in~\eqref{eq:ip.widehatomega} is bounded from above by
\begin{equation}\label{eq:ip.widehatomegaupperbound}
\widehat{\omega}_n \leq (n+1)\pi.
\end{equation}
Following Proposition~\ref{prop:approxertophaseerrinequ},
the approximation error attained by $r\in\mathcal{U}_n$ is non-maximal, i.e., $<2$, if and only if there exists a phase function $g$ with $r(\mathrm{i} x) = \mathrm{e}^{\mathrm{i} g(x)}$ and phase error bounded by $\pi$, i.e., $\max_{x\in[-1,1]}| g(x) - \omega x | < \pi$.
The latter implies $ g(-1) < \pi - \omega $ and $ g(1) > \omega - \pi $, so that~{$g(1)-g(-1)>2(\omega-\pi)$}.
Considering the maximum and minimum values of $\arctan$ in $g$,~\eqref{eq:defg}, we also have $\max_{x\in[-1,1]} g(x) - \min_{x\in[-1,1]} g(x) \leq 2n \pi $. For $\omega = (n+1) \pi$, where $2 (\omega -\pi) = 2  n \pi$, these lead to a contradiction, and thus,~$\max_{x\in[-1,1]}| g(x) - \omega x | < \pi$ is not feasible for~$\omega = (n+1)\pi$.
Consequently, $\min_{r\in\mathcal{U}_n}\|r - \exp(\omega\cdot)\| = 2$ for~$\omega = (n+1)\pi$ and we conclude~\eqref{eq:ip.widehatomegaupperbound}.

\medskip
It remains to show that~\eqref{eq:ip.widehatomegaupperbound} is an equality.
Following Proposition~\ref{prop:rforwtonp1pi}, for sufficiently small $\varepsilon>0$ and $\omega = (1-n\varepsilon)(n+1)\pi$ we can construct $r\in\mathcal{U}_n$ with $\|r-\exp(\omega \cdot)\|<2$. This especially implies that the unitary best approximation attains an approximation error $<2$ for $\omega$ arbitrarily close to $(n+1)\pi$, approaching from below. Together with the upper bound $\widehat{\omega}_n \leq (n+1)\pi$~\eqref{eq:ip.widehatomegaupperbound}, this implies that $\widehat{\omega}_n$ in~\eqref{eq:ip.widehatomega} satisfies $\widehat{\omega}_n=(n+1)\pi$, i.e.,~\eqref{eq:omega0} holds true.
\end{proof}

We remark that, while we consider $n$ to be fixed in Proposition~\ref{prop:omega0}, we may also consider a fixed $\omega>0$ instead. Namely, for a sufficiently large degree $n$ the condition $\omega<(n+1)\pi$ holds true and the best approximation $r$ in $\mathcal{U}_n$ attains an approximation error $\|r-\exp(\omega\cdot)\|<2$. 

Note that $\mathrm{e}^{\mathrm{i} \omega x}$ has $\frac{\omega}{\pi}$ full oscillations or {\em wavelengths} in the interval $[-1,1]$. The case $\omega=(n+1)\pi$ corresponds to $n+1$ wavelengths. Thus $\omega \in (0, (n+1) \pi)$, would correspond to a restriction that degree $(n,n)$ unitary rational approximants should not be used for approximating $n+1$ or more wavelengths.

\section{Equioscillation and uniqueness}\label{sec:main}

For the approximation $r(\mathrm{i} x)\approx\mathrm{e}^{\mathrm{i} \omega x}$ for $\omega\in(0,(n+1)\pi)$, $x\in[-1,1]$ and $r\in\mathcal{U}_n$, we have the representation $r(\mathrm{i} x) = \mathrm{e}^{\mathrm{i} g(x)}$, where $g$ is to be understood as the unique representation from Proposition~\ref{prop:approxertophaseerrinequ}.
While the approximation error $r(\mathrm{i} x)-\mathrm{e}^{\mathrm{i} \omega x}$ (cf. Definition~\ref{def:errors}) lives in the complex plane, the phase error $g(x) - \omega x$ is real-valued. Equioscillating properties are typically not viable in the complex plane.
However, following Proposition~\ref{prop:approxertophaseerrinequ} and Proposition~\ref{prop:omega0}, a unitary function $r\in\mathcal{U}_n$ minimizes $\|r-\exp(\omega \cdot)\|$ for the non-trivial case $\omega\in(0,(n+1)\pi)$ if and only if the respective phase function $g$ minimizes the phase error $\max_{x\in[-1,1]} |g(x)-\omega x|$ in the class of functions of the form~\eqref{eq:defg}.
In this case, the best approximant $r$ can be uniquely characterized via an equioscillation property of the real phase error, which is introduced in the following.

We say that the phase error of a unitary approximation $r(\mathrm{i} x) = \mathrm{e}^{\mathrm{i} g(x)} \approx \mathrm{e}^{\mathrm{i} \omega x}$
equioscillates between $m$ points
\begin{equation*}
-1\leq \eta_1 < \ldots < \eta_m \leq 1,
\end{equation*}
if
\begin{equation}\label{eq:defeo}
g(\eta_j) - \omega \eta_j = (-1)^{j+\iota} \max_{x\in[-1,1]}|g(x) - \omega x |,~~~j=1,\ldots,m,~~~\iota\in\{0,1\}.
\end{equation}

\begin{theorem}[Equioscillation characterization of best approximants]
\label{thm:bestapprox}
Provided $n$ is a fixed degree and $\omega\in(0,(n+1)\pi)$, then
$r\in\mathcal{U}_n$ is a unitary rational best approximation to $\mathrm{e}^{\mathrm{i} \omega x}$, i.e.,
\begin{equation*}
\| r - \exp(\omega \cdot)\|
= \min_{u \in\mathcal{U}_n} \| u - \exp(\omega \cdot)\| <2,
\end{equation*}
if and only if the phase error
of $r(\mathrm{i} x)\approx \mathrm{e}^{\mathrm{i} \omega x}$ equioscillates between $2n+2$ points in $[-1,1]$. Moreover, then
\begin{enumerate}[label=(\roman*)]
\item\label{item:rwisunique} the unitary best approximation $r$ is unique,
\item\label{item:rwmindegreen} $r$ has minimal degree $n$ and distinct poles, 
\item\label{item:rwequi} there are exactly $2n+2$ equioscillation points, which include the points $-1$ and $1$, and
\item\label{item:rwphaseerrmonotonic} the $2n$ equioscillation points in $(-1,1)$ are exactly the zeros of the derivative of the phase error and these zeros are simple.
\end{enumerate}
\end{theorem}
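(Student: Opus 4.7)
The plan is to reduce the complex-valued minimization to a real-valued Chebyshev-type problem via the phase function. By Propositions~\ref{prop:approxertophaseerrinequ} and~\ref{prop:omega0}, for $\omega\in(0,(n+1)\pi)$ the best approximation has error strictly less than two, the phase function $g$ with $r(\mathrm{i}x)=\mathrm{e}^{\mathrm{i}g(x)}$ is uniquely determined, and minimizing the approximation error is equivalent to minimizing the real phase error $\max_{x\in[-1,1]}|g(x)-\omega x|$. The equioscillation characterization will then be established for this real error.

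For the sufficiency direction, I would assume $r\in\mathcal{U}_n$ has phase error equioscillating at $2n+2$ points $\eta_1<\cdots<\eta_{2n+2}$ with maximum magnitude $E$ and suppose some $\tilde r\in\mathcal{U}_n$ attains a strictly smaller phase error. Then $|\tilde g(\eta_j)-\omega\eta_j|<E=|g(\eta_j)-\omega\eta_j|$ forces $(g-\tilde g)(\eta_j)$ to share the sign of $(g-\omega x)(\eta_j)$; by the alternation~\eqref{eq:defeo}, $g-\tilde g$ has at least $2n+1$ sign changes and therefore at least $2n+1$ zeros on $[-1,1]$. At each such zero both phase errors lie in $(-\pi,\pi)$, so $g=\tilde g$ (not merely modulo $2\pi$) and $r(\mathrm{i}x)=\tilde r(\mathrm{i}x)$. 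Writing $r-\tilde r=(p^\dag\tilde p-\tilde p^\dag p)/(p\tilde p)$ reveals a numerator polynomial of degree at most $2n$, which must then vanish identically, contradicting the strict inequality. Applied to any two best approximants the same zero-counting argument yields uniqueness (item~\ref{item:rwisunique}).

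For necessity, I would take a best $r\in\mathcal{U}_n$ of minimal degree $m\leq n$, represented as $r=p^\dag/p$ with $\deg p=m$. Perturbations $p\mapsto p+\epsilon q$ with $q$ a polynomial of degree at most $n$ remain in $\mathcal{U}_n$ and induce first-order phase variations $\delta g(x)=-2\operatorname{Im}(q(\mathrm{i}x)/p(\mathrm{i}x))$. A direct algebraic computation shows that the kernel of $q\mapsto\delta g$ is $\{q=p\,h:h=h^\dag,\,\deg h\leq n-m\}$, of real dimension $n-m+1$, so the tangent space $V$ of admissible $\delta g$'s has real dimension $n+m+1$. Moreover, $\delta g(x)$ vanishes iff the polynomial $Q:=qp^\dag-q^\dag p$ (of degree at most $n+m$ and purely imaginary on the imaginary axis) vanishes at $\mathrm{i}x$, so a nonzero $\delta g\in V$ has at most $n+m$ zeros on $[-1,1]$, making $V$ a Haar system of dimension $n+m+1$. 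A standard alternation argument then applies: with fewer than $n+m+2$ equioscillation points the Haar property produces $\delta g\in V$ with sign opposite to the phase error at each alternation point, and a small step in that direction strictly lowers the maximum phase error (compactness transfers the pointwise decrease into a global one), contradicting optimality. Hence the phase error equioscillates at at least $n+m+2$ points.

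To close, I use $g'(x)=2\sum_{j=1}^m\xi_j/(\xi_j^2+(x-\mu_j)^2)$, so $g'-\omega$ is a real rational function of $x$ with positive denominator of degree $2m$ and hence at most $2m$ real zeros. Each interior equioscillation point is a zero of $g'-\omega$ and at most two equioscillation points lie at $\pm1$, yielding at most $2m+2$ equioscillation points in total. Combining $n+m+2\leq 2m+2$ forces $m=n$ (item~\ref{item:rwmindegreen}); the count then becomes exactly $2n+2$ with both endpoints included, since otherwise the interior count would exceed $2n$ and contradict the derivative bound (item~\ref{item:rwequi}). The $2n$ interior equioscillation points saturate the zero count of $g'-\omega$ on $(-1,1)$, so all $2n$ zeros are simple (item~\ref{item:rwphaseerrmonotonic}); distinct poles follow by an auxiliary perturbation argument showing that any hypothetical multiple pole produces a $\delta g\in V$ incompatible with the $2n+2$ alternation constraints under optimality. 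The main obstacle I anticipate is the rigorous execution of the Haar alternation step in Paragraph~3 — specifically, the compactness argument that converts local pointwise decrease at the alternation points into a strict decrease of the global maximum error, and careful bookkeeping of the Haar dimension at a potentially degenerate best approximant with $m<n$.
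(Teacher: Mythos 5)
Your overall architecture matches the paper's: reduce to the real phase error via propositions~\ref{prop:approxertophaseerrinequ} and~\ref{prop:omega0}, prove sufficiency by alternation-forced zeros of $g-\tilde g$ plus the degree-$2n$ numerator count (this is exactly Proposition~\ref{prop:gidentitytor}), prove necessity by a first-order perturbation of $p$ whose induced phase variation $2\operatorname{Im}(\overline{\delta p}\,p)/|p|^2=\delta a\,b-a\,\delta b$ spans an $(n+m+1)$-dimensional space (your Haar-system framing is the paper's Fredholm-alternative surjectivity argument in different clothing, with the same kernel dimension $n-m+1$), and close with the derivative bound on $g'-\omega$. Two steps, however, are not carried out correctly as written.

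First, uniqueness does not follow from ``the same zero-counting argument.'' When $\tilde r$ attains a \emph{strictly smaller} error you get strict sign alternation of $g-\tilde g$ at the $2n+2$ equioscillation points and hence $2n+1$ genuine sign changes. When $\tilde r$ attains the \emph{same} error --- which is the situation in the uniqueness claim --- you only get the weak inequalities $(-1)^{j}(g(\eta_j)-\tilde g(\eta_j))\le 0$ (or $\ge 0$), and a zero guaranteed in $[\eta_j,\eta_{j+1}]$ may sit at a shared endpoint $\eta_j$ and be double-counted by adjacent intervals. One must count zeros \emph{with multiplicity} and run the induction of Proposition~\ref{prop:p3uniqueness} (showing that a deficient count forces simple zeros at interior $\eta_j$'s and ultimately a sign contradiction) before Proposition~\ref{prop:gidentitytor} applies. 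This is a standard but genuinely necessary extra argument; without it the uniqueness claim is unproven.

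Second, your derivative count gives $m=n$ but not distinct poles. Writing $g'(x)-\omega=2\sum_{j=1}^m \xi_j/(\xi_j^2+(x-\mu_j)^2)-\omega$ over a common denominator, the relevant numerator has degree at most $2k$ where $k$ is the number of \emph{distinct} poles, not $2m$: coincident poles collapse terms of the partial fraction. The paper exploits exactly this to conclude $n+m+2\le 2k+2\le 2m+2\le 2n+2$, which forces $k=m=n$ in one stroke and delivers distinctness of the poles for free. Your proposal instead bounds by $2m$ and defers distinctness to an unspecified ``auxiliary perturbation argument''; that argument is not needed and, as left, is a gap. Replacing your bound by the sharper $2k$ count repairs item~\ref{item:rwmindegreen} without further work and also cleanly yields items~\ref{item:rwequi} and~\ref{item:rwphaseerrmonotonic} by saturation of the zero count.
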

\begin{proof}
The structure of our proof is motivated by the proofs of Theorem~10.1 and Theorem~24.1 in~\cite{Tre13}.
The proof of Theorem~\ref{thm:bestapprox} consists of
\begin{enumerate}[leftmargin=3cm]
\item[Proposition~\ref{prop:p11equitoopti}] (equioscillation implies optimality),
\item[Proposition~\ref{prop:p2optitoequi}] (optimality implies equioscillation, and~\ref{item:rwmindegreen}--\ref{item:rwphaseerrmonotonic}), and
\item[Proposition~\ref{prop:p3uniqueness}] (uniqueness~\ref{item:rwisunique}).
\end{enumerate}
\end{proof}

Throughout the present work we refer to the equioscillation points in Theorem~\ref{thm:bestapprox} as equioscillation points of the phase error and equioscillation points of the unitary best approximation in an equivalent manner.

\begin{figure}
\centering
\includegraphics{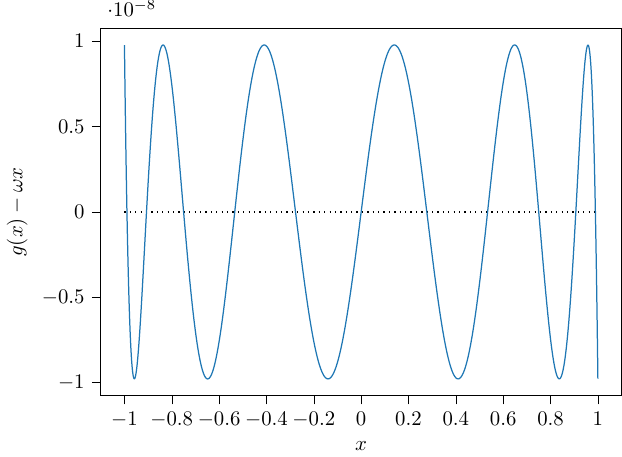}
\caption{
The phase error $ g(x)-\omega x$ of the unitary rational best approximation $r(\mathrm{i} x) = \mathrm{e}^{\mathrm{i} g(x)} \approx \mathrm{e}^{\mathrm{i} \omega x}$ for $\omega=2.85$ and degree $n=5$.}
\label{fig:phaseerror}
\end{figure}

\begin{corollary}[to Theorem~\ref{thm:bestapprox}]\label{cor:errattainsmax}
For a fixed degree $n$ and $\omega\in(0,(n+1)\pi)$,
let $r\in \mathcal{U}_n$ be the unitary best approximant $ r(\mathrm{i} x)=\mathrm{e}^{\mathrm{i} g(x)} \approx \mathrm{e}^{\mathrm{i} \omega x} $ as in Theorem~\ref{thm:bestapprox}.
Let $-1=\eta_1<\ldots<\eta_{2n+2}=1$ denote the equioscillation points of the phase error of $r$.
Then, following Proposition~\ref{prop:approxertophaseerrinequ}.\ref{item:xmaxerr} the point-wise approximation error satisfies
\begin{equation*}
| r(\mathrm{i} \eta_j) - \mathrm{e}^{\mathrm{i} \omega \eta_j}| = \| r - \exp(\omega \cdot)\| ,~~~j = 1,\ldots, 2n+2.
\end{equation*}
Due to Proposition~\ref{prop:approxertophaseerrinequ}.\ref{item:xmaxerr} and Theorem~\ref{thm:bestapprox}.\ref{item:rwphaseerrmonotonic}, the equioscillation points are exactly the points where the approximation and phase error attain their maxima in absolute value on $[-1,1]$, and the phase error is strictly monotonic between the equioscillation points.
Moreover, the phase error has exactly one simple zero between each pair of neighboring equioscillation points, i.e., we have nodes $x_1,\ldots,x_{2n+1}\in\mathbb{R}$ with~$x_j\in(\eta_j,\eta_{j+1})$, and
\begin{equation*}
g(x_j)=\omega x_j,~~~j = 1,\ldots, 2n+1,
\end{equation*}
and these nodes also provide zeros of the approximation error. Namely, the unitary best approximation interpolates $\mathrm{e}^{\omega z}$ with interpolation nodes $\mathrm{i} x_1,\ldots,\mathrm{i} x_{2n+1}\in\mathrm{i}\mathbb{R}$ for $x_j\in(\eta_j,\eta_{j+1})$ as above, i.e.,
\begin{equation*}
r(\mathrm{i} x_j) = \mathrm{e}^{\mathrm{i} \omega x_j},~~~ j = 1,\ldots, 2n+1.
\end{equation*}
\end{corollary}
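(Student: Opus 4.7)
The plan is to assemble the corollary's assertions by directly combining Proposition~\ref{prop:approxertophaseerrinequ}.\ref{item:xmaxerr} with Theorem~\ref{thm:bestapprox}.\ref{item:rwphaseerrmonotonic}, and then running a simple intermediate value argument on each gap between consecutive equioscillation points. Since $\omega\in(0,(n+1)\pi)$, Proposition~\ref{prop:omega0} already ensures $\|r-\exp(\omega\cdot)\|<2$, so the hypothesis of Proposition~\ref{prop:approxertophaseerrinequ}.\ref{item:xmaxerr} is satisfied throughout.

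For the claim $|r(\mathrm{i}\eta_j) - \mathrm{e}^{\mathrm{i}\omega\eta_j}| = \|r-\exp(\omega\cdot)\|$, I would note that by the definition of equioscillation~\eqref{eq:defeo} each $\eta_j$ satisfies $|g(\eta_j)-\omega\eta_j| = \max_{x\in[-1,1]}|g(x)-\omega x|$. Proposition~\ref{prop:approxertophaseerrinequ}.\ref{item:xmaxerr} then translates this identity into the stated pointwise approximation error equality. The ``if and only if'' part of the same assertion further shows that the $\eta_j$'s are precisely the points on $[-1,1]$ where the approximation error attains its maximum, which is the second claim.

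For strict monotonicity of the phase error between consecutive $\eta_j$, I would invoke Theorem~\ref{thm:bestapprox}.\ref{item:rwphaseerrmonotonic}: the derivative of the phase error vanishes exactly at the $2n$ interior equioscillation points $\eta_2,\ldots,\eta_{2n+1}$, and these zeros are simple. Consequently on each open interval $(\eta_j,\eta_{j+1})$ the derivative is continuous, nonzero and therefore of constant sign, so the phase error is strictly monotonic on that interval.

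Finally, the existence of a single zero $x_j$ of the phase error in $(\eta_j,\eta_{j+1})$ follows from the intermediate value theorem together with strict monotonicity: the sign alternation in~\eqref{eq:defeo} forces $g(\eta_j)-\omega\eta_j$ and $g(\eta_{j+1})-\omega\eta_{j+1}$ to have opposite signs (both of maximal modulus), and strict monotonicity on $(\eta_j,\eta_{j+1})$ yields exactly one sign change $x_j$ there. At such a point, $g(x_j)=\omega x_j$ gives $r(\mathrm{i}x_j)=\mathrm{e}^{\mathrm{i}g(x_j)}=\mathrm{e}^{\mathrm{i}\omega x_j}$, establishing the interpolation property. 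I do not expect a substantial obstacle: this corollary is essentially bookkeeping that repackages Theorem~\ref{thm:bestapprox} and Proposition~\ref{prop:approxertophaseerrinequ}.\ref{item:xmaxerr}, and the only care needed is to feed the sign-alternation encoded in~\eqref{eq:defeo} into the intermediate value step.
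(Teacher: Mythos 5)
Your proposal is correct and follows essentially the same route as the paper, which justifies this corollary inline by combining Proposition~\ref{prop:approxertophaseerrinequ}.\ref{item:xmaxerr} (for the equivalence of approximation-error and phase-error maximizers) with Theorem~\ref{thm:bestapprox}.\ref{item:rwphaseerrmonotonic} (for strict monotonicity between equioscillation points) and the sign alternation in~\eqref{eq:defeo} for the intermediate value step. The only detail left implicit is that each zero $x_j$ is simple, which follows immediately from your own observation that the derivative of the phase error is nonzero on the open interval $(\eta_j,\eta_{j+1})$.
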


The phase error of the unitary best approximation is illustrated for a numerical example in Fig.~\ref{fig:phaseerror}.
Results concerning the approximation error, cf.\ Corollary~\ref{cor:errattainsmax}, are illustrated in figures~\ref{fig:bestapproxinterpolate} and~\ref{fig:errorpath}. The equioscillation property leads to the path of the approximation error in the complex plane being {\em floral} in nature. The error attains extrema at the ends of the path, which correspond to the equioscillation points $\eta_1=-1$ and $\eta_{2n+2}=1$, respectively. The remaining extrema, which correspond to the $2n$ equioscillation points in $(-1,1)$, appear at the end of $2n$ {\em petals}. Unlike rose curves~\cite{La95}, the petals are neither uniform in their spacing nor width, and can be nested inside others.

In Fig.~\ref{fig:phaseerror} we also observe that the phase error attains its maximum at the first equioscillation point, i.e., $\eta_1=-1$. As shown in Proposition~\ref{prop:eo1max} further below, this holds true in general for the phase error of the unitary best approximation in $\mathcal{U}_n$ and $\omega\in(0,(n+1)\pi)$. Thus, the equioscillation points $\eta_1,\ldots,\eta_{2n+2}$ satisfy~\eqref{eq:defeo} with $\iota=1$.

 \begin{figure}
\centering
\includegraphics{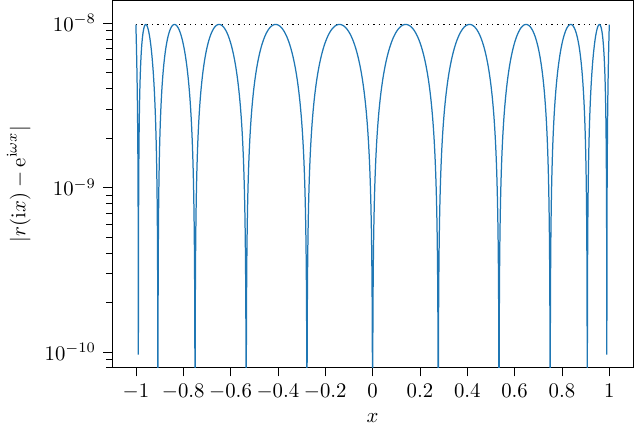}
\caption{The absolute value of the approximation error $|r(\mathrm{i} x) - \mathrm{e}^{\mathrm{i} \omega x}|$ of the unitary best approximation $r(\mathrm{i} x) \approx \mathrm{e}^{\mathrm{i} \omega x}$ for $\omega=2.85$ and degree $n=5$.}
\label{fig:bestapproxinterpolate}
\end{figure}

\begin{figure}
\centering
\includegraphics{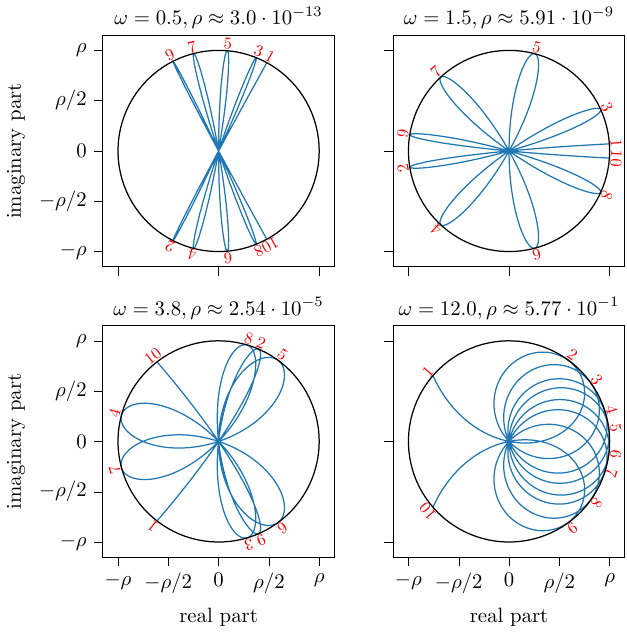}
\caption{Path of the approximation error $x\mapsto r(\mathrm{i} x)-\mathrm{e}^{\mathrm{i} \omega x}$ for $x\in[-1,1]$ in the complex plane, where $r\in\mathcal{U}_n$ refers to the best approximation to $\mathrm{e}^{\mathrm{i} \omega x}$ with $n=4$, and $\omega = 0.5, 1.5, 3.8, 12.0$. The different sub-plots show results for the different values of $\omega$ as noted in the title of each sub-plot together with the radius $\rho$ of the circle which corresponds to the respective maximal magnitude of the approximation error, namely, $\rho\approx 3\cdot 10^{-13}, 5.91\cdot10^{-9}, 2.54\cdot10^{-5},5.77\cdot10^{-1}$. Note that there are $2n+2=10$ points where the approximation error attains its maximum in magnitude, and these points correspond to the equioscillation points of the phase error. Two of these points appear at the 
ends of the error path ($x=-1$ and $x=1$) and $2n$ appear at tips of {\em petals}. Moreover, these points are numbered in ascending order, i.e., `$1$' refers the first equioscillation point at $x=-1$ and `$10$' refers to the last equioscillation point at $x=1$.}
\label{fig:errorpath}
\end{figure}

\subsection{Proof of the equioscillation theorem}\label{subsec:mainproof}

Before stating propositions~\ref{prop:p11equitoopti},~\ref{prop:p2optitoequi} and~\ref{prop:p3uniqueness}, which together comprise the proof of the equioscillation theorem, Theorem~\ref{thm:bestapprox}, we state an auxiliary result in Proposition~\ref{prop:gidentitytor}.

\begin{proposition}\label{prop:gidentitytor}
Let $r_1,r_2\in\mathcal{U}_n$ with $r_1(\mathrm{i} x) = \mathrm{e}^{\mathrm{i} g_1(x)}$ and $r_2(\mathrm{i} x) = \mathrm{e}^{\mathrm{i} g_2(x)}$ where $g_1$ and $g_2$ correspond to functions of the form~\eqref{eq:defg}.
Let $n_1$ and $n_2\leq n$ denote the minimal degrees of $r_1$ and $r_2$, respectively.
If $g_1-g_2$ has at least $n_1+n_2+1$ zeros counting multiplicity, then~$r_1=r_2$.
\end{proposition}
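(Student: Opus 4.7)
The approach is to compare $r_1 - r_2$ as a rational function, showing that each zero of the phase difference $g_1 - g_2$ forces a zero of the numerator of $r_1 - r_2$, and then using a degree count on that numerator to conclude it must vanish identically.

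First I would put $r_j$ into irreducible form, writing $r_j = p_j^\dag/p_j$ with $p_j$ a polynomial of degree exactly $n_j$. By Proposition~\ref{prop:fulldegree}, irreducibility is equivalent to the condition $\overline{s}_\ell \neq -s_k$ on the poles; in particular, an irreducible $p_j$ has no zeros on the imaginary axis (a pure imaginary zero $\mathrm{i} x_0$ of $p_j$ would also be a zero of $p_j^\dag$ since $p_j^\dag(\mathrm{i} x_0) = \overline{p_j(\mathrm{i} x_0)} = 0$). Then
\[
r_1(z) - r_2(z) = \frac{p_1^\dag(z)\, p_2(z) - p_2^\dag(z)\, p_1(z)}{p_1(z)\, p_2(z)} =: \frac{N(z)}{p_1(z)\, p_2(z)},
\]
where $N$ is a polynomial of degree at most $n_1 + n_2$, and the denominator does not vanish anywhere on the imaginary axis.

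Next I would transfer the zeros of $g_1 - g_2$ into zeros of $N$ with the right multiplicities. For any real $x_0$ with $(g_1-g_2)(x_0)=0$, the identity
\[
r_1(\mathrm{i} x) - r_2(\mathrm{i} x) = \mathrm{e}^{\mathrm{i} g_2(x)}\bigl(\mathrm{e}^{\mathrm{i}(g_1(x) - g_2(x))} - 1\bigr)
\]
applies. Since $\mathrm{e}^{\mathrm{i} y} - 1 = \mathrm{i} y + \mathcal{O}(y^2)$ near $y = 0$, a $k$-fold zero of the real-analytic function $g_1 - g_2$ at $x_0$ produces a $k$-fold zero of $\mathrm{e}^{\mathrm{i}(g_1-g_2)} - 1$, and hence of $(r_1-r_2)(\mathrm{i}\, \cdot)$ at $x_0$. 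Because $r_1 - r_2$ is analytic at $\mathrm{i} x_0$ (the denominator $p_1 p_2$ is non-zero there), the order of vanishing of $(r_1-r_2)(\mathrm{i} x)$ in $x$ at $x_0$ agrees with the order of vanishing of the rational function in $z$ at $z = \mathrm{i} x_0$, which in turn equals the order of $N$ at $\mathrm{i} x_0$.

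Summing over all zeros of $g_1 - g_2$, counted with multiplicity, yields at least $n_1 + n_2 + 1$ zeros of $N$ counted with multiplicity. Since $\deg N \leq n_1 + n_2$, this forces $N \equiv 0$, and hence $r_1 \equiv r_2$. The main point that needs careful justification is the multiplicity transfer in the middle step, namely that a $k$-fold real zero of $g_1 - g_2$ produces a zero of the holomorphic function $r_1 - r_2$ of order at least $k$ at the corresponding imaginary axis point; the rest is linear algebra on polynomial degrees together with the irreducibility provided by the minimal-degree assumption.
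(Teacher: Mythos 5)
Your proposal is correct and follows essentially the same route as the paper: transfer the multiplicities of the zeros of $g_1-g_2$ to zeros of $r_1-r_2$ on the imaginary axis, observe that these are zeros of the degree-$\leq n_1+n_2$ numerator of $r_1-r_2$ because the denominator $p_1p_2$ does not vanish there, and conclude by counting. Your factorization $r_1(\mathrm{i} x)-r_2(\mathrm{i} x)=\mathrm{e}^{\mathrm{i} g_2(x)}\bigl(\mathrm{e}^{\mathrm{i}(g_1(x)-g_2(x))}-1\bigr)$ is a slightly cleaner way to justify the multiplicity transfer than the paper's chain-rule argument, but it is the same idea.
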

\begin{proof}
We first remark that $g_j$ in $r_j(\mathrm{i} x) = \mathrm{e}^{\mathrm{i} g_j(x)}$ for $j=1,2$ does not necessarily need to be unique for the present proposition, especially, we do not require $r_j$ to satisfy $\|r_j-\exp(\omega\cdot)\|<2$ for some $\omega>0$.
A zero of $g_2-g_1$ of multiplicity $k$ refers to a point $x$ where
\begin{equation*}
\frac{\mathrm{d}^\ell}{\mathrm{d} x^\ell} (g_2(x)-g_1(x))=0~~~\text{for}~~\ell = 0,\ldots, k-1.
\end{equation*}
This implies
\begin{equation*}
\frac{\mathrm{d}^\ell}{\mathrm{d} x^\ell} r_1(\mathrm{i} x) = \frac{\mathrm{d}^\ell}{\mathrm{d} x^\ell} r_2(\mathrm{i} x)~~~\text{for}~~\ell = 0,\ldots,k-1,
\end{equation*}
since~$\tfrac{\mathrm{d}^\ell}{\mathrm{d} x^\ell} r_j(\mathrm{i} x)$ is a function of~$g_j, \tfrac{\mathrm{d} g_j}{\mathrm{d} x} ,\ldots, \tfrac{\mathrm{d}^\ell g_j}{\mathrm{d}^\ell x}$ for $j=1,2$ and $\ell=0,\ldots,k-1$.
Thus, the zeros of~$g_2(x)-g_1(x)$ are also zeros $r_2(\mathrm{i} x)- r_1(\mathrm{i} x)$ with the respective multiplicities.

For $ r_1 = p_1^\dag/p_1$ and $ r_2=p_2^\dag/p_2$
the difference $r_2-r_1$ corresponds to a rational function,
\begin{equation*}
r_2-r_1 = \frac{p_2^\dag p_1 - p_2 p_1^\dag}{p_2 p_1}
=: \frac{\widehat{p}}{\widehat{q}},
\end{equation*}
where $\widehat{p}$ and $\widehat{q}$ are polynomials of degree $\leq n_1+n_2$. Since $r_1\in\mathcal{U}_{n_1}$ and $r_2\in\mathcal{U}_{n_2}$ may be assumed to be given in an irreducible form, the polynomials $p_1$ and $p_2$ have no zeros on the imaginary axis and this carries over to $\widehat{q}$. Thus, any zero of $r_2-r_1$ on the imaginary axis is also a zero of $\widehat{p}$ with the respective multiplicity.
Consequently, the $n_1+n_2+1$ zeros of $r_2-r_1$, counting multiplicity, are also zeros of $\widehat{p}$ which implies $\widehat{p}\equiv 0$ and $r_1=r_2$.
\end{proof}

\begin{proposition}[Equioscillation implies optimality]\label{prop:p11equitoopti}
Let $n$ denote a fixed degree, $r\in \mathcal{U}_n$, and $\omega\in(0,(n+1)\pi)$.
If the phase error of~$r(\mathrm{i} x)\approx \mathrm{e}^{\mathrm{i} \omega x}$ equioscillates between~$2n+2$ points,
then $r$ minimizes the approximation error
\begin{equation}\label{eq:eqiotooptimalminaerr}
\| r-\exp(\omega \cdot) \|
= \min_{u\in\mathcal{U}_n} \| u-\exp(\omega \cdot) \| <2.
\end{equation}
\end{proposition}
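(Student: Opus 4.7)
The plan is a proof by contradiction in the classical Chebyshev/de~la~Vall\'ee Poussin style, using Proposition~\ref{prop:gidentitytor} as the rigidity tool. Suppose toward a contradiction that some $u\in\mathcal{U}_n$ satisfies $\|u-\exp(\omega\cdot)\|<\|r-\exp(\omega\cdot)\|$. Since Proposition~\ref{prop:omega0} gives $\min_{v\in\mathcal{U}_n}\|v-\exp(\omega\cdot)\|<2$ for $\omega\in(0,(n+1)\pi)$, we may without loss of generality take $u$ to be a minimizer, so $\|u-\exp(\omega\cdot)\|<2$. By Proposition~\ref{prop:approxertophaseerrinequ}~\ref{item:approxandphaseerrlesstwo}--\ref{item:gunique}, $u$ then admits a unique phase function $g^\star$ of the form~\eqref{eq:defg} with $\max_{x\in[-1,1]}|g^\star(x)-\omega x|<\pi$. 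The phase function $g$ associated with the equioscillation hypothesis on $r$ is, by assumption, also of the form~\eqref{eq:defg}, so both representations are compatible with Proposition~\ref{prop:gidentitytor}.

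The central step is to track the sign of $g-g^\star$ at the equioscillation points $\eta_1<\cdots<\eta_{2n+2}$. Writing $V:=\max_{x\in[-1,1]}|g(x)-\omega x|$, the equioscillation identity~\eqref{eq:defeo} gives $g(\eta_j)-\omega\eta_j=(-1)^{j+\iota}V$, while $|g^\star(\eta_j)-\omega\eta_j|$ turns out to be strictly smaller than $V$ in every case: if $V\ge\pi$, this is automatic from $|g^\star-\omega x|<\pi\le V$; if $V<\pi$, then $\|r-\exp(\omega\cdot)\|<2$ and Proposition~\ref{prop:approxertophaseerrinequ}~\ref{item:riqtogiq} converts the strict inequality $\|u-\exp(\omega\cdot)\|<\|r-\exp(\omega\cdot)\|$ into $\max_{x\in[-1,1]}|g^\star(x)-\omega x|<V$. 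Decomposing
\begin{equation*}
g(\eta_j)-g^\star(\eta_j)=\bigl(g(\eta_j)-\omega\eta_j\bigr)-\bigl(g^\star(\eta_j)-\omega\eta_j\bigr),
\end{equation*}
the dominant first term forces $g-g^\star$ to inherit the alternating sign pattern $(-1)^{j+\iota}$ at the $\eta_j$.

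Continuity of $g-g^\star$ on $[-1,1]$ and the intermediate value theorem then yield at least $2n+1$ zeros. Since the minimal degrees $n_1,n_2$ of $r$ and $u$ each satisfy $n_i\le n$, we have $n_1+n_2+1\le 2n+1$, so Proposition~\ref{prop:gidentitytor} forces $r=u$, contradicting $\|u-\exp(\omega\cdot)\|<\|r-\exp(\omega\cdot)\|$. This proves $r$ is a minimizer, and the bound $\|r-\exp(\omega\cdot)\|<2$ in~\eqref{eq:eqiotooptimalminaerr} is inherited from Proposition~\ref{prop:omega0}.

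The main subtlety I anticipate lies in the case $V\ge\pi$, where $r$ itself has the maximal error $2$: Proposition~\ref{prop:approxertophaseerrinequ}~\ref{item:riqtogiq} does not apply to $r$, so the strict sign dominance of $g-g^\star$ at the $\eta_j$ must be obtained purely from the universal bound $|g^\star-\omega x|<\pi$ together with $V\ge\pi$. One must also be careful that both $g$ and $g^\star$ are genuine representatives of the form~\eqref{eq:defg} so that Proposition~\ref{prop:gidentitytor} is applicable; this is immediate for $g^\star$ by uniqueness, and for $g$ it is part of the equioscillation hypothesis.
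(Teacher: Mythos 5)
Your proof is correct and follows essentially the same route as the paper's: compare $r$ with the actual minimizer, use the equioscillation pattern together with the strictly smaller phase error of the minimizer to force $2n+1$ sign changes of $g-g^\star$, and invoke Proposition~\ref{prop:gidentitytor} to conclude that $r$ coincides with the minimizer. Your explicit case split $V\ge\pi$ versus $V<\pi$ makes precise a step the paper leaves implicit (it simply asserts the strict phase-error inequality via Proposition~\ref{prop:approxertophaseerrinequ}), but the underlying argument is identical.
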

\begin{proof}
Let $\widetilde{r}\in\mathcal{U}_n$ denote the minimizer of the approximation error, which exists due to Proposition~\ref{prop:bestapproxinUnexists}. Due to $\omega\in(0,(n+1)\pi)$ and Proposition~\ref{prop:omega0}, the minimizer satisfies
\begin{equation}
\| \widetilde{r}-\exp(\omega \cdot) \|
= \min_{u\in\mathcal{U}_n} \| u-\exp(\omega \cdot) \| <2.
\end{equation}
Moreover, following Proposition~\ref{prop:approxertophaseerrinequ} there exists a unique representation $\widetilde{r}(\mathrm{i} x)=\mathrm{e}^{\mathrm{i} \widetilde{g}(x)}$ with
\begin{equation*}
\max_{x\in[-1,1]}  |  \widetilde{g}(x) - \omega x | < \pi.
\end{equation*}
We proceed to show that some $r\in\mathcal{U}_n$ with $r(\mathrm{i} x) = \mathrm{e}^{\mathrm{i} g(x)}$ for $g$ of the form~\eqref{eq:defg}, and for which the phase error equioscillates $2n+2$ times, also minimizes the approximation error, i.e.,~\eqref{eq:eqiotooptimalminaerr}. Assume the opposite, i.e.,
\begin{equation}\label{eq:eqiotooptimal0a}
\| r-\exp(\omega \cdot) \|>\| \widetilde{r}-\exp(\omega \cdot) \|.
\end{equation}
Following Proposition~\ref{prop:approxertophaseerrinequ}, this implies
\begin{equation}\label{eq:eqiotooptimal1a}
\max_{x\in[-1,1]}  |  g(x) - \omega x |
> \max_{x\in[-1,1]}  |  \widetilde{g}(x) - \omega x |.
\end{equation}
The phase error of $r(\mathrm{i} x)\approx \mathrm{e}^{\mathrm{i} \omega x}$ equioscillates $2n+2$ times, i.e.,\ there exist equi\-oscillation points $\eta_1<\ldots<\eta_{2n+2}$ with
\begin{equation*}
g(\eta_j) - \omega \eta_j = (-1)^{j+\iota} \max_{x\in[-1,1]}|g(x) - \omega x |,~~~j=1,\ldots,m,~~~\iota\in\{0,1\}.
\end{equation*}
The inequality~\eqref{eq:eqiotooptimal1a} implies that $g$ and $\widetilde{g}$ intersect at least once between each pair of neighboring equioscillation points. More precisely, there exist points $x_j\in(\eta_j,\eta_{j+1})$, $j=1,\ldots,2n+1$, with
\begin{equation*}
g(x_j) = \widetilde{g}(x_j),~~~ j = 1,\ldots 2n+1.
\end{equation*}
Following Proposition~\ref{prop:gidentitytor}, this implies $r=\widetilde{r}$ (independently of the minimal degrees of $r$ and $\widetilde{r}$). This identity is contradictory to~\eqref{eq:eqiotooptimal0a}, and thus,
\begin{equation*}
\| r-\exp(\omega \cdot) \|
\leq \| \widetilde{r}-\exp(\omega \cdot) \| < 2.
\end{equation*}
Since $\widetilde{r}$ minimizes the approximation error, the assertion~\eqref{eq:eqiotooptimalminaerr} holds true.
\end{proof}

\begin{proposition}[Optimality implies equioscillation]
\label{prop:p2optitoequi}
For a fixed degree $n$ and $\omega\in(0,(n+1)\pi)$,
let $r\in\mathcal{U}_n$ be a best approximation to $\mathrm{e}^{\mathrm{i} \omega x}$, i.e.,
\begin{equation*}
\|r-\exp(\omega \cdot)\| = \min_{u \in \mathcal{U}_n}\|u-\exp(\omega \cdot)\|<2.
\end{equation*}
Then the phase error of $r(\mathrm{i} x) = \mathrm{e}^{\mathrm{i} g(x)} \approx \mathrm{e}^{\mathrm{i} \omega x}$
equioscillates between $2n+2$ points. Moreover, the following statements of Theorem~\ref{thm:bestapprox} hold true,
\begin{enumerate}
\item[\ref*{item:rwmindegreen}] $r$ has minimal degree $n$ and distinct poles, 
\item[\ref*{item:rwequi}] there are exactly $2n+2$ equioscillation points, which include the points $-1$ and $1$, and
\item[\ref*{item:rwphaseerrmonotonic}] the $2n$ equioscillation points in $(-1,1)$ are exactly the zeros of the derivative of the phase error and these zeros are simple.
\end{enumerate}
\end{proposition}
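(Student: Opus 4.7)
The plan is to adapt the classical Chebyshev alternation (Haar-space) theorem to the nonlinear set $\mathcal{U}_n$ by linearizing the problem through the phase function $g$. Since $\omega\in(0,(n+1)\pi)$, Propositions~\ref{prop:omega0} and~\ref{prop:approxertophaseerrinequ} guarantee that $g$ is uniquely defined and that strictly improving the approximation error of $r$ is equivalent to strictly improving the real-valued phase error $\phi=g-\omega\cdot$, with $E=\|\phi\|_\infty<\pi$. Let $r\in\mathcal{U}_n$ be the given best approximant in irreducible form, with $n'$ distinct poles $s_j=\xi_j+\mathrm{i}\mu_j$ ($\xi_j\neq 0$) and minimal degree $k\geq n'$. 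I would proceed in three main steps.

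Step~1 (lower bound on equioscillations). Differentiating~\eqref{eq:defg} in the $2n'+1$ real parameters $(\theta,\xi_1,\mu_1,\ldots,\xi_{n'},\mu_{n'})$ yields a $(2n'+1)$-dimensional tangent space $V$ of rational functions $P/Q$ with fixed strictly positive denominator $Q(x)=\prod_{j=1}^{n'}((x-\mu_j)^2+\xi_j^2)$ of degree $2n'$ and $P$ an arbitrary real polynomial of degree $\leq 2n'$; since $Q>0$ on $\mathbb{R}$, $V$ is a Haar system on $[-1,1]$. The standard Chebyshev alternation construction then applies: if $\phi$ equioscillated at only $m\leq 2n'+1$ points $\eta_1<\cdots<\eta_m$ with alternating signs $\sigma_j$, I could pick dividers $\zeta_j\in(\eta_j,\eta_{j+1})$ so that $\sigma_j\phi$ is bounded away from $-E$ on each $I_j=[\zeta_{j-1},\zeta_j]$ (with $\zeta_0=-1$, $\zeta_m=1$), set $h(x)=c\,P(x)/Q(x)\in V$ with $P(x)=\prod_{j=1}^{m-1}(x-\zeta_j)$ and sign $c$ chosen so that $\sigma_j h>0$ on the interior of $I_j$, and realize $h$ as the first-order velocity of a smooth path $r_t\in\mathcal{U}_k\subseteq\mathcal{U}_n$ with $\phi_t=\phi-th+O(t^2)$. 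A uniform compactness bound using the margins of $\sigma_j\phi$ above $-E$ then gives $\|\phi_t\|_\infty<E$ for small $t>0$, contradicting optimality. Hence $m\geq 2n'+2$.

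Step~2 (matching upper bound and structure). Direct computation gives $\phi'(x)=g'(x)-\omega=N(x)/Q(x)$ with $\deg N\leq 2n'$, so $\phi'$ has at most $2n'$ real zeros. Every interior equioscillation is necessarily a critical point of $\phi$, so $m\leq 2n'+2$; combined with Step~1 this gives $m=2n'+2$, forces both endpoints $\pm 1$ to be equioscillation points, and identifies the $2n'$ interior equioscillations with the zeros of $\phi'$. Alternation of $\phi$ between $+E$ and $-E$ makes $\phi'$ strictly change sign at each such zero, so these zeros are simple.

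Step~3 (full minimal degree and distinct poles). To rule out $k<n$, I would enlarge the tangent space by the direction $x$, accessible as the first-order tangent of the path $r\cdot c_t\in\mathcal{U}_{k+1}\subseteq\mathcal{U}_n$, where $c_t(z)=(1+tz)/(1-tz)\in\mathcal{U}_1$ has phase $2\arctan(tx)=2tx+O(t^3)$. The enlarged space $V'=V+\mathbb{R}\,x$ consists of rational functions $R/Q$ with $\deg R\leq 2n'+1$, so remains a Haar system of dimension $2n'+2$; re-running Step~1 with $V'$ forces $m\geq 2n'+3$, contradicting $m=2n'+2$. To rule out coincident poles ($n'<k$), I would access further tangent directions via \textit{quadratic} pole-splitting paths of the form $(\mu_1^{(1)},\mu_1^{(2)})=(\mu_1+\epsilon,\mu_1-\epsilon)$ (and the $\xi$-analogue), along which $\phi_\epsilon=\phi+\epsilon^2 h_{\mathrm{split}}+O(\epsilon^3)$, and verify that these second-order directions enlarge the effective Haar space to dimension $\geq 2n+1$, forcing $m\geq 2n+2$ in contradiction with $m=2n'+2\leq 2n$. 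The dimension count for the tangent cone of $\mathcal{U}_n$ at a degenerate $r$ is, I expect, the main technical obstacle; the adding-poles-at-infinity and pole-splitting constructions would need to be packaged into a single statement about the local Haar structure of $\mathcal{U}_n$ at $r$.
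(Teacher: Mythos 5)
Your Steps~1 and~2 are sound and, in the non-degenerate case, constitute a valid alternative route: linearizing in the pole parameters $(\theta,\xi_j,\mu_j)$ does give the Haar space $V=\{P/Q:\deg P\leq 2n'\}$ with $Q=\prod_j((x-\mu_j)^2+\xi_j^2)>0$, the classical alternation construction applies, and the derivative count $\deg\phi'\cdot Q\leq 2n'$ gives the matching upper bound. The Cayley-factor augmentation in Step~3 also correctly rules out $k<n$, since the direction $x$ genuinely enlarges $V$ to a Haar space of dimension $2n'+2$ while staying inside $\mathcal{U}_n$. The genuine gap is the coincident-pole case $n'<k$. There your pole-splitting paths $s_1\mapsto s_1\pm\epsilon w$ produce a \emph{first-order-vanishing} perturbation whose leading term is $\epsilon^2\,\mathrm{Re}$-quadratic in $w$; the set of achievable leading directions is the image of the rank-one positive semidefinite forms $w\mapsto(u^2\partial_\xi^2+2uv\,\partial_\xi\partial_\mu+v^2\partial_\mu^2)h$, which is a \emph{cone}, not a linear space, and moreover these second derivatives have denominator $((x-\mu_1)^2+\xi_1^2)^2$, so they do not live in any space of the form $\{P/\widetilde{Q}\}$ with a common squarefree denominator. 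The alternation argument you invoke requires a linear Haar space (or at least a one-sided Kolmogorov-type criterion over a cone that you would have to verify contains a function with the prescribed sign pattern on up to $2n+1$ intervals); neither is established, and it is not obvious that the cone of second-order directions is rich enough. Since the whole point of the proposition is to \emph{prove} non-degeneracy, this case cannot be waved away.

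The paper sidesteps the degeneracy issue entirely by perturbing the coefficients rather than the poles: writing $r=p^\dag/p$ with $p(\mathrm{i}x)=a(x)+\mathrm{i}b(x)$ and setting $r_2=(p+\delta p)^\dag/(p+\delta p)$ for an arbitrary polynomial $\delta p$ of degree $\leq n$, the first-order change of the phase is $2(\delta a\,b-a\,\delta b)/|p(\mathrm{i}x)|^2$, and a kernel-dimension (Fredholm alternative) argument shows that $(\delta a,\delta b)\mapsto \delta a\,b-a\,\delta b$ is \emph{onto} the real polynomials of degree $\leq m+n$, where $m=\deg p$. This yields a genuine linear perturbation space of dimension $m+n+1$ at first order, regardless of whether poles coincide, hence at least $m+n+2$ equioscillation points; combined with the upper bound $2k+2$ ($k$ the number of distinct poles, $k\leq m\leq n$) this forces $k=m=n$ in one counting step. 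If you want to salvage your pole-based parametrization, the cleanest repair is to replace your Step~3 splitting argument by exactly this coefficient perturbation (or to prove directly that the map from $\delta p$ to the induced phase perturbation has full rank $m+n+1$ even at a $p$ with repeated roots); as it stands, the degenerate case is an unproved assertion on which the conclusion $m=2n+2$ depends.
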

\begin{proof}
Assume $r_1\in\mathcal{U}_n$ is a best approximant with 
\begin{equation}\label{eq:opttoequir1conds2}
\|r_1-\exp(\omega \cdot)\|<2.
\end{equation}
Let $m\leq n$ denote the minimal degree of $r_1$, thus, $r_1\in\mathcal{U}_m$.
Let $g_1$ be uniquely defined by $ r_1(\mathrm{i} x) = \mathrm{e}^{\mathrm{i} g_1(x)}$ as in Proposition~\ref{prop:approxertophaseerrinequ}. We define
\begin{equation}\label{eq:opttoequidefalph}
\alpha := \max_{x\in[-1,1]} |g_1(x)-\omega x|.
\end{equation}
Our proof consists of the following steps which we elaborate in detail further below.
\begin{enumerate}[label=(\alph*)]
\item\label{item:equiproof1} We first assume the phase error $g_1(x)-\omega x$ has $\ell < m+n+2 $ equioscillating points $\eta_1<\ldots<\eta_\ell$, and to simplify our notation, we assume $g_1(\eta_1)-\omega \eta_1=-\alpha$, thus,
\begin{equation}\label{eq:phaseerrequiinproof}
\eta_1<\ldots<\eta_\ell,~~~\text{with}~~g_1(\eta_j)-\omega \eta_j = (-1)^j \alpha,~~~j=1,\ldots,\ell.
\end{equation}
\item\label{item:equiproof2} This implies that for a sufficiently small $\varepsilon>0$ there exist points $y_1<\ldots<y_{\ell-1}$ with $y_j \in (\eta_j,\eta_{j+1})$, $j=1,\ldots,\ell-1$, s.t.,
\begin{equation}\label{eq:proofoedefyjs}
\begin{aligned}
& g_1(x) - \omega x < \alpha - \varepsilon~~~\text{for}~~
x \in [-1, y_1 + \varepsilon] \cup [y_2-\varepsilon, y_3 + \varepsilon]\\
&~~~~~~~~~~~~~~~~~~~~~~~~~~~~~~~~~~~~~~~  \cup [y_4-\varepsilon, y_5 + \varepsilon] \cup \ldots,~~~\text{and} \\
& g_1(x) - \omega x > -\alpha + \varepsilon~~~\text{for}~~
x \in [y_1-\varepsilon, y_2 + \varepsilon] \cup [y_3-\varepsilon, y_4+ \varepsilon]\\
&~~~~~~~~~~~~~~~~~~~~~~~~~~~~~~~~~~~~~~~~~~   \cup [y_5-\varepsilon, y_6 + \varepsilon] \cup \ldots.
\end{aligned}
\end{equation}
\item\label{item:equiproof3} We proceed to construct $r_2\in\mathcal{U}_n$ with $\|r_2-\exp(\omega\cdot)\|<2$ and $r_2(\mathrm{i} x) = \mathrm{e}^{\mathrm{i} g_2(x)}$, s.t., for a sufficiently small~$\varepsilon>0$,
\begin{subequations}\label{eq:dummygamma2inequalities}
\begin{equation}\label{eq:dummygamma2epsdist}
\max_{x\in[-1,1]} | g_2(x) - g_1(x)| < \varepsilon,
\end{equation}
and
\begin{equation}\label{eq:dummygamma2smallerlarger}
g_2(x) - g_1(x) \left\{\begin{array}{ll}
> 0 & x\in [-1,y_1-\varepsilon]\cup[y_2+\varepsilon,y_3-\varepsilon]\cup\cdots,~~~\text{and}\\
< 0 & x\in [y_1+\varepsilon,y_2-\varepsilon]\cup[y_3+\varepsilon,y_4-\varepsilon]\cup\cdots,
\end{array}\right.
\end{equation}
where the points $y_1,\ldots,y_{\ell-1}$ in~\eqref{eq:dummygamma2smallerlarger} refer to the points in~\eqref{eq:proofoedefyjs}.
\end{subequations}
\item\label{item:equiproof4} Based on~\eqref{eq:proofoedefyjs} and~\eqref{eq:dummygamma2inequalities}, we show that~$r_2$ has an approximation error strictly smaller than~$r_1$.
This  yields a contradiction since~$r_1$ is a best approximation. As a consequence, we conclude that~$r_1$ has at least~$m+n+2$ equioscillating points.
\item\label{item:equiproof5} The derivative of the phase error reveals that the number of equioscillating points is at most $2k+2$ where $k\leq m$ denotes the number of distinct poles of $r_1$. As a consequence we conclude $k=n$, i.e., $m=n$ and all poles of $r_1$ are distinct, which completes our proof.
\end{enumerate}
We proceed to discuss~\ref{item:equiproof1}-\ref{item:equiproof5} in detail. Some of the arguments in~\ref{item:equiproof1}-\ref{item:equiproof4} also appear in the proof~\cite[Theorem~24.1]{Tre13} in a slightly different setting.

Considering our assumption to have $\ell < m+n+2 $ equioscillating points~\eqref{eq:phaseerrequiinproof}, we remark that~$\ell\geq 1$ since the extreme value~\eqref{eq:opttoequidefalph} is attained at least once for $x\in[-1,1]$. Moreover, the assumption that $g_1(\eta_1)-\omega \eta_1=-\alpha$ is not critical since the present proof easily adapts to the case $g_1(\eta_1)-\omega \eta_1=\alpha$.

Since $g_1(x)-\omega x$ is continuous and has the equioscillating points $x_1,\ldots,x_\ell$, we find a sufficiently small $\varepsilon>0$ s.t.~\eqref{eq:proofoedefyjs} holds true. The definition of the points $y_1,\ldots,y_{\ell-1}$ requires $\ell>1$. However, for the case $\ell=1$ the inequality~\eqref{eq:proofoedefyjs} simplifies to~$g_1(x)-\omega x< \alpha-\varepsilon$ for $x\in[-1,1]$ and the following steps of the proof apply with minimal adaptions to the notation.

We proceed to construction $r_2\in\mathcal{U}_n$ as in~\ref{item:equiproof3}. Since $r_1$ has minimal degree $m\leq n$, we have $r_1=p^\dag/p$ for a polynomial $p$ of degree exactly $m$.
Let $\delta p$ be a polynomial of degree~$\leq n$ which will be specified later.
We define
\begin{equation}\label{eq:r2frompdp}
r_2 = \frac{(p+\delta p)^\dag}{p+\delta p} 
= \frac{p^\dag + \delta p^\dag }{p + \delta p}\in\mathcal{U}_n.
\end{equation}
The difference between $r_2$ and $r_1$ corresponds to
\begin{equation*}
\begin{aligned}
r_2 - r_1 &= \frac{p^\dag + \delta p^\dag }{p + \delta p} - \frac{p^\dag}{p}
=  \frac{(p^\dag + \delta p^\dag )p-p^\dag(p + \delta p)}{p(p + \delta p)}\\
&=  \frac{\delta p^\dag p-p^\dag  \delta p}{p^2 + p\delta p} 
=  \frac{\delta p^\dag p-p^\dag  \delta p}{p^2} + \mathcal{O}\big(|\delta p|^2\big),~~~\text{for $|\delta p|\to0$},
\end{aligned}
\end{equation*}
where $|\delta p|$ has to be understood as the maximum over the considered argument set.
On the imaginary axis, where $p^\dag(\mathrm{i} x)=\overline{p(\mathrm{i} x)}$ and $\delta p^\dag(\mathrm{i} x)=\overline{\delta p(\mathrm{i} x)}$, this simplifies to
\begin{equation}\label{eq:r2minusr}
r_2(\mathrm{i} x) - r_1(\mathrm{i} x) = \frac{2 \mathrm{i} \operatorname{Im}(\overline{\delta p(\mathrm{i} x)} p(\mathrm{i} x))}{p(\mathrm{i} x)^2} + \mathcal{O}\big(\|\delta p\|^2\big),~~~\text{for $\|\delta p\|\to0$ and $x\in[-1,1]$},
\end{equation}
where
\begin{equation}\label{eq:r2minusremasym}
\frac{\operatorname{Im}(\overline{\delta p(\mathrm{i} x)} p(\mathrm{i} x))}{p(\mathrm{i} x)^2} = \mathcal{O}(\|\delta p\|).
\end{equation}
Furthermore, expanding the denominator of this quotient we observe
\begin{equation*}
\frac{1}{p(\mathrm{i} x)^2}
 =\frac{1}{\overline{p(\mathrm{i} x)} p(\mathrm{i} x)} \cdot \frac{\overline{p(\mathrm{i} x)}}{p(\mathrm{i} x)}
=\frac{r_1(\mathrm{i} x)}{|p(\mathrm{i} x)|^2}.
\end{equation*}
Substituting this identity in~\eqref{eq:r2minusr}, we arrive at
\begin{equation}\label{eq:optimaltoequir2tor1x}
r_2(\mathrm{i} x) = r_1(\mathrm{i} x) \cdot\left(1 + 2\mathrm{i} \frac{\operatorname{Im}(\overline{\delta p(\mathrm{i} x)} p(\mathrm{i} x))}{|p(\mathrm{i} x)|^2}\right) + \mathcal{O}\big(\|\delta p\|^2\big).
\end{equation}
Considering the asymptotic behavior~\eqref{eq:r2minusremasym}, we note
\begin{equation*}
\mathrm{e}^{2\mathrm{i} \operatorname{Im}(\overline{\delta p(\mathrm{i} x)} p(\mathrm{i} x))/|p(\mathrm{i} x)|^2} =
1 + 2\mathrm{i} \frac{\operatorname{Im}(\overline{\delta p(\mathrm{i} x)} p(\mathrm{i} x))}{|p(\mathrm{i} x)|^2} +\mathcal{O}(\|\delta p\|^2).
\end{equation*}
Thus, substituting this and $r_1(\mathrm{i} x)=\mathrm{e}^{\mathrm{i} g_1(x)}$ in~\eqref{eq:optimaltoequir2tor1x} we arrive at
\begin{equation}\label{eq:optoeqr2exp0}
r_2(\mathrm{i} x) = \mathrm{e}^{\mathrm{i} (g_1(x) + 2 \operatorname{Im}(\overline{\delta p(\mathrm{i} x)} p(\mathrm{i} x))/|p(\mathrm{i} x)|^2)} + \mathcal{O}(\|\delta p\|^2).
\end{equation}
On the other hand, combining~\eqref{eq:r2minusr} and~\eqref{eq:r2minusremasym} we deduce  that~\eqref{eq:opttoequir1conds2} implies~$ \|r_2 - \exp(\omega \cdot)\|<2$ if $\|\delta p\|$ is sufficiently small. Thus, under this assumption we find a unique $g_2$ with $r_2(\mathrm{i} x) = \mathrm{e}^{\mathrm{i} g_2(x)}$ as in Proposition~\ref{prop:approxertophaseerrinequ}, and~\eqref{eq:optoeqr2exp0} corresponds to
\begin{equation}\label{eq:optoeqr2exp0k1}
g_2(x) = g_1(x) + 2 \frac{\operatorname{Im}(\overline{\delta p(\mathrm{i} x)} p(\mathrm{i} x))}{|p(\mathrm{i} x)|^2} +2k\pi +\mathcal{O}(\|\delta p\|^2),~~~k\in\mathbb{N}.
\end{equation}
Moreover, $g_1$ and $g_2$ both have a phase error $<\pi$, i.e., $|g_1(x)-\omega x|<\pi$ and $|g_2(x)-\omega x|<\pi$, and thus, $|g_1(x)-g_2(x)|<2\pi$ for $x\in[-1,1]$. As a consequence, for a sufficiently small $\|\delta p\|$ we observe $k=0$, and~\eqref{eq:optoeqr2exp0k1} simplifies to
\begin{equation}\label{eq:g2togviadeltap}
g_2(x) = g_1(x) + 2 \frac{\operatorname{Im}(\overline{\delta p(\mathrm{i} x)} p(\mathrm{i} x))}{|p(\mathrm{i} x)|^2} +\mathcal{O}(\|\delta p\|^2).
\end{equation}
Combining~\eqref{eq:r2minusremasym} and~\eqref{eq:g2togviadeltap}, we conclude
\begin{equation*}
|g_2(x) - g_1(x)| =\mathcal{O}(\|\delta p \|),~~~x\in[-1,1],
\end{equation*}
which concludes~\eqref{eq:dummygamma2epsdist} for a given $\varepsilon>0$ and $\|\delta p\|$ sufficiently small.

We proceed to specify $\delta p$ in consideration of~\eqref{eq:dummygamma2smallerlarger}. In particular, this imposes some conditions on $\operatorname{Im}(\overline{\delta p(\mathrm{i} x)} p(\mathrm{i} x))$ in~\eqref{eq:g2togviadeltap}.
To this end, we define
\begin{equation*}
p(\mathrm{i} x) =: a(x) + \mathrm{i} b(x),
\end{equation*}
where $a$ and $b$ are real polynomials, i.e., mapping $\mathbb{R}$ to $\mathbb{R}$, of degree exactly~$m$.
The polynomial $p$ has no zeros on the imaginary axis since such points would correspond to common zeros of $p$ and $p^\dag$ which is contrary to $r_1$ being irreducible, cf.\ Proposition~\ref{prop:fulldegree}.
This further implies that the polynomials~$a$ and~$b$ have no common zeros on the real axis.

Our next step is to show existence of real polynomials $\delta a$ and $\delta b$ of degree $\leq n$ s.t.
\begin{equation}\label{eq:dummygamma2smallerlarger0}
\delta a(x) b(x) - a(x)\delta b(x) \left\{\begin{array}{ll}
> 0 & x\in [-1,y_1)\cup(y_2,y_3)\cup\cdots,~~~\text{and}\\
< 0 & x\in (y_1,y_2)\cup(y_3,y_4)\cup\cdots,
\end{array}\right.
\end{equation}
where the real polynomials $a$ and $b$ correspond to $p=a+\mathrm{i} b$ and the points $y_1,\ldots,y_{\ell-1}$ correspond to~\eqref{eq:dummygamma2smallerlarger}.
We recall that $\delta a b - a\delta b$ is a polynomial of degree $\leq n+m$ and $\ell-1 \leq m+n$, and the inequalities~\eqref{eq:dummygamma2smallerlarger0} hold true if this polynomial corresponds to
\begin{equation}\label{eq:dummygamma2smallerlarger01}
\delta a(x) b(x) - a(x)\delta b(x) = c(-1)^{\ell-1}(x-y_1)(x-y_2)\cdots(x-y_{\ell-1}),
\end{equation}
for a pre-factor $c>0$ which is specified by other conditions further below. We apply the Fredholm alternative of linear algebra (similar to the proof of Theorem~24.1 in~\cite{Tre13}) to show that the mapping from $\delta a$ and $\delta b$ to the polynomial $\delta a b - a\delta b$ is surjective. Certainly, this mapping is linear, the choice of~$\delta a$ and~$\delta b$ corresponds to a $(2n+2)$-dimensional space and $\delta a b - a \delta b$ corresponds to the $(m+n+1)$-dimensional space of real polynomials of degree $\leq m+n$. To show that this mapping is surjective, it is enough to show that its kernel has at most dimension $n-m+1$. Assuming $\delta a b - a\delta b =0 $ then $\delta a b = a \delta b$. Since $a$ and $b$ have no common zeros, all the roots of $a$ must be roots of $\delta a$ and all the roots of $b$ must be roots of $\delta b$. Thus, $a=\delta a g$ and $b=\delta b g $ for some polynomial $g$ of degree $\leq n-m$. The set of polynomials of degree $n-m$ has dimension $n-m+1$ which completes this argument.
Thus, there exists $\delta a$ and $\delta b$ s.t.~\eqref{eq:dummygamma2smallerlarger01} holds true, and this implies~\eqref{eq:dummygamma2smallerlarger0}. 
With these polynomials $\delta a$ and $\delta b$ we define $\delta p$ as 
\begin{equation*}
\delta p(\mathrm{i} x) := \delta a(x) + \mathrm{i} \delta b(x).
\end{equation*}
This definition extends to complex arguments, $\delta p(z)$ for $z\in\mathbb{C}$, in a direct manner. The pre-factor $c>0$ in~\eqref{eq:dummygamma2smallerlarger01} corresponds to a common scaling factor of $\delta a$ and $\delta b$ which we may use to re-scaled $\delta p$ s.t.\ $\|\delta p\|$ is arbitrary small. Moreover,
\begin{equation}\label{eq:Imdpptodabdba}
\operatorname{Im}(\overline{\delta p(\mathrm{i} x)} p(\mathrm{i} x))
= \operatorname{Im}\big((\delta a(x) - \mathrm{i} \delta b(x))(a(x) + \mathrm{i} b(x)) \big)
= \delta a(x) b(x) - a(x)\delta b(x).
\end{equation}
Combining~\eqref{eq:g2togviadeltap},~\eqref{eq:dummygamma2smallerlarger0} and~\eqref{eq:Imdpptodabdba}, we conclude that for a sufficiently small $\varepsilon>0$ and sufficiently small scaling of $\delta p$, the inequalities~\eqref{eq:dummygamma2inequalities} hold true.

We proceed with~\ref{item:equiproof4}. Namely, from the conditions~\eqref{eq:proofoedefyjs} and~\eqref{eq:dummygamma2inequalities} we deduce
\begin{equation}\label{eq:g2isbetterdummy}
\max_{x\in[-1,1]} | g_2(x) - \omega x| < \max_{x\in[-1,1]} | g_1(x) - \omega x| = \alpha.
\end{equation}
We first show
\begin{equation}\label{eq:g2isbetterdummya}
g_2(x) - \omega x < \alpha,~~~x\in[-1,1].
\end{equation}
For $x \in [-1, y_1 + \varepsilon] \cup [y_2-\varepsilon, y_3 + \varepsilon] \cup \ldots$, the upper bound in~\eqref{eq:proofoedefyjs} and $g_2(x)< g_1(x) + \varepsilon$ from~\eqref{eq:dummygamma2epsdist} show
\begin{equation*}
g_2(x) - \omega x <  g_1(x) - \omega x + \varepsilon < \alpha,
~~~~x \in [-1, y_1 + \varepsilon] \cup [y_2-\varepsilon, y_3 + \varepsilon] \cup \ldots.
\end{equation*}
For $x \in [y_1 + \varepsilon, y_2-\varepsilon] \cup [y_3 + \varepsilon, y_4-\varepsilon] \cup \ldots$ we have $ g_2(x) < g_1(x) $~\eqref{eq:dummygamma2smallerlarger}, and together with $ |g_1(x) - \omega x| \leq  \alpha$ this shows
\begin{equation*}
g_2(x) - \omega x < g_1(x) - \omega x \leq \alpha,
~~~~ x \in [y_1 + \varepsilon, y_2-\varepsilon] \cup [y_3 + \varepsilon, y_4-\varepsilon] \cup \ldots.
\end{equation*}
We proceed to show
\begin{equation}\label{eq:g2isbetterdummyb}
g_2(x) - \omega x > -\alpha,~~~x\in[-1,1].
\end{equation}
For $x \in [y_1-\varepsilon, y_2 + \varepsilon] \cup [y_3-\varepsilon, y_4+ \varepsilon] \cup\ldots$ the lower bound in~\eqref{eq:proofoedefyjs}
and $g_2(x) > g_1(x) - \varepsilon$ from~\eqref{eq:dummygamma2epsdist} entail
\begin{equation*}
g_2(x) - \omega x > g_1(x) - \omega x - \varepsilon > -\alpha,
~~~~ x \in [y_1-\varepsilon, y_2 + \varepsilon] \cup [y_3-\varepsilon, y_4+ \varepsilon] \cup\ldots.
\end{equation*}
For $x \in [-1,y_1 -\varepsilon]\cup[y_2 + \varepsilon, y_3-\varepsilon] \cup\ldots$ we have $ g_2(x) > g_1(x) $~\eqref{eq:dummygamma2smallerlarger}, and $ |g_1(x) - \omega x| \leq \alpha$, which implies
\begin{equation*}
g_2(x) - \omega x > g_1(x) - \omega x \geq -\alpha,
~~~~ x \in [-1,y_1 -\varepsilon]\cup[y_2 + \varepsilon, y_3-\varepsilon] \cup\ldots.
\end{equation*}
Combining~\eqref{eq:g2isbetterdummya} and~\eqref{eq:g2isbetterdummyb} we conclude~\eqref{eq:g2isbetterdummy}.

Following Proposition~\ref{prop:approxertophaseerrinequ}, the inequality~\eqref{eq:g2isbetterdummy} entails
\begin{equation*}
\| r_2 - \exp(\omega\cdot)\| 
< \| r_1 - \exp(\omega\cdot)\|,
\end{equation*}
which is contrary to $r_1$ attaining the minimal approximation error.
We conclude that the phase error has at least $m+n+2$ equioscillating points.

We finalize our proof by showing~\ref{item:equiproof5}. Provided $s_1,\ldots,s_m$ denote the poles of $r_1$ with $s_j=\xi_j+\mathrm{i} \mu_j$, the phase error $g_1(x)-\omega x$ has the derivative
\begin{equation}\label{eq:ddxphaseerror}
\begin{aligned}
\frac{\mathrm{d}}{\mathrm{d} x}(g_1(x)-\omega x) &= 2 \sum_{j=1}^m \frac{1}{1+\big(\frac{x-\mu_j}{\xi_j}\big)^2} \cdot \frac{1}{\xi_j} - \omega
=  2 \sum_{j=1}^m \frac{\xi_j}{\xi_j^2+(x-\mu_j)^2} - \omega\\
&=  2 \sum_{j=1}^m \frac{\xi_j}{|\mathrm{i} x-s_j|^2} - \omega.
\end{aligned}
\end{equation}
This corresponds to a partial fraction.
Due to $m$ being the minimal degree of $r$ and Proposition~\ref{prop:fulldegree}, we have $\overline{s}_j\neq -s_\iota$ for $j,\iota\in\{1,\ldots,m\}$.
Thus, the denominators $|\mathrm{i} x-s_j|^2$, $j=1,\ldots,m$, of the partial fraction above are distinct if and only if the poles $s_1,\ldots,s_m$ are distinct.
As a consequence, the derivative~\eqref{eq:ddxphaseerror} of the phase error corresponds to a rational function $\widetilde{p}/\widetilde{q}$ where $\widetilde{p}$ and $\widetilde{q}$ denote polynomials of degree $\leq 2k$ for $k\leq m$ denoting the number of distinct poles of $r_1$.
Thus, the derivative of the phase error has at most $2k$ zeros counting multiplicity. Equioscillation points in $(-1,1)$ are necessarily extreme values of the phase error, and thus, zeros of its derivative. Consequently,  $ g_1(x) - \omega x $  has at most $2k$ equioscillation points in $(-1,1)$, and $2k+2$ many in $[-1,1]$.

Since we show that the phase error of $r_1$ has at least $m+n+2$ equioscillation points in~\ref{item:equiproof4}, and we show that the phase error has at most $2k+2$ equioscillation points in~\ref{item:equiproof5} where $k\leq m\leq n$, we conclude that the phase error of $r_1$ has exactly $2n+2$ equioscillating points, i.e.,\ $k=m=n$.
Since $k$ and $m$ denote the number of distinct poles and the minimal degree of $r_1$, respectively, this shows~\ref{item:rwmindegreen}.

We recall that equioscillation points in $(-1,1)$ are zeros of the derivative of the phase error and there are at most $2n$ located in $(-1,1)$ counting multiplicity. As a consequence, we conclude that exactly $2n$ equioscillation points are located in $(-1,1)$ and these correspond to the simple zeros of the derivative of the phase error. Moreover, the remaining two equioscillation points are located at $-1$ and $1$. This shows~\ref{item:rwequi} and~\ref{item:rwphaseerrmonotonic}.
\end{proof}

\begin{proposition}[Uniqueness of unitary best approximants]\label{prop:p3uniqueness}
Provided $n$ is a fixed degree and $\omega\in(0,(n+1)\pi)$,
the best approximation $r\in\mathcal{U}_n$ to $\mathrm{e}^{\mathrm{i} \omega x}$ is unique.
\end{proposition}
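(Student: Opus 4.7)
The plan is to suppose $r_1, r_2 \in \mathcal{U}_n$ are both unitary best approximations and prove they coincide via Proposition~\ref{prop:gidentitytor}. Since $\omega \in (0,(n+1)\pi)$, the minimum approximation error is strictly below $2$ by Proposition~\ref{prop:omega0}, so each $r_j$ admits a unique phase function $g_j$ with $r_j(\mathrm{i} x) = \mathrm{e}^{\mathrm{i} g_j(x)}$ (Proposition~\ref{prop:approxertophaseerrinequ}). Assertion~\ref{item:riqtogiq} of that proposition ensures that $r_1$ and $r_2$ realise the same minimal phase error $\alpha := \max_{x\in[-1,1]} |g_j(x) - \omega x|$, and Proposition~\ref{prop:p2optitoequi} says both have minimal degree exactly $n$ and an equioscillating phase error on $2n+2$ points. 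Setting $h := g_2 - g_1$, the plan reduces to showing that $h$ has at least $2n+1$ zeros on $[-1,1]$ counted with multiplicity, after which Proposition~\ref{prop:gidentitytor} (with $n_1 = n_2 = n$) forces $r_1 = r_2$.

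First I would let $-1 = \eta_1 < \ldots < \eta_{2n+2} = 1$ denote the equioscillation points of $r_1$, labelled so that $g_1(\eta_j) - \omega \eta_j = (-1)^j \alpha$. From $|g_2(\eta_j) - \omega \eta_j| \leq \alpha$ it follows that $(-1)^j h(\eta_j) \leq 0$, so $h$ takes weakly alternating signs across $\eta_1, \ldots, \eta_{2n+2}$. In each of the $2n+1$ subintervals $[\eta_j, \eta_{j+1}]$, either $h(\eta_j)$ and $h(\eta_{j+1})$ are both nonzero with strictly opposite signs, producing a zero of $h$ in the open interval by the intermediate value theorem, or at least one of the endpoints $\eta_j, \eta_{j+1}$ is itself a zero of $h$.

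The hard part will be the bookkeeping, since the same equioscillation point may serve as an endpoint zero for two consecutive subintervals and be double counted. The resolution I plan to use is a multiplicity upgrade at interior shared endpoints: if $\eta_j \in (-1,1)$ satisfies $h(\eta_j) = 0$, then $g_2(\eta_j) - \omega \eta_j = \pm \alpha$, so $\eta_j$ is an interior extremum of the phase error of $r_2$; consequently $g_2'(\eta_j) = \omega$, and the same reasoning applied to $r_1$ gives $g_1'(\eta_j) = \omega$, forcing $h'(\eta_j) = 0$. Hence any interior equioscillating zero of $h$ has multiplicity at least two. I would then charge one unit of multiplicity to each of the $2n+1$ subintervals---an interior IVT-zero charged to its containing subinterval, an interior equioscillating zero splitting its double multiplicity across the two adjoining subintervals, and a boundary equioscillating zero at $\eta_1$ or $\eta_{2n+2}$ charged to its unique adjoining subinterval---producing at least $2n+1$ zeros of $h$ counted with multiplicity, after which Proposition~\ref{prop:gidentitytor} concludes $r_1 = r_2$.
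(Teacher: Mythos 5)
Your proof is correct, and while it shares the paper's overall skeleton---reduce uniqueness to exhibiting $2n+1$ zeros of $h=g_2-g_1$ counted with multiplicity, starting from the weak sign alternation $(-1)^jh(\eta_j)\le 0$ at the equioscillation points of $r_1$, and conclude via Proposition~\ref{prop:gidentitytor}---the way you produce those zeros is genuinely different. The paper handles the double-counting danger with an induction adapted from the classical real Chebyshev argument of~\cite{Tre13}: assuming the count fails on $[\eta_1,\eta_{k+1}]$, it forces $h$ to have simple, sign-changing zeros at $\eta_2,\ldots,\eta_k$ and then derives a parity contradiction with the alternation of signs. You avoid that induction entirely by observing that an interior equioscillation point at which $h$ vanishes is automatically a zero of multiplicity at least two: there $g_1(x)-\omega x$ and $g_2(x)-\omega x$ both attain the common extremal value $\pm\alpha$ at an interior point of $[-1,1]$, and since the phase functions are differentiable (sums of arctangents with $\xi_j\neq 0$), both derivatives equal $\omega$ there, so $h'(\eta_j)=0$. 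This first-order condition is exactly what makes your one-unit-per-gap charging airtight, and the resulting argument is shorter and more transparent; the paper's induction, in exchange, uses only sign information together with the fact that a simple zero changes sign. Two minor remarks: Proposition~\ref{prop:gidentitytor} only requires $n_1+n_2+1\le 2n+1$ zeros, so the appeal to minimal degree $n$ for both approximants is not actually needed in the final step; and the equality of the two phase errors indeed follows from Proposition~\ref{prop:approxertophaseerrinequ}.\ref{item:riqtogiq} applied in both directions, as you implicitly do.
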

\begin{proof} 
In contrast to Proposition~\ref{prop:p11equitoopti} we now to consider the case that two unitary functions $r_1,r_2\in\mathcal{U}_n$ both minimize the approximation error. 
Some of the arguments in this proof also appear in the proof~\cite[Theorem~10.1]{Tre13} in a slightly different setting.

Let $r_1\in\mathcal{U}_n$ denote a best approximation which, since $\omega\in(0,(n+1)\pi)$, attains an approximation error $\| r_1 - \exp(\omega\cdot)\| < 2$.
Assume there exists $r_2\in\mathcal{U}_n$ which attains the same approximation error as $r_1$,
\begin{equation*}
\|r_2 - \exp(\omega \cdot)\| = \|r_1 - \exp(\omega \cdot)\| < 2.
\end{equation*}
As a consequence of  Proposition~\ref{prop:approxertophaseerrinequ},
for $g_1$ with $r_1(\mathrm{i} x) = \mathrm{e}^{\mathrm{i} g_1(x) }$ and $g_2$ with $r_2(\mathrm{i} x) = \mathrm{e}^{\mathrm{i} g_2(x) }$
this implies 
\begin{equation*}
\max_{x\in[-1,1]} | g_2(x) - \omega x|
= \max_{x\in[-1,1]} | g_1(x) - \omega x|=:\alpha.
\end{equation*}
Due to optimality of $r_1$, Proposition~\ref{prop:p2optitoequi} implies that $g_1(x)-\omega x$ equioscillates between $2n+2$ points.
Let $\eta_1<\ldots<\eta_{2n+2}$ denote the equioscillation points of $g_1(x)-\omega x$ and assume $g_1(\eta_1) - \omega \eta_1 = -\alpha$ which implies $g_2(\eta_1)-\omega \eta_1 \geq g(\eta_1)-\omega \eta_1$.
Due to equioscillation of $g_1(x)-\omega x$ we have 
\begin{equation}\label{eq:g2minusgatxj}
g_2(\eta_j) - g_1(\eta_j) =
\left\{\begin{array}{ll}
\geq 0,&~\text{for $j=1,3,\ldots,2n+1$, and,}\\
\leq 0,&~\text{for $j=2,4,\ldots,2n+2$.}
\end{array}\right.
\end{equation}
Thus, the function
\begin{equation*}
f(x):= g_2(x) - g_1(x)
\end{equation*}
is zero at least once in $[\eta_j,\eta_{j+1}]$ for $j=1,\ldots, 2n+1$.
We proceed to show that 
\begin{equation}\label{eq:fjm1zerosisubeta1etaj}
\text{$f$ has at least $j-1$ zeros counting multiplicity in $[\eta_1,\eta_j]$,}
\end{equation}
for $j=2,\ldots, 2n+2$, by induction.
\begin{itemize}
\item The case $j=2$. From~\eqref{eq:g2minusgatxj}, since $ g_2(\eta_1) - g_1(\eta_1) \geq 0$ and $ g_2(\eta_2) - g_1(\eta_2) \leq 0$, the function $f$ has at least one zero in $[\eta_1,\eta_2]$. Thus, the statement~\eqref{eq:fjm1zerosisubeta1etaj} holds true for $j=2$.

\item We prove the induction step from $k$ to $k+1$ by contradiction. Assume the statement~\eqref{eq:fjm1zerosisubeta1etaj} holds true for $j=1,\ldots,k$, in particular, $f$ has at least $k-1$ zeros in the interval $[\eta_1,\eta_k]$, and assume~\eqref{eq:fjm1zerosisubeta1etaj} is false for $j=k+1$, i.e., $f$ has $<k$ zeros in $[\eta_1,\eta_{k+1}]$. This implies that $f$ has exactly $k-1$ zeros in $[\eta_1,\eta_k]$ counting multiplicity, and $f$ has no zero in $(\eta_k,\eta_{k+1}]$.

Since $f$ has at least one zero in each interval $[\eta_j,\eta_{j+1}]$, this implies $f(\eta_k)=0$. Moreover, since $f$ has exactly $k-1$ zeros in $[\eta_1,\eta_k]$ and $k-2$ zeros in $[\eta_1,\eta_{k-1}]$ due to our induction assumption, the zero of $f$ at $\eta_k$ has multiplicity one, and $f$ has no other zeros in the interval $(\eta_{k-1},\eta_{k+1}]$. The inequalities~\eqref{eq:g2minusgatxj} imply that $f$ has alternating signs or is zero at the points $\eta_{k-1},\eta_k,\eta_{k+1}$. Namely, $f(\eta_{k+1})\geq 0$ (or $\leq 0$), $f(\eta_{k})\leq 0$ (or $\geq 0$) and $f(\eta_{k-1})\geq 0$ (or $\leq 0$), and since the only zero of $f$ in $(\eta_{k-1},\eta_{k+1}]$ is a simple zero located at $\eta_k$, we conclude $f(\eta_{k-1})=0$. Repeatedly applying these arguments, we observe that $f$ has simple zeros at $\eta_2,\ldots,\eta_k$.

Due to the location of the zeros of $f$, the sign of $f(\eta_1)$ is equal (opposite) to the sign of $f(\eta_{k+1})$ for $k$ even (odd). However, this is in contradiction to the sign of $f$ from~\eqref{eq:g2minusgatxj}. Thus, the assumption that $[\eta_1,\eta_{k+1}]$ has $<k$ zeros counting multiplicity leads to a contradiction, and our induction step holds true. 
\end{itemize}
The induction above shows~\eqref{eq:fjm1zerosisubeta1etaj}, and thus, $f=g_2-g_1$ has at least $2n+1$ zeros in $[\eta_1,\eta_{2n+2}]$ counting multiplicity. As a consequence, Proposition~\ref{prop:gidentitytor} entails $r_1=r_2$ which proves uniqueness of the unitary best approximant.
\end{proof}

\section{Symmetry}\label{sec:symmetry}
In the present work, we refer to a function $r$ as symmetric if
\begin{equation*}\tag{\ref{eq:defsym}}
r(-z) = r(z)^{-1},~~~ z\in \mathbb{C}.
\end{equation*}
A property which certainly holds true for the exponential function, namely, $\mathrm{e}^{-z}=(\mathrm{e}^z)^{-1}$. We use the term symmetry for an approximation $r$ to the exponential function in reference to the respective property of time integrators. Before showing that unitary best approximations are symmetric in Proposition~\ref{prop:sym} below, we state some equivalent definitions for symmetry.
\begin{proposition}\label{prop:symequivstatements}
Let $r\in\mathcal{U}_n$ with minimal degree $n$, and let $\mathrm{e}^{\mathrm{i} \theta}$ denote the complex phase of $r$ as in~\eqref{eq:defrbyp}. The following properties are equivalent.
\begin{enumerate}[label=(\roman*)]
\item\label{item:s2.1symmetric} $r$ is symmetric~\eqref{eq:defsym},
\item\label{item:s2.1xsymUn} $r$ satisfies
\begin{equation}\label{eq:rissymid0b}
r(-\mathrm{i} x) = \overline{r(\mathrm{i} x)},~~~x\in\mathbb{R},
\end{equation}
\item\label{item:s2.2evenodd} the real and imaginary parts of $r(\mathrm{i} x)$ are even and odd, respectively,
\item\label{item:s2.3poles} the poles of~$r$ are either real or come in complex conjugate pairs, and $r$ has the complex phase $\mathrm{e}^{\mathrm{i}\theta}= r(0) \in\{-1,1\}$, and
\item\label{item:s2.4realp} $r(z)=\sigma \rho(-z)/\rho(z)$ where $\rho$ is a real polynomial of degree exactly $n$, and $\sigma = r(0)\in\{-1,1\}$.
\end{enumerate}
\end{proposition}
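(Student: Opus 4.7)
The plan is to establish the five equivalences via the cycle (iii)$\Leftrightarrow$(ii)$\Leftrightarrow$(i)$\Rightarrow$(iv)$\Rightarrow$(v)$\Rightarrow$(i), exploiting unitarity of $r$ throughout and the explicit product form~\eqref{eq:defrbyp}.

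\textbf{The easy equivalences.} The equivalence (ii)$\Leftrightarrow$(iii) is immediate: writing $r(\mathrm{i}x)=a(x)+\mathrm{i}b(x)$ with $a,b$ real, the identity $r(-\mathrm{i}x)=\overline{r(\mathrm{i}x)}$ is nothing but the simultaneous statement $a(-x)=a(x)$ and $b(-x)=-b(x)$. For (i)$\Leftrightarrow$(ii), unitarity~\eqref{eq:runitary} gives $\overline{r(\mathrm{i}x)}=r(\mathrm{i}x)^{-1}$ for $x\in\mathbb{R}$, so (ii) is equivalent to $r(-\mathrm{i}x)r(\mathrm{i}x)=1$ on the imaginary axis. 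The function $z\mapsto r(-z)r(z)$ is rational, hence the identity $r(-z)r(z)=1$ follows on all of $\mathbb{C}$ by analytic continuation, giving~(i). Conversely, (i) specialized to $z=\mathrm{i}x$ combined with unitarity yields~(ii).

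\textbf{(i)$\Rightarrow$(iv).} Using the factorization~\eqref{eq:defrbyp} for $r$ with minimal degree $n$ and poles $s_1,\ldots,s_n$, the zeros of $r$ are $-\overline{s}_1,\ldots,-\overline{s}_n$. Symmetry $r(-z)=r(z)^{-1}$ interchanges zeros and poles of $r(-z)$ with poles and zeros of $r(z)$ respectively; concretely, the poles of $r(-z)$ are $\{-s_j\}$, which must coincide (as multisets) with the zeros of $r(z)^{-1}$, i.e., with $\{-\overline{s}_j\}$. Thus $\{s_j\}=\{\overline{s}_j\}$, so the poles are closed under complex conjugation. Setting $z=0$ in~\eqref{eq:defrbyp} and grouping conjugate pairs (and real poles) gives $\prod_j \overline{s}_j/s_j = 1$, so $r(0)=\mathrm{e}^{\mathrm{i}\theta}$; symmetry forces $r(0)^2=1$ and hence $r(0)=\mathrm{e}^{\mathrm{i}\theta}\in\{-1,1\}$.

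\textbf{(iv)$\Rightarrow$(v).} Define $\rho(z):=\prod_{j=1}^n(z-s_j)$. Because $\{s_j\}$ is closed under conjugation (real roots counted with multiplicity, complex roots in conjugate pairs), $\rho$ has real coefficients and degree exactly $n$. Using $\prod_j(z+\overline{s}_j)=\prod_j(z+s_j)$ (again from the conjugation symmetry of the pole set) in~\eqref{eq:defrbyp}, one obtains
\[
r(z) = (-1)^n \mathrm{e}^{\mathrm{i}\theta}\prod_{j=1}^n\frac{z+s_j}{z-s_j} = \mathrm{e}^{\mathrm{i}\theta}\,\frac{\rho(-z)}{\rho(z)},
\]
and $\sigma:=\mathrm{e}^{\mathrm{i}\theta}=r(0)\in\{-1,1\}$ by~(iv), which is~(v).

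\textbf{(v)$\Rightarrow$(i).} With $\rho$ real and $\sigma^2=1$, a one-line computation gives
\[
r(-z)=\sigma\,\frac{\rho(z)}{\rho(-z)} = \sigma^{-1}\,\frac{\rho(z)}{\rho(-z)} = \left(\sigma\,\frac{\rho(-z)}{\rho(z)}\right)^{-1} = r(z)^{-1},
\]
closing the cycle.

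The main subtlety, rather than any hard calculation, lies in the bookkeeping of (i)$\Rightarrow$(iv): one must verify that symmetry acts correctly at the level of the unordered multisets of poles and zeros given by~\eqref{eq:defrbyp}, and then use the conjugate-pair structure to pin down $\mathrm{e}^{\mathrm{i}\theta}\in\{-1,1\}$ rather than an arbitrary unit complex number. Everything else is either a direct algebraic manipulation or an appeal to unitarity and the rational-function analytic continuation already used elsewhere in the manuscript.
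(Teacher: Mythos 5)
Your argument is correct and follows essentially the same route as the paper: the easy equivalences (i)$\Leftrightarrow$(ii)$\Leftrightarrow$(iii) via unitarity and the identity theorem for rational functions, then conjugation-closure of the pole multiset from~\eqref{eq:defrbyp}, reality of $\rho$, and pinning down $\mathrm{e}^{\mathrm{i}\theta}\in\{-1,1\}$; the paper merely orders the implications among (ii), (iv), (v) slightly differently. One terminological slip in your (i)$\Rightarrow$(iv): the multiset $\{-\overline{s}_j\}$ is the set of \emph{poles} of $r(z)^{-1}$ (equivalently the zeros of $r(z)$), not its zeros, but the resulting identity $\{s_j\}=\{\overline{s}_j\}$ is unaffected.
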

\begin{proof}
Unitarity $|r(\mathrm{i} x)|=1$ implies $\overline{r(\mathrm{i} x)} = r(\mathrm{i} x)^{-1}$ for $x\in\mathbb{R}$. Substituting $\mathrm{i} x$ for $z$ in~\eqref{eq:defsym} and applying this identity, we conclude that~\ref{item:s2.1symmetric} entails~\ref{item:s2.1xsymUn}. On the other hand, for $r\in\mathcal{U}_n$ the identity~\eqref{eq:rissymid0b} corresponds to
\begin{equation}\label{eq:defsymix}
r(-\mathrm{i} x) = r(\mathrm{i} x)^{-1},~~~x\in\mathbb{R}.
\end{equation}
The functions on the left and right-hand side, i.e., $r(-z)$ and $r(z)^{-1}$, are in $\mathcal{U}_n$ since $r\in\mathcal{U}_n$. Since rational functions of degree $(n,n)$ which coincide at $2n+1$ or more points are identical, the identity~\eqref{eq:defsymix} certainly implies~\eqref{eq:defsym}. We conclude that~\ref{item:s2.1symmetric} and~\ref{item:s2.1xsymUn} are equivalent. 

\medskip
We show that~\ref{item:s2.1xsymUn} and~\ref{item:s2.2evenodd} are equivalent.
Considering the real and imaginary parts of $x\mapsto r(\mathrm{i} x)$ for $x\in\mathbb{R}$, we have
\begin{equation*}
r(-\mathrm{i} x) = \operatorname{Re} r(-\mathrm{i} x)+\mathrm{i} \operatorname{Im} r(-\mathrm{i} x),~~~\text{and}~~\overline{r(\mathrm{i} x)} = \operatorname{Re} r(\mathrm{i} x)-\mathrm{i} \operatorname{Im} r(\mathrm{i} x) .
\end{equation*}
Thus,~\eqref{eq:rissymid0b} holds true if and only if
\begin{equation*}
\operatorname{Re} r(-\mathrm{i} x) = \operatorname{Re} r(\mathrm{i} x),~~~\text{and}~~\operatorname{Im} r(-\mathrm{i} x) = -\operatorname{Im} r(\mathrm{i} x),
\end{equation*}
which shows that~\ref{item:s2.1xsymUn} and~\ref{item:s2.2evenodd} are equivalent.

\medskip
We proceed to show that~\ref{item:s2.1xsymUn} implies~\ref{item:s2.4realp}. We recall the representation~\eqref{eq:defrbyp} for $r\in\mathcal{U}_n$ with minimal degree $n$. Let $s_1,\ldots,s_n\in\mathbb{C}$ and $\mathrm{e}^{\mathrm{i} \theta}$ denote the poles and the complex phase of $r$, respectively, where $|\mathrm{e}^{\mathrm{i} \theta}|=1$. Then $r$ is of the form
\begin{equation}\label{eq:defrbypagain}\tag{\ref*{eq:defrbyp}$\star$}
r(z) = (-1)^n \mathrm{e}^{\mathrm{i} \theta}  \prod_{j=1}^{n} \frac{z+\overline{s}_j}{z-s_j},~~~z\in\mathbb{C}.
\end{equation}
Using the representation~\eqref{eq:defrbypagain} for $r(-z)$ and $r(z)^{-1}$, the identity~\eqref{eq:defsym} reads
\begin{equation}\label{eq:defrbyprminusrinv}
\mathrm{e}^{\mathrm{i} \theta}  \prod_{j=1}^{n} \frac{z-\overline{s}_j}{z+s_j}
= \mathrm{e}^{-\mathrm{i} \theta}  \prod_{j=1}^{n} \frac{z-s_j}{z+\overline{s}_j},~~~z\in\mathbb{C}.
\end{equation}
Since we assume that $r\in\mathcal{U}_n$ has minimal degree $n$, the representation~\eqref{eq:defrbypagain} is irreducible and so are the rational functions in~\eqref{eq:defrbyprminusrinv}. As a consequence,~\eqref{eq:defrbyprminusrinv} implies
\begin{equation}\label{eq:rhoids}
\mathrm{e}^{\mathrm{i} \theta}=\mathrm{e}^{-\mathrm{i} \theta},~~~\text{and}~~
\prod_{j=1}^{n} (z-\overline{s}_j) = \prod_{j=1}^{n} (z-s_j) ~~ =:\rho(z),~~~z\in\mathbb{C}.
\end{equation}
While the first identity therein implies $\mathrm{e}^{\mathrm{i}\theta}\in\{-1,1\}$, the latter entails $\overline{\rho(z)} = \rho(\overline{z})$, i.e., $\rho$ is a polynomial with real coefficients. We remark that the numerator in~\eqref{eq:defrbypagain} corresponds to
\begin{equation*}
(-1)^n \prod_{j=1}^{n} (z+\overline{s}_j) = \prod_{j=1}^{n} (-z-\overline{s}_j) = \rho(-z).
\end{equation*}
Substituting $\rho$ in~\eqref{eq:defrbypagain}, we observe
\begin{equation}\label{eq:defrbypgamma}
r(z) = \sigma \frac{\rho(-z)}{\rho(z)},~~~\sigma:=\mathrm{e}^{\mathrm{i} \theta}\in\{-1,1\},~~~z\in\mathbb{C}.
\end{equation}
Substituting $z=0$ therein, we observe $r(0) = \sigma = \mathrm{e}^{\mathrm{i} \theta}$. Thus,~\ref{item:s2.1xsymUn} implies~\ref{item:s2.4realp}.

Moreover, we show that~\ref{item:s2.3poles} is equivalent to~\ref{item:s2.4realp}. Considering~\ref{item:s2.3poles}, we have $r$ of the form~\eqref{eq:defrbypagain} with real or complex conjugate pairs of poles and $\mathrm{e}^{\mathrm{i}\theta}\in\{-1,1\}$. Thus,~\eqref{eq:rhoids} holds true and this shows~\eqref{eq:defrbypgamma} as above, i.e., we arrive at~\ref{item:s2.4realp}. On the other hand, assume~\ref{item:s2.4realp} holds true, then $r(z)=\sigma \rho(-z)/\rho(z)$ corresponds to $r=p^\dag/p$ with $p=\rho$ for $\sigma=1$ and $p=\mathrm{i}\rho$ for $\sigma=-1$. Thus, $\sigma$ corresponds to the complex phase $\mathrm{e}^{\mathrm{i} \theta}$ of $r$ and since $\rho$ has real coefficients, its zeros are real or come in complex conjugate pairs which carries over to the poles of $r$. Hence,~\ref{item:s2.3poles} and~\ref{item:s2.4realp} are equivalent.

\medskip
We proceed to show that~\ref{item:s2.4realp} implies~\ref{item:s2.1xsymUn}. Consider $r(\mathrm{i} x) = \sigma \rho(-\mathrm{i} x)/\rho(\mathrm{i} x)$ where $\sigma\in\{-1,1\}$ and $\rho$ is a polynomial with real coefficients, i.e., $\overline{\rho(z)} = \rho(\overline{z})$. This identity entails
\begin{equation*}
\overline{r(\mathrm{i} x)}
= \sigma \overline{\left(\frac{\rho(-\mathrm{i} x)}{\rho(\mathrm{i} x)}\right)}
= \sigma \frac{\rho(\mathrm{i} x)}{\rho(-\mathrm{i} x)}
= r(-\mathrm{i} x),~~~x\in\mathbb{R},
\end{equation*}
and thus, the identity~\eqref{eq:rissymid0b} holds true, which shows~\ref{item:s2.1xsymUn}.
\end{proof}

\begin{proposition}[Symmetry]\label{prop:sym}
Provided $n$ is a fixed degree and $\omega\in(0,(n+1)\pi)$, the best approximation to $\mathrm{e}^{\mathrm{i} \omega x}$ in $\mathcal{U}_n$ is symmetric.
\end{proposition}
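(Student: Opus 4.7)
The plan is to exploit the uniqueness of the unitary best approximation (Proposition~\ref{prop:p3uniqueness}) by constructing a second candidate best approximation that forces symmetry. Specifically, let $r\in\mathcal{U}_n$ be the unique best approximation guaranteed by Theorem~\ref{thm:bestapprox} for $\omega\in(0,(n+1)\pi)$, and define
\begin{equation*}
\widetilde{r}(z) := \frac{1}{r(-z)},\quad z\in\mathbb{C}.
\end{equation*}
I would show $\widetilde{r}=r$; the identity $r(-z)=r(z)^{-1}$ follows at once.

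First I would verify $\widetilde{r}\in\mathcal{U}_n$. Since $r\in\mathcal{R}_n$ is rational of degree $(n,n)$, so is $z\mapsto 1/r(-z)$. Moreover, for $x\in\mathbb{R}$ we have $|r(-\mathrm{i}x)|=1$ by unitarity of $r$, hence $|\widetilde{r}(\mathrm{i}x)|=1$, so $\widetilde{r}$ is unitary. By the equivalence \ref{item:1requiv}$\Leftrightarrow$\ref{item:2requiv} in Proposition~\ref{prop:unitaryridentities}, $\widetilde{r}\in\mathcal{U}_n$. (If one prefers an explicit $p^\dag/p$ representation: writing $r=p^\dag/p$, a direct computation using $p^\dag(z)=\overline{p}(-z)$ gives $\widetilde{r}=\overline{p}^\dag/\overline{p}$.)

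Next I would compute the approximation error of $\widetilde{r}$. Using $1/r(-\mathrm{i}x)=\overline{r(-\mathrm{i}x)}$ from unitarity and then substituting $y=-x$,
\begin{equation*}
|\widetilde{r}(\mathrm{i}x)-\mathrm{e}^{\mathrm{i}\omega x}|
= |\overline{r(-\mathrm{i}x)}-\mathrm{e}^{\mathrm{i}\omega x}|
= |r(-\mathrm{i}x)-\mathrm{e}^{-\mathrm{i}\omega x}|
= |r(\mathrm{i}y)-\mathrm{e}^{\mathrm{i}\omega y}|\Big|_{y=-x}.
\end{equation*}
Taking the maximum over $x\in[-1,1]$ and using that the interval is symmetric about the origin yields $\|\widetilde{r}-\exp(\omega\cdot)\|=\|r-\exp(\omega\cdot)\|$. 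Hence $\widetilde{r}$ is also a minimizer of the approximation error in $\mathcal{U}_n$.

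Finally, Proposition~\ref{prop:p3uniqueness} (uniqueness of the unitary best approximation, which applies because $\omega\in(0,(n+1)\pi)$ and the common error is $<2$ by Proposition~\ref{prop:omega0}) gives $\widetilde{r}=r$, i.e. $r(z)^{-1}=r(-z)$ on the imaginary axis, and therefore on all of $\mathbb{C}$ since both sides are rational functions of degree $(n,n)$ agreeing on infinitely many points. This is exactly the symmetry~\eqref{eq:defsym}. The only point needing any care in this argument is the verification that $\widetilde{r}\in\mathcal{U}_n$ and that the substitution $y=-x$ is legitimate; everything else is a one-line invocation of uniqueness, so I do not anticipate a substantive obstacle.
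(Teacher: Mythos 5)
Your proposal is correct and is essentially the paper's own argument: the paper likewise forms the competitor $\overline{r}$, which on the imaginary axis equals $\overline{r(-\mathrm{i}x)}=1/r(-\mathrm{i}x)$ (the same rational function as your $\widetilde{r}$), shows via the conjugation/reflection identity that it attains the same error, and invokes uniqueness from Theorem~\ref{thm:bestapprox} to conclude $\widetilde{r}=r$. All the details you flag (membership in $\mathcal{U}_n$, the substitution $y=-x$ over the symmetric interval) check out.
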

\begin{proof}
Let $r\in\mathcal{U}_n$ denote the best approximation to $\mathrm{e}^{\mathrm{i} \omega x}$.
For $r\in\mathcal{U}_n$ the rational function $\overline{r}$, which satisfies $ \overline{r}(\mathrm{i} x) = \overline{r(-\mathrm{i} x)} $ on the imaginary axis, is also unitary, i.e., $\overline{r}\in\mathcal{U}_n$. Moreover, the approximation error of $ \overline{r(-\mathrm{i} x)}\approx \mathrm{e}^{\mathrm{i}\omega x} $ corresponds to
\begin{equation*}
\max_{x\in[-1,1]}|\overline{r(-\mathrm{i} x)} - \mathrm{e}^{\mathrm{i} \omega x}|
= \max_{x\in[-1,1]}|r(-\mathrm{i} x) - \mathrm{e}^{-\mathrm{i} \omega x}|
= \max_{x\in[-1,1]}|r(\mathrm{i} x) - \mathrm{e}^{\mathrm{i} \omega x}|.
\end{equation*}
This shows that $r$ and $\overline{r(-\mathrm{i} x)}$ are both unitary best approximants, and since we show uniqueness of the unitary best approximation in Theorem~\ref{thm:bestapprox}, we conclude $\overline{r(-\mathrm{i} x)} = r(\mathrm{i} x)$.
This entails symmetry~\eqref{eq:rissymid0b}.
\end{proof}

\begin{corollary}\label{cor:gsym}
As shown in Proposition~\ref{prop:sym}, the best approximation $r\in\mathcal{U}_n$ is symmetric for $\omega\in(0,(n+1)\pi)$. We note some consequences of symmetry.

Proposition~\ref{prop:symequivstatements}.\ref{item:s2.2evenodd} shows that $r(\mathrm{i} x)$ has an odd imaginary part which especially implies $\operatorname{Im} r(0)=0$. Since $|r(\mathrm{i} x)|=1$, this entails $r(0)=-1$ or $r(0)=1$. Since $\|r-\exp(\omega \cdot)\|<2$, we have $|r(0)-1|<2$ and we conclude $r(0)=1$. Due to Proposition~\ref{prop:symequivstatements}.\ref{item:s2.3poles}, this carries over to the complex phase of $r$, namely, $\mathrm{e}^{\mathrm{i}\theta}=1$. Moreover, according to Proposition~\ref{prop:symequivstatements}.\ref{item:s2.4realp} the unitary best approximation is of the form
\begin{equation*}
r(z) = \frac{p(-z)}{p(z)},~~~\text{where $p$ is a real polynomial of degree exactly $n$}.
\end{equation*}

Consider the representation $r(\mathrm{i} x)=\mathrm{e}^{\mathrm{i} g(x)}$. We recall the definition of $g$ from~\eqref{eq:defg} for the case that poles of $r$ are subject to Proposition~\ref{prop:symequivstatements}.\ref{item:s2.3poles}, i.e., poles are real or come in complex conjugate pairs. Let $\xi_1,\ldots,\xi_{n_1}\in\mathbb{R}$ denote real poles, and let $ s_{n_1+1},\ldots,s_{n_1+n_2}\in\mathbb{C}$ with $s_j=\xi_j+\mathrm{i}\mu_j$ denote complex poles which occur in pairs $s_j$ and $\overline{s}_j$ with $n_1+2n_2=n$. Then $g$ corresponds to
\begin{equation}\label{eq:defgagain}\tag{\ref*{eq:defg}$\star$}
g(x) = \theta + 2 \sum_{j=1}^{n_1}\arctan\left( \frac{x}{\xi_j}\right) + 2 \sum_{j=n_1+1}^{n_1+n_2}\left(\arctan\left( \frac{x+\mu_j}{\xi_j}\right)+\arctan\left( \frac{x-\mu_j}{\xi_j}\right)\right).
\end{equation}
Since the arc tangents is an odd function, terms in the first sum therein are also odd, namely,
\begin{equation*}
\arctan\left( \frac{-x}{\xi_j}\right) = -\arctan\left( \frac{x}{\xi_j}\right),
\end{equation*}
for $j=1,\ldots,n_1$. Inserting $-x$ for terms of the second sum in~\eqref{eq:defgagain} and making use of arc tangents being odd, we observe
\begin{equation*}
\arctan\left( \frac{-x+\mu_j}{\xi_j}\right)+\arctan\left( \frac{-x-\mu_j}{\xi_j} \right)
= -\arctan\left( \frac{x-\mu_j}{\xi_j}\right)-\arctan\left( \frac{x+\mu_j}{\xi_j} \right),
\end{equation*}
for $j=n_1+1,\ldots,n_1+n_2$.
Thus, both sums in~\eqref{eq:defgagain} are odd and vanish for $x=0$, i.e., $g(0)=\theta$ and $r(0)=\textrm{e}^{\textrm{i} \theta}$. It remains to show that $\theta=0$.
Following Proposition~\ref{prop:approxertophaseerrinequ}, the case $\|r-\exp(\omega \cdot)\|<2$ entails $\max_{x\in[-1,1]}  | g(x) - \omega x | < \pi$, in particular, $|g(0)|<\pi$. Since $r(0)=1$ this implies $g(0)=0$. We conclude $\theta=0$ in~\eqref{eq:defgagain}. Thus, for the unitary best approximation $r(\mathrm{i} x)=\mathrm{e}^{\mathrm{i} g(x)}$ the function $g$ is odd, i.e., $g(-x)=-g(x)$.
\end{corollary}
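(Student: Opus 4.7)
The plan is to leverage symmetry of the best approximation (Proposition~\ref{prop:sym}) together with the equivalent characterizations in Proposition~\ref{prop:symequivstatements} to successively pin down the value $r(0)$, the complex phase, the rational-function form, and finally the parity of the phase function $g$.

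First I would apply Proposition~\ref{prop:symequivstatements}.\ref{item:s2.2evenodd} at $x=0$: since $\operatorname{Im} r(\mathrm{i} x)$ is odd, $\operatorname{Im} r(0)=0$. Combined with unitarity $|r(0)|=1$, this forces $r(0)\in\{-1,1\}$. The hypothesis $\omega\in(0,(n+1)\pi)$ gives via Proposition~\ref{prop:omega0} the non-maximality $\|r-\exp(\omega\cdot)\|<2$, so in particular $|r(0)-1|<2$, which rules out $r(0)=-1$ and yields $r(0)=1$. Invoking Proposition~\ref{prop:symequivstatements}.\ref{item:s2.4realp} with $\sigma=r(0)=1$ then gives the representation $r(z)=p(-z)/p(z)$ with $p$ a real polynomial of degree exactly $n$, and simultaneously $\mathrm{e}^{\mathrm{i}\theta}=1$ via Proposition~\ref{prop:symequivstatements}.\ref{item:s2.3poles}.

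Next I would substitute the pole structure from Proposition~\ref{prop:symequivstatements}.\ref{item:s2.3poles} (real poles plus complex conjugate pairs) into the generic formula~\eqref{eq:defg} for the phase function. Grouping the real poles $\xi_1,\ldots,\xi_{n_1}$ separately from the conjugate pairs $s_{n_1+j},\overline{s}_{n_1+j}=\xi_{n_1+j}\pm\mathrm{i}\mu_{n_1+j}$ produces the expression~\eqref{eq:defgagain}. The key observation is parity: each real-pole term $\arctan(x/\xi_j)$ is odd, and swapping $x\mapsto -x$ in a conjugate-pair contribution $\arctan((x-\mu_j)/\xi_j)+\arctan((x+\mu_j)/\xi_j)$ reverses its sign via oddness of $\arctan$. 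Therefore the entire sum in~\eqref{eq:defgagain} is odd in $x$, and $g(x)=\theta+h(x)$ with $h$ odd; evaluating at zero gives $g(0)=\theta$.

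The remaining subtle point is to nail down $\theta=0$ as opposed to some other representative in $2\pi\mathbb{Z}$ (which would be consistent with $\mathrm{e}^{\mathrm{i}\theta}=1$ alone). This is where I would invoke Proposition~\ref{prop:approxertophaseerrinequ}.\ref{item:approxandphaseerrlesstwo}--\ref{item:gunique}: since $\|r-\exp(\omega\cdot)\|<2$, there is a \emph{unique} phase function with $\max_{x\in[-1,1]}|g(x)-\omega x|<\pi$, and this is the $g$ being considered. In particular $|g(0)|<\pi$, which combined with $\mathrm{e}^{\mathrm{i} g(0)}=r(0)=1$ forces $g(0)=0$, hence $\theta=0$. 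Consequently $g=h$ is odd. I anticipate no genuine obstacle; every step is a direct consequence of a previously established proposition, and the only care needed is to enforce the uniqueness normalization of the phase function exactly when fixing $\theta$.
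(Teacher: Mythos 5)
Your proposal is correct and follows essentially the same route as the paper: pin down $r(0)=1$ via oddness of $\operatorname{Im} r(\mathrm{i}x)$, unitarity and non-maximality of the error, deduce $\mathrm{e}^{\mathrm{i}\theta}=1$ and the real-polynomial form from Proposition~\ref{prop:symequivstatements}, show the arctangent sums in~\eqref{eq:defgagain} are odd so $g(0)=\theta$, and finally use the uniqueness normalization $\max_{x\in[-1,1]}|g(x)-\omega x|<\pi$ from Proposition~\ref{prop:approxertophaseerrinequ} to force $\theta=0$. You correctly identified the one subtle point — distinguishing $\theta=0$ from other representatives in $2\pi\mathbb{Z}$ — exactly as the paper does.
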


Moreover, symmetry also affects equioscillation points and interpolation nodes.
\begin{proposition}\label{prop:inodessym}
The interpolation nodes and equioscillation points of the unitary best approximation are mirrored around the origin.
\end{proposition}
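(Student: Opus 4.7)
The plan is to reduce everything to the observation that the phase error is an odd function of $x$. By Proposition~\ref{prop:sym} the unitary best approximation $r\in\mathcal{U}_n$ is symmetric, and then by Corollary~\ref{cor:gsym} its phase function $g$ in the representation $r(\mathrm{i} x)=\mathrm{e}^{\mathrm{i} g(x)}$ satisfies $g(-x)=-g(x)$. Setting $h(x):=g(x)-\omega x$ for the phase error, oddness of $g$ and of $\omega x$ gives $h(-x)=-h(x)$, so $|h|$ is an even function of $x$ on $[-1,1]$.

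For the equioscillation points, Theorem~\ref{thm:bestapprox}.\ref{item:rwequi} identifies $\eta_1<\ldots<\eta_{2n+2}$ as exactly the $2n+2$ points in $[-1,1]$ at which $|h|$ attains its maximum value $\alpha=\max_{x\in[-1,1]}|h(x)|$. Since $|h|$ is even, this extremal set is invariant under $x\mapsto -x$, and because it contains exactly $2n+2$ ordered elements the involution must reverse their order, giving $\eta_j=-\eta_{2n+3-j}$ for $j=1,\ldots,2n+2$. This is precisely mirroring about the origin.

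For the interpolation nodes, Corollary~\ref{cor:errattainsmax} identifies $x_1<\ldots<x_{2n+1}$ in $(-1,1)$ as exactly the zeros of $h$ in $(-1,1)$, one in each interval $(\eta_j,\eta_{j+1})$. Oddness of $h$ makes its zero set symmetric about the origin; moreover $h(0)=g(0)=0$ by Corollary~\ref{cor:gsym}, and the point $x=0$ cannot be an equioscillation point since for $\omega\in(0,(n+1)\pi)$ Proposition~\ref{prop:omega0} yields $\alpha>0$ whereas $|h(0)|=0$. Thus $0$ lies strictly between two consecutive equioscillation points (necessarily $\eta_{n+1}$ and $\eta_{n+2}$) and hence coincides with the middle interpolation node $x_{n+1}$; the remaining $2n$ nodes come in symmetric pairs, giving $x_j=-x_{2n+2-j}$.

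There is no substantial obstacle here: once symmetry of $r$ and oddness of $g$ have been established in the preceding section, the assertion is an essentially immediate consequence of the structural characterization (Theorem~\ref{thm:bestapprox} and Corollary~\ref{cor:errattainsmax}) of equioscillation points and interpolation nodes as, respectively, extrema and zeros of the phase error. The only mildly delicate point is the bookkeeping at $x=0$ needed to rule out $0$ being an equioscillation point, which is handled by combining $g(0)=0$ with the non-maximality bound $\alpha<\pi$ from Proposition~\ref{prop:omega0}.
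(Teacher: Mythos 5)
Your proof is correct and follows essentially the same route as the paper: establish via Proposition~\ref{prop:sym} and Corollary~\ref{cor:gsym} that the phase error is odd, then observe that its zero set (the interpolation nodes) and its set of extremal points (the equioscillation points) are consequently invariant under $x\mapsto -x$. The extra bookkeeping identifying $x_{n+1}=0$ and ruling out $0$ as an equioscillation point is correct but not needed for the statement; the paper leaves it implicit.
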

\begin{proof}
Let $r$ denote the unitary best approximation to $\mathrm{e}^{\mathrm{i} \omega x}$ for a fixed degree $n$ and $\omega\in(0,(n+1)\pi)$, and let $r(\mathrm{i} x) =\mathrm{e}^{\mathrm{i} g(x)}$, where the representation $g$ is unique as in Proposition~\ref{prop:approxertophaseerrinequ}. Following Proposition~\ref{prop:sym}, $r$ is symmetric, and we show in Corollary~\ref{cor:gsym} that, consequently, the function $g$ is odd which entails that the phase error $g(x)-\omega x$ is odd. Thus, its zeros $x_1,\ldots,x_{2n+1}$ are mirrored around the origin. This carries over to the interpolation nodes $\mathrm{i}x_1,\ldots,\mathrm{i}x_{2n+1}$ (see Corollary~\ref{cor:errattainsmax}). In a similar manner the equioscillation points, which correspond exactly to the points of extreme value of the (odd) phase error, are also mirrored around the origin.
\end{proof}

\section{Continuity in \texorpdfstring{$\omega$}{w}}\label{sec:cont}
In Proposition~\ref{prop:wtomincontinuous} further above we show that the approximation error of the unitary best approximation is continuous in the frequency $\omega$. In the present section we clarify that the unitary best approximation to $\mathrm{e}^{\mathrm{i}\omega x}$ also depends continuously on $\omega$. This carries over to the poles, the phase function $g$, the phase error, the equioscillation points and the interpolation nodes of the unitary best approximation. 
\begin{proposition}\label{prop:minimizercontinuousonw}
For a fixed degree $n$ and $\omega\in(0,(n+1)\pi)$, the unique minimizer
\begin{equation*}
r_\omega = \operatorname*{arg\,min}_{u \in \mathcal{U}_n} \| u - \exp(\omega \cdot)\|,
\end{equation*}
depends continuously on $\omega$. In particular,
$\|r_{\omega}-r_{\omega_a}\|\to 0$ for $\omega\to\omega_a\in(0,(n+1)\pi)$.
\end{proposition}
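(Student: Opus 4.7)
The plan is to combine the compactness provided by Proposition~\ref{prop:convsubsequence}, the Lipschitz continuity of the minimal error from Proposition~\ref{prop:wtomincontinuous}, and the uniqueness of the unitary best approximation from Theorem~\ref{thm:bestapprox} in a standard subsequence/unique-limit argument. Fix $\omega_a\in(0,(n+1)\pi)$ and any sequence $\omega_k\to\omega_a$. The goal is $\|r_{\omega_k}-r_{\omega_a}\|\to 0$.

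First I would write each $r_{\omega_k}=p_k^\dag/p_k$ with $\|p_k\|=1$ and, following the argument in the proof of Proposition~\ref{prop:convsubsequence}, extract a subsequence with $p_{k_j}\to \widetilde p$ uniformly on $[-1,1]$, where $\widetilde p$ is a polynomial of degree $\leq n$ with $\|\widetilde p\|=1$. Set $\widetilde r:=\widetilde p^\dag/\widetilde p\in\mathcal{U}_n$. Then $r_{\omega_{k_j}}(\mathrm{i} x)\to \widetilde r(\mathrm{i} x)$ pointwise for every $x\in[-1,1]$ outside the (at most $n$) zeros of $\widetilde p(\mathrm{i}\cdot)$.

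Next, I would identify $\widetilde r$ as the best approximation at $\omega_a$. By Proposition~\ref{prop:wtomincontinuous}.\ref{item:minecontinw},
\begin{equation*}
\|r_{\omega_{k_j}}-\exp(\omega_{k_j}\cdot)\|\longrightarrow \min_{u\in\mathcal{U}_n}\|u-\exp(\omega_a\cdot)\|.
\end{equation*}
At any $x$ where $\widetilde p(\mathrm{i} x)\neq 0$, estimate~\eqref{eq:erratxcontinrw} combined with the above convergence gives
\begin{equation*}
|\widetilde r(\mathrm{i} x)-\mathrm{e}^{\mathrm{i}\omega_a x}|=\lim_{j\to\infty}|r_{\omega_{k_j}}(\mathrm{i} x)-\mathrm{e}^{\mathrm{i}\omega_{k_j} x}|\leq \min_{u\in\mathcal{U}_n}\|u-\exp(\omega_a\cdot)\|,
\end{equation*}
and by continuity of $\widetilde r(\mathrm{i}\cdot)$ and $\mathrm{e}^{\mathrm{i}\omega_a\cdot}$ (at removable singularities) the bound extends to all $x\in[-1,1]$. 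Hence $\widetilde r$ attains the minimum, and by the uniqueness assertion of Theorem~\ref{thm:bestapprox} we conclude $\widetilde r=r_{\omega_a}$.

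The main obstacle, and the key point of the proof, is upgrading this pointwise limit to convergence in $\|\cdot\|$. Here I would use that $r_{\omega_a}$ has minimal degree exactly $n$ with distinct poles (Theorem~\ref{thm:bestapprox}) and is therefore irreducible (Proposition~\ref{prop:fulldegree}); in particular none of its poles lie on the imaginary axis, since any pole $s$ with $s\in\mathrm{i}\mathbb{R}$ would satisfy $-\overline{s}=s$ and violate irreducibility. Consequently $\widetilde p$ has no zeros on $\mathrm{i}[-1,1]$, so $|\widetilde p(\mathrm{i} x)|\geq c>0$ uniformly on $[-1,1]$, and the uniform polynomial convergence $p_{k_j}\to\widetilde p$ and $p_{k_j}^\dag\to\widetilde p^\dag$ yields $\|r_{\omega_{k_j}}-r_{\omega_a}\|\to 0$. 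A standard subsequence argument then promotes this to convergence of the full sequence: every subsequence of $(r_{\omega_k})$ admits, by the reasoning above, a further subsequence converging to the unique limit $r_{\omega_a}$ in $\|\cdot\|$, so $\|r_{\omega_k}-r_{\omega_a}\|\to 0$ as $\omega_k\to\omega_a$.
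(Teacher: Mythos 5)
Your proof is correct and follows essentially the same route as the paper's: extract a pointwise-convergent subsequence via Proposition~\ref{prop:convsubsequence}, identify the limit as a minimizer using Proposition~\ref{prop:wtomincontinuous}, and invoke the uniqueness from Theorem~\ref{thm:bestapprox} (the paper phrases this as a contradiction with a sub-subsequence bounded away from $r_{\omega_a}$, which is logically the same subsequence principle). Your explicit upgrade from pointwise to uniform convergence — using that $r_{\omega_a}$ has minimal degree $n$, so $\widetilde p$ is nonvanishing on $\mathrm{i}[-1,1]$ and the quotients converge uniformly — is a detail the paper's proof leaves implicit, and including it is a genuine improvement in rigor rather than a different approach.
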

\begin{proof}
From Theorem~\ref{thm:bestapprox} we recall that the minimizer $r_\omega$ is unique for $\omega\in(0,(n+1)\pi)$. Let $\{\omega_j\in(0,(n+1)\pi)\}_{j\in\mathbb{N}}$ denote a sequence with $\omega_j \to \omega_a\in(0,(n+1)\pi)$ and let $r_j\in\mathcal{U}_n$ be defined as
\begin{equation*}
r_j = \operatorname*{arg\,min}_{r\in\mathcal{U}_n}\|r - \exp(\omega_j\cdot )\|,~~~\text{thus,}~~
\min_{r\in\mathcal{U}_n} \|r - \exp(\omega_j\cdot )\| = \|r_j - \exp(\omega_j\cdot )\|,~~~j\in\mathbb{N}.
\end{equation*}
We further define $r_a\in\mathcal{U}_n$ as
\begin{equation*}
r_a = \operatorname*{arg\,min}_{r\in\mathcal{U}_n} \|r - \exp(\omega_a\cdot )\|,~~~\text{thus,}~~
\min_{r\in\mathcal{U}_n} \|r - \exp(\omega_a\cdot )\| = \|r_a - \exp(\omega_a\cdot )\|.
\end{equation*}
Assume $\lim_j r_j \neq r_a$ in the $\|.\|$ norm, then, for a some $\varepsilon>0$ we find a sub-sequence $r_{j_k}$ with $\|r_{j_k}-r_a\|>\varepsilon$. Due to Proposition~\ref{prop:convsubsequence}, this sub-sequence has a converging sub-sub-sequence ($r_{j_\ell}$ with $\{r_{j_\ell}\}\subset\{r_{j_k}\}$). We let $r_b\in\mathcal{U}_n$ denote the limit of $r_{j_\ell}$. Namely, $|r_{j_\ell}(\mathrm{i} x) - r_b(\mathrm{i} x)|\to 0$ for $x\in[-1,1]$ up to at most $n$ points. Due to continuity and by construction of the sub-sequence $\{r_{j_k}\}$ this implies $r_b\neq r_a$.
From Proposition~\ref{prop:wtomincontinuous}.\ref{item:erroratxiscont} we deduce continuity of the point-wise deviation, i.e.,
\begin{equation}\label{eq:erjwjpointwisetoerw}
|r_{j_\ell}(\mathrm{i} x)-\mathrm{e}^{\mathrm{i} \omega_{j_\ell} x}| \to |r_b(\mathrm{i} x)-\mathrm{e}^{\mathrm{i} \omega_a x}|,
\end{equation}
for points $x\in[-1,1]$ for which $r_{j_\ell}(\mathrm{i} x) \to r_b(\mathrm{i} x)$.

Due to continuity of $\min_{r\in\mathcal{U}_n}\|r - \exp(\omega\cdot )\|$ in $\omega$ as given in Proposition~\ref{prop:wtomincontinuous}.\ref{item:minecontinw}, we get
\begin{equation*}
 \|r_{j_\ell} - \exp(\omega_{j_\ell}\cdot )\|
= \min_{r\in\mathcal{U}_n} \|r - \exp(\omega_{j_\ell}\cdot )\|
\to \min_{r\in\mathcal{U}_n} \|r - \exp(\omega_a\cdot )\|
= \|r_a - \exp(\omega_a\cdot )\|,
\end{equation*}
for $\ell\to\infty$ where $\omega_{j_\ell} \to \omega_a$, i.e.,
\begin{equation}\label{eq:erjwjmaxtoerw}
\max_{x\in[-1,1]}|r_{j_\ell}(\mathrm{i} x)-\mathrm{e}^{\mathrm{i} \omega_{j_\ell} x}|
\to \max_{x\in[-1,1]}|r_a(\mathrm{i} x)-\mathrm{e}^{\mathrm{i} \omega_a x}|.
\end{equation}
Combining~\eqref{eq:erjwjpointwisetoerw} and~\eqref{eq:erjwjmaxtoerw}, for points $y\in[-1,1]$ for which $r_{j_\ell}(\mathrm{i} y) \to r_b(\mathrm{i} y)$, we get
\begin{equation}\label{eq:erjwjpointwisetoerw2}
\begin{aligned}
|r_b(\mathrm{i} y)-\mathrm{e}^{\mathrm{i} \omega_a y}|
& = \lim_{\ell\to\infty}|r_{j_\ell}(\mathrm{i} y)-\mathrm{e}^{\mathrm{i} \omega_{j_\ell} y}|\\
& \leq \lim_{\ell\to\infty} \max_{x\in[-1,1]}  |r_{j_\ell}(\mathrm{i} x)-\mathrm{e}^{\mathrm{i} \omega_{j_\ell} x}|
= \max_{x\in[-1,1]}|r_a(\mathrm{i} x)-\mathrm{e}^{\mathrm{i} \omega_a x}|,
\end{aligned}
\end{equation}
and due to continuity of the error in $x$, we get
\begin{equation}\label{eq:erjwjpointwisetoerw3}
\|r_b-\exp(\omega_a \cdot)\| \leq \|r_a -\exp(\omega_a\cdot)\|.
\end{equation}
This yields a contradiction since $r_a$ is the unique minimizer for $\omega=\omega_a$ and $r_b\neq r_a$ by construction.
\end{proof}

We recall that from symmetry arguments, Corollary~\ref{cor:gsym}, the complex phase of $r_\omega$ is one for $\omega \in(0,(n+1)\pi)$, i.e., $\mathrm{e}^{\mathrm{i} \theta}=1$ in the representation~\eqref{eq:defrbyp}. In the following proposition, we show that the poles of $r_\omega$ are continuous in $\omega$.
\begin{proposition}\label{prop:polescont}
For a given degree $n$ and $\omega \in(0,(n+1)\pi)$ the poles of the unitary best approximation to $\mathrm{e}^{\mathrm{i} \omega x}$ depend continuously on $\omega$.
\end{proposition}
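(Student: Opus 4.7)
The plan is to pass from continuity of $r_\omega$ in the uniform norm, already established in Proposition~\ref{prop:minimizercontinuousonw}, to continuity of the coefficients of a canonical polynomial representation of $r_\omega$, and thence to continuity of the zeros of that polynomial via the classical fact (e.g.\ via Rouch\'e's theorem) that the roots of a polynomial depend continuously on its coefficients. By Corollary~\ref{cor:gsym} together with Theorem~\ref{thm:bestapprox}.\ref{item:rwmindegreen}, for $\omega\in(0,(n+1)\pi)$ the unitary best approximation admits the representation $r_\omega(z)=p_\omega(-z)/p_\omega(z)$ where $p_\omega$ is a real polynomial of exact degree $n$, unique up to a nonzero real scalar. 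I would fix this ambiguity by requiring $p_\omega$ to be monic, so that the poles of $r_\omega$ are precisely the zeros of $p_\omega$.

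The bulk of the proof is to show that $\omega\mapsto p_\omega$ is continuous in coefficients. Given a sequence $\omega_k\to\omega_a$ in $(0,(n+1)\pi)$, I would argue by subsequence extraction. Because the coefficients of a monic polynomial are not a priori bounded, I would rescale and set $\widetilde p_{\omega_k}:=p_{\omega_k}/M_k$, where $M_k$ denotes the largest absolute value of a coefficient of $p_{\omega_k}$; then we still have $r_{\omega_k}(z)=\widetilde p_{\omega_k}(-z)/\widetilde p_{\omega_k}(z)$, and the family $\{\widetilde p_{\omega_k}\}$ lies in the compact set of real polynomials of degree $\leq n$ with coefficient-max norm one. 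Any convergent subsequence $\widetilde p_{\omega_{k_j}}\to\widetilde p^*$ combined with $r_{\omega_k}\to r_{\omega_a}$ on $\mathrm{i}[-1,1]$ yields $r_{\omega_a}(z)=\widetilde p^*(-z)/\widetilde p^*(z)$, pointwise at the infinitely many $x\in[-1,1]$ where $\widetilde p^*(\mathrm{i} x)\neq 0$, hence as rational functions on all of $\mathbb{C}$.

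The main obstacle is to rule out degeneracy of $\widetilde p^*$. If $\widetilde p^*$ had degree $m<n$, or a pair of antipodal roots, then $\widetilde p^*(-z)/\widetilde p^*(z)$ in irreducible form would have minimal degree strictly smaller than $n$, contradicting Theorem~\ref{thm:bestapprox}.\ref{item:rwmindegreen}. Hence $\widetilde p^*$ has degree exactly $n$ and no antipodal root pair; by uniqueness of the representation up to nonzero real scaling, together with the fact that the positive leading coefficient $1/M_k$ of $\widetilde p_{\omega_k}$ passes to a positive limit, we identify $\widetilde p^*$ with $p_{\omega_a}/M_a$, where $M_a$ is the coefficient-max norm of $p_{\omega_a}$. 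Since every convergent subsequence of the bounded family $\{\widetilde p_{\omega_k}\}$ shares this limit, the full sequence converges; reading off the leading coefficients gives $M_k\to M_a$, and hence $p_{\omega_k}\to p_{\omega_a}$ in coefficient norm. Continuity of the zeros in the coefficients then transfers to convergence of the poles of $r_{\omega_k}$ to those of $r_{\omega_a}$, as required.
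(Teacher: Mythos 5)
Your proposal is correct, and it follows the same overall skeleton as the paper's proof --- uniform convergence of $r_\omega$ (Proposition~\ref{prop:minimizercontinuousonw}), then convergence of the denominator coefficients, then continuity of polynomial roots in the coefficients --- but it fills in the middle step differently. The paper simply invokes non-degeneracy from Theorem~\ref{thm:bestapprox} and cites~\cite[Theorem~1]{Gu90} for the fact that uniform convergence of non-degenerate $(n,n)$ rational functions forces coefficient convergence of numerator and denominator, before concluding with~\cite[Proposition~5.2.1]{Ar11}. You instead prove the coefficient convergence from scratch: you exploit the symmetric representation $r_\omega(z)=p_\omega(-z)/p_\omega(z)$ with $p_\omega$ real of exact degree $n$ (Corollary~\ref{cor:gsym}, Proposition~\ref{prop:symequivstatements}), normalize to a compact coefficient sphere, extract a subsequential limit $\widetilde p^*$, identify $r_{\omega_a}=\widetilde p^*(-\cdot)/\widetilde p^*(\cdot)$ from pointwise convergence off the finitely many zeros of $\widetilde p^*$ on $\mathrm{i}[-1,1]$, and rule out degeneracy of $\widetilde p^*$ (degree drop or antipodal root pairs, i.e.\ exactly the irreducibility criterion of Proposition~\ref{prop:fulldegree}) using the minimal degree $n$ of $r_{\omega_a}$; the sign bookkeeping via the positive leading coefficient correctly pins down the limit, so all subsequential limits coincide. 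This buys a self-contained argument that avoids the external well-posedness theorem, at the cost of leaning on the symmetry structure specific to unitary best approximants; the paper's citation-based route is shorter and would work for any non-degenerate family of rational approximants without a canonical real representation.
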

\begin{proof}
We consider a sequence $\{\omega_j\in(0,(n+1)\pi)\}_{j\in\mathbb{N}}$ which converges $\omega_j \to \omega_a\in(0,(n+1)\pi)$. Following Proposition~\ref{prop:minimizercontinuousonw}, the corresponding sequence of unitary best approximations $r_{\omega_j}$ converges to $r_{\omega_a}$ in the $\|\cdot\|$ norm. Let $p_{\omega_j}$ and $p_{\omega_a}$ denote the denominators of $r_{\omega_j}$ and $r_{\omega_a}$, respectively.
We remark that $r_{\omega_j}$ and $r_{\omega_a}$ are non-degenerate rational functions, and respectively, $p_{\omega_j}$ and $p_{\omega_a}$ have degree exactly $n$, see Theorem~\ref{thm:bestapprox}. In the non-degenerate case, the convergence of $r_{\omega_j}$ to $r_{\omega_a}$ in the $\|\cdot\|$ norm also implies that the coefficients of $p_{\omega_j}$ converge to the coefficients of $p_{\omega_a}$, see~\cite[Theorem~1]{Gu90} for instance. Convergence of the coefficients of $p_{\omega_j}$ implies that its zeros converge to the zeros of $p_{\omega_a}$ (up to ordering, cf.~\cite[Proposition~5.2.1]{Ar11}). We recall that the zeros of $p_{\omega_j}$ correspond to the poles of $r_{\omega_j}$. Especially, we have shown that if $\omega_j\to\omega_a$, then the poles of $r_{\omega_j}$ converge to $r_{\omega_a}$ which shows our assertion on continuity.
\end{proof}
\begin{proposition}\label{prop:gcont}
Let $n$ denote a fixed degree and $\omega \in(0,(n+1)\pi)$. Let $r_\omega(\mathrm{i} x)\approx\mathrm{e}^{\mathrm{i} \omega x}$ denote the best approximation in $\mathcal{U}_n$, and let $g_\omega$ correspond to the unique representation $r_\omega(\mathrm{i} x)=\mathrm{e}^{\mathrm{i} g_\omega(x)} $ as in Proposition~\ref{prop:approxertophaseerrinequ}. Then $g_\omega$ depends continuously on $\omega$. Namely, $\max_{x\in[-1,1]} |g_{\omega_j}(x)-g_{\omega_a}(x)|\to 0 $ for a sequence $\omega_j\to\omega_a\in(0,(n+1)\pi)$.
\end{proposition}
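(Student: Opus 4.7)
The plan is to use the explicit representation~\eqref{eq:defg} for $g_\omega$ in terms of the poles of $r_\omega$, combined with the continuity of poles from Proposition~\ref{prop:polescont} and the symmetry consequences of Corollary~\ref{cor:gsym}.

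First, I would invoke Corollary~\ref{cor:gsym} to pin down the complex phase: since the unitary best approximation is symmetric with $r_\omega(0)=1$ for $\omega \in (0,(n+1)\pi)$, the representation~\eqref{eq:defg} holds with $\theta = 0$. Theorem~\ref{thm:bestapprox}.\ref{item:rwmindegreen} further guarantees that $r_\omega$ has minimal degree exactly $n$ with distinct poles; writing these as $s_j(\omega) = \xi_j(\omega) + \mathrm{i}\mu_j(\omega)$ for $j = 1, \ldots, n$, Proposition~\ref{prop:fulldegree} ensures $\xi_j(\omega) \neq 0$ for all $j$, and
\begin{equation*}
g_\omega(x) = 2\sum_{j=1}^n \arctan\!\left(\frac{x - \mu_j(\omega)}{\xi_j(\omega)}\right).
\end{equation*}

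Second, by Proposition~\ref{prop:polescont}, the poles $s_j(\omega)$ depend continuously on $\omega$ under a suitable labeling. Given $\omega_a \in (0,(n+1)\pi)$, I would select a compact neighborhood $K \subset (0,(n+1)\pi)$ of $\omega_a$ on which, by continuity and non-vanishing of $\xi_j$, the quantity $|\xi_j(\omega)|$ is bounded below by a positive constant and $|\mu_j(\omega)|$ is bounded above. Consequently, the arguments $(x - \mu_j(\omega))/\xi_j(\omega)$ remain in a fixed compact subset of $\mathbb{R}$ as $(x,\omega)$ ranges over $[-1,1] \times K$, and they depend continuously on $(x,\omega)$ jointly.

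Finally, uniform continuity of $\arctan$ on this compact set, together with the joint continuity of the arguments, delivers $\max_{x\in[-1,1]} |g_\omega(x)-g_{\omega_a}(x)| \to 0$ as $\omega \to \omega_a$; since any sequence $\omega_j \to \omega_a$ eventually lies in $K$, this settles the claim. I do not anticipate a serious obstacle; the only point requiring mild care is that the enumeration of poles is not canonical, but this is immaterial because $g_\omega$ is a symmetric function of the poles, so any labeling consistent with the continuous selection provided by Proposition~\ref{prop:polescont} works.
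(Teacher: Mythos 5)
Your proof is correct and follows essentially the same route as the paper's: both fix $\theta=0$ via Corollary~\ref{cor:gsym}, use the continuity of the poles from Proposition~\ref{prop:polescont} together with the non-vanishing of their real parts (minimal degree $n$), and conclude via compactness of $[-1,1]$ and continuity of $\arctan$. Your version merely spells out the uniform-continuity and pole-labeling details that the paper leaves implicit.
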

\begin{proof}
Consider $g_\omega$ from Proposition~\ref{prop:approxertophaseerrinequ} as defined in~\eqref{eq:defg}. Following Corollary~\ref{cor:gsym}, the constant term $\theta$ in $g_\omega$ is zero for all $\omega\in(0,(n+1)\pi)$. The poles of $r_\omega$ are continuous in $\omega$ as shown in Proposition~\ref{prop:polescont}, and poles have a non-zero real part since the best approximation has minimal degree $n$. Together with compactness of $[-1,1]$ and continuity of the $\arctan$ function, this entails that the function $g_\omega$~\eqref{eq:defg} changes continuously in $\omega$ in the $\infty$-norm over $x\in[-1,1]$.
\end{proof}

\begin{corollary}[to Proposition~\ref{prop:gcont}]\label{cor:phaseerrcont}
The phase error $g_\omega (x) - \omega x$ is continuous in $\omega$ for $\omega\in(0,(n+1)\pi)$. Moreover, this carries over to its maximum, i.e.,
\begin{equation*}
\max_{x\in[-1,1]}|g_\omega (x) - \omega x|,
\end{equation*}
due to compactness of $[-1,1]$.
\end{corollary}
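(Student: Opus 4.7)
The plan is to derive both claims directly from Proposition~\ref{prop:gcont}, which supplies the uniform continuity of $\omega \mapsto g_\omega$ on $[-1,1]$. Fix $\omega_a \in (0,(n+1)\pi)$ and let $\omega_j \to \omega_a$ in $(0,(n+1)\pi)$. For each $x\in[-1,1]$ the triangle inequality gives
\begin{equation*}
\bigl|(g_{\omega_j}(x) - \omega_j x) - (g_{\omega_a}(x) - \omega_a x)\bigr|
\leq |g_{\omega_j}(x) - g_{\omega_a}(x)| + |x|\,|\omega_j - \omega_a|.
\end{equation*}
Taking the supremum over $x\in[-1,1]$ and using $|x|\leq 1$, I obtain
\begin{equation*}
\max_{x\in[-1,1]}\bigl|(g_{\omega_j}(x) - \omega_j x) - (g_{\omega_a}(x) - \omega_a x)\bigr|
\leq \max_{x\in[-1,1]}|g_{\omega_j}(x) - g_{\omega_a}(x)| + |\omega_j - \omega_a|,
\end{equation*}
and by Proposition~\ref{prop:gcont} both terms on the right vanish as $\omega_j\to\omega_a$, establishing continuity of the phase error in $\omega$ in the sup-norm on $[-1,1]$.

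For the second claim, I would invoke the standard reverse-triangle inequality for the uniform norm: for any continuous $f,h$ on the compact set $[-1,1]$,
\begin{equation*}
\Bigl|\max_{x\in[-1,1]}|f(x)| - \max_{x\in[-1,1]}|h(x)|\Bigr|
\leq \max_{x\in[-1,1]}\bigl||f(x)| - |h(x)|\bigr|
\leq \max_{x\in[-1,1]}|f(x) - h(x)|.
\end{equation*}
Applying this with $f(x)=g_{\omega_j}(x)-\omega_j x$ and $h(x)=g_{\omega_a}(x)-\omega_a x$ and chaining with the bound from the first step yields continuity of $\omega\mapsto\max_{x\in[-1,1]}|g_\omega(x)-\omega x|$. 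Compactness of $[-1,1]$ guarantees that the maximum is attained and hence well-defined, which is the only role compactness plays.

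There is no genuine obstacle in this argument: it is a direct consequence of Proposition~\ref{prop:gcont} together with the trivial Lipschitz continuity of the linear term $\omega x$ in $\omega$ (with constant $1$ on $[-1,1]$) and the standard contraction property of the sup-norm under absolute values. The proof should accordingly be short.
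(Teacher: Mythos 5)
Your proof is correct and follows exactly the route the paper intends: the corollary is an immediate consequence of Proposition~\ref{prop:gcont} (sup-norm continuity of $g_\omega$ in $\omega$) combined with the trivial $1$-Lipschitz dependence of $\omega x$ on $\omega$ over $[-1,1]$ and the standard reverse-triangle inequality for the uniform norm. The paper gives no separate proof, and your write-up supplies precisely the details it leaves implicit.
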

\begin{proposition}\label{prop:eo1max}
Let $n$ be a given degree and $\omega\in(0,(n+1)\pi)$. Then the phase error of the unitary best approximation to $\mathrm{e}^{\mathrm{i}\omega x}$ attains a maximum at the first equioscillation point $\eta_1=-1$. Thus, the equioscillation points $\eta_1<\ldots<\eta_{2n+1}$ of the unitary best approximation satisfy
\begin{equation}\label{eq:defeo2}
g(\eta_j)-\omega \eta_j = (-1)^{j+1} \max_{x\in[-1,1]} |g(x) - \omega x| ,~~~\text{for $j=1,\ldots,2n+2$}.
\end{equation}
\end{proposition}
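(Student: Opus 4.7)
The plan is to show that $g_\omega(-1)+\omega$ has a fixed sign across the connected interval $(0,(n+1)\pi)$ via continuity plus a nonvanishing argument, and then to pin down that sign by a direct computation on the sub-interval $(n\pi,(n+1)\pi)$ using the $\arctan$ representation of $g_\omega$ together with odd symmetry.

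First I would note that, since $\eta_1=-1$ is an equioscillation point (Theorem~\ref{thm:bestapprox}(iii)), definition~\eqref{eq:defeo} gives $g_\omega(-1)+\omega=(-1)^{1+\iota}\alpha(\omega)$ with $\alpha(\omega):=\max_{x\in[-1,1]}|g_\omega(x)-\omega x|$ and $\iota\in\{0,1\}$; the claim~\eqref{eq:defeo2} is exactly that $\iota=1$, equivalently $g_\omega(-1)+\omega>0$. Continuity of $\omega\mapsto g_\omega(-1)+\omega$ on $(0,(n+1)\pi)$ follows from Corollary~\ref{cor:phaseerrcont}. To rule out zeros, note that $g_\omega(-1)+\omega=0$ would force $\alpha(\omega)=0$, hence $g_\omega(x)=\omega x$ on $[-1,1]$ and therefore $r_\omega(\mathrm{i}x)=\mathrm{e}^{\mathrm{i}\omega x}$ on the interval; by analytic continuation this would equate a rational function with the transcendental entire function $\mathrm{e}^{\omega z}$, which is impossible for $\omega>0$. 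Hence $\omega\mapsto g_\omega(-1)+\omega$ is continuous and nowhere zero on the connected set $(0,(n+1)\pi)$, so it has constant sign, and it suffices to verify positivity at a single $\omega$.

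To verify that sign I would fix any $\omega\in(n\pi,(n+1)\pi)$ and invoke the representation of $g_\omega$ from Corollary~\ref{cor:gsym}: the constant $\theta$ vanishes and the $n$ poles of $r_\omega$ split into $n_1$ real poles and $n_2$ conjugate pairs with $n_1+2n_2=n$, giving
\[
g_\omega(x)=2\sum_{j=1}^{n_1}\arctan(x/\xi_j)+2\sum_{j=n_1+1}^{n_1+n_2}\bigl[\arctan((x-\mu_j)/\xi_j)+\arctan((x+\mu_j)/\xi_j)\bigr].
\]
Each $\arctan$ lies strictly in $(-\pi/2,\pi/2)$, so $|g_\omega(1)|\leq n\pi$ (strict for $n\geq 1$, and trivially $g_\omega(1)=0$ when $n=0$), and combined with $\omega>n\pi$ this yields $g_\omega(1)-\omega<0$. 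By the odd symmetry of $g_\omega$ (also from Corollary~\ref{cor:gsym}) the phase error $h_\omega(x):=g_\omega(x)-\omega x$ is itself odd, so $g_\omega(-1)+\omega=h_\omega(-1)=-h_\omega(1)>0$. Together with the sign-constancy established above, this gives $g_\omega(-1)+\omega>0$ throughout $(0,(n+1)\pi)$, which is precisely~\eqref{eq:defeo2}.

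The main, and rather mild, obstacle is the non-vanishing step; once that is in hand, the rest is a crude range bound from the $\arctan$ representation in a convenient right-end sub-interval combined with odd symmetry, with no asymptotic analysis or detailed information about the pole locations required.
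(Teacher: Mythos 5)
Your proposal is correct and follows essentially the same route as the paper's proof: both establish positivity of the phase error at $\eta_1=-1$ for $\omega\in(n\pi,(n+1)\pi)$ from the bound $|g_\omega|\leq n\pi$ coming from the $\arctan$ representation with $\theta=0$ (you evaluate at $x=1$ and use oddness, the paper evaluates at $x=-1$ directly, which is the same computation), and both then extend to all of $(0,(n+1)\pi)$ by continuity in $\omega$ combined with non-vanishing of the phase error at the equioscillation point. Your non-vanishing step (a zero would force the rational best approximant to coincide with $\mathrm{e}^{\omega z}$, impossible by analytic continuation) is just a slightly more explicit version of the paper's appeal to the non-triviality of the phase error.
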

\begin{proof}
Due to symmetry the phase function $g$~\eqref{eq:defg} of the unitary best approximation is an odd function with $\theta=0$ (see Corollary~\ref{cor:gsym}). Thus, $g$ attains values between $-n\pi$ and $n\pi$. As a consequence, for $\omega \in (n\pi,(n+1)\pi)$ the phase error $g(x)-\omega x$ is strictly positive at the first equioscillation point $\eta_1=-1$. Following Theorem~\ref{thm:bestapprox}, the phase error is non-trivial, i.e., $\max_{x\in[-1,1]} |g(x) - \omega x|>0$ for $\omega\in(0,(n+1)\pi)$, and in particular, $g(\eta_1) - \omega \eta_1 \neq 0$ for $\omega\in(0,(n+1)\pi)$ since the phase error attains its maximum in absolute value at the equioscillation points. 

We summarize that, at $\eta_1$ the phase error is non-zero for $\omega\in(0,(n+1)\pi)$ and strictly positive for $\omega \in (n\pi,(n+1)\pi)$. Due to continuity in $\omega$, the phase error is strictly positive at $\eta_1$ for $\omega\in(0,(n+1)\pi)$, and thus, it attains a maximum at $\eta_1$. As a consequence, the definition of equioscillation points~\eqref{eq:defeo} specifies to~\eqref{eq:defeo2} for the unitary best approximation.
\end{proof}

\begin{proposition}\label{prop:inodescontinuous}
For a given degree $n$ and $\omega \in(0,(n+1)\pi)$ the equioscillation points and interpolation nodes of the best approximation in $\mathcal{U}_n$ depend continuously on $\omega$.
\end{proposition}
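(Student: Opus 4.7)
The plan is to reduce the continuity of equioscillation points and interpolation nodes to a subsequential compactness argument, using the continuity of the phase function $g_\omega$ and phase error $g_\omega(x)-\omega x$ already established in Proposition~\ref{prop:gcont} and Corollary~\ref{cor:phaseerrcont}, together with the uniqueness and counting assertions of Theorem~\ref{thm:bestapprox}, Corollary~\ref{cor:errattainsmax} and Proposition~\ref{prop:eo1max}. Fix a sequence $\omega_j \to \omega_a\in(0,(n+1)\pi)$ and denote the $2n+2$ equioscillation points of $r_{\omega_j}$ by $\eta_1^{(j)}<\ldots<\eta_{2n+2}^{(j)}$. Since these lie in the compact interval $[-1,1]$, a diagonal extraction yields a subsequence along which $\eta_k^{(j)} \to \eta_k^* \in [-1,1]$ for each $k$, with $\eta_k^* \leq \eta_{k+1}^*$.

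The first step identifies the limits $\eta_k^*$ as the equioscillation points of $r_{\omega_a}$. Writing $E_\omega := \max_{x\in[-1,1]} |g_\omega(x) - \omega x|$, Proposition~\ref{prop:eo1max} gives $g_{\omega_j}(\eta_k^{(j)}) - \omega_j \eta_k^{(j)} = (-1)^{k+1} E_{\omega_j}$. Uniform convergence of $g_{\omega_j}$ to $g_{\omega_a}$ on $[-1,1]$ (Proposition~\ref{prop:gcont}) and continuity of $E_\omega$ in $\omega$ (Corollary~\ref{cor:phaseerrcont}) permit passing to the limit, yielding
\begin{equation*}
g_{\omega_a}(\eta_k^*) - \omega_a \eta_k^* = (-1)^{k+1} E_{\omega_a}.
\end{equation*}
By Proposition~\ref{prop:omega0} we have $E_{\omega_a}>0$, so consecutive limits $\eta_k^*$ and $\eta_{k+1}^*$ cannot coincide (otherwise the phase error would simultaneously take opposite signs at the same point). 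Thus the $\eta_k^*$ are strictly ordered, and constitute $2n+2$ equioscillation points of $r_{\omega_a}$. Since Theorem~\ref{thm:bestapprox}\ref{item:rwequi} states there are exactly $2n+2$ such points, we must have $\eta_k^* = \eta_k^{(\omega_a)}$. As this limit is independent of the chosen subsequence, the full sequence $\eta_k^{(j)}$ converges to $\eta_k^{(\omega_a)}$, establishing continuity of the equioscillation points.

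For the interpolation nodes $x_1^{(\omega)}<\ldots<x_{2n+1}^{(\omega)}$, repeat the compactness argument. Extract a subsequence with $x_k^{(j)} \to x_k^*$. By Corollary~\ref{cor:errattainsmax}, $x_k^{(j)} \in (\eta_k^{(j)},\eta_{k+1}^{(j)})$ and $g_{\omega_j}(x_k^{(j)}) = \omega_j x_k^{(j)}$; passing to the limit via Proposition~\ref{prop:gcont} and the continuity of the equioscillation points gives $x_k^* \in [\eta_k^{(\omega_a)},\eta_{k+1}^{(\omega_a)}]$ and $g_{\omega_a}(x_k^*) = \omega_a x_k^*$. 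Since the phase error at the equioscillation points equals $\pm E_{\omega_a} \neq 0$, the endpoints are excluded, and Corollary~\ref{cor:errattainsmax} (strict monotonicity between equioscillation points) guarantees a unique zero of the phase error in the open subinterval, forcing $x_k^* = x_k^{(\omega_a)}$.

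The main obstacle is the non-degeneracy needed to ensure that the limiting points $\eta_k^*$ remain distinct — without this, the limits might collapse and fail to be genuine equioscillation points of $r_{\omega_a}$. This is exactly where the restriction $\omega \in (0,(n+1)\pi)$ enters crucially through Proposition~\ref{prop:omega0}, giving $E_{\omega_a}>0$ and ensuring the alternating-sign structure survives the limit. The same non-vanishing underlies the corresponding argument for interpolation nodes by keeping them strictly interior to the equioscillation intervals.
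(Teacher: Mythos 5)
Your proof is correct, but it takes a genuinely different route from the paper's. The paper argues locally: for each zero $x_j(\omega_a)$ of the phase error it picks a small $\varepsilon$-neighbourhood on which the (simple) zero forces strict opposite signs at the endpoints, invokes continuity of the phase error in $\omega$ to preserve those signs for nearby $\omega_b$, and concludes by the intermediate value theorem; the equioscillation points are then handled by a similar but more delicate local argument that traps a persistent extremum between consecutive zeros. You instead argue globally: extract a convergent subsequence of the $2n+2$ equioscillation points, pass to the limit in the identity $g_{\omega_j}(\eta_k^{(j)})-\omega_j\eta_k^{(j)}=(-1)^{k+1}E_{\omega_j}$ using uniform convergence of $g_\omega$ and continuity of $E_\omega$, use $E_{\omega_a}>0$ to keep the limits distinct, and then invoke the exact count of Theorem~\ref{thm:bestapprox}\ref{item:rwequi} together with the characterization of equioscillation points as the points of maximal phase error (Corollary~\ref{cor:errattainsmax}) to identify the limits; subsequence-independence upgrades this to full convergence, and the nodes follow from interlacing plus uniqueness of the zero between consecutive equioscillation points. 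Your approach is shorter and leans harder on the structural results (exact count, maximality characterization, interlacing), avoiding the $\varepsilon$--$\delta\omega$ bookkeeping of the paper, at the cost of the compactness/diagonal-extraction machinery. One cosmetic point: the positivity $E_{\omega_a}>0$ is not what Proposition~\ref{prop:omega0} asserts (that gives the upper bound $<2$); it follows from Theorem~\ref{thm:bestapprox} (a nontrivial equioscillation) or simply from the impossibility of a rational function agreeing with $\mathrm{e}^{\mathrm{i}\omega x}$ on an interval -- the paper makes exactly this attribution in the proof of Proposition~\ref{prop:eo1max}.
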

\begin{proof}
Let $r_\omega(\mathrm{i} x)\approx \mathrm{e}^{\mathrm{i} \omega x}$ denote the best approximation in $\mathcal{U}_n$ with $r_\omega(\mathrm{i} x)=\mathrm{e}^{\mathrm{i} g_\omega(x)} $ where $g_\omega$ is defined as in Proposition~\ref{prop:approxertophaseerrinequ}.
Following Theorem~\ref{thm:bestapprox}, there exist exactly $2n+2$ equioscillation points $\eta_1,\ldots,\eta_{2n+2}$ where the phase error $g_\omega(x)-\omega x$ attains its extreme value with alternating sign.
Following Corollary~\ref{cor:errattainsmax}, the zeros of the phase error $x_1,\ldots,x_{2n+1}$ and the equioscillation points are interlaced, i.e., $x_j\in(\eta_j,\eta_{j+1})$ for $j=1,\ldots,2n+1$, and these zeros correspond to simple zeros. Since the equioscillation points and zeros of the phase error depend on $\omega$, we may also write $\eta_j=\eta_j(\omega)$ and $x_j=x_j(\omega)$. We further recall that the interpolation nodes of $r_\omega$ correspond to $\mathrm{i}x_1(\omega),\ldots,\mathrm{i}x_{2n+1}(\omega)$, and equivalently to our assertion on interpolation nodes, we proceed to show that the zeros $x_j(\omega)$ depend continuously on $\omega$.

Consider $\omega_a\in(0,(n+1)\pi)$ and $j\in\{1,\ldots,2n+1\}$ fixed. To simplify our notation we let $\gamma_\omega$ denote the phase error, i.e., $\gamma_\omega(x) := g_\omega(x) - \omega x $. Furthermore, we first assume that the phase error for $\omega=\omega_a$ is negative for $x$ left of the zero $x_j(\omega_a)$. Especially, for a sufficiently small $\varepsilon>0$, and $t_1 := x_j(\omega_a)-\varepsilon$ and $t_2 := x_j(\omega_a)+\varepsilon$, we have $\gamma_{\omega_a}(t_1)<0$ and $\gamma_{\omega_a}(t_2)>0$ since $x_j(\omega_a)$ is a simple zeros.
Since the phase error $\gamma_\omega$ depends continuously on $\omega$ (see Corollary~\ref{cor:phaseerrcont}), we find a sufficiently small $\delta\omega>0$ s.t.\ for $\omega_b\in(\omega_a-\delta\omega, \omega_a+\delta\omega)$ the inequalities $\gamma_{\omega_b}(t_1)<0$ and $\gamma_{\omega_b}(t_2)>0$ remain true. The phase error is continuous in $x$, and thus, $\gamma_{\omega_b}(x)$ has a zero in the interval $[t_1,t_2]$. Especially, such $\delta\omega>0$ exists for an arbitrary small $\varepsilon>0$, i.e., for an arbitrary small neighborhood $[t_1,t_2]$ of $x_j(\omega_a)$, which shows that the zeros $x_j(\omega)$ depends continuously $\omega$. Similar arguments hold true for the case that the phase error is positive left of the zero $x_j(\omega_a)$. Thus, the zeros of the phase error, and consequently, the interpolation nodes $\mathrm{i}x_1(\omega),\ldots,\mathrm{i}x_{2n+1}(\omega)$, depend continuously on $\omega$.

We proceed to consider equioscillation points in a similar manner. Since $\eta_1(\omega)=-1$ and $\eta_{2n+2}(\omega)=1$ we only need to consider $\eta_j(\omega)$ for $j\in\{2,\ldots,2n+1\}$. Let $\omega_a\in(0,(n+1)\pi)$ and $j\in\{2,\ldots,2n+1\}$ be fixed. We first consider indices $j$ s.t.\ the phase error attains a maximum at $\eta_j(\omega_a)$, namely, $\alpha := \gamma_{\omega_a}(\eta_j(\omega_a)) > 0$.
For a sufficiently small $\beta>0$ and $\varepsilon>0$, the points $t_1:=\eta_j(\omega_a)-\varepsilon$ and $t_2:=\eta_j(\omega_a)+\varepsilon$ satisfy $\gamma_{\omega_a}(t_\ell)< \alpha-2\beta$ for $\ell=1,2$ and the phase error $ \gamma_{\omega_a}$ is strictly positive on the interval $[t_1,t_2]$. Since the phase error is continuous in $\omega$ (see Corollary~\ref{cor:phaseerrcont}), we find a sufficiently small $\delta\omega>0$ s.t.\ for $\omega_b\in(\omega_a-\delta\omega,\omega_a+\delta\omega)$ the inequalities $\gamma_{\omega_b}(\eta_j(\omega_a))>\alpha-\beta$ and $\gamma_{\omega_b}(t_\ell)<\alpha-\beta$ for $\ell=1,2$ hold true, and in addition, the phase error remains strictly positive on the interval $[t_1,t_2]$.
Thus, for $\omega_b\in(\omega_a-\delta\omega,\omega_a+\delta\omega)$ the phase error $\gamma_{\omega_b}$ has at least one maximum in the interval $[t_1,t_2]$. Since the phase error does not change its sign in $[t_1,t_2]$, this interval remains enclosed by the zeros $x_{j-1}(\omega_b)$ and $x_j(\omega_b)$. The phase error has exactly one extreme value, i.e., the equioscillation point $\eta_j(\omega_b)$, in between $x_{j-1}(\omega_b)$ and $x_j(\omega_b)$, which shows that the maximum in $[t_1,t_2]$ corresponds to $\eta_j(\omega_b)$ for $\omega_b\in(\omega_a-\delta\omega,\omega_a+\delta\omega)$. For an arbitrary small $\varepsilon>0$, i.e., for an arbitrary small interval $[t_1,t_2]$ around $x_j(\omega_a)$, we find $\beta,\delta\omega>0$ s.t.\ this argument holds true. Thus, the equioscillation point $\eta_j(\omega_a)$ changes continuously with $\omega$. Similar arguments apply for the equioscillation points $\eta_j(\omega)$ where the phase error attains a minimum, which completes our proof.
\end{proof}

\section{Error behavior and convergence}\label{sec:convergence}
In the present section we provide an upper bound and an asymptotic expansion for the approximation error of the unitary best approximation. These results are based on an upper error bound for the diagonal Pad\'e approximation and an asymptotic error expansion for the rational interpolation at Chebyshev nodes.

\subsection{Error bound}\label{subsec:errbound}
We first remark some results for the diagonal Pad\'e approximation, continuing from Subsection~\ref{subsec:intropade}.
The $(n,n)$-Pad\'e approximation~\eqref{eq:defPade} satisfies, cf.~\cite[eq.~(10.24)]{Hi08},
\begin{equation}\label{eq:leadingordererrorpade}
\mathrm{e}^{z} - \widehat{r}(z) = (-1)^n \frac{(n!)^2}{(2n)!(2n+1)!} z^{2n+1} +\mathcal{O}(|z|^{2n+2}),~~~~|z|\to 0.
\end{equation}
In the present work we apply the $(n,n)$-Pad\'e approximation $\widehat{r}(z)\approx\mathrm{e}^z$ as $\widehat{r}(\mathrm{i} \omega x)\approx\mathrm{e}^{\mathrm{i} \omega x}$ where $x\in[-1,1]$.
For the respective error we recall the notation from~\eqref{eq:errnormnotation}, i.e.,
\begin{equation*}
\| \widehat{r}(\omega\cdot) - \exp(\omega\cdot)\| = \max_{x\in[-1,1]} |\widehat{r}(\mathrm{i} \omega x) - \mathrm{e}^{\mathrm{i} \omega x}|.
\end{equation*}

\begin{proposition}[A direct consequence of Theorem~5 in~\cite{LC92}]
\label{prop:padeerrorbound}
The $(n,n)$-Pad\'e approximation to $\textrm{e}^{z}$ satisfies
\begin{equation}\label{eq:errboundpade}
\| \widehat{r}(\omega\cdot) - \exp(\omega\cdot)\|
\leq \frac{(n!)^2 \omega^{2n+1}}{(2n)!(2n+1)!}.
\end{equation}
\end{proposition}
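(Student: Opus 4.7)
The plan is to invoke Theorem~5 of~\cite{LC92}, which establishes a sharp uniform bound on the modulus of the error of the diagonal $(n,n)$-Pad\'e approximation $\widehat{r}$ to $\mathrm{e}^z$ on the imaginary axis, of the form
\begin{equation*}
|\mathrm{e}^{\mathrm{i} y} - \widehat{r}(\mathrm{i} y)| \leq \frac{(n!)^2}{(2n)!(2n+1)!}\,|y|^{2n+1}, \qquad y\in\mathbb{R}.
\end{equation*}
First I would substitute $y = \omega x$ with $x\in[-1,1]$, which yields the pointwise estimate
\begin{equation*}
|\mathrm{e}^{\mathrm{i} \omega x} - \widehat{r}(\mathrm{i} \omega x)| \leq \frac{(n!)^2\,\omega^{2n+1}}{(2n)!(2n+1)!}\,|x|^{2n+1}.
\end{equation*}
Since $|x|^{2n+1}\leq 1$ on $[-1,1]$ with equality at the endpoints, taking the maximum over $x$ as in the definition~\eqref{eq:errnormnotation} of $\|\cdot\|$ yields~\eqref{eq:errboundpade}.

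There is no substantive obstacle; after quoting the external bound the argument reduces to an elementary maximization in a single variable. As a sanity check, the constant $(n!)^2/((2n)!(2n+1)!)$ in~\eqref{eq:errboundpade} matches exactly the leading-order coefficient visible in the asymptotic expansion~\eqref{eq:leadingordererrorpade}, so~\eqref{eq:errboundpade} is asymptotically sharp as $\omega\to 0^+$ with the extrema approached at $x=\pm 1$. If a self-contained derivation were preferred, the plan would instead be to start from the Hermite integral representation of the Pad\'e remainder,
\begin{equation*}
\widehat{p}(z)\mathrm{e}^{z} - \widehat{p}^\dag(z) = \frac{(-1)^n z^{2n+1}}{(2n)!}\int_0^1 \mathrm{e}^{z(1-t)}\,t^n(1-t)^n\,dt,
\end{equation*}
(cf.~\cite[Thm.~3.12]{HW02}), divide by $\widehat{p}(z)$, specialize to $z=\mathrm{i} y$ where $|\mathrm{e}^{z(1-t)}|=1$, apply the beta-integral identity $\int_0^1 t^n(1-t)^n\,dt = (n!)^2/(2n+1)!$, and finally invoke the lower bound $|\widehat{p}(\mathrm{i} y)|\geq |\widehat{p}(0)|=1$ that follows from $|\widehat{p}(\mathrm{i} y)|^2$ being an even polynomial in $y$ with non-negative coefficients (a consequence of unitarity of $\widehat{r}$ together with the standard normalization of $\widehat{p}$).
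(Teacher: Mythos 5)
Your proposal is correct and follows essentially the same route as the paper: quote the point-wise bound $|\widehat{r}(\mathrm{i}\omega x)-\mathrm{e}^{\mathrm{i}\omega x}|\leq \frac{(n!)^2(\omega|x|)^{2n+1}}{(2n)!(2n+1)!}$ from Theorem~5 of~\cite{LC92} and take the maximum over $x\in[-1,1]$. The additional remarks on asymptotic sharpness and the self-contained Hermite-integral derivation are sound but not needed.
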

\begin{proof}
This error bound directly follows from the point-wise error bound in~\cite[Theorem~5]{LC92}, i.e.,
\begin{equation*}
|\widehat{r}(\mathrm{i} \omega x) - \mathrm{e}^{\mathrm{i} \omega x}|
\leq \frac{(n!)^2 (\omega|x|)^{2n+1}}{(2n)!(2n+1)!},~~~x\in\mathbb{R}.
\end{equation*}
\end{proof}
Comparing with~\eqref{eq:leadingordererrorpade} for $z=\mathrm{i}\omega x$ and $x\in[-1,1]$, we remark that the error bound~\eqref{eq:errboundpade} is asymptotically correct for $\omega\to 0^+$. Thus, this error bound is tight for sufficiently small frequencies $\omega>0$, and potentially, even for frequencies outside of an asymptotic regime.

In the following proposition we show that the error bound~\eqref{eq:errboundpade} for the $(n,n)$-Pad\'e approximation also yields an error bound for the unitary best approximation. 
\begin{proposition}[Error bound]\label{prop:errorbound}
The error of the unitary best approximation for $\omega>0$ is bounded by
\begin{equation}\label{prop:errorboundconverg}
\min_{u\in\mathcal{U}_n}\| u - \exp(\omega\cdot)\| \leq \frac{(n!)^2 \omega^{2n+1}}{(2n)!(2n+1)!}.
\end{equation}
\end{proposition}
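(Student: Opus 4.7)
The plan is to exhibit a specific element of $\mathcal{U}_n$ whose approximation error is already bounded by the claimed quantity, which will then dominate the infimum/minimum. The natural candidate is (a rescaled version of) the diagonal Pad\'e approximation $\widehat{r}$ introduced in Subsection~\ref{subsec:intropade}, because Proposition~\ref{prop:padeerrorbound} already delivers precisely the desired upper bound for its error on $\mathrm{i}\omega[-1,1]$.

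First, I would recall from Subsection~\ref{subsec:intropade} that the $(n,n)$-Pad\'e approximation has the representation $\widehat{r} = \widehat{p}^\dag/\widehat{p}$ with $\widehat{p}$ a polynomial of degree exactly $n$, so that $\widehat{r} \in \mathcal{U}_n$. Next, I would define the rescaled function $u(z) := \widehat{r}(\omega z)$ and verify $u \in \mathcal{U}_n$ by setting $\widetilde{p}(z) := \widehat{p}(\omega z)$; then $\widetilde{p}$ is a polynomial of degree $\leq n$, and a direct computation gives
\begin{equation*}
\widetilde{p}^\dag(z) = \overline{\widetilde{p}(-z)} = \overline{\widehat{p}(-\omega z)} = \widehat{p}^\dag(\omega z),
\end{equation*}
so that $u(z) = \widehat{p}^\dag(\omega z)/\widehat{p}(\omega z) = \widetilde{p}^\dag(z)/\widetilde{p}(z) \in \mathcal{U}_n$ by definition.

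With this candidate in hand, the definition of the norm~\eqref{eq:errnormnotation} gives
\begin{equation*}
\| u - \exp(\omega\cdot) \| = \max_{x\in[-1,1]} |u(\mathrm{i}x) - \mathrm{e}^{\mathrm{i}\omega x}| = \max_{x\in[-1,1]} |\widehat{r}(\mathrm{i}\omega x) - \mathrm{e}^{\mathrm{i}\omega x}| = \| \widehat{r}(\omega\cdot) - \exp(\omega\cdot) \|,
\end{equation*}
so that Proposition~\ref{prop:padeerrorbound} applies and bounds this quantity by $(n!)^2 \omega^{2n+1}/((2n)!(2n+1)!)$. Since $u \in \mathcal{U}_n$, the minimum over $\mathcal{U}_n$ can only be smaller, yielding~\eqref{prop:errorboundconverg}.

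There is essentially no obstacle here: the proof reduces to recognizing that the Pad\'e approximation (after rescaling the argument by $\omega$) is an admissible competitor in the unitary best-approximation problem and then invoking Proposition~\ref{prop:padeerrorbound}. The only point requiring a line of verification is that the rescaling $z \mapsto \omega z$ preserves membership in $\mathcal{U}_n$, which it does because the set $\mathcal{U}_n$ is defined in terms of polynomial degree and the dagger operation, both of which are preserved by multiplication of the argument by a positive real scalar.
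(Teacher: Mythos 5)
Your proof is correct and follows essentially the same route as the paper: the rescaled Pad\'e approximation $\widehat{r}(\omega\cdot)$ is exhibited as a competitor in $\mathcal{U}_n$ and Proposition~\ref{prop:padeerrorbound} is invoked to bound its error. The only difference is that you spell out the verification that rescaling by $\omega$ preserves membership in $\mathcal{U}_n$, which the paper states without computation.
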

\begin{proof}
The $(n,n)$-Pad\'e approximation $\widehat{r}$ is a unitary rational approximation and this carries over to the re-scaled Pad\'e approximation, i.e.,\ $\widehat{r}(\omega\cdot)\in\mathcal{U}_n$. Especially,
\begin{equation*}
\min_{u\in\mathcal{U}_n} \|u-\exp(\omega\cdot)\| \leq \| \widehat{r}(\omega\cdot) - \exp(\omega \cdot)\|.
\end{equation*}
Together with~\eqref{eq:errboundpade} this shows
the error bound~\eqref{prop:errorboundconverg}.
\end{proof}
We remark that with Stirling's approximation the upper bound in~\eqref{prop:errorboundconverg} approximately corresponds to
\begin{equation}\label{eq:errupperboundapprox}
\frac{(n!)^2 \omega^{2n+1}}{(2n)!(2n+1)!}
\approx \frac{\sqrt{2n}}{\sqrt{2n+1}} \left(\frac{\mathrm{e}\, \omega}{2(2n+1)}\right)^{2n+1}.
\end{equation}
This formula indicates that super-linear convergence occurs  for $\mathrm{e}\, \omega/(2(2n+1))<1$, i.e.,
\begin{equation}\label{eq:superlinearconvwhen}
\omega \leq  4\mathrm{e}^{-1} (n+1/2) \approx 1.47 (n+1/2).
\end{equation}
In particular, this bound accurately represents the error bound~\eqref{prop:errorboundconverg} for larger $n$, when Stirling's approximation is accurate and $\sqrt{2n}/\sqrt{2n+1}\approx 1$. However, since the error bound~\eqref{prop:errorboundconverg} is not necessarily tight, super-linear convergence can already be expected for larger $\omega$ (or smaller $n$).

The error bound~\eqref{prop:errorboundconverg} implies asymptotic convergence for $\omega\to 0$, and super-linear convergence in the degree $n$. While this error bound is certainly relevant from a theoretical point of view, it's not necessarily practical. In the following subsection, we study the asymptotic error for $\omega\to0^+$ which also leads to a more practical error estimate.

\subsection{Asymptotic error and error estimate}\label{subsec:asymerr}

We proceed to derive an asymptotic expansion of the approximation error of the unitary best approximation to $\textrm{e}^{\textrm{i} \omega x}$ for $\omega\to0^+$. This result is closely related a similar expansion for the rational interpolant to $\textrm{e}^{\omega z}$ at Chebyshev nodes on the imaginary axis. Let $T_{2n+1}$ denote the Chebyshev polynomial of degree $2n+1$. Provided $\tau_1,\ldots,\tau_{2n+1}$ refer to the $2n+1$ Chebyshev nodes as in~\eqref{eq:ratinterpolateCheb}, we have the representation
\begin{equation*}
T_{2n+1}(x) = 2^{2n} \prod_{j=1}^{2n+1} (x-\tau_j).
\end{equation*}
Certainly, the monic Chebyshev polynomial is $2^{-2n}T_{2n+1}(x)$. Following classical results, for instance \cite[Corollary~8.1]{SM03}, the monic Chebyshev polynomial uniquely minimizes the $\infty$-norm on $[-1,1]$ in the set of monic polynomials, i.e., for any sequence $x_1,\ldots,x_{2n+1}\in[-1,1]$ distinct to the sequence of Chebyshev nodes,
\begin{equation}\label{eq:Chebmin}
\max_{x\in[-1,1]}\left| \prod_{j=1}^{2n+1} (x-\tau_j)\right| = 2^{-2n} < \max_{x\in[-1,1]}\left| \prod_{j=1}^{2n+1} (x-x_j)\right|.
\end{equation}

\begin{proposition}\label{prop:asymerrcheb}
Let $\mathring{r}_\omega$ denote the rational interpolant to $\mathrm{e}^{\omega z}$ at Chebyshev nodes on the imaginary axis, i.e.,~\eqref{eq:ratinterpolateCheb}. Then
\begin{equation}\label{eq:asymerrratintCheb}
\|\mathring{r}_\omega - \exp(\omega\cdot)\|
= \frac{2(n!)^2 }{(2n)!(2n+1)!}\left(\frac{\omega}{2} \right)^{2n+1}+\mathcal{O}(\omega^{2n+2}),~~~\omega\to 0^+.
\end{equation}
\end{proposition}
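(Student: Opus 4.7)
The plan is to pass to a rescaled variable that collapses the Chebyshev nodes to the origin as $\omega \to 0^+$, and then compare the resulting rational interpolant with the diagonal Pad\'e approximation. Define $\tilde{r}_\omega(w) := \mathring{r}_\omega(w/\omega)$, so that $\tilde{r}_\omega \in \mathcal{R}_n$ interpolates $\mathrm{e}^w$ at the rescaled nodes $\mathrm{i}\omega\tau_1,\ldots,\mathrm{i}\omega\tau_{2n+1}$. Writing $\tilde{r}_\omega = \tilde{p}_\omega/\tilde{q}_\omega$ with the normalization $\tilde{q}_\omega(0)=1$, the linearized interpolation identity~\eqref{eq:ratintcondconfluentlin2} yields an analytic function $\tilde{v}_\omega$ with
\begin{equation*}
\tilde{p}_\omega(w) - \tilde{q}_\omega(w)\,\mathrm{e}^w = \prod_{j=1}^{2n+1}(w - \mathrm{i}\omega\tau_j)\,\tilde{v}_\omega(w),
\end{equation*}
so that, setting $w=\mathrm{i}\omega x$, the identity $\mathring{r}_\omega(\mathrm{i} x) = \tilde{r}_\omega(\mathrm{i}\omega x)$ leads to
\begin{equation*}
\mathring{r}_\omega(\mathrm{i} x) - \mathrm{e}^{\mathrm{i}\omega x} = \frac{(\mathrm{i}\omega)^{2n+1}\prod_{j=1}^{2n+1}(x-\tau_j)\,\tilde{v}_\omega(\mathrm{i}\omega x)}{\tilde{q}_\omega(\mathrm{i}\omega x)}.
\end{equation*}

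The next step is to identify the $\omega \to 0^+$ limits of $\tilde{q}_\omega$ and $\tilde{v}_\omega$. Since all the rescaled nodes coalesce at the origin and the interpolant is normalized, one expects $\tilde{p}_\omega \to \widehat{p}^\dagger$ and $\tilde{q}_\omega \to \widehat{p}$, where $\widehat{r}=\widehat{p}^\dagger/\widehat{p}$ denotes the $(n,n)$-Pad\'e approximation with $\widehat{p}(0)=1$. I would invoke the asymptotic analysis established in Appendix~\ref{sec:appendixA} to justify this rigorously and conclude $\tilde{q}_\omega(\mathrm{i}\omega x) = 1+\mathcal{O}(\omega)$ and $\tilde{v}_\omega(\mathrm{i}\omega x) = v_0(0) + \mathcal{O}(\omega)$ uniformly for $x\in[-1,1]$, where $v_0$ is the analogous remainder function for the Pad\'e case, defined through $\widehat{p}^\dagger(w) - \widehat{p}(w)\,\mathrm{e}^w = w^{2n+1}v_0(w)$. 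The value $v_0(0)$ is then read off from the leading order Pad\'e error~\eqref{eq:leadingordererrorpade} together with $\widehat{p}(0)=1$: expanding $\mathrm{e}^w - \widehat{r}(w) = -w^{2n+1}v_0(w)/\widehat{p}(w)$ near $w=0$ gives $v_0(0) = (-1)^{n+1}(n!)^2/[(2n)!\,(2n+1)!]$.

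Combining these pieces and using $(\mathrm{i})^{2n+1} = (-1)^n\mathrm{i}$, the error admits the pointwise asymptotic
\begin{equation*}
\mathring{r}_\omega(\mathrm{i} x) - \mathrm{e}^{\mathrm{i}\omega x} = -\mathrm{i}\,\frac{(n!)^2}{(2n)!\,(2n+1)!}\,\omega^{2n+1}\prod_{j=1}^{2n+1}(x-\tau_j) + \mathcal{O}(\omega^{2n+2}),
\end{equation*}
with the $\mathcal{O}$ term uniform in $x\in[-1,1]$. Taking absolute values and then maximizing over $x\in[-1,1]$, and invoking the minimality property of the monic Chebyshev polynomial recorded in~\eqref{eq:Chebmin} which yields $\max_{x\in[-1,1]}|\prod_j(x-\tau_j)| = 2^{-2n}$, I obtain
\begin{equation*}
\|\mathring{r}_\omega - \exp(\omega\cdot)\| = \frac{(n!)^2}{(2n)!\,(2n+1)!}\,\omega^{2n+1}\cdot 2^{-2n} + \mathcal{O}(\omega^{2n+2}),
\end{equation*}
which coincides with \eqref{eq:asymerrratintCheb} after rewriting $2^{-2n}\omega^{2n+1} = 2\,(\omega/2)^{2n+1}$.

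The main obstacle is the rigorous justification that $\tilde{q}_\omega$ and $\tilde{v}_\omega$ converge, together with their values at the origin, to the Pad\'e counterparts as $\omega\to 0^+$. This is a continuity statement for the rational interpolation problem at coalescing nodes, complicated by the fact that the Pad\'e limit is a genuinely confluent interpolation problem. It is precisely this content that is isolated in Appendix~\ref{sec:appendixA} and whose invocation turns the formal leading-order calculation into a proof; without it, one could only conclude an $\mathcal{O}(\omega^{2n+1})$ bound on the error with an unidentified leading constant.
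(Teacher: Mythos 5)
Your proposal is correct and follows essentially the same route as the paper: the paper proves Proposition~\ref{prop:asymerrcheb} by citing Proposition~\ref{prop:ratintasymerr} (whose proof in Appendix~\ref{sec:appendixA} is precisely your rescaled linearized-interpolation identity together with the convergence $\det\Gamma_\omega/\widehat{p}_\omega \to (-1)^{n+1}(n!)^2/[(2n)!(2n+1)!]$, established uniformly in $x$ via the determinant estimates of Proposition~\ref{prop:ratintasymThetaGamma}) and then substituting the Chebyshev value $2^{-2n}$ from~\eqref{eq:Chebmin}. The step you flag as the main obstacle is exactly the content isolated in~\eqref{eq:ip.remPadefromratint}, and your sign and constant computations agree with the paper's.
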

\begin{proof}
This result is based on the asymptotic error expansion for rational interpolants in Proposition~\ref{prop:ratintasymerr}. Substituting $\tau_j$ for the nodes $x_j$ therein, more precisely, $2^{-2n}$ for the product $|\prod(x-\tau_j)|$ as in~\eqref{eq:Chebmin}, we arrive at~\eqref{eq:asymerrratintCheb}.
\end{proof}

In the limit $\omega\to0^+$, the unitary best approximation attains the same asymptotic error as the rational interpolant at Chebyshev nodes.

\begin{proposition}[Asymptotic error] \label{prop:asymerrorbest}
The asymptotic error of the unitary best approximation corresponds to
\begin{equation}\label{eq:asymerrbest}
\min_{u\in\mathcal{U}_n}\| u - \exp(\omega\cdot)\| = \frac{2(n!)^2}{(2n)!(2n+1)!} \left( \frac{\omega }{2} \right)^{2n+1} + \mathcal{O}(\omega^{2n+2}),~~~\omega\to 0^+.
\end{equation}
\end{proposition}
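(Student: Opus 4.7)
The plan is to establish matching upper and lower asymptotic bounds using Proposition~\ref{prop:asymerrcheb} together with the interpolatory structure of the unitary best approximation.

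For the upper bound, I would use that the rational interpolant $\mathring{r}_\omega$ at Chebyshev nodes lies in $\mathcal{U}_n$ by Proposition~\ref{prop:ratintconfluentunitary}, so
$$
\min_{u\in\mathcal{U}_n}\|u-\exp(\omega\cdot)\| \;\leq\; \|\mathring{r}_\omega-\exp(\omega\cdot)\|,
$$
and Proposition~\ref{prop:asymerrcheb} immediately provides the right-hand side of~\eqref{eq:asymerrbest} with an $\mathcal{O}(\omega^{2n+2})$ remainder.

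For the matching lower bound, the key input is the interpolatory structure established in Corollary~\ref{cor:errattainsmax}: the unitary best approximation $r_\omega$ interpolates $\mathrm{e}^{\omega z}$ at $2n+1$ real nodes $\mathrm{i}x_1(\omega),\ldots,\mathrm{i}x_{2n+1}(\omega)\in\mathrm{i}[-1,1]$. I would then apply the asymptotic expansion for rational interpolants (Proposition~\ref{prop:ratintasymerr} of Appendix~\ref{sec:appendixA}, already used in the proof of Proposition~\ref{prop:asymerrcheb}) to write
$$
\|r_\omega-\exp(\omega\cdot)\| \;=\; \frac{2(n!)^{2}\,\omega^{2n+1}}{(2n)!(2n+1)!}\,\max_{x\in[-1,1]}\Bigl|\prod_{j=1}^{2n+1}(x-x_j(\omega))\Bigr| \;+\; \mathcal{O}(\omega^{2n+2}).
$$
The Chebyshev extremal property~\eqref{eq:Chebmin} guarantees that the sup-norm of the nodal polynomial is at least $2^{-2n}$ no matter how the real nodes $x_j(\omega)\in[-1,1]$ are placed, which converts the displayed identity into the desired lower bound that matches the upper bound and completes~\eqref{eq:asymerrbest}.

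The main obstacle will be to verify that the remainder in Proposition~\ref{prop:ratintasymerr} is \emph{uniform} in the node positions, strong enough that the $\mathcal{O}(\omega^{2n+2})$ statement survives when the asymptotic expansion is applied along the $\omega$-dependent node family $\{x_j(\omega)\}$ rather than at a fixed node set. Since the interpolation nodes of $r_\omega$ converge to distinct Chebyshev nodes in the interior of $[-1,1]$ as $\omega\to0^+$ (Proposition~\ref{prop:inodestoCheb}) and depend continuously on $\omega$ (Proposition~\ref{prop:inodescontinuous}), the nodes remain in a compact region bounded away from coalescence for all sufficiently small $\omega$, so the uniformity of the expansion over a neighbourhood of the Chebyshev tuple is the quantitative ingredient that needs to be traced through in the application of Proposition~\ref{prop:ratintasymerr}.
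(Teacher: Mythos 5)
Your proposal is correct and follows essentially the same route as the paper: the upper bound via the Chebyshev interpolant and Proposition~\ref{prop:asymerrcheb}, and the lower bound via the interpolatory property of the best approximant from Corollary~\ref{cor:errattainsmax} combined with the Chebyshev extremal property~\eqref{eq:Chebmin}. The uniformity-in-the-nodes issue you flag is exactly what the paper isolates as Proposition~\ref{prop:ratintasymlowererrbound}, which establishes the lower bound uniformly over all node configurations in $[-1,1]^{2n+1}$ by a compactness argument, so your additional appeal to Proposition~\ref{prop:inodestoCheb} (which appears later in the paper) is not needed.
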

\begin{proof}
The rational interpolant at Chebyshev nodes is unitary, i.e., $\mathring{r}_\omega\in\mathcal{U}_n$, and thus,
\begin{equation*}
\min_{u\in\mathcal{U}_n}\| u - \exp(\omega\cdot)\| \leq \|\mathring{r}_\omega - \exp(\omega\cdot)\|.
\end{equation*}
Thus, the asymptotic error~\eqref{eq:asymerrratintCheb} of $\mathring{r}_\omega$ implies
\begin{equation}\label{eq:asymerrbesteq}
\min_{u\in\mathcal{U}_n}\| u - \exp(\omega\cdot)\|
\leq \frac{2(n!)^2}{(2n)!(2n+1)!} \left( \frac{\omega }{2} \right)^{2n+1} + \mathcal{O}(\omega^{2n+2}),~~~\omega\to 0^+.
\end{equation}
Moreover, since $r_\omega$ interpolates $\mathrm{e}^{\omega z}$ at $2n+1$ nodes on the imaginary axis for $\omega\in(0,(n+1)\pi)$, see Corollary~\ref{cor:errattainsmax}, its error satisfies the lower bound from Proposition~\ref{prop:ratintasymlowererrbound}, i.e.,
\begin{equation*}
\|r_\omega-\exp(\omega\cdot)\|
\geq \frac{2(n!)^2}{(2n)!(2n+1)!} \left( \frac{\omega }{2} \right)^{2n+1} + \mathcal{O}(\omega^{2n+2}),~~~\omega\to 0^+.
\end{equation*}
Together with the upper bound~\eqref{eq:asymerrbesteq}, this shows~\eqref{eq:asymerrbest}.
\end{proof}

Based on numerical experiments we also suggest using the leading-order term of the asymptotic expansion~\eqref{eq:asymerrbest} as an error estimate in practice, i.e.,
\begin{equation}\label{eq:asymerrest}
\min_{u\in\mathcal{U}_n}\| u - \exp(\omega \cdot)\|\approx \frac{2(n!)^2}{(2n)!(2n+1)!} \left( \frac{\omega }{2} \right)^{2n+1}.
\end{equation}
Numerical tests provided in the following subsection indicate that this error estimate is practical even for $\omega$ outside of an asymptotic regime.

\begin{remark}\label{remark:bestapproxvsPade}
The asymptotic error~\eqref{eq:asymerrbest} of the unitary best approximation is by a factor $(1/2)^{2n}$ smaller than the asymptotic error of the Pad\'e approximation~\eqref{eq:errboundpade}. Furthermore, considering~\eqref{eq:errboundpade} and~\eqref{eq:asymerrbest} the error of the unitary best approximation for a given $\omega$ can be compared to the error of the Pad\'e approximation for $\omega/2$.
\end{remark}

Using Stirling's approximation to simplify the error estimate~\eqref{eq:asymerrest}, along similar lines to~\eqref{eq:errupperboundapprox}, we also observe
\begin{equation}\label{eq:asymerrestStirlings}
\min_{u\in\mathcal{U}_n}\| u - \exp(\omega \cdot)\|\approx \frac{2\sqrt{2n}}{\sqrt{2n+1}} \left(\frac{\mathrm{e}\, \omega}{4(2n+1)}\right)^{2n+1}.
\end{equation}
Similar to~\eqref{eq:superlinearconvwhen}, based on the error estimate~\eqref{eq:asymerrest} the estimate~\eqref{eq:asymerrestStirlings} indicates that super-linear convergence sets in for
\begin{equation}\label{eq:superlinearconvwhenest}
\omega \leq  8\mathrm{e}^{-1} (n+1/2) \approx 2.94 (n+1/2).
\end{equation}

\subsection{Numerical illustrations}\label{subsec:plots}

In Fig.~\ref{fig:convergence} we plot the absolute value of the approximation error for various approximations over $n$. Different subplots show results for different values of $\omega$. In Fig.~\ref{fig:erroroverw} we plot the absolute value of approximation errors over $\omega$ for different values of $n$.
We observe that~\eqref{eq:errboundpade} (and respectively~\eqref{prop:errorboundconverg}) indeed yields an upper error bound for the Pad\'e approximation and the unitary best approximation.
Moreover, the asymptotic error~\eqref{eq:asymerrbest} of the unitary best approximation is verified in Fig.~\ref{fig:erroroverw}. For $n=4$ and $n=8$ we also observe that the approximation error of the unitary best approximation and rational interpolation at Chebyshev nodes have the same asymptotic behavior for $\omega\to0^+$, as expected from~\eqref{eq:asymerrratintCheb} and~\eqref{eq:asymerrbest}. For larger $n$ the approximation error of the Chebyshev approximant is already beyond machine precision before $\omega$ reaches an asymptotic regime. For $\omega$ outside of an asymptotic regime the unitary best approximation clearly outperforms rational interpolation at Chebyshev nodes in terms of accuracy. Moreover, the unitary best approximation performs better than the Pad\'e approximation. Especially, the error of these two approximations seems to verify Remark~\ref{remark:bestapproxvsPade}.
Furthermore, Fig.~\ref{fig:convergence} also shows the approximation error of a rational approximation based on polynomial Chebyshev approximations (Subsection~\ref{subsec:ratapproxfrompolcheb}), and the polynomial Chebyshev approximation $p(\mathrm{i} x)\approx\mathrm{e}^{\mathrm{i} \omega x}$ of degree $n$ (for instance~\cite[Subsection~III.2.1]{Lu08}). However, these approximations show to be the least accurate among those compared. 

For the unitary best approximation, the error estimate~\eqref{eq:asymerrest}, which corresponds to the leading-order term of the asymptotic expansion~\eqref{eq:asymerrbest}, shows to be practical and reliable in these examples, in particular, it seems to constitute an upper error bound.

\begin{figure}
\centering
\includegraphics{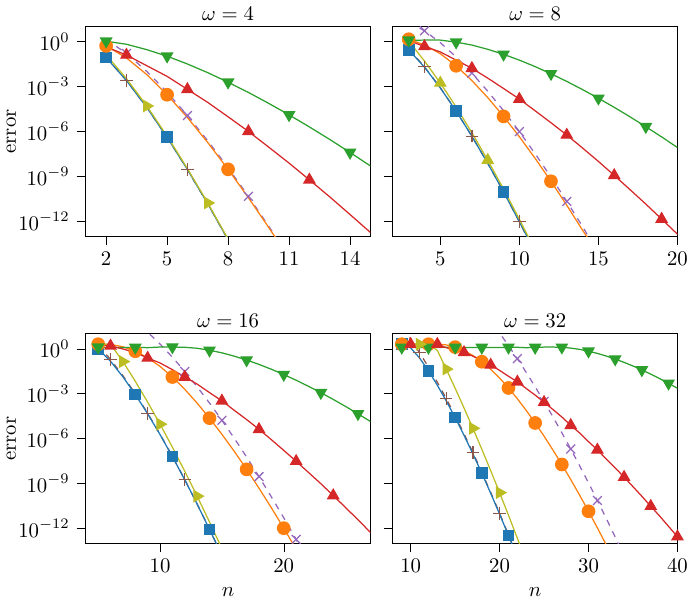}
\caption{This figure shows approximation errors and error estimates over $n$ for different rational approximations $r(\mathrm{i} x) \approx \mathrm{e}^{\mathrm{i} \omega x}$. The four different plots show results for different values of $\omega$, namely, $\omega = 4,8,16,32$ as marked in the plots.
Each of the plots shows the error of the unitary best approximation ($\square$), the error estimate~\eqref{eq:asymerrest} which corresponds to the leading-order term of the asymptotic error expansion~\eqref{eq:asymerrbest} (dashed, $+$), the error of the Pad\'e approximation ($\circ$), and the error bound~\eqref{prop:errorboundconverg} (dashed, $\times$). Furthermore, these plots show the error of the polynomial Chebyshev approximation~($\nabla$) of degree $n$ to $\mathrm{e}^{\mathrm{i} \omega x}$, the error of $\check{r}$~($\triangle$) from~\eqref{eq:rfrompcheb}, and the error of the rational interpolant at Chebyshev nodes $\mathring{r}$ ($\triangleright$) from~\eqref{eq:ratinterpolateCheb}.}\label{fig:convergence}
\end{figure}

\begin{figure}
\centering
\includegraphics{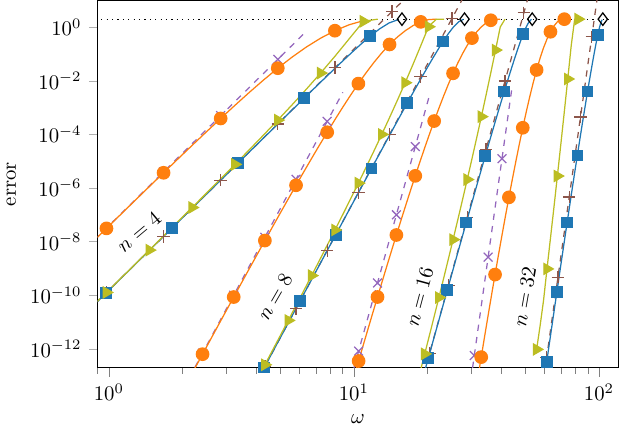}
\caption{This figure shows the approximation error for various approximations and different degrees $n$ over $\omega$. Namely, $n=4,8,16,32$ as marked in the plot. For each choice of $n$ this plot shows the error of the unitary best approximation ($\square$), the error estimate~\eqref{eq:asymerrest} which corresponds to the leading-order term of the asymptotic error expansion~\eqref{eq:asymerrbest} (dashed, $+$), the error of the Pad\'e approximation ($\circ$) and the error bound~\eqref{prop:errorboundconverg} (dashed, $\times$), and the error of the rational interpolant at Chebyshev nodes $\mathring{r}$ ($\triangleright$) from~\eqref{eq:ratinterpolateCheb}. The error estimate and bound seem to be asymptotically correct for $\omega\to0^+$. The error of the unitary best approximation is strictly smaller than two for $\omega< (n+1)\pi$, whereas $(n+1)\pi$ is marked by ($\diamond$) symbols for each $n$.
}\label{fig:erroroverw}
\end{figure}

\section{Further asymptotic considerations}\label{sec:asymprop}

In the present section we show results for the poles, interpolation nodes and equioscillation points in the limits $\omega\to0^+$ and $\omega\to(n+1)\pi^-$. These results are illustrated by numerical experiments at the end of the present section.

\subsection{The limit \texorpdfstring{$\omega\to 0^+$}{w to 0+}}\label{subsec:limitwtozero}

The unitary best approximation is uniquely characterized by an equioscillating phase error for $\omega\in(0,(n+1)\pi)$. In the following proposition, we show that it can be continuously extended to $\omega=0$.
\begin{proposition}\label{prop:rwto1omega0}
The unitary best approximation $r_\omega(\mathrm{i} x)\approx \mathrm{e}^{\mathrm{i} \omega x}$ in $\mathcal{U}_n$, which has minimal degree $n$ for $\omega\in(0,(n+1)\pi)$,
continuously extends to $\omega=0$ with $r_0\equiv 1$, i.e.,
\begin{equation}\label{eq:rwtopade}
\|r_\omega - 1\|\to 0,~~~\omega\to 0^+.
\end{equation}
For $\omega=0$ the approximation $r_\omega\equiv1$ is exact but degenerate.
\end{proposition}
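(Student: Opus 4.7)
The plan is to prove~\eqref{eq:rwtopade} by a simple triangle inequality, combining the super-linear error bound from Proposition~\ref{prop:errorbound} with the elementary estimate $|\mathrm{e}^{\mathrm{i}\omega x}-1|\leq \omega$ for $x\in[-1,1]$. Indeed, writing
\begin{equation*}
\|r_\omega - 1\| \leq \|r_\omega - \exp(\omega\cdot)\| + \|\exp(\omega\cdot) - 1\|,
\end{equation*}
the first term on the right-hand side is controlled by Proposition~\ref{prop:errorbound} since
\begin{equation*}
\|r_\omega - \exp(\omega\cdot)\|
= \min_{u\in\mathcal{U}_n}\|u - \exp(\omega\cdot)\|
\leq \frac{(n!)^2 \omega^{2n+1}}{(2n)!(2n+1)!} \to 0,~~~\omega\to 0^+,
\end{equation*}
while the second term satisfies $\|\exp(\omega\cdot) - 1\| = \max_{x\in[-1,1]}|\mathrm{e}^{\mathrm{i}\omega x}-1| \leq \omega \to 0$ (using the standard bound $|\mathrm{e}^{\mathrm{i} t}-1|\leq |t|$).

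The second part of the statement, namely that $r_0\equiv 1$ is exact but degenerate, is an immediate observation rather than a proof obligation. The constant function $1$ is clearly a unitary rational function in $\mathcal{U}_n$ (obtained, e.g., from the representation~\eqref{eq:defrbyp} with $m=0$ and $\mathrm{e}^{\mathrm{i}\theta}=1$), and for $\omega=0$ we have $\mathrm{e}^{\mathrm{i}\omega x}\equiv 1$, so the approximation error vanishes identically. On the other hand, $r_0\equiv 1$ has minimal degree $0$, whereas Theorem~\ref{thm:bestapprox}.\ref{item:rwmindegreen} asserts that the unitary best approximation has minimal degree exactly $n$ for $\omega\in(0,(n+1)\pi)$; in this sense the extension at $\omega=0$ is degenerate.

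There is no real obstacle here: the result is essentially a direct corollary of the error bound~\eqref{prop:errorboundconverg} together with continuity of $\mathrm{e}^{\mathrm{i}\omega x}$ at $\omega=0$. The only conceptual subtlety is to make explicit that the convergence in~\eqref{eq:rwtopade} is in the $\|\cdot\|$ norm on $\mathrm{i}[-1,1]$, which is exactly the norm in which Proposition~\ref{prop:errorbound} provides the bound, so the triangle inequality applies directly without any further regularity considerations.
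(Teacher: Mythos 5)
Your proof is correct and follows essentially the same route as the paper: a triangle inequality through $\exp(\omega\cdot)$, with both terms tending to zero. The only (immaterial) difference is that you bound $\|r_\omega-\exp(\omega\cdot)\|$ via the Pad\'e-based estimate of Proposition~\ref{prop:errorbound}, whereas the paper uses the simpler observation that $1\in\mathcal{U}_n$ gives $\|r_\omega-\exp(\omega\cdot)\|\leq\|1-\exp(\omega\cdot)\|=\mathcal{O}(\omega)$, which suffices and avoids invoking the heavier machinery.
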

\begin{proof}
Since $r\equiv1\in\mathcal{U}_n$ for any degree $n$, we have
\begin{equation*}
\| r_\omega -\exp(\omega\cdot)\| \leq \| 1 - \exp(\omega\cdot) \| = \mathcal{O}(\omega),~~~\omega\to 0^+.
\end{equation*}
Thus, using triangular inequality we observe
\begin{equation*}
\|r_\omega - 1\| \leq \| r_\omega -\exp(\omega\cdot)\| + \| \exp(\omega\cdot) - 1 \| = \mathcal{O}(\omega),~~~\omega\to 0^+,
\end{equation*}
which shows~\eqref{eq:rwtopade}.
\end{proof}

We proceed to show that the poles of the unitary best approximation $r_\omega$ scaled by $\omega$ converge to poles of the Pad\'e approximation in the limit $\omega\to0^+$.

\begin{proposition}\label{prop:poleslimit}
Let $\widehat{s}_1,\ldots,\widehat{s}_n\in\mathbb{C}$ denote the poles of the $(n,n)$-Pad\'{e} approximation to~$\mathrm{e}^{z}$ (cf.\ Subsection~\ref{subsec:intropade}).
Let $s_1,\ldots,s_n\in\mathbb{C}$ denote the poles of $r_\omega\in \mathcal{U}_n$ which refers to the unitary best approximation $r_\omega(\mathrm{i} x)\approx \mathrm{e}^{\mathrm{i} \omega x}$.
In particular, we write~$s_j=s_j(\omega)$.
Then the poles can be numbered s.t.
\begin{equation}\label{eq:convofpoles}
\omega\, s_j(\omega)  \to \widehat{s}_j,~~~\text{for $\omega \to 0^+$},~~~j=1,\ldots,n.
\end{equation}
\end{proposition}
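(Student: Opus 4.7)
\medskip
\noindent\textbf{Proof plan for Proposition~\ref{prop:poleslimit}.} The plan is to reduce the statement to a confluent-limit result for rational interpolation to $\mathrm{e}^{z}$ (which, by the outline of Appendix~\ref{sec:appendixA}, is available in the paper) via a rescaling argument. The key observation is that by Corollary~\ref{cor:errattainsmax}, the unitary best approximation $r_\omega\in\mathcal{U}_n$ interpolates $\mathrm{e}^{\omega z}$ at $2n+1$ nodes $\mathrm{i} x_1(\omega),\ldots,\mathrm{i} x_{2n+1}(\omega)$ on $\mathrm{i}[-1,1]$. Define the rescaled rational function
\begin{equation*}
\widetilde{r}_\omega(z) := r_\omega(z/\omega),
\end{equation*}
which is of degree $(n,n)$ and interpolates $\mathrm{e}^{z}$ at the scaled nodes $\mathrm{i}\,\omega x_j(\omega)$, $j=1,\ldots,2n+1$. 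Its poles are exactly $\omega\, s_j(\omega)$, $j=1,\ldots,n$. Since $x_j(\omega)\in[-1,1]$ is bounded, the scaled interpolation nodes $\omega\, \mathrm{i} x_j(\omega)$ all collapse to the origin at rate $\mathcal{O}(\omega)$ as $\omega\to 0^+$.

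First, I would invoke the confluent-limit result for rational interpolation from Appendix~\ref{sec:appendixA}: as the $2n+1$ interpolation nodes of a degree $(n,n)$ rational interpolant to $\mathrm{e}^{z}$ collapse to a single point (here, to $0$), the interpolant converges to the $(n,n)$-Pad\'e approximation $\widehat{r}$ of~\eqref{eq:defPade}, which is the unique rational function matching $\mathrm{e}^{z}$ to order $2n+1$ at~$0$. Concretely, $\widetilde{r}_\omega\to \widehat{r}$ in an appropriate sense (uniformly on compact subsets away from the poles of~$\widehat{r}$ will be sufficient). Since $\widehat{r}$ has minimal degree~$n$ (as recalled in Subsection~\ref{subsec:intropade}), its denominator has exactly $n$ distinct zeros $\widehat{s}_1,\ldots,\widehat{s}_n$.

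Next, I would translate convergence of the rational function into convergence of the poles. By Theorem~\ref{thm:bestapprox}, $r_\omega$ has minimal degree $n$ and distinct poles for $\omega\in(0,(n+1)\pi)$, and hence so does $\widetilde{r}_\omega$. Convergence of $\widetilde{r}_\omega$ to $\widehat{r}$ in the nondegenerate setting implies convergence of the coefficients of the respective denominators (after a common normalisation), and the zeros of a polynomial depend continuously on its coefficients (up to relabelling), cf.~the argument used in Proposition~\ref{prop:polescont}. Thus we can number the poles of $\widetilde{r}_\omega$, i.e.\ the scaled poles $\omega\, s_j(\omega)$, so that they converge to $\widehat{s}_j$, giving~\eqref{eq:convofpoles}.

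The main obstacle is the confluent-limit step: establishing that $\widetilde{r}_\omega \to \widehat{r}$ when all $2n+1$ interpolation nodes collapse to $0$. This is precisely the type of result that Appendix~\ref{sec:appendixA} supplies; the argument goes by considering the linearised interpolation condition~\eqref{eq:ratintcondconfluentlin2}, which for the nodes $\mathrm{i}\,\omega x_j(\omega)\to 0$ reduces in the limit to the Pad\'e condition $p(z)-q(z)\mathrm{e}^{z}=\mathcal{O}(|z|^{2n+1})$. Uniqueness of the Pad\'e approximant and the fact that it is non-degenerate of minimal degree~$n$ rule out any spurious limiting behaviour (such as order reduction or support/zero cancellation), which is what allows the passage from convergence of the rational function to convergence of its individual poles.
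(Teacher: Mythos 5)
Your proposal is correct and follows essentially the same route as the paper: it reduces the claim, via Corollary~\ref{cor:errattainsmax} and the rescaling $z\mapsto z/\omega$, to the confluent-limit result for rational interpolants in Appendix~\ref{sec:appendixA} (Proposition~\ref{prop:ratintasympoles}, built on Proposition~\ref{prop:ratintasymdenom}), and then passes from convergence of the denominators to convergence of their zeros using non-degeneracy of the Pad\'e approximant. The only detail the paper is slightly more explicit about is that the limit must hold uniformly over the admissible node configurations in $[-1,1]^{2n+1}$ because the interpolation nodes $x_j(\omega)$ themselves depend on $\omega$; this is handled there by a compactness argument and is implicitly covered by your observation that the scaled nodes collapse to the origin at rate $\mathcal{O}(\omega)$ regardless of the configuration.
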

\begin{proof}
We recall that $r_\omega$ interpolates $\mathrm{e}^{\omega z}$ at $2n+1$ nodes $\mathrm{i}x_1(\omega),\ldots,\mathrm{i}x_{2n+1}(\omega)$ with $x_j(\omega)\in[-1,1]$. Using Proposition~\ref{prop:ratintasympoles}, particularly,~\eqref{eq:sinttopade} which holds true for poles of rational interpolants including the case that interpolation nodes in $[-1,1]$ depend on $\omega$, we arrive at~\eqref{eq:convofpoles}.
\end{proof}

We remark that the convergence result of Proposition~\ref{prop:rwto1omega0} holds true for rational interpolants to $\textrm{e}^{\textrm{i} \omega x}$ and $\omega\to0^+$ in general, cf.\ Proposition~\ref{prop:ratintasymdenom}. In a similar manner Proposition~\ref{prop:poleslimit} directly follows from Proposition~\ref{prop:ratintasympoles}, and thus, the same convergence result holds true for the poles of a rational interpolant to $\mathrm{e}^{\mathrm{i} \omega}$ in general. In the following proposition, we show that the interpolation nodes of the unitary best approximation $r_\omega$ converge to Chebyshev nodes in the limit $\omega\to 0^+$, taking into account that $r_\omega$ minimizes the approximation error.
\begin{proposition}\label{prop:inodestoCheb}
Let $\mathrm{i}x_1(\omega),\ldots,\mathrm{i}x_{2n+1}(\omega)$ denote the interpolation nodes of the best approximation to $\mathrm{e}^{\omega z}$ in $\mathcal{U}_n$.
For $\omega\to0^+$ the nodes $x_1(\omega),\ldots,x_{2n+1}(\omega)$ converge to the $2n+1$ Chebyshev nodes, i.e., $\tau_1,\ldots,\tau_{2n+1}$ in~\eqref{eq:ratinterpolateCheb}.
\end{proposition}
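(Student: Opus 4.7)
The plan is to match the asymptotic expansion of the best-approximation error supplied by Proposition~\ref{prop:asymerrorbest} with the general asymptotic expansion of the interpolation error for any rational interpolant at distinct nodes on $\mathrm{i}[-1,1]$, furnished by Proposition~\ref{prop:ratintasymerr} in Appendix~\ref{sec:appendixA} (the Chebyshev specialization being Proposition~\ref{prop:asymerrcheb}). The Chebyshev uniqueness built into the strict inequality in~\eqref{eq:Chebmin} then pins down the limiting nodes.

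First, since $r_\omega$ interpolates $\mathrm{e}^{\omega z}$ at the $2n+1$ distinct nodes $\mathrm{i} x_1(\omega),\ldots,\mathrm{i} x_{2n+1}(\omega)\in\mathrm{i}[-1,1]$ (Corollary~\ref{cor:errattainsmax}), the general form of Proposition~\ref{prop:ratintasymerr} yields
\begin{equation*}
\|r_\omega-\exp(\omega\cdot)\|
= K_n\,\omega^{2n+1}\max_{x\in[-1,1]}\Big|\prod_{j=1}^{2n+1}(x-x_j(\omega))\Big|
+O(\omega^{2n+2}),\qquad\omega\to 0^+,
\end{equation*}
where $K_n>0$ is the constant (independent of the nodes) which, upon substitution $\max|\prod(x-\tau_j)|=2^{-2n}$, recovers~\eqref{eq:asymerrratintCheb}. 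Equating this with Proposition~\ref{prop:asymerrorbest} and dividing by $K_n\omega^{2n+1}$ forces
\begin{equation*}
\lim_{\omega\to 0^+}\max_{x\in[-1,1]}\Big|\prod_{j=1}^{2n+1}(x-x_j(\omega))\Big|=2^{-2n}.
\end{equation*}

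Next, I would use compactness of $[-1,1]^{2n+1}$ together with the ordering $x_1(\omega)<\cdots<x_{2n+1}(\omega)$: any sequence $\omega_k\to 0^+$ admits a subsequence along which $x_j(\omega_k)\to x_j^*\in[-1,1]$ for every $j$, with $x_1^*\leq\cdots\leq x_{2n+1}^*$. Continuity of $(x_1,\ldots,x_{2n+1})\mapsto \max_{x\in[-1,1]}|\prod_j(x-x_j)|$ gives $\max_{x\in[-1,1]}|\prod_j(x-x_j^*)|=2^{-2n}$, so the monic polynomial $\prod_j(x-x_j^*)$ attains the Chebyshev minimum on $[-1,1]$. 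The strict inequality in~\eqref{eq:Chebmin} applies to any $(2n+1)$-tuple different from the Chebyshev tuple (including tuples with repetitions), identifying the minimizer uniquely as $2^{-2n}T_{2n+1}$ with simple roots $\tau_1<\cdots<\tau_{2n+1}$. Hence $x_j^*=\tau_j$ for all $j$; since every subsequential limit agrees, the full family satisfies $x_j(\omega)\to\tau_j$ as $\omega\to 0^+$.

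The main technical point to verify is that the $O(\omega^{2n+2})$ remainder in the general expansion of Proposition~\ref{prop:ratintasymerr} is uniform over node configurations in $[-1,1]^{2n+1}$, because the nodes $x_j(\omega)$ themselves move with $\omega$. This uniformity is implicit in the derivation carried out in Appendix~\ref{sec:appendixA}, where higher-order corrections are controlled in terms of $n$, derivatives of $\exp$ on the imaginary axis, and bounded quantities involving the nodes; once it is invoked, the matching and uniqueness argument above completes the proof.
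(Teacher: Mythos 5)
Your proposal is correct and follows essentially the same route as the paper: the asymptotic expansion of Proposition~\ref{prop:ratintasymerr} (with the remainder made uniform over node configurations in $[-1,1]^{2n+1}$, which the paper establishes explicitly via a compactness argument), comparison against the Chebyshev interpolant's error to force $\max_x|\prod_j(x-x_j(\omega))|\to 2^{-2n}$, and then subsequence extraction plus the strict uniqueness in~\eqref{eq:Chebmin} to identify the limits as $\tau_j$. Your shortcut of equating with Proposition~\ref{prop:asymerrorbest} rather than re-running the comparison with $\mathring{r}_\omega$ by contradiction is a legitimate and slightly cleaner packaging of the same argument, and you correctly flag the uniformity of the $\mathcal{O}(\omega^{2n+2})$ remainder as the one point requiring verification.
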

\begin{proof}
As remarked in~\eqref{eq:Chebmin}, the $\infty$-norm on $[-1,1]$ of monic polynomials is uniquely minimized by the Chebyshev nodes $\tau_1,\ldots,\tau_{2n+1}$.
We first show that the interpolation nodes $x_1(\omega),\ldots,x_{2n+1}(\omega)$ of $r_\omega$ attain a minimizing sequence for this $\infty$-norm, namely, 
\begin{equation}\label{eq:minpolylikecheb}
\max_{x\in[-1,1]}\prod_{j=1}^{2n+1} |x-x_j(\omega)|
\to  \max_{x\in[-1,1]}\prod_{j=1}^{2n+1} |x-\tau_j|
= 2^{-2n},~~~\omega\to0^+.
\end{equation}
We prove this claim by contradiction, assuming $x_j(\omega)$ above yields no minimizing sequence. This implies that for some $\varepsilon>0$ there exists a sequence $\{\omega_\ell\}_{\ell\in\mathbb{N}}$ with $\omega_\ell\to0^+$, s.t.
\begin{equation}\label{eq:minpolylikechebnot}
\max_{x\in[-1,1]}\prod_{j=1}^{2n+1} |x-x_j(\omega_\ell)|
> 2^{-2n}+\varepsilon,~~~\ell\in\mathbb{N}.
\end{equation}

We proceed with the representation~\eqref{eq:interr} for the asymptotic error of rational interpolants. The representation~\eqref{eq:interr} has a remainder $\mathcal{O}(\omega^{2n+2})$ for $\omega\to0^+$. For a given $\beta>0$ and a sufficiently small $\omega>0$ this remainder term is bounded by $\beta\omega^{2n+1}$. In particular, consider $\widetilde{r}_\omega$ to be the rational interpolant to $\mathrm{e}^{\mathrm{i} \omega x}$ with fixed interpolation nodes $x_1,\ldots,x_{2n+1}$, then for a sufficiently small $\omega>0$,
\begin{equation}\label{eq:interrorremboundeddelta}
\left| \|\widetilde{r}_\omega-\exp(\omega\cdot)\| - 
\max_{x\in[-1,1]}\prod_{j=1}^{2n+1}|x-x_j| \frac{ (n!)^2}{(2n)!(2n+1)!} \omega^{2n+1}\right|
< \beta\omega^{2n+1}.
\end{equation}
Since this holds true for any choice of interpolation nodes in $[-1,1]$ (including the cases of interpolation nodes of higher multiplicity, which makes the set of interpolation nodes $[-1,1]^{2n+1}$ compact), we may define $\omega_\beta$ s.t.~\eqref{eq:interrorremboundeddelta} holds true for $\omega<\omega_\beta$, and for all choices of $x_j\in[-1,1]$ in a uniform sense.

Since~\eqref{eq:interrorremboundeddelta} holds true uniformly in the choice of the nodes for $\omega<\omega_\beta$, it also applies to the unitary best approximation $r_\omega$ which interpolates $\mathrm{e}^{\omega z}$ at nodes depending on $\omega$, i.e., $\mathrm{i} x_j(\omega)$. Especially, for a sufficiently small $\omega>0$ the upper bound~\eqref{eq:interrorremboundeddelta} implies
\begin{equation}\label{eq:interrorremboundedbetabelow}
\|r_\omega-\exp(\omega\cdot)\| > \max_{x\in[-1,1]}\prod_{j=1}^{2n+1}|x-x_j(\omega)| \frac{ (n!)^2}{(2n)!(2n+1)!} \omega^{2n+1} - \beta\omega^{2n+1}.
\end{equation}
Moreover, the bound from~\eqref{eq:interrorremboundeddelta} holds true for the rational interpolant $\mathring{r}_\omega$ at Chebyshev nodes, i.e., $x_j=\tau_j$, in which case we can substitute $2^{-2n}$ for the product term~\eqref{eq:minpolylikecheb} is this bound. In particular, for a sufficiently small $\omega>0$,
\begin{equation}\label{eq:chebrorremboundedbeta}
\|\mathring{r}_\omega-\exp(\omega\cdot)\| < \frac{2^{-2n}(n!)^2}{(2n)!(2n+1)!} \omega^{2n+1}+ \beta\omega^{2n+1}.
\end{equation}
Considering $\varepsilon$ as in~\eqref{eq:minpolylikechebnot} above, and
\begin{equation}\label{eq:minsequencethisbeta}
\beta = \frac{\varepsilon(n!)^2}{2(2n)!(2n+1)!},
\end{equation}
we find an index $\ell_0$ s.t.\  the inequalities~\eqref{eq:interrorremboundedbetabelow}  and~\eqref{eq:chebrorremboundedbeta} hold true for $\omega_\ell$ with $\ell>\ell_0$. Substituting~\eqref{eq:minsequencethisbeta} for $\beta$ in~\eqref{eq:interrorremboundedbetabelow} we observe
\begin{equation*}
\|r_{\omega_\ell}-\exp(\omega_\ell\cdot)\| > \left( \max_{x\in[-1,1]} \prod_{j=1}^{2n+1}|x-x_j(\omega_\ell)| - \frac{\varepsilon}{2}\right) \frac{ (n!)^2}{(2n)!(2n+1)!} \omega_\ell^{2n+1},~~~\ell>\ell_0.
\end{equation*}
Combining this with~\eqref{eq:minpolylikechebnot}, we arrive at
\begin{equation}\label{eq:ip.besterrboundbelow}
\|r_{\omega_\ell}-\exp(\omega_\ell\cdot)\| > \frac{(2^{-2n}+\varepsilon/2) (n!)^2}{(2n)!(2n+1)!} \omega_\ell^{2n+1},~~~\ell>\ell_0.
\end{equation}

On the other hand, substituting~\eqref{eq:minsequencethisbeta} for $\beta$ in~\eqref{eq:chebrorremboundedbeta} shows that for $\omega_\ell$ with $\ell>\ell_0$, the approximation error of the rational interpolant at Chebyshev nodes is bounded by 
\begin{equation}\label{eq:chebrorremboundedbeta2}
\|\mathring{r}_{\omega_\ell}-\exp(\omega_\ell\cdot)\| < \frac{(2^{-2n}+\varepsilon/2)(n!)^2}{(2n)!(2n+1)!} \omega_\ell^{2n+1},~~~\ell>\ell_0.
\end{equation}
The inequalities~\eqref{eq:ip.besterrboundbelow} and~\eqref{eq:chebrorremboundedbeta2} imply that the rational interpolant at Chebyshev nodes $\mathring{r}_\omega$ attains a smaller approximation error than the unitary best approximation $r_\omega$ for some $\omega_\ell$ with $\ell>\ell_0$, where $\omega_\ell\to0^+$. Since the unitary best approximation minimizes the approximation error in $\mathcal{U}_n$, and $\mathring{r}_\omega\in\mathcal{U}_n$, this yields a contradiction. Thus, we conclude that~\eqref{eq:minpolylikecheb} holds true, i.e., the nodes $x_j(\omega)$ correspond to a minimizing sequence for the $\infty$-norm of all monic polynomials and $\omega\to0^+$. We recall that this $\infty$-norm is uniquely minimized by Chebyshev nodes $\tau_1,\ldots\tau_{2n+1}$. 

It remains to show that interpolation nodes $x_j(\omega)$ converge to Chebyshev nodes $\tau_j$. Assume otherwise, i.e., there exists a sequence $\{\omega_k>0\}_{k\in\mathbb{N}}$ with $\omega_k\to0$ s.t.\ $|x_\iota(\omega_k)-\tau_\iota|>\widetilde{\varepsilon}$ for some $\widetilde{\varepsilon}>0$, $k\in\mathbb{N}$ and at least one index $\iota\in\{1,\ldots,2n+1\}$. Due to compactness arguments, we find a sub-sequence of $\{\omega_{k_\ell}>0\}_{\ell \in \mathbb{N}}$ and nodes $\widetilde{x}_1,\ldots,\widetilde{x}_{2n+1}\in[-1,1]$ s.t.\ $x_j(\omega_{k_\ell})\to \widetilde{x}_j$ for $\ell\to\infty$, and since $x_j(\omega)$ is a minimizing sequence for the $\infty$-norm~\eqref{eq:minpolylikecheb} for $\omega\to0^+$, the points $\widetilde{x}_j$ also minimize the $\infty$-norm. Due to the assumption $|x_\iota(\omega_k)-\tau_\iota|>\widetilde{\varepsilon}$ above, the set of nodes $\widetilde{x}_1,\ldots,\widetilde{x}_{2n+1}$ is distinct to the set of Chebyshev nodes $\tau_1,\ldots,\tau_{2n+1}$, which yields a contradiction since $\tau_j$ uniquely minimize this norm. We conclude that the interpolation nodes $x_j(\omega)$ converge to the Chebyshev nodes $\tau_j$ for $\omega\to0^+$ and up to ordering.
\end{proof}

\subsection{The limit \texorpdfstring{$\omega\to (n+1)\pi^-$}{w to (n+1)pi-}}\label{subsec:limitwnpipi}

In the present section we consider poles, interpolation nodes and equioscillation points of the unitary best approximation to $\mathrm{e}^{\mathrm{i} \omega x}$ in the limit $\omega\to (n+1)\pi^-$. 

\begin{proposition}\label{prop:polesconvwtonp1pi}
Let $n$ denote a fixed degree and let $r_\omega\in\mathcal{U}_n$ denote the best approximation to $\mathrm{e}^{\mathrm{i} \omega x}$ for $\omega\in(0,(n+1)\pi)$.
Then the poles $s_1(\omega),\ldots,s_n(\omega)\in\mathbb{C}$ of $r_\omega$ can be numbered s.t.
\begin{equation}\label{eq:poleslimitwtonp1pi}
s_j(\omega) \to \mathrm{i} \left(-1 +\frac{2j}{n+1}\right),~~~\text{for $j=1,\ldots,n$ and $\omega\to (n+1)\pi^-$.}
\end{equation}
In particular, $\operatorname{Re} s_j(\omega) \to 0^+$ in this limit.
\end{proposition}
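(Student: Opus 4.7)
The plan is to push the partial-fraction phase representation
\[
g_\omega(x) = 2\sum_{j=1}^n \arctan\!\left(\frac{x-\mu_j(\omega)}{\xi_j(\omega)}\right)
\]
(the constant term $\theta$ vanishing by Corollary~\ref{cor:gsym}) to the limit $\omega \to (n+1)\pi^-$, and to use the equioscillation structure from Theorem~\ref{thm:bestapprox} together with the symmetry of the pole configuration to pin down the centers $\mu_j^*$ and widths $\xi_j^*$.

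First I would extract the boundary values of $g_\omega$. Proposition~\ref{prop:rforwtonp1pi} combined with Proposition~\ref{prop:omega0} and the continuity of the minimal error (Proposition~\ref{prop:wtomincontinuous}.\ref{item:minecontinw}) force $\|r_\omega - \exp(\omega\cdot)\| \to 2$ as $\omega \to (n+1)\pi^-$, hence $\max_{x\in[-1,1]}|g_\omega(x)-\omega x| \to \pi$. By Proposition~\ref{prop:eo1max} the maximum is attained with a $+$ sign at $\eta_1 = -1$, yielding $g_\omega(-1) \to -n\pi$; oddness of $g_\omega$ (Corollary~\ref{cor:gsym}) then gives $g_\omega(1) \to n\pi$. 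Passing to a subsequence $\omega_k \to (n+1)\pi^-$ along which each pole converges on the Riemann sphere to some $s_j^* = \xi_j^* + \mathrm{i}\mu_j^*$, the bound $2\arctan((\cdot)/\xi_j) \in (-\pi,\pi)$ for each summand together with $g_{\omega_k}(-1) \to -n\pi$ forces every summand to saturate at $-\pi$ at $x=-1$, i.e., $(-1-\mu_j(\omega_k))/\xi_j(\omega_k) \to -\infty$. This rules out $\xi_j^* = \infty$ (which would give zero contribution) and forces $\xi_j^* = 0$ with $\mu_j^* > -1$; symmetrically $\mu_j^* < 1$.

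With $\xi_j(\omega_k) \to 0^+$ and $\mu_j(\omega_k) \to \mu_j^* \in (-1,1)$, the functions $g_{\omega_k}$ converge pointwise on $[-1,1]\setminus\{\mu_j^*\}$ to a step function whose jumps occur at the distinct limit centers with heights that are positive multiples of $2\pi$ (counting coincidences). The equioscillating phase error, with $2n+2$ alternating extrema of magnitude tending to $\pi$, must then converge to a sawtooth with slope $-(n+1)\pi$ between jumps and jumps of height exactly $2\pi$. Counting forces $n$ distinct limit centers and equal spacing $\mu_{j+1}^* - \mu_j^* = 2\pi/((n+1)\pi) = 2/(n+1)$, which together with the boundary identities pins down $\mu_j^* = -1 + 2j/(n+1)$ after an appropriate reordering. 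Since every subsequential limit coincides with this set, the full sequence converges. The positivity $\operatorname{Re} s_j(\omega) > 0$ for $\omega \in (0,(n+1)\pi)$ (Corollary~\ref{cor:polestoinf}) combined with $\xi_j(\omega) \to 0$ yields the ``$\to 0^+$'' direction.

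The main obstacle is the sawtooth-matching step: ruling out coalescence of several $\mu_j^*$ at a common point (which would leave fewer than $n$ jump locations, incompatible with $2n+2$ alternating extrema of magnitude $\pi$ against a linear target of slope $(n+1)\pi$) and showing that each jump has height exactly $2\pi$ rather than a larger even multiple. Reconciling the pointwise convergence of $g_\omega$ with the global equioscillation count uniformly across the coalescing jump points is the technical heart of the argument, which is why it is deferred to Appendix~\ref{sec:proofswtonp1pi}.
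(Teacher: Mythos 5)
Your proposal is correct, and it splits naturally into two halves that compare differently with the paper's Appendix~\ref{sec:proofswtonp1pi} argument. The first half --- forcing each summand $2\arctan((\pm1-\mu_j)/\xi_j)$ to saturate at $\pm\pi$ because $g_\omega(\mp 1)\to\mp n\pi$ while each term lies in $(-\pi,\pi)$, whence $\xi_j(\omega)\to0^+$ and $\mu_j^*\in(-1,1)$ --- is essentially the paper's argument (Proposition~\ref{prop:gbminusgalowerbound} applied at $\alpha=-1$, $\beta=1$, followed by the necessary conditions of Proposition~\ref{prop:arctantermconditions}), merely rephrased through subsequential limits. The second half is where you genuinely diverge: the paper never forms a limit object, but instead proves a quantitative density statement at finite $\omega$ (every window of width $2/(n+1)+2\zeta$ in the bulk must contain some $\mu_j(\omega)$, by playing the growth bound of Proposition~\ref{prop:gbminusgalowerbound} against the near-constancy of $g_\omega$ away from the $\mu_j$ in Proposition~\ref{prop:appendixBauxgeps2}) and then pins the spacings by the pigeonhole identity $\sum_j\ell_j(\omega)=2$. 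Your route --- pass to a subsequential limit, observe the phase error converges to a piecewise-affine function of slope $-(n+1)\pi$ with upward jumps of height $2\pi m_j$ at the distinct limit centers, and use $|g_\omega(x)-\omega x|<\pi$ to force every $m_j=1$ and every inter-jump value to run exactly from $+\pi$ down to $-\pi$ --- is sound and arguably cleaner: a jump of height $\geq 4\pi$ launched from a level $\geq-\pi$ lands above $\pi$, and once all jumps are simple the spacing $2/(n+1)$ is forced segment by segment, so the equioscillation count you invoke is not even needed. Both arguments ultimately rest on the same inequality~\eqref{eq:ip.g1mgm1step0}; the trade-off is that the paper's finite-$\omega$ estimates are reusable for Proposition~\ref{prop:nodesconvwtonp1pi}, whereas your compactness argument would need the pointwise convergence away from the $\mu_j^*$ and the boundary limits $g_\omega(\pm1)\to\pm n\pi$ spelled out once. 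Two minor points: your citation of Corollary~\ref{cor:polestoinf} for $\operatorname{Re}s_j(\omega)>0$ is misplaced (that corollary concerns small $\omega$; the relevant statement is Proposition~\ref{prop:stable}, or simply the sign information your own saturation argument already yields), and the sentence asserting the limit ``must'' be a sawtooth with $2\pi$-jumps states the conclusion before the counting that establishes it --- but you correctly flag that step as the one requiring the real work.
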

\begin{proof}
The proof of this proposition is provided in Appendix~\ref{sec:proofswtonp1pi}.
\end{proof}

\begin{proposition}\label{prop:nodesconvwtonp1pi}
Let $n$ denote a fixed degree and let $r_\omega\in\mathcal{U}_n$ denote the best approximation to $\mathrm{e}^{\mathrm{i} \omega x}$ for $\omega\in(0,(n+1)\pi)$.
Let $\mathrm{i} x_1(\omega),\ldots,\mathrm{i} x_{2n+1}(\omega)$ denote the interpolation nodes of $r_\omega$ in ascending order, then
\begin{equation*}
x_k(\omega) \to -1 +\frac{k}{n+1},~~~k=1,\ldots,2n+1,~~~\omega\to(n+1)\pi^-.
\end{equation*}
Moreover, let $\eta_1(\omega),\ldots,\eta_{2n+2}(\omega)$ denote the equioscillation points of $r_\omega$ in ascending order. The left-most and right-most of these points are fixed, namely, $\eta_1(\omega)=-1$ and $\eta_{2n+2}(\omega)=1$, and the remaining equioscillation points satisfy
\begin{equation*}
\begin{aligned}
&\eta_{2j}(\omega),\eta_{2j+1}(\omega) \to -1+\frac{2j}{n+1},~~~\text{for $j=1,\ldots,n$, and $\omega\to(n+1)\pi^-$},\\
&\text{where} ~~ \eta_{2j}(\omega)< x_{2j}(\omega) < \eta_{2j+1}(\omega),~~~\text{for $\omega\in(0,(n+1)\pi)$}.
\end{aligned}
\end{equation*}
\end{proposition}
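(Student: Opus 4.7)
The plan is to combine the pole convergence of Proposition~\ref{prop:polesconvwtonp1pi} with the explicit arctangent representation~\eqref{eq:defg} of the phase function and the equioscillatory structure from Theorem~\ref{thm:bestapprox}, Corollary~\ref{cor:errattainsmax} and Proposition~\ref{prop:eo1max}. First, by the Lipschitz continuity of the minimal error (Proposition~\ref{prop:wtomincontinuous}.\ref{item:minecontinw}) together with Proposition~\ref{prop:omega0}, $\|r_\omega-\exp(\omega\cdot)\|\to 2$ as $\omega\to(n+1)\pi^-$. By~\eqref{eq:errortosinphase} and the uniqueness of the phase function, this entails that the phase error amplitude $\alpha(\omega) := \max_{x\in[-1,1]}|g_\omega(x)-\omega x|$ tends to $\pi^-$. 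By Corollary~\ref{cor:gsym} the phase factor is $1$ and $g_\omega(x) = 2\sum_{j=1}^n \arctan((x-\mu_j(\omega))/\xi_j(\omega))$, where $s_j(\omega) = \xi_j(\omega)+\mathrm{i}\mu_j(\omega)$. By Proposition~\ref{prop:polesconvwtonp1pi} I can reorder the poles so that $\mu_1(\omega)<\cdots<\mu_n(\omega)$ for $\omega$ close enough to $(n+1)\pi$, with $\xi_j(\omega)\to 0^+$ and $\mu_j(\omega)\to \mu_j^*:=-1+2j/(n+1)$.

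Next, I would work out the pointwise limit of the phase error $\gamma_\omega(x):=g_\omega(x)-\omega x$. Since $\arctan(y/\xi)\to (\pi/2)\operatorname{sgn}(y)$ pointwise as $\xi\to 0^+$ for $y\neq 0$, on each open sub-interval $(\mu_j^*, \mu_{j+1}^*)$ (with the conventions $\mu_0^*:=-1$, $\mu_{n+1}^*:=1$) one obtains $\gamma_\omega(x)\to \gamma^*(x):=\pi(2j-n) - (n+1)\pi x$; this limit is linear with slope $-(n+1)\pi$, decreasing from $+\pi$ to $-\pi$ across the sub-interval, and jumps by $+2\pi$ at each $\mu_j^*$. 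A direct computation (using $\arctan(0)=0$ and $\arctan((\mu_j^*-\mu_k^*)/\xi_k)\to\pm\pi/2$ for $k\neq j$) also gives $\gamma_\omega(\mu_j(\omega))\to 0$. Thus the $2n+1$ candidate accumulation points for the zeros $x_1(\omega)<\cdots<x_{2n+1}(\omega)$ of $\gamma_\omega$ are exactly $\{-1+k/(n+1):k=1,\ldots,2n+1\}$. Since Corollary~\ref{cor:errattainsmax} guarantees exactly $2n+1$ simple zeros, a standard continuity/compactness argument (pointwise convergence away from $\{\mu_j^*\}$ combined with uniform non-vanishing of the slope of $\gamma^*$ on each sub-interval) then yields $x_k(\omega)\to -1+k/(n+1)$ in ascending order.

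Finally, for the equioscillation points I would invoke the derivative formula~\eqref{eq:ddxphaseerror}: $\gamma_\omega'(x) = 2\sum_{j=1}^n \xi_j(\omega)/((x-\mu_j(\omega))^2+\xi_j(\omega)^2)-\omega$. On any compact subset of $[-1,1]$ disjoint from $\{\mu_j^*\}$ this derivative tends to $-(n+1)\pi<0$, whereas at $x=\mu_j(\omega)$ the $j$th summand contributes $2/\xi_j(\omega)\to\infty$. Hence, for each $j$, $\gamma_\omega'$ has exactly two sign changes in any sufficiently small neighborhood of $\mu_j^*$, producing exactly two interior equioscillation points flanking $\mu_j(\omega)$ which both converge to $\mu_j^*$. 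Together with the fixed boundary equioscillation points $\eta_1=-1$ and $\eta_{2n+2}=1$ from Theorem~\ref{thm:bestapprox}.\ref{item:rwequi}, this accounts for all $2n+2$ equioscillation points; the interlacing $\eta_{2j}(\omega)<x_{2j}(\omega)<\eta_{2j+1}(\omega)$ is then immediate from Corollary~\ref{cor:errattainsmax}. The chief technical obstacle is making the ``spike at $\mu_j^*$'' argument quantitative: one needs a uniform lower bound on $-\gamma_\omega'(x)$ away from small neighborhoods of the $\mu_j^*$, together with a width estimate of order $\sqrt{\xi_j(\omega)/\omega}$ for the window around $\mu_j(\omega)$ in which the $j$th summand of $\gamma_\omega'$ exceeds $\omega$. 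This simultaneously localises the two equioscillation points flanking $\mu_j^*$ to $\mu_j^*$ and rules out any other interior extremum.
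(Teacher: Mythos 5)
Your proposal is correct, and for the interpolation nodes it follows essentially the same route as the paper: both arguments rest on the step-function limit of the arctangent sum (your $\gamma_\omega\to\gamma^*$ with $\gamma^*$ piecewise linear of slope $-(n+1)\pi$, jumping by $2\pi$ at the $\mu_j^*$, is exactly the content of the paper's auxiliary Proposition~\ref{prop:appendixBauxgeps3}), followed by bracketing each of the $2n+1$ zeros by a sign change of the phase error in an arbitrarily small neighborhood of $t_k=-1+k/(n+1)$ and invoking the exact zero count from Corollary~\ref{cor:errattainsmax}. Where you genuinely diverge is the equioscillation part. The paper localizes the interior equioscillation points by showing $K_\omega=\max|g_\omega(x)-\omega x|\to\pi^-$ while $|g_\omega(x)-\omega x|$ stays bounded away from $\pi$ on the plateau intervals between consecutive $\mu_j^*$, so no extremum of magnitude $K_\omega$ can live there; combined with the interlacing $\eta_j\in(x_{j-1},x_j)$ this pins $\eta_{2j},\eta_{2j+1}$ into a shrinking neighborhood of $\mu_j^*$. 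You instead work with the derivative formula~\eqref{eq:ddxphaseerror}: uniformly negative ($\to-(n+1)\pi$) away from the $\mu_j^*$, positive at $\mu_j(\omega)$ since the $j$th summand equals $2/\xi_j(\omega)\to\infty$. This is a valid and arguably cleaner alternative, and it makes the statement that the two points flank $\mu_j(\omega)$ transparent; moreover it does not need $K_\omega\to\pi^-$ at all. One point you should make explicit: the local sign pattern (negative, positive, negative) only gives \emph{at least} two simple zeros of the derivative in each neighborhood; to get \emph{exactly} two you must combine this with Theorem~\ref{thm:bestapprox}.\ref{item:rwphaseerrmonotonic}, which says the derivative of the phase error has exactly $2n$ zeros in $(-1,1)$, all simple and all equal to interior equioscillation points, so that $n$ disjoint neighborhoods each containing at least two zeros forces exactly two in each and none elsewhere. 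Once this counting is in place, the quantitative ``width of order $\sqrt{\xi_j(\omega)/\omega}$'' estimate you flag as the chief technical obstacle is unnecessary: the zeros are confined to arbitrarily small neighborhoods of the $\mu_j^*$ by the uniform negativity of the derivative outside, which already yields the claimed convergence. The final interlacing $\eta_{2j}<x_{2j}<\eta_{2j+1}$ is, as you say, immediate from Corollary~\ref{cor:errattainsmax} and needs no limit argument.
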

\begin{proof}
The proof of this proposition is provided in Appendix~\ref{sec:proofswtonp1pi}.
\end{proof}

We remark that it is already part of the statement of  Theorem~\ref{thm:bestapprox} that two of the equioscillation points are fixed at the boundary, i.e., $\eta_1=-1$ and $\eta_{2n+2}=1$.

\subsection{Numerical illustrations}\label{subsec:plotslimit}

In figures~\ref{fig:interpolationnodes} and~\ref{fig:polesconv} we verify propositions of the present section considering poles, interpolation nodes and equioscillation points of the unitary best approximation $r_\omega(\mathrm{i} x)\approx\mathrm{e}^{\mathrm{i} \omega x}$ in the limits $\omega\to0^+$ and $\omega\to(n+1)\pi^-$. In particular, for $r_\omega\in\mathcal{U}_n$ with $n=4$, and respectively, $(n+1)\pi\approx 15.71$.

We recall the notation $\mathrm{i}x_1,\ldots,\mathrm{i}x_{2n+1}$ for the interpolation nodes and $\eta_1,\ldots,\eta_{2n+2}$ for the equioscillation points of the unitary best approximation where the nodes $x_1,\ldots,x_{2n+1}$ refer to the zeros of the respective phase error. The $2n+1$ zeros $x_j$ and $2n+2$ equioscillation points $\eta_j$ are mirrored around the origin (Proposition~\ref{prop:inodessym}). In Fig.~\ref{fig:interpolationnodes} we plot the non-negative zeros $x_j$ and equioscillation points $\eta_j$ of $r_\omega$ over $\omega$. While the sets of zeros and equioscillation points are uniquely defined for $\omega\in(0,(n+1)\pi)$ due to Theorem~\ref{thm:bestapprox} and Corollary~\ref{cor:errattainsmax}, we only show results for $\omega \in[0.5,15.5]$ in this figure due to numerical difficulties. Namely, for smaller $\omega$ computing the best approximation requires higher precision arithmetic since the phase error is already below machine precision. On the other hand, for $\omega \to (n+1)\pi^-$, in particular, for $\omega>15.5$ in the present example, numerical difficulties occur since equioscillation points approach the zeros of the phase error.

The results shown in Fig.~\ref{fig:interpolationnodes} indicate that propositions~\ref{prop:inodestoCheb} and~\ref{prop:nodesconvwtonp1pi} hold true. Moreover, this figure illustrates that the equioscillation points and interpolation nodes depend continuously on $\omega$ as shown in Proposition~\ref{prop:inodescontinuous}.
We also refer to Fig.~\ref{fig:ip.gnp1pi} for a plot of the phase function and phase error for the current example, i.e., $n=4$, and $\omega=15.5$. We recall that equioscillation points correspond to extrema of the phase error. Similar to Fig.~\ref{fig:interpolationnodes}, this figure shows that pairs of equioscillation points enclose specific zeros $x_j$ of the phase error, as described in Proposition~\ref{prop:nodesconvwtonp1pi} for the limit $\omega\to(n+1)\pi^-$. For this example, the phase error can be described as a sawtooth function. When considering the limit $\omega\to(n+1)\pi^-$, singularities of the phase error occur around the points $-1+2j/(n+1)$, for $j=1,\ldots,n$, which also explains numerical difficulties when computing the unitary best approximation for $\omega\approx (n+1)\pi$. 

In Fig.~\ref{fig:polesconv} we verify the results of proposition~\ref{prop:poleslimit} and~\ref{prop:polesconvwtonp1pi}. The left plot in this figure shows the poles $s_1(\omega),\ldots,s_n(\omega)$ of $r_\omega$ in the complex plane for $\omega\to(n+1)\pi^-$. The poles converge to the points $\mathrm{i}(-1+2j/(n+1))$ on the imaginary axis, as expected from Proposition~\ref{prop:polesconvwtonp1pi}. Especially, poles converge these points while remaining in the right complex plane. In the right plot we show the re-scaled poles $\omega s_j(\omega)$ for $\omega\to0^+$ to verify Proposition~\ref{prop:poleslimit}, i.e., re-scaled poles $\omega s_j(\omega)$ converge to the poles of the $(n,n)$-Pad\'e approximation.

\begin{figure}
\centering
\includegraphics{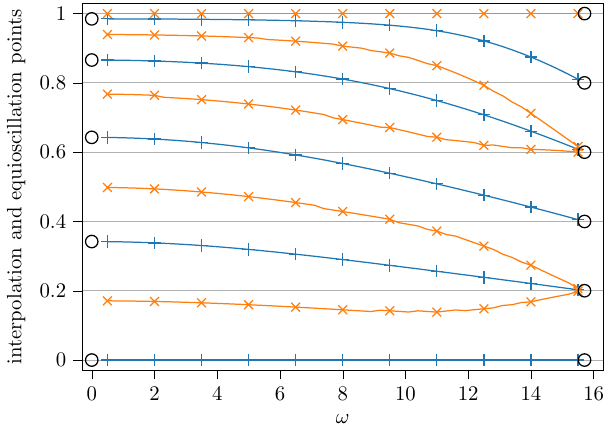}
\caption{
This figure illustrates the non-negative zeros $x_{n+1},\ldots,x_{2n+1}$ ($+$) and equioscillation points $\eta_{n+2},\ldots,\eta_{2n+2}$ ($\times$) of the phase error of the unitary best approximation $r_\omega$ to $\mathrm{e}^{\mathrm{i} \omega x}$ of degree $n=4$ over $\omega$. The zeros of the phase error are directly related to the interpolation nodes $\mathrm{i}x_{n+1},\ldots,\mathrm{i}x_{2n+1}$ of $r_\omega$. We show results for $\omega \in[0.5,15.5]\subset(0,(n+1)\pi)$, avoiding values of $\omega$ too close to zero or $(n+1)\pi\approx 15.71$ due to numerical difficulties. 
At $\omega=0$ we plot the $n+1$ non-negative Chebyshev nodes $\tau_{n+1},\ldots,\tau_{2n+1}$ ($\circ$). At $\omega=0.5$ the nodes $x_{n+1},\ldots,x_{2n+1}$ seem to coincide with the respective set of Chebyshev nodes as expected for sufficiently small $\omega$ due to Proposition~\ref{prop:inodestoCheb}, particularly, in the limit $\omega\to0^+$. At $\omega=(n+1)\pi \approx 15.71$ the symbols ($\circ$) mark the points $-1+j/(n+1)$ for $j=n+1,\ldots,2n+2$. We observe that for $\omega\to (n+1)\pi^-$, the nodes $x_j$ approach the points $-1+j/(n+1)$ for $j=1,\ldots,2n+1$. Moreover, while the equioscillation point $\eta_{2n+2}=1$ is fixed, the other equioscillation points seem to enclose nodes $x_j$ which approach $-1+2j/(n+1)$ for $j=1,\ldots,n$ and $\omega\to (n+1)\pi^-$. Observations for $\omega$ approaching $(n+1)\pi$ comply with Proposition~\ref{prop:nodesconvwtonp1pi}. 
}\label{fig:interpolationnodes}
\end{figure}

\begin{figure}
\centering
\includegraphics{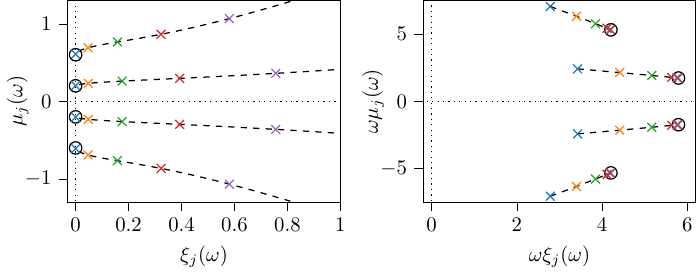}
\caption{
We let $s_j(\omega)=\xi_j(\omega)+\mathrm{i} \mu_j(\omega)$ for $j=1,\ldots,n$ denote the poles of the best approximation $r_\omega(\mathrm{i} x)\approx \mathrm{e}^{\mathrm{i} \omega x}$ where $r_\omega\in\mathcal{U}_n$ and $n=4$.
{\bf left:} The case $\omega \to (n+1)\pi^-\approx 15.71$. The poles $s_j(\omega)$ for $\omega \in  \{15.5, 13.1, 10.7, 8.3, 5.9\}$ are marked by~($\times$) symbols, and in addition, sequences $\{ s_j(\omega):\omega \leq 15.5\}$ are connected by dashed lines. With increasing $\omega$, the poles approach the imaginary axis. In particular, poles converge to points $\mathrm{i}(-1+2j/(n+1))$ for $j=1,\ldots,n$ which are marked by~($\circ$).
{\bf right:} The case $\omega \to 0^+$. The re-scaled poles $\omega s_j(\omega)$ for $\omega \in \{  8, 6, 4,2,1,0.5\} $ are marked by~($\times$) symbols,
and in addition, sequences $\{ \omega s_j(\omega):\omega\in[0.5,8]\}$  are connected by dashed lines. Poles of the $(n,n)$-Pad\'{e} approximation are marked by~($\circ$) symbols. In this plot, the re-scaled poles $\omega s_j(\omega)$ for $\omega=10$ correspond to the left-most poles and re-scaled poles for $\omega = 1$ and $\omega = 0.5$ already overlap with poles of the $(n,n)$-Pad\'{e} approximation.}
\label{fig:polesconv}
\end{figure}

\section{Poles and stability}\label{sec:poles}

In the present section we summarize some previously stated results for the poles of the unitary best approximation, and we show that poles are located in the right complex plane which implies stability~\eqref{eq:rzless1leftplane}.

Similar to symmetry properties discussed in Section~\ref{sec:symmetry}, stability has some relevance for numerical time integration. We remark on this property very briefly, referring the reader to~\cite{HW02} and others for further details. Stability of a time integrator is typically defined via a stability function, which results from applying the time integrator to a scalar differential equation. Especially, for a time integrator based on a scalar rational approximation $r(z)\approx \mathrm{e}^{\omega z}$, $\omega>0$, the stability function corresponds to $r$ itself, and provided $r$ satisfies~\eqref{eq:rzless1leftplane}, such a method is A-stable~\cite[Section~IV.3]{HW02}.

We proceed to recapitulate some results regarding poles of the unitary best approximation $r_\omega(\textrm{i} x)\approx\textrm{e}^{\textrm{i} \omega x}$ from previous sections. Provided $\omega\in(0,(n+1)\pi)$, the unitary best approximation $r_\omega\in\mathcal{U}_n$ has minimal degree $n$ (Theorem~\ref{thm:bestapprox}), and thus, its poles are not located on the imaginary axis. Moreover, the poles are distinct. Since $r_\omega$ is symmetric (Propositions~\ref{prop:sym}), its poles are real or come in complex conjugate pairs (Corollary~\ref{cor:gsym}). Furthermore, poles change continuously with $\omega\in(0,(n+1)\pi)$ (Proposition~\ref{prop:polescont}), in the limit $\omega\to0^+$ the poles scaled by $\omega$ converge to the poles of the Pad\'e approximation (Proposition~\ref{prop:poleslimit}), and in the limit $\omega\to(n+1)\pi^-$ the poles converge to the points $\mathrm{i}(-1+2j/(n+1))$ for $j=1,\ldots,n$ (Proposition~\ref{prop:polesconvwtonp1pi}).

\begin{corollary}[to Proposition~\ref{prop:poleslimit}]\label{cor:polestoinf}
Following~\cite[Section~5.7]{BG96} and others, the poles $\widehat{s}_1,\ldots,\widehat{s}_n$ of the $(n,n)$-Pad\'{e} approximation to $\mathrm{e}^z$ are located in the right complex plane, i.e., $\operatorname{Re} \widehat{s}_j>0$ for $j=1,\ldots,n$. Following Proposition~\ref{prop:poleslimit}, the poles $s_1(\omega),\ldots,s_n(\omega)$ of the best approximation $r_\omega(\mathrm{i} x)\approx \mathrm{e}^{\mathrm{i} \omega x}$ can be numbered s.t.\ $\omega\, s_j(\omega)  \to \widehat{s}_j$ for $\omega \to 0^+$ and $j=1,\ldots,n$. Hence, for a sufficiently small $\omega>0$ this implies $\operatorname{Re} s_j(\omega)>0$. 

Furthermore, for $\omega\to0^+$ the poles are unbounded in the limit, i.e., $|s_j(\omega)|\to \infty$. This result complies with Proposition~\ref{prop:rwto1omega0}, i.e., $r_\omega$ continuously extends to $\omega=0$ with $r_0\equiv 1$.
\end{corollary}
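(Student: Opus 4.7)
The plan is to assemble the four assertions of the corollary from one external input and one previously proved proposition, so no new machinery is needed. First, I would simply cite the classical fact from~\cite{BG96} (Section~5.7) that the diagonal $(n,n)$-Padé poles $\widehat{s}_1,\ldots,\widehat{s}_n$ to $\mathrm{e}^z$ all satisfy $\operatorname{Re}\widehat{s}_j>0$; this requires no argument beyond the citation. Second, I would invoke Proposition~\ref{prop:poleslimit} to fix a numbering of the poles $s_1(\omega),\ldots,s_n(\omega)$ of the unitary best approximation so that $\omega\,s_j(\omega)\to\widehat{s}_j$ as $\omega\to0^+$ for each $j=1,\ldots,n$.

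For the right-half-plane statement, I would use continuity of the real part: from $\omega\,s_j(\omega)\to\widehat{s}_j$ we get $\omega\operatorname{Re} s_j(\omega)=\operatorname{Re}(\omega\,s_j(\omega))\to\operatorname{Re}\widehat{s}_j>0$, so for all sufficiently small $\omega>0$ the quantity $\omega\operatorname{Re} s_j(\omega)$ is positive; dividing by $\omega>0$ gives $\operatorname{Re} s_j(\omega)>0$. For the unboundedness statement, I would note that $\operatorname{Re}\widehat{s}_j>0$ implies in particular $\widehat{s}_j\neq0$, hence $|\widehat{s}_j|>0$, and therefore $|s_j(\omega)|=|\omega\,s_j(\omega)|/\omega\to\infty$ as $\omega\to0^+$ because the numerator tends to $|\widehat{s}_j|>0$ while the denominator tends to $0^+$.

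Finally, I would close with a one-line remark that this unboundedness is consistent with Proposition~\ref{prop:rwto1omega0}: as all poles of $r_\omega$ escape to infinity and $r_\omega\in\mathcal{U}_n$ is uniformly bounded (by~$1$) on the imaginary axis, the limit $r_0\equiv1$ is the natural degenerate case. There is essentially no obstacle here; the statement is meant as a direct packaging of Proposition~\ref{prop:poleslimit} together with the cited Padé fact, and the only thing to be slightly careful about is that convergence of $\omega\,s_j(\omega)$ does not by itself imply convergence of $s_j(\omega)$, which is precisely why assertions~(iii) and~(iv) are formulated as ``positive real part for sufficiently small $\omega$'' and ``unbounded modulus'' rather than as convergence of $s_j(\omega)$ itself.
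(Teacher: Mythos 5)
Your proposal is correct and follows essentially the same route as the paper: the corollary is a direct packaging of the cited Pad\'e fact with Proposition~\ref{prop:poleslimit}, and your explicit steps (dividing $\omega\operatorname{Re} s_j(\omega)>0$ by $\omega>0$, and $|s_j(\omega)|=|\omega s_j(\omega)|/\omega\to\infty$ since $|\widehat{s}_j|>0$) are exactly the elementary manipulations the paper leaves implicit. Your closing caution that convergence of $\omega\,s_j(\omega)$ does not give convergence of $s_j(\omega)$ itself is a correct and worthwhile observation.
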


Poles of the unitary best approximation for $n=5$, $n=6$ and various values of $\omega$ are illustrated in Fig.~\ref{fig:polesbestapprox}.

In the following proposition, we show that the poles are located in the right complex plane which implies stability.
\begin{proposition}[Stability]\label{prop:stable}
For $\omega\in(0,(n+1)\pi)$ the poles $s_1,\ldots,s_n$ of the unitary best approximation $r(\mathrm{i} x)\approx \mathrm{e}^{\mathrm{i} \omega x}$, $r\in\mathcal{U}_n$, are located in the right complex plane, i.e., $\operatorname{Re} s_j>0$ for $j=1,\ldots,n$, and we have
\begin{equation}\tag{\ref{eq:rzless1leftplane}}
|r(z)| < 1,~~~~\text{for $z\in\mathbb{C}$ with $\operatorname{Re} z < 0$}. 
\end{equation}
\end{proposition}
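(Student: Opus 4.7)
The plan is to combine three pieces established earlier: (a) poles of the unitary best approximation depend continuously on $\omega$ for $\omega\in(0,(n+1)\pi)$ (Proposition~\ref{prop:polescont}); (b) these poles cannot cross the imaginary axis because the best approximation has minimal degree exactly $n$ and is therefore irreducible (Theorem~\ref{thm:bestapprox} combined with Proposition~\ref{prop:fulldegree}); and (c) for sufficiently small $\omega>0$ the poles lie in the open right half-plane (Corollary~\ref{cor:polestoinf}). From these three facts, a standard connectivity argument forces all poles to stay in the open right half-plane on the whole interval $(0,(n+1)\pi)$.

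Concretely, fix any $\omega_\star\in(0,(n+1)\pi)$ and consider, for each pole label $j$, the continuous trajectory $\omega\mapsto s_j(\omega)$ on the interval $(0,\omega_\star]$. If any pole had non-positive real part somewhere on this interval, then by (c) and the intermediate value theorem applied to $\omega\mapsto \operatorname{Re} s_j(\omega)$, there would exist some $\omega_0\in(0,\omega_\star]$ with $\operatorname{Re} s_j(\omega_0)=0$, i.e., a pole on the imaginary axis. But the unitary best approximation at $\omega_0$ has minimal degree $n$ (Theorem~\ref{thm:bestapprox}\ref{item:rwmindegreen}), hence is irreducible; a pole $s_j(\omega_0)=\mathrm{i}\mu$ would give $\overline{s_j(\omega_0)}=-s_j(\omega_0)$, contradicting Proposition~\ref{prop:fulldegree}. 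Hence $\operatorname{Re} s_j(\omega)>0$ for all $\omega\in(0,(n+1)\pi)$ and all $j$.

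The stability inequality~\eqref{eq:rzless1leftplane} then follows factor-by-factor from the representation~\eqref{eq:defrbyp}. Writing $s_j=\xi_j+\mathrm{i}\mu_j$ with $\xi_j>0$, and $z=x+\mathrm{i}y$ with $x<0$, a direct computation gives
\begin{equation*}
|z+\overline{s}_j|^2 - |z-s_j|^2 = (x+\xi_j)^2-(x-\xi_j)^2 = 4x\xi_j < 0,
\end{equation*}
so that each Cayley-type factor in~\eqref{eq:defrbyp} satisfies $|(z+\overline{s}_j)/(z-s_j)|<1$. Since the leading phase factor $(-1)^n\mathrm{e}^{\mathrm{i}\theta}$ has modulus one, taking the product of the $n$ factors yields $|r(z)|<1$ for every $z$ with $\operatorname{Re} z<0$.

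The potentially delicate step is the use of continuity of the poles together with the "trajectory" viewpoint: Proposition~\ref{prop:polescont} only asserts convergence of unordered sets of roots, so strictly speaking one should phrase the obstruction set-wise. This causes no trouble: the multiset of poles is a continuous function of $\omega$ into the symmetric product of $\mathbb{C}$, the open right half-plane is open (hence its preimage is open), its closed complement meets every curve only at pole locations on the imaginary axis, and those are excluded by Proposition~\ref{prop:fulldegree} combined with the minimal-degree property. So the set $\{\omega\in(0,(n+1)\pi):\text{all poles in right half-plane}\}$ is non-empty (by Corollary~\ref{cor:polestoinf}), open (by continuity and openness of the right half-plane), and closed in $(0,(n+1)\pi)$ (because its complement requires hitting the imaginary axis, which is impossible), hence equal to $(0,(n+1)\pi)$.
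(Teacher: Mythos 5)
Your proposal is correct and follows essentially the same route as the paper's proof: establish the base case for small $\omega$ via Corollary~\ref{cor:polestoinf}, rule out poles on the imaginary axis using minimal degree $n$ (Theorem~\ref{thm:bestapprox}) together with Proposition~\ref{prop:fulldegree}, invoke continuity of the poles (Proposition~\ref{prop:polescont}) to conclude $\operatorname{Re} s_j(\omega)>0$ throughout $(0,(n+1)\pi)$, and then bound each Cayley-type factor of~\eqref{eq:defrbyp} on the left half-plane. Your explicit computation $|z+\overline{s}_j|^2-|z-s_j|^2=4x\xi_j<0$ and your clopen-set treatment of the unordered-roots continuity issue are slightly more detailed than the paper's exposition but do not constitute a different argument.
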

\begin{proof}
In the present proof we use the notation $r$ and $r_\omega$ for the best approximation $r_\omega(\mathrm{i} x)\approx \mathrm{e}^{\mathrm{i} \omega x}$, and $s_j$ and $s_j(\omega)$ for its poles in an equivalent manner.
As noted in Corollary~\ref{cor:polestoinf}, for a sufficiently small $\omega>0$ the poles $s_1(\omega),\ldots,s_n(\omega)$ of $r_\omega$ are located in the right complex plane, i.e., $\operatorname{Re} s_j(\omega)>0$.

Theorem~\ref{thm:bestapprox} shows that the unitary best approximation~$r_\omega$ has minimal degree $n$ for $\omega\in(0,(n+1)\pi)$. Especially, this implies that $r_\omega$ is irreducible and has no poles located on the imaginary axis, cf.\ Proposition~\ref{prop:fulldegree}.
Following Proposition~\ref{prop:polescont} the poles $s_j(\omega)$ of $r_\omega$ depend continuously on $\omega\in(0,(n+1)\pi)$. We conclude that the paths of the poles $s_j(\omega)$ do not cross the imaginary axis for $\omega\in(0,(n+1)\pi)$. Since $\operatorname{Re} s_j(\omega) >0$ for sufficiently small~$\omega$, this remains true for all~$\omega\in(0,(n+1)\pi)$.

In accordance to Proposition~\ref{prop:unitaryridentities},
the absolute value of the unitary best approximation $r$ has the form
\begin{equation}\label{eq:ip.absvalrsj}
|r(z)| = \prod_{j=1}^n\left|\frac{z+\overline{s}_j}{z-s_j}\right|.
\end{equation}
Since $\operatorname{Re} s_j>0$, the quotient $(z+\overline{s}_j)/(z-s_j)$ corresponds to a Cayley transform (cf.~\eqref{eq:cayley12}), mapping the left complex plane $\{z \in \mathbb{C} : \operatorname{Re} z < 0\}$ to the disk $\{z \in \mathbb{C}: |z|<1\}$. Thus, the product in~\eqref{eq:ip.absvalrsj} is strictly smaller than one for $z$ in the left complex plane which shows~\eqref{eq:rzless1leftplane}.
\end{proof}

 \begin{figure}
\centering
\includegraphics{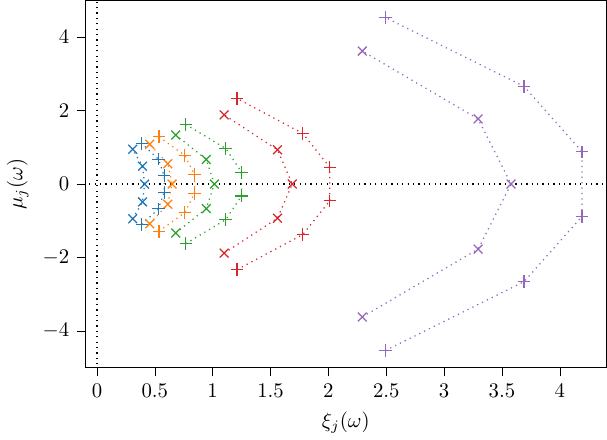}
\caption{Poles $s_j(\omega)=\xi_j(\omega)+\mathrm{i}\mu_j(\omega)$ of the unitary best approximation $r_\omega(\mathrm{i} x)\approx \mathrm{e}^{\mathrm{i}\omega x}$ in the complex plane, where $j=1,\ldots,n$ and $r_\omega\in\mathcal{U}_n$ for $n=5$ ($\times$) and $n=6$ ($+$), and $\omega = 10,8,6,4,2$. Poles for the same $n$ and $\omega$ are connected by a dotted line. The left-most poles correspond to $\omega=10$ and the right-most poles to $\omega=2$.}
\label{fig:polesbestapprox}
\end{figure}

\appendix

\section{Rational interpolation and the limit \texorpdfstring{$\omega\to0^+$}{w to 0+}}\label{sec:appendixA}

In the present section we show some results for the poles and the approximation error of rational interpolants to $\mathrm{e}^{\omega z}$ with interpolation nodes on the imaginary axis, i.e., $\mathrm{i} x_j\in\mathrm{i}\mathbb{R}$, in the limit $\omega\to0^+$. 
This interpolation setting naturally occurs when considering rational approximation to $\mathrm{e}^{\omega z}$ for $z$ on a subset of the imaginary axis. However, to simplify some technicalities in the present section we also consider rational interpolation to $\mathrm{e}^z$ with nodes $\mathrm{i} \omega x_j\in\mathrm{i}\mathbb{R}$ in an equivalent manner. Throughout the present section we assume the nodes $x_1,\ldots,x_{2n+1}\in[-1,1]$ to be given in ascending order, i.e.,
\begin{equation}\label{eq:zjimaginary}
-1 \leq x_1\leq x_2 \leq \ldots \leq x_{2n+1} \leq 1.
\end{equation}

As explained in full detail in Subsection~\ref{subsec:ratint}, rational interpolants are uniquely defined via solutions of an underlying linearized interpolation problem featuring the osculatory case and unattainable nodes. We first state the proof of Proposition~\ref{prop:ratintconfluentunitary}, showing that rational interpolants to $\mathrm{e}^{\omega z}$ with interpolation nodes on the imaginary axis are unitary.

\begin{proof}[\bf Proof of Proposition~\ref{prop:ratintconfluentunitary}]
Let $r=p/q$ be the rational interpolant to $\mathrm{e}^{\omega z}$ in the sense of~\eqref{eq:ratintcondconfluentboth} with interpolation nodes $\mathrm{i} x_1,\ldots,\mathrm{i} x_{2n+1}\in\mathrm{i}\mathbb{R}$. We define 
\begin{equation*}
\chi:=p^\dag p - q^\dag q,
\end{equation*}
which is a polynomial of degree $\leq 2n$. For arguments on the imaginary axis,
\begin{equation}\label{eq:ip.chiidentitiesintix}
\chi(\mathrm{i} x) = \overline{p(\mathrm{i} x)} p(\mathrm{i} x)  - \overline{q(\mathrm{i} x)} q(\mathrm{i} x)  = |p(\mathrm{i} x)|^2 - |q(\mathrm{i} x)|^2,~~~x\in\mathbb{R},
\end{equation}
since $p^\dag(\mathrm{i} x) =\overline{p(\mathrm{i} x)}$ and $q^\dag(\mathrm{i} x) =\overline{q(\mathrm{i} x)}$.
The identity~\eqref{eq:ratintcondconfluentlin2} entails
\begin{equation}\label{eq:ip.chiidentitiesint21}
p(\mathrm{i} x) = \mathrm{e}^{\mathrm{i} \omega x}q(\mathrm{i} x)+h(\mathrm{i} x),~~~\text{where}~~
h(\mathrm{i} x) := \prod_{j=1}^{2n+1}(\mathrm{i} x-\mathrm{i} x_j) v(\mathrm{i} x).
\end{equation}
This shows
\begin{equation}\label{eq:ip.chiidentitiesint1}
\overline{p(\mathrm{i} x)} p(\mathrm{i} x)
= \overline{q(\mathrm{i} x)} q(\mathrm{i} x) + \mathrm{e}^{-\mathrm{i} \omega x}\overline{q(\mathrm{i} x)}h(\mathrm{i} x) +  \mathrm{e}^{\mathrm{i} \omega x}q(\mathrm{i} x)\overline{h(\mathrm{i} x)} +  \overline{h(\mathrm{i} x)} h(\mathrm{i} x),
\end{equation}
where
\begin{equation}\label{eq:ip.chiidentitiesint2}
\overline{h(\mathrm{i} x)} = \prod_{j=1}^{2n+1}(\mathrm{i} x_j - \mathrm{i} x) \overline{v(\mathrm{i} x)}.
\end{equation}
Inserting~\eqref{eq:ip.chiidentitiesint1} in~\eqref{eq:ip.chiidentitiesintix}, we observe
\begin{equation*}
\chi(\mathrm{i} x) =  \mathrm{e}^{-\mathrm{i}\omega x}\overline{q(\mathrm{i} x)}h(\mathrm{i} x) +  \mathrm{e}^{\mathrm{i}\omega x}p(\mathrm{i} x)\overline{h(\mathrm{i} x)} +  \overline{h(\mathrm{i} x)} h(\mathrm{i} x).
\end{equation*}
The functions $h(\mathrm{i} x)$ and $\overline{h(\mathrm{i} x)}$, as in~\eqref{eq:ip.chiidentitiesint21} and~\eqref{eq:ip.chiidentitiesint2}, have zeros at the nodes $\mathrm{i} x_j$, and the multiplicity of these zeros corresponds to the multiplicity of the interpolation nodes. As a consequence, the polynomial $\chi$ has $2n+1$ zeros counting multiplicity which shows $\chi\equiv 0$. Thus, $|p(\mathrm{i} x)|=|q(\mathrm{i} x)|$ for $x\in\mathbb{R}$ and $r=p/q$ is unitary.
\end{proof}

Let $\widehat{r}_\omega$ denote the solution the rational interpolation problem~\eqref{eq:ratintcondconfluentboth} to $\mathrm{e}^z$ with interpolation nodes $\mathrm{i} \omega x_1,\ldots, \mathrm{i} \omega x_{2n+1}$ where $x_j$ satisfies~\eqref{eq:zjimaginary}. Following Proposition~\ref{prop:ratintconfluentunitary} the interpolant $\widehat{r}_\omega$ is unitary. Thus, it is of the form $\widehat{r}_\omega=\widehat{p}^\dag_\omega/\widehat{p}_\omega\in\mathcal{U}_n$ for some polynomial $\widehat{p}_\omega$ of degree $\leq n$, and the underlying linearized interpolation problem~\eqref{eq:ratintcondconfluentlin2} corresponds to 
\begin{equation}\label{eq:linintinproofphat}
\widehat{p}_\omega^\dag(z) - \widehat{p}_\omega(z) \mathrm{e}^z = \prod_{j=1}^{2n+1}(z-\mathrm{i} \omega x_j) v_\omega(z),
\end{equation}
for some analytic function $v_\omega$. We recall that $\widehat{p}_\omega$ is a unique solution to~\eqref{eq:linintinproofphat} up to a common factor of $\widehat{p}_\omega$ and $\widehat{p}^\dag_\omega$. For the case $\omega>0$ and $\widehat{p}_\omega$ as in~\eqref{eq:rintdenomdetTpfull}, we also introduce the notation
\begin{equation}\label{eq:rintdefpw}
p_\omega(z)=\widehat{p}_\omega(\omega z)~~~\text{and}~~r_\omega=p_\omega^\dag/p_\omega,
\end{equation}
i.e.,
\begin{equation}\label{eq:ip.rfromphat}
r_\omega(z) = \frac{p_\omega^\dag(z)}{p_\omega(z)} = \frac{\widehat{p}^\dag_\omega(\omega z)}{\widehat{p}_\omega(\omega z)}.
\end{equation}
Substituting $\omega z$ for $z$ in~\eqref{eq:linintinproofphat}, we observe that $p_\omega$ satisfies 
\begin{equation}\label{eq:linintinproofp}
p_\omega^\dag(z) - p_\omega(z) \mathrm{e}^{\omega z} = \omega^{2n+1}\prod_{j=1}^{2n+1}(z-\mathrm{i} x_j) v_\omega(\omega z),
\end{equation}
and thus $r_\omega$ corresponds to the unique rational interpolant to $\mathrm{e}^{\omega z}$ with interpolation nodes $\mathrm{i} x_1,\ldots,\mathrm{i} x_{2n+1}$.

We may also use the notation $\widehat{r}_\omega$ and $r_\omega$ for the case $\omega =0$. In this case, $\widehat{r}_\omega$ interpolates $\mathrm{e}^z$ at the confluent interpolation node $\mathrm{i} \omega x_1=\ldots=\mathrm{i} \omega x_{2n+1}=0$, and thus, $\widehat{r}_0$ refers to the $(n,n)$-Pad\'e approximant to $\mathrm{e}^z$. Similar to previous sections, we also write $\widehat{r}=\widehat{r}_0$ for the Pad\'e approximant. On the other hand, for $\omega =0$ the unitary function $r_\omega$ interpolates $\mathrm{e}^{\omega z}\equiv 1$ at nodes $\mathrm{i} x_1,\ldots,\mathrm{i} x_{2n+1}$, and thus $r_0 \equiv 1$. The relation~\eqref{eq:ip.rfromphat} remains true since $\widehat{p}^\dag_\omega(\mathrm{i}\omega x)/\widehat{p}_\omega(\mathrm{i}\omega x) = \widehat{r}(0)$ for $\omega = 0$, and since the Pad\'e approximant satisfies $\widehat{r}(0) = 1$.

\subsection{Divided differences}
Similar to polynomial interpolation, divided differences also have some relevance for rational interpolation. In the present subsection we recall classical definitions of divided differences and some properties which are auxiliary for the present work.

Let $z_j=\mathrm{i} \omega x_j$ denote underlying interpolation nodes. Assuming~\eqref{eq:zjimaginary}, we remark that equal interpolation nodes are contiguous, i.e., if $z_j=z_k$ for $j<k$ then $z_j=z_{j+1}=\ldots=z_k$. 
We define the divided differences for the exponential function in accordance to~\cite[Section~B.16]{Hi08} and others, namely,
\begin{subequations}\label{eq:ddrecursive}
\begin{align}\label{eq:dd1nodes}
\exp[z_k] &= \mathrm{e}^{z_k},\\
\label{eq:dd2nodes}
\exp[z_k,z_{k+1}] &=\left\{ 
\begin{array}{ll}
\frac{\exp(z_{k+1})-\exp(z_{k})}{z_{k+1} - z_{k}},~~~&z_k\neq z_{k+1},\\
\mathrm{e}^{z_k},~~~&z_k= z_{k+1},
\end{array}\right.\\
\exp[z_j,\ldots,z_{k+1}] &=\left\{ 
\begin{array}{ll}
\frac{\exp[z_{j+1},\ldots,z_{k+1}]-\exp[z_{j},\ldots,z_{k}]}{z_{k+1} - z_{j}},~~~&z_j\neq z_{k+1},\\
\frac{\exp(z_k)}{(k-j+1)!},~~~&z_j= z_{k+1}.
\end{array}\right.
\end{align}
\end{subequations}

For the special case of an interpolation node at zero of multiplicity $m+1$, e.g., $z_k=\ldots=z_{k+m}=0$ for some index $k$, we remark
\begin{equation}\label{eq:ddconf}
\exp[0^{m+1}]
= \frac{1}{m!},~~~\text{where}~~0^{m+1}:=\underbrace{0,\ldots,0}_{\text{$m+1$ many}}.
\end{equation}

Moreover, divided differences also satisfy the Genocchi--Hermite formula, for instance~\cite[eq.~(B.25)]{Hi08} or~\cite[eq.~(52)]{Bo05},
\begin{equation}\label{eq:ddGenocchiHermite}
\begin{aligned}
&\exp[z_j,\ldots,z_{j+k}]\\
&\quad=\int_0^1 \int_0^{t_1} \int_0^{t_2}\cdots \int_0^{t_{k-1}} 
\exp\left( z_j +\sum_{\ell=1}^k t_{\ell} (z_{j+\ell} - z_{j+\ell -1})  \right)\, \mathrm{d} t_k \cdots \mathrm{d} t_2 \mathrm{d} t_1.
\end{aligned}
\end{equation}

We proceed with some auxiliary results which hold true for divided differences of the exponential function, assuming the underlying interpolation nodes are located the imaginary axis.
The upper bound from~\cite[eq.~(B.28)]{Hi08} implies
\begin{equation}\label{eq:upperboundonddzk}
|\exp[\mathrm{i} \omega x_{k},\mathrm{i} \omega x_{k+1},\ldots,\mathrm{i} \omega x_{k+m}]| \leq \frac{1}{m!}.
\end{equation}

\begin{proposition} For given frequencies $\omega_1,\omega_2\in\mathbb{R}$ the exponential function satisfies
\begin{equation}\label{eq:boundonexpiw}
|\mathrm{e}^{\mathrm{i} \omega_2 x}-\mathrm{e}^{\mathrm{i} \omega_1 x}|
\leq |\omega_2-\omega_1|,~~~x\in[-1,1].
\end{equation}
\end{proposition}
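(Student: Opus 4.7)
The plan is to reduce the bound to the elementary inequality $|\sin t|\le |t|$. First I would factor out a unimodular phase: writing $\omega_2 x-\omega_1 x = 2\cdot \tfrac{(\omega_2-\omega_1)x}{2}$, and pulling out $\mathrm{e}^{\mathrm{i}(\omega_1+\omega_2)x/2}$ from both exponentials, gives
\begin{equation*}
|\mathrm{e}^{\mathrm{i}\omega_2 x}-\mathrm{e}^{\mathrm{i}\omega_1 x}|
= \bigl|\mathrm{e}^{\mathrm{i}(\omega_2-\omega_1)x/2}-\mathrm{e}^{-\mathrm{i}(\omega_2-\omega_1)x/2}\bigr|
= 2\bigl|\sin\bigl((\omega_2-\omega_1)x/2\bigr)\bigr|,
\end{equation*}
exactly as in the identity~\eqref{eq:errortosinphasepointwise} used earlier in the paper.

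Then I would apply $|\sin t|\le |t|$ (valid for all real $t$) to get
\begin{equation*}
2\bigl|\sin\bigl((\omega_2-\omega_1)x/2\bigr)\bigr|\;\le\;|(\omega_2-\omega_1)x|\;\le\;|\omega_2-\omega_1|,
\end{equation*}
where the last inequality uses $|x|\le 1$. Combining these two displays yields~\eqref{eq:boundonexpiw}.

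There is no real obstacle here; the only thing to be careful about is that the argument of $\sin$ is real (which it is, since $\omega_1,\omega_2,x\in\mathbb{R}$), so the standard real-variable bound $|\sin t|\le |t|$ applies directly. An equivalent route via the fundamental theorem of calculus, $\mathrm{e}^{\mathrm{i}\omega_2 x}-\mathrm{e}^{\mathrm{i}\omega_1 x}=\int_{\omega_1}^{\omega_2}\mathrm{i}x\,\mathrm{e}^{\mathrm{i}\omega x}\,\mathrm{d}\omega$, would give the same bound by taking absolute values inside the integral and using $|\mathrm{e}^{\mathrm{i}\omega x}|=1$ and $|x|\le 1$, but the sine identity is the most compact and also matches the notation already used in Section~\ref{sec:phaserror}.
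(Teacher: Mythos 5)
Your proof is correct, but it takes a different route from the paper's. You derive the bound from the explicit identity $|\mathrm{e}^{\mathrm{i}\omega_2 x}-\mathrm{e}^{\mathrm{i}\omega_1 x}|=2|\sin((\omega_2-\omega_1)x/2)|$ (the same identity as~\eqref{eq:errortosinphasepointwise}) followed by $|\sin t|\le|t|$ and $|x|\le 1$; every step is elementary and valid, and your alternative via the integral $\int_{\omega_1}^{\omega_2}\mathrm{i}x\,\mathrm{e}^{\mathrm{i}\omega x}\,\mathrm{d}\omega$ is equally sound. The paper instead phrases the estimate in the language of divided differences: it invokes the bound $|\exp[\mathrm{i}\omega_1 x,\mathrm{i}\omega_2 x]|\le 1$ for nodes on the imaginary axis (the two-node case of~\eqref{eq:upperboundonddzk}, which ultimately rests on the Genocchi--Hermite representation~\eqref{eq:ddGenocchiHermite}), and then unwinds the definition~\eqref{eq:dd2nodes} of $\exp[z_1,z_2]=(\mathrm{e}^{z_2}-\mathrm{e}^{z_1})/(z_2-z_1)$ together with $|x|\le 1$. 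The two arguments prove exactly the same inequality; yours is more self-contained and arguably more transparent, while the paper's choice is dictated by context --- the proposition sits inside Appendix~\ref{sec:appendixA}, where the divided-difference machinery is already set up and reused immediately afterwards (e.g.\ in Proposition~\ref{prop:ddexpdifbound}), so stating the two-node case in that language keeps the appendix uniform. Nothing is missing from your argument.
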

\begin{proof}
Similar to~\eqref{eq:upperboundonddzk}, the divided difference for the nodes $z_1=\mathrm{i}\omega_1 x$ and $z_2=\mathrm{i}\omega_2 x$ satisfies
\begin{equation*}
|\exp[\mathrm{i} \omega_1 x,\mathrm{i} \omega_2 x]| \leq 1,~~~x\in\mathbb{R}.
\end{equation*}
Substituting~\eqref{eq:dd2nodes} for the divided difference, and making use $|x|\leq 1$, we conclude~\eqref{eq:boundonexpiw}.
\end{proof}

\begin{proposition}\label{prop:ddexpdifbound}
The divided differences for nodes $z_j=\mathrm{i} \omega x_j$ as in~\eqref{eq:zjimaginary} satisfy
\begin{equation}\label{eq:ddexpto1k}
\left|\exp[\mathrm{i}\omega x_{j},\mathrm{i}\omega x_{j+1},\ldots,\mathrm{i}\omega x_{j+k}]
- \frac{1}{k!}\right| \leq \frac{\omega}{k!}.
\end{equation}
\end{proposition}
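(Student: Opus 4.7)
The plan is to represent the divided difference via the Genocchi--Hermite formula~\eqref{eq:ddGenocchiHermite}, recognize that $1/k!$ equals the volume of the integration simplex, and then estimate the integrand pointwise.

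First I would apply~\eqref{eq:ddGenocchiHermite} to write
\begin{equation*}
\exp[\mathrm{i}\omega x_{j},\ldots,\mathrm{i}\omega x_{j+k}]
= \int_0^1\!\!\int_0^{t_1}\!\!\cdots\!\int_0^{t_{k-1}}
\exp\!\left(\mathrm{i}\omega\, y(t_1,\ldots,t_k)\right) \mathrm{d} t_k\cdots \mathrm{d} t_1,
\end{equation*}
where $y(t_1,\ldots,t_k) := x_j + \sum_{\ell=1}^k t_\ell (x_{j+\ell}-x_{j+\ell-1})$. Since the same iterated integral with integrand $1$ evaluates to $1/k!$ (the volume of the standard simplex $\Delta_k = \{0\leq t_k\leq\cdots\leq t_1\leq 1\}$), the difference reads
\begin{equation*}
\exp[\mathrm{i}\omega x_{j},\ldots,\mathrm{i}\omega x_{j+k}] - \frac{1}{k!}
= \int_{\Delta_k} \bigl( \mathrm{e}^{\mathrm{i}\omega y(t)} - 1 \bigr)\, \mathrm{d} t.
\end{equation*}

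Next I would show that $y(t)\in[-1,1]$ for $t\in\Delta_k$. Telescoping the inner sum in $y$ yields
\begin{equation*}
y(t) = (1-t_1)\,x_j + (t_1-t_2)\,x_{j+1} + \cdots + (t_{k-1}-t_k)\,x_{j+k-1} + t_k\, x_{j+k}.
\end{equation*}
The coefficients are nonnegative on $\Delta_k$ and sum to $1$, so $y(t)$ is a convex combination of $x_j,\ldots,x_{j+k}\in[-1,1]$, whence $|y(t)|\leq 1$.

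Finally, using the elementary bound $|\mathrm{e}^{\mathrm{i}\theta}-1|\leq |\theta|$ for real $\theta$ (a special case of~\eqref{eq:boundonexpiw}), I obtain $|\mathrm{e}^{\mathrm{i}\omega y(t)}-1|\leq \omega |y(t)|\leq \omega$ pointwise on $\Delta_k$, and consequently
\begin{equation*}
\left|\exp[\mathrm{i}\omega x_{j},\ldots,\mathrm{i}\omega x_{j+k}] - \frac{1}{k!}\right|
\leq \int_{\Delta_k} \omega\, \mathrm{d} t
= \frac{\omega}{k!},
\end{equation*}
which is exactly~\eqref{eq:ddexpto1k}. There is no real obstacle here; the only step that needs care is the telescoping identity that exhibits $y(t)$ as a convex combination, and it is a direct computation.
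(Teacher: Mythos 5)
Your proof is correct and follows essentially the same route as the paper: both apply the Genocchi--Hermite formula, identify $1/k!$ with the integral of $1$ over the simplex, and bound the integrand by $\max|\mathrm{e}^{\mathrm{i}\omega x}-1|\leq\omega$ via~\eqref{eq:boundonexpiw}. Your explicit telescoping/convex-combination step is a nice touch of rigor that the paper leaves implicit when it takes the maximum over $x\in[x_j,x_{j+k}]$.
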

\begin{proof}
Making use of the Genocchi--Hermite formula~\eqref{eq:ddGenocchiHermite}
for the divided difference of a confluent node at zero of multiplicity $k+1$ as in~\eqref{eq:ddconf}, we note
\begin{equation}\label{eq:GenicchiHermite1}
\frac{1}{k!}=\exp[0^{k+1}]=\int_0^1 \int_0^{t_1} \int_0^{t_2}\cdots \int_0^{t_{k-1}} 1\, \mathrm{d} t_k \cdots \mathrm{d} t_2 \mathrm{d} t_1.
\end{equation}
Thus, the Genocchi--Hermite formula~\eqref{eq:ddGenocchiHermite} shows
\begin{align*}
& \exp[\mathrm{i}\omega x_{j},\mathrm{i}\omega x_{j+1},\ldots,\mathrm{i}\omega x_{j+k}]
- \frac{1}{k!}\\
& \quad ~ = \int_0^1 \int_0^{t_1} \int_0^{t_2}\cdots \int_0^{t_{k-1}} 
\left( \exp\left( z_j +\sum_{\ell=1}^k t_{j+\ell} (z_{j+\ell} - z_{j+\ell -1})  \right) - 1 \right) \mathrm{d} t_k \cdots \mathrm{d} t_2 \mathrm{d} t_1\notag
\end{align*}
where $z_j=\mathrm{i} \omega x_j$. Using triangular inequalities, we observe that the absolute value of this difference is bounded by
\begin{equation}\label{eq:GenicchiHermitediff1}
\begin{aligned}
& \left|\exp[\mathrm{i}\omega x_{j},\mathrm{i}\omega x_{j+1},\ldots,\mathrm{i}\omega x_{j+k}]
- \frac{1}{k!}\right|\\
&\qquad\qquad\qquad\quad \leq
\max_{x \in[x_j,x_{j+k}]} |\mathrm{e}^{\mathrm{i}\omega x} - 1|
 \int_0^1 \int_0^{t_1} \int_0^{t_2}\cdots \int_0^{t_{k-1}} 
1\, \mathrm{d} t_k \cdots \mathrm{d} t_2 \mathrm{d} t_1.
\end{aligned}
\end{equation}
The integral therein corresponds to $1/k!$ as in~\eqref{eq:GenicchiHermite1}, and following~\eqref{eq:boundonexpiw}, the maximum therein satisfies
\begin{equation}\label{eq:GenicchiHermitediff2}
\max_{x \in[x_j,x_{j+k}]} |\mathrm{e}^{\mathrm{i}\omega x}-1|
\leq \max_{x \in[-1,1]} |\mathrm{e}^{\mathrm{i}\omega x}-1|
\leq \omega.
\end{equation}
Thus, combining~\eqref{eq:GenicchiHermitediff1} and~\eqref{eq:GenicchiHermitediff2} we conclude~\eqref{eq:ddexpto1k}.
\end{proof}

\subsection{Explicit representations for rational interpolation}

Explicit formulations for the numerator $p$ and denominator $q$ of a rational interpolant $r=p/q$ are provided in~\cite[Theorem~7.1.1]{BG96} together with an expression of the remainder, i.e., the right-hand side of the linearized problem~\eqref{eq:ratintcondconfluentlin2}. We may apply these results to provide explicit formulations for $\widehat{p}_\omega$ and $v_\omega$ in~\eqref{eq:linintinproofphat}. To this end, we first declare the notation
\begin{equation}\label{eq:ddshortw}
\exp_{k,j}^\omega :=
\exp[\mathrm{i} \omega x_k,\mathrm{i} \omega x_{k+1},
\ldots,\mathrm{i} \omega x_j],~~~k\leq j,
\end{equation}
for the divided differences of the underlying nodes, and
\begin{equation}\label{eq:rintdefHw}
H_\omega := 
\begin{pmatrix}
\exp_{n+1,n+2}^\omega&\exp_{n+1,n+3}^\omega&\cdots&\exp_{n+1,2n+1}^\omega\\
\exp_{n,n+2}^\omega&\exp_{n,n+3}^\omega&\cdots&\exp_{n,2n+1}^\omega\\
\vdots&\vdots&&\vdots\\
\exp_{1,n+2}^\omega&\exp_{1,n+3}^\omega&\cdots&\exp_{1,2n+1}^\omega
\end{pmatrix}\in\mathbb{C}^{(n+1)\times n}.
\end{equation}
Moreover, we explicitly define $\widehat{p}_\omega$ as
\begin{subequations}\label{eq:rintdenomdetTpfull}
\begin{align}\label{eq:rintdenomdetT}
&\widehat{p}_\omega(z) = \det\Theta_\omega(z),~~~\text{for}~~~\Theta_\omega(z)=[H_\omega|\theta_\omega(z)]\in\mathbb{C}^{(n+1)\times (n+1)},~~~\text{where}\\
&\label{eq:rintdefthetaw}
\theta_\omega(z) := \left(\prod_{k=1}^{n}( z-\mathrm{i}\omega x_k),\prod_{k=1}^{n-1}( z-\mathrm{i}\omega x_k),\ldots,1\right)^\top\in\mathbb{C}^{n+1}, 
\end{align}
\end{subequations}
and $[H_\omega|\theta_\omega(z)]$ is to be understood as the $(n+1)\times (n+1)$ complex
matrix obtained by concatenating the $(n+1)\times n$ matrix $H_\omega$ and $n+1$-dimensional vector $\theta_\omega(z)$.
Furthermore, we explicitly define $v_\omega$ as
\begin{subequations}\label{eq:ip.linerrgenall}
\begin{align}\label{eq:ip.linerrgen}
&v_\omega(z) = \det\Gamma_\omega(z),~~~\text{where}~~\Gamma_\omega(z)=[H_\omega|\gamma_\omega(z)]\in\mathbb{C}^{(n+1)\times (n+1)}~~\text{with},\\
&\label{eq:ip.rintdefgammaw}
\gamma_\omega(z) := 
\begin{pmatrix}
\exp[\mathrm{i}\omega x_{n+1},\ldots,\mathrm{i}\omega x_{2n+1},z]\\
\exp[\mathrm{i}\omega x_n,\ldots,\mathrm{i}\omega x_{2n+1},z]\\
\vdots\\
\exp[\mathrm{i}\omega x_1,\ldots,\mathrm{i}\omega x_{2n+1},z]
\end{pmatrix}\in\mathbb{C}^{n+1}.
\end{align}
\end{subequations}

\smallskip
Applying the results~\cite[Theorem~7.1.1]{BG96} to the present setting, we observe that $\widehat{r}_\omega=\widehat{p}_\omega^\dag/\widehat{p}_\omega$ with $\widehat{p}_\omega$ as in~\eqref{eq:rintdenomdetTpfull} interpolates $\mathrm{e}^z$ with interpolation nodes $\mathrm{i}\omega x_1,\ldots,\mathrm{i}\omega x_{2n+1}$. Thus, together with $v_\omega$ from~\eqref{eq:ip.linerrgenall}, the polynomial $\widehat{p}_\omega$ solves the linearized interpolation problem~\eqref{eq:linintinproofphat}.
Moreover, $r_\omega=p^\dag_\omega/p_\omega$ with $p_\omega(z) = \widehat{p}_\omega(\omega z)$ corresponds to the rational interpolant to $\mathrm{e}^{\omega z}$ with interpolation nodes $\mathrm{i} x_1,\ldots,\mathrm{i} x_{2n+1}$. Thus, $\widehat{p}_\omega(z) = \widehat{p}_\omega(\omega z)$ with $\widehat{p}_\omega$ as in~\eqref{eq:rintdenomdetTpfull} and $v_\omega$ as in~\eqref{eq:ip.linerrgenall} solve the linearized interpolation problem~\eqref{eq:ip.rfromphat}.

For the case $\omega=0$, i.e., the Pad\'e approximation, we remark that the notation in~\eqref{eq:rintdefHw},~\eqref{eq:rintdefthetaw} and~\eqref{eq:ip.rintdefgammaw} simplifies to
\begin{subequations}\label{eq:H0theta0gamma0}
\begin{align}\label{eq:rintdenomdetTPade}
&H_0 = 
\begin{pmatrix}
1&1/2!&\cdots&1/n!\\
1/2!&1/3!&\cdots&1/(n+1)!\\
\vdots&\vdots&&\vdots\\
1/(n+1)!&1/(n+2)!&\cdots&1/(2n)!
\end{pmatrix},\\
&\theta_0(z) = (z^n,z^{n-1},\ldots,1)^\top,~~~\text{and}\label{eq:theta0entries}\\
&\gamma_0(z) = (
\exp[0^{n+1},z],
\exp[0^{n+2},z],
\ldots,
\exp[0^{2n+1},z]
)^\top.
\end{align}
\end{subequations}
In a similar manner, for the Pad\'e approximation and its remainder  the expressions given by $H_0$, $\theta_0$ and $\gamma_0$ via~\eqref{eq:rintdenomdetTpfull} and~\eqref{eq:ip.linerrgenall} also appear in~\cite[eq.~(2.1)]{BG96}. The denominator $\widehat{p}$ of the Pad\'e approximation is unique up to a constant factor which is typically used to normalize $\widehat{p}(0)=1$. However, no normalizing factor is used in the present section.

\subsection{Convergence of poles for \texorpdfstring{$\omega\to0^+$}{w to 0+}}
We provide a convergence results for the poles of the rational interpolant $r_\omega$ to $\mathrm{e}^{\omega z}$ in Proposition~\ref{prop:ratintasympoles} further below.

\begin{proposition}\label{prop:ratintasymdenom}
Let $\widehat{r}=\widehat{p}^\dag/\widehat{p}$ denote the $(n,n)$-Pad\'e approximation to $\mathrm{e}^z$, and let $\widehat{r}_\omega\in\mathcal{U}_n$ denote the rational interpolant to $\mathrm{e}^z$ with interpolation nodes $\mathrm{i} \omega x_1,\ldots,\mathrm{i}\omega x_{2n+1}\in\mathrm{i}\mathbb{R}$. Then
\begin{enumerate}[label=(\roman*)]
\item\label{item:pzconverge} there exists a representation $\widehat{r}_\omega=\widehat{p}_\omega^\dag/\widehat{p}_\omega$ with
\begin{equation}\label{eq:pinttopade}
\widehat{p}_\omega(z) \to \widehat{p}(z),~~~z\in\mathbb{C},~~~\omega\to0^+,~~~\text{and}
\end{equation}
\item\label{item:rpolesconverge} the zeros and poles of $\widehat{r}_\omega$ converge to the zeros and poles of $\widehat{r}$, respectively, for $\omega\to0^+$.
\end{enumerate}
In particular, for a sufficiently small $\omega>0$ the rational interpolant $\widehat{r}_\omega$ has minimal degree $n$.
\end{proposition}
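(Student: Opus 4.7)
The plan is to exploit the explicit determinantal representation \eqref{eq:rintdenomdetTpfull} of the interpolant's denominator, $\widehat{p}_\omega(z) = \det[H_\omega\,|\,\theta_\omega(z)]$, and to show that every ingredient of this determinant depends continuously on $\omega$ at $\omega = 0$. For the matrix $H_\omega$ in~\eqref{eq:rintdefHw}, Proposition~\ref{prop:ddexpdifbound} furnishes the pointwise bound $|\exp_{k,j}^\omega - 1/(j-k)!| \leq \omega/(j-k)!$, so entry-wise $H_\omega \to H_0$ as $\omega \to 0^+$, with $H_0$ exactly the Hankel matrix in~\eqref{eq:rintdenomdetTPade}. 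The vector $\theta_\omega(z)$ from~\eqref{eq:rintdefthetaw} is a polynomial in $(\omega,z)$ that converges locally uniformly to $\theta_0(z) = (z^n, z^{n-1},\ldots,1)^\top$ in~\eqref{eq:theta0entries}. Continuity of the determinant in its entries then gives $\widehat{p}_\omega(z) \to \det[H_0\,|\,\theta_0(z)]$ locally uniformly in $z$, and this limit is precisely the (unnormalized) $(n,n)$-Pad\'e denominator $\widehat{p}(z)$ since setting all interpolation nodes to $0$ in~\eqref{eq:linintinproofphat} recovers the defining Hermite condition of the Pad\'e approximation; this is the classical Jacobi-type formula already recorded near~\eqref{eq:H0theta0gamma0}. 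This establishes~\ref{item:pzconverge} for a specific choice of normalization of $\widehat{p}$ (and the general case then follows up to a nonzero multiplicative constant).

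For~\ref{item:rpolesconverge}, I would invoke continuity of roots of polynomials in their coefficients, cf.~\cite[Proposition~5.2.1]{Ar11}. The Pad\'e denominator $\widehat{p}$ has degree exactly $n$ (the $(n,n)$-Pad\'e approximation has minimal degree $n$, since the leading principal minor $\det H_0$ of the Hankel matrix is nonzero), hence the leading coefficient of $\widehat{p}_\omega$, which is $(\det H_\omega)$ times the top entry of $\theta_\omega(z)$, converges to a nonzero quantity. Consequently $\widehat{p}_\omega$ has degree exactly $n$ for all sufficiently small $\omega > 0$, and continuity of roots yields that the poles of $\widehat{r}_\omega$, i.e.\ the zeros of $\widehat{p}_\omega$, converge (up to reordering) to the poles $\widehat{s}_j$ of $\widehat{r}$. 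By~\eqref{eq:pandpdag}, the zeros of $\widehat{p}_\omega^\dag$ are the points $-\overline{s}$ for $s$ a zero of $\widehat{p}_\omega$, so the zeros of $\widehat{r}_\omega$ also converge to those of $\widehat{r}$.

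Finally, for the minimal-degree claim, recall that the Pad\'e poles $\widehat{s}_j$ satisfy $\operatorname{Re}\widehat{s}_j > 0$, so the separation condition $\overline{\widehat{s}_j} \neq -\widehat{s}_\ell$ of Proposition~\ref{prop:fulldegree} holds strictly. Since the poles of $\widehat{r}_\omega$ converge to those of $\widehat{r}$, this separation persists by continuity for all sufficiently small $\omega > 0$, so Proposition~\ref{prop:fulldegree} gives irreducibility of $\widehat{r}_\omega\in\mathcal{U}_n$, i.e., minimal degree $n$. The only real obstacle in this program is the identification step — confirming that $\det[H_0\,|\,\theta_0(z)]$ is indeed the Pad\'e denominator up to a constant — but the paper has already built this in by using a single unified divided-difference formulation \eqref{eq:rintdenomdetTpfull} that specializes to~\eqref{eq:H0theta0gamma0} at $\omega=0$, so this step reduces to an invocation of the formulas already in the text.
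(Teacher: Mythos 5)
Your proposal is correct, and for part~(i) it takes a genuinely different route from the paper. The paper disposes of~(i) in one stroke by citing Gutknecht's well-posedness result \cite[Lemma~1]{Gu90}: the limiting (confluent) interpolation problem is the non-degenerate Pad\'e problem, so numerator and denominator of $\widehat{r}_\omega$ converge coefficient-wise to those of $\widehat{r}$. You instead work directly with the determinantal representation $\widehat{p}_\omega=\det[H_\omega\,|\,\theta_\omega(\cdot)]$, using Proposition~\ref{prop:ddexpdifbound} for entry-wise convergence $H_\omega\to H_0$ and continuity of the determinant; the identification of $\det[H_0\,|\,\theta_0(\cdot)]$ with the (unnormalized) Pad\'e denominator is indeed already recorded in the text around~\eqref{eq:H0theta0gamma0}. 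Your route is more self-contained and in fact quantitative --- it gives an $\mathcal{O}(\omega)$ rate, which is essentially what the paper proves separately later as Proposition~\ref{prop:ratintasymThetaGamma}. Parts~(ii) and the minimal-degree claim proceed as in the paper (continuity of roots via \cite[Proposition~5.2.1]{Ar11}); for minimal degree the paper argues that the zero set and pole set of $\widehat{r}$ are disjoint and that this persists under convergence, whereas you use $\operatorname{Re}\widehat{s}_j>0$ together with Proposition~\ref{prop:fulldegree} --- both work. Two small slips to repair: $H_\omega$ is $(n+1)\times n$, so ``$\det H_\omega$'' is not defined; the coefficient of $z^n$ in $\det[H_\omega\,|\,\theta_\omega(z)]$ is (up to sign) the $n\times n$ minor of $H_\omega$ obtained by deleting its first row, and the nonvanishing of the corresponding minor of $H_0$ is precisely the statement that the Pad\'e denominator has degree exactly $n$ (recorded in Subsection~\ref{subsec:intropade}), not a ``leading principal minor'' of a square Hankel matrix. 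Neither slip affects the validity of the argument.
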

\begin{proof}
For $\omega\to0^+$ the sequence of interpolation nodes $\mathrm{i}\omega x_1,\ldots,\mathrm{i} \omega x_{2n+1}$ converges to the sequence of $2k+1$ zeros and the Pad\'e approximant $\widehat{r}$ corresponds to the rational interpolant to $\mathrm{e}^z$ for the latter set of interpolation nodes. Since the Pad\'e approximation is non-degenerate, i.e., $\widehat{r}$ has minimal degree $n$,~\cite[Lemma~1]{Gu90} implies that the numerator and denominator of $\widehat{r}_\omega$ converge to the numerator and denominator $\widehat{r}$ coefficient-wise. In particular, this shows~\eqref{eq:pinttopade}.

Moreover, since we have the non-degenerate case the denominator of the Pad\'e approximation $\widehat{p}$ has degree exactly $n$, and the zeros of $\widehat{p}_\omega$ converge to the zeros of $\widehat{p}$ for $\omega\to0^+$ up to ordering, see~\cite[Proposition~5.2.1]{Ar11} for instance. Analogously, the zeros of $\widehat{p}^\dag_\omega$ converge to the zeros of $\widehat{p}^\dag$. Since the zeros and poles of $\widehat{r}_\omega$ correspond to the zeros of $\widehat{p}_\omega^\dag$ and $\widehat{p}_\omega$, respectively, this shows~\ref{item:rpolesconverge}.

Since the $(n,n)$-Pad\'e approximation $\widehat{r}\in\mathcal{U}_n$ has minimal degree $n$, the set of its zeros is distinct to the set of its poles. Due to convergence of zeros and poles of $\widehat{r}_\omega$ as in~\ref{item:rpolesconverge}, we conclude that $\widehat{r}_\omega$ has minimal degree $n$ for a sufficiently small $\omega>0$.
\end{proof}

We recall that the rational interpolant $r_\omega\in\mathcal{U}_n$ to $\mathrm{e}^{\omega z}$ with interpolation nodes $\mathrm{i} x_1,\ldots,\mathrm{i} x_{2n+1}\in\mathrm{i}\mathbb{R}$
satisfies $r_\omega(z) = \widehat{p}_\omega^\dag(\omega z)/\widehat{p}_\omega(\omega z)$ as in~\eqref{eq:ip.rfromphat}.
As a consequence of Proposition~\ref{prop:ratintasymdenom}, the rational interpolant $\widehat{r}_\omega(z)=\widehat{p}_\omega^\dag(z)/\widehat{p}_\omega(z)$ has minimal degree $n$ for a sufficiently small $\omega>0$, and this carries over to $r_\omega$. Thus, in this case the rational interpolant $r_\omega$ has $n$ poles which correspond to non-removable singularities.
\begin{proposition}\label{prop:ratintasympoles}
Let $s_1(\omega),\ldots,s_n(\omega)\in\mathbb{C}$ denote the poles of the rational interpolant $r_\omega\in\mathcal{U}_n$ to $\mathrm{e}^{\omega z}$ with interpolation nodes $\mathrm{i} x_1,\ldots,\mathrm{i} x_{2n+1}\in\mathrm{i}\mathbb{R}$, and let $\widehat{s}_1,\ldots,\widehat{s}_n\in\mathbb{C}$ denote the poles of the $(n,n)$-Pad\'e approximation to $\mathrm{e}^z$, then
\begin{equation}\label{eq:sinttopade}
\omega s_\ell(\omega) \to \widehat{s}_\ell,~~~\ell=1,\ldots,n,~~\omega\to 0^+,
\end{equation}
up to ordering. Moreover, this result remains true if $r_\omega$ refers to a rational interpolant with interpolation nodes $\mathrm{i} x_1(\omega),\ldots,\mathrm{i} x_{2n+1}(\omega)$ where $x_j(\omega)\in[-1,1]$ depends on $\omega$.
\end{proposition}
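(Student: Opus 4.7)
The plan is to reduce~\eqref{eq:sinttopade} to the already-established denominator convergence for the Pad\'e approximation via the scaling identity~\eqref{eq:ip.rfromphat}. Namely, $r_\omega(z) = \widehat{p}_\omega^\dag(\omega z)/\widehat{p}_\omega(\omega z)$, so that a pole $s_\ell(\omega)$ of $r_\omega$ corresponds precisely to a zero of $\widehat{p}_\omega$ at $\omega s_\ell(\omega)$, i.e.\ to a pole of $\widehat{r}_\omega$. Consequently, the assertion~\eqref{eq:sinttopade} reduces to showing that the poles of $\widehat{r}_\omega$ (with nodes $\mathrm{i}\omega x_1,\ldots,\mathrm{i}\omega x_{2n+1}$) converge to the poles $\widehat{s}_1,\ldots,\widehat{s}_n$ of the $(n,n)$-Pad\'e approximation to $\mathrm{e}^z$ as $\omega\to 0^+$, up to ordering.

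For fixed interpolation nodes $x_j\in[-1,1]$, this is exactly Proposition~\ref{prop:ratintasymdenom}\ref{item:rpolesconverge}, which gives the desired convergence of poles of $\widehat{r}_\omega$ to those of $\widehat{r}$. Combined with the scaling reduction above, this establishes $\omega s_\ell(\omega) \to \widehat{s}_\ell$ and hence the first part of the statement.

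For the second part, with $\omega$-dependent nodes $x_j(\omega)\in[-1,1]$, the strategy is to invoke the explicit determinantal formula $\widehat{p}_\omega(z)=\det[H_\omega\mid\theta_\omega(z)]$ from~\eqref{eq:rintdenomdetTpfull}. Since $|\mathrm{i}\omega x_j(\omega)|\leq\omega$, the divided-difference bound in Proposition~\ref{prop:ddexpdifbound}, namely $|\exp^\omega_{k,j}-1/(j-k)!|\leq \omega/(j-k)!$, holds uniformly over all admissible node choices, so that $H_\omega\to H_0$ entrywise and $\theta_\omega(z)\to\theta_0(z)$ locally uniformly in $z$ (cf.\ \eqref{eq:H0theta0gamma0}). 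Expanding the determinant then yields coefficient-wise convergence $\widehat{p}_\omega\to\widehat{p}$, where $\widehat{p}$ is the denominator of the $(n,n)$-Pad\'e approximation. Since $\widehat{p}$ has degree exactly $n$, standard continuity of polynomial roots in the coefficients (cf.~\cite[Proposition~5.2.1]{Ar11}) ensures that the zeros of $\widehat{p}_\omega$ converge, up to ordering, to those of $\widehat{p}$, which are the Pad\'e poles $\widehat{s}_1,\ldots,\widehat{s}_n$. Translating back through the scaling relation yields~\eqref{eq:sinttopade} in this more general setting.

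The only nontrivial step is the last one: confirming that the arguments of Proposition~\ref{prop:ratintasymdenom}, originally stated for fixed nodes, survive when the nodes are allowed to depend on $\omega$. This obstacle is disposed of by the fact that the uniform-in-nodes bound of Proposition~\ref{prop:ddexpdifbound} controls $H_\omega-H_0$ and $\theta_\omega(z)-\theta_0(z)$ independently of the particular selection of $x_j(\omega)\in[-1,1]$, so that the explicit formula propagates the convergence to the coefficients of $\widehat{p}_\omega$ without modification.
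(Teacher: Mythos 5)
Your proof is correct and, for the first assertion, follows the same route as the paper: the scaling identity $r_\omega(z)=\widehat{p}_\omega^\dag(\omega z)/\widehat{p}_\omega(\omega z)$ from~\eqref{eq:ip.rfromphat} turns the poles of $r_\omega$ into $\omega^{-1}$ times the poles of $\widehat{r}_\omega$, and Proposition~\ref{prop:ratintasymdenom}.\ref{item:rpolesconverge} then delivers~\eqref{eq:sinttopade}. Where you genuinely diverge is the second assertion: the paper disposes of the $\omega$-dependent nodes by a terse appeal to ``compactness arguments'' asserting that the convergence in~\eqref{eq:sinttopade} is uniform in the choice of nodes, whereas you make this uniformity explicit by running the convergence through the determinantal representation $\widehat{p}_\omega(z)=\det[H_\omega\mid\theta_\omega(z)]$ of~\eqref{eq:rintdenomdetTpfull}, using the node-independent divided-difference bound of Proposition~\ref{prop:ddexpdifbound} to get $H_\omega\to H_0$ and $\theta_\omega\to\theta_0$ uniformly over $[-1,1]^{2n+1}$, hence coefficient-wise convergence $\widehat{p}_\omega\to\widehat{p}$ uniformly in the nodes, and then root continuity (valid since $\widehat{p}$ has degree exactly $n$). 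This is a quantitatively sharper justification of the step the paper leaves implicit, at the cost of bypassing the citation of~\cite[Lemma~1]{Gu90} that the paper uses for coefficient convergence; both arguments are sound.
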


\begin{proof} For the rational interpolant $r_\omega$ to $\mathrm{e}^z$ we consider $r_\omega = p_\omega^\dag /p_\omega$ with $p_\omega(z)=\widehat{p}_\omega(\omega z)$ as in~\eqref{eq:ip.rfromphat}, where we choose $\widehat{p}_\omega$ as in Proposition~\ref{prop:ratintasymdenom}.\ref{item:pzconverge} for the present proof. Let $\zeta_1(\omega),\ldots,\zeta_n(\omega)$, $\widehat{\zeta}_1(\omega),\ldots,\widehat{\zeta}_n(\omega)$, and $\widehat{\zeta}_1,\ldots,\widehat{\zeta}_n$ refer the zeros of $p_\omega$, $\widehat{p}_\omega$, and $\widehat{p}$, respectively. Since $p_\omega(z)=\widehat{p}_\omega(\omega z)$, the zeros $\zeta_\ell(\omega)$ and $\widehat{\zeta}_\ell(\omega)$ satisfy the relation 
\begin{equation}\label{eq:ip.zerosphattozerosp}
\omega \zeta_\ell(\omega) = \widehat{\zeta}_\ell(\omega),~~~\ell=1,\ldots,n.
\end{equation}
We recall that the zeros of $\widehat{p}_\omega$ and $\widehat{p}$ correspond to the poles of the rational interpolant $\widehat{r}_\omega=\widehat{p}_\omega^\dag/\widehat{p}_\omega$ and the Pad\'e approximant $\widehat{r}$, respectively. Thus, Proposition~\ref{prop:ratintasymdenom}.\ref{item:rpolesconverge} implies
\begin{equation}\label{eq:ip.phatzerosconverge}
\widehat{\zeta}_\ell(\omega) \to \widehat{\zeta}_\ell,~~~\ell=1,\ldots,n,~~\omega\to 0^+,
\end{equation}
up to ordering.
Since the zeros $\zeta_\ell(\omega)$ correspond to the poles of the rational interpolant $r_\omega$, the identity~\eqref{eq:ip.zerosphattozerosp} and the convergence result~\eqref{eq:ip.phatzerosconverge} show~\eqref{eq:sinttopade}.

\smallskip
The convergence result~\eqref{eq:sinttopade} holds true for interpolation nodes $\mathrm{i} x_1,\ldots,\mathrm{i} x_{2n+1}$ where $x_j\in[-1,1]$, i.e., $(x_1,\ldots,x_{2n+1}) \in [-1,1]^{2n+1}$ where $[-1,1]^{2n+1}$ denotes the $2n+1$-dimensional Cartesian product of the interval $[-1,1]$. Thus, the convergence result~\eqref{eq:sinttopade} holds true independently of the choice of underlying interpolation nodes due to compactness arguments. More precisely, for all $\varepsilon>0$ and $\ell=1,\ldots,n$ we have $|\omega s_\ell(\omega) - \widehat{s}_\ell|<\varepsilon$ up to ordering for a sufficiently small $\omega>0$ and independent of the underlying interpolation nodes $\mathrm{i} x_j$ with $x_j\in[-1,1]$. Thus, the statement~\eqref{eq:sinttopade} also holds true if underlying interpolation nodes depend on $\omega$ which concludes our last assertion.
\end{proof}

\subsection{An asymptotic error expansion}
We provide an asymptotic expression and lower bound for the error $\|r_\omega - \exp(\omega \cdot)\|$ for $\omega\to0^+$ in propositions~\ref{prop:ratintasymerr} and~\ref{prop:ratintasymlowererrbound} below. First, we state some auxiliary results.

\begin{proposition}\label{prop:detAmdetB}
The determinants of two matrices $A$ and $B\in\mathbb{C}^{n+1\times n+1}$ satisfy
\begin{equation}\label{eq:detAdetBdif0}
|\det A - \det B|
\leq \sum_{\sigma\in\mathrm{S}_{n+1}}  \sum_{k=1}^{n+1}\left(
\left|A_{\sigma(k)k} - B_{\sigma(k)k}\right|
\prod_{j=1}^{k-1}\left|A_{\sigma(j)j}\right|
\prod_{j=k+1}^{n+1}\left|B_{\sigma(j)j}\right|\right),
\end{equation}
where $\mathrm{S}_{n+1}$ denotes the set of permutations of $\{1,\ldots,n+1\}$. 
\end{proposition}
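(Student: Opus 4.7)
The plan is to derive the bound directly from the Leibniz formula for the determinant combined with a standard telescoping identity for differences of products. Starting from
\begin{equation*}
\det A - \det B = \sum_{\sigma \in \mathrm{S}_{n+1}} \operatorname{sgn}(\sigma) \left( \prod_{j=1}^{n+1} A_{\sigma(j) j} - \prod_{j=1}^{n+1} B_{\sigma(j) j} \right),
\end{equation*}
the task reduces to bounding, for each permutation $\sigma$, the difference of two products of $n+1$ scalars.

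The key step is the telescoping identity: for any scalars $a_1,\ldots,a_N$ and $b_1,\ldots,b_N$, setting $T_k := \prod_{j=1}^{k-1} a_j \cdot \prod_{j=k}^{N} b_j$ for $k=1,\ldots,N+1$ gives $T_1 = \prod_j b_j$ and $T_{N+1} = \prod_j a_j$, whence
\begin{equation*}
\prod_{j=1}^{N} a_j - \prod_{j=1}^{N} b_j = \sum_{k=1}^{N} (T_{k+1} - T_k) = \sum_{k=1}^{N} \prod_{j=1}^{k-1} a_j \cdot (a_k - b_k) \cdot \prod_{j=k+1}^{N} b_j.
\end{equation*}
I would apply this with $N=n+1$, $a_j = A_{\sigma(j)j}$ and $b_j = B_{\sigma(j)j}$ to rewrite each summand in the Leibniz expansion.

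Finally, taking absolute values, using $|\operatorname{sgn}(\sigma)| = 1$, and applying the triangle inequality across both the sum over $\sigma$ and the inner telescoping sum over $k$, yields exactly~\eqref{eq:detAdetBdif0}. There is no real obstacle here -- the argument is essentially mechanical -- the only point requiring a moment of care is bookkeeping the empty-product conventions $\prod_{j=1}^{0} = \prod_{j=n+2}^{n+1} = 1$, which correctly cover the boundary terms $k=1$ and $k=n+1$ in the final bound.
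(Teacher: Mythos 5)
Your proposal is correct and follows essentially the same route as the paper: Leibniz expansion of both determinants, the telescoping identity for the difference of two products of $n+1$ factors, and the triangle inequality over $\sigma$ and $k$. The only difference is that you justify the telescoping step explicitly via the intermediate products $T_k$, whereas the paper states the expansion directly.
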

\begin{proof}
We recall the Leibniz formula for the determinant of a matrix $A \in\mathbb{C}^{(n+1)\times(n+1)}$, namely,
\begin{equation*}
\det A = \sum_{\sigma\in\mathrm{S}_{n+1}} \operatorname{sgn}(\sigma) 
\prod_{j=1}^{n+1} A_{\sigma(j)j}.
\end{equation*}
where $\mathrm{S}_{n+1}$ denotes the set of permutations of $\{1,\ldots,n+1\}$, and $\operatorname{sgn}(\sigma) = \pm1$ denotes the sign of the permutation $\sigma$.
Making use of the Leibniz formula for the determinants of the two matrices $A$ and $B$, we observe
\begin{equation}\label{eq:deviationthetaprod0}
\begin{aligned}
|\det A-\det B|
&=\left|  \sum_{\sigma\in\mathrm{S}_{n+1}} \operatorname{sgn}(\sigma) \left( 
 \prod_{j=1}^{n+1} A_{\sigma(j)j}-
 \prod_{j=1}^{n+1} B_{\sigma(j)j}
\right) \right|\\
&\leq \sum_{\sigma\in\mathrm{S}_{n+1}} \left|
 \prod_{j=1}^{n+1} A_{\sigma(j)j}-
 \prod_{j=1}^{n+1} B_{\sigma(j)j}\right|.
\end{aligned}
\end{equation}
The difference in the last line therein expands to
\begin{align*}
 \prod_{j=1}^{n+1} A_{\sigma(j)j}-
 \prod_{j=1}^{n+1} B_{\sigma(j)j}
 = \sum_{k=1}^{n+1}
&\left(\vphantom{\prod_{j}^{k}}\left(A_{\sigma(k)k} - B_{\sigma(k)k}\right)\right.\\
&\qquad\cdot
\left.\prod_{j=1}^{k-1}A_{\sigma(j)j}
\prod_{j=k+1}^{n+1}B_{\sigma(j)j}
\right),
\end{align*}
and together with~\eqref{eq:deviationthetaprod0}, this shows~\eqref{eq:detAdetBdif0}.
\end{proof}

\begin{proposition}\label{prop:ratintasymThetaGamma}
For $x\in[-1,1]$ and $\omega\leq 1$ we have the upper bounds
\begin{align}
\label{eq:thetaerr}
|\det\Theta_\omega(\mathrm{i} \omega x) - \det\Theta_0(\mathrm{i} \omega x)|
&\leq \omega c_{\theta,n},
~~~\text{and}\\
\label{eq:gammaerr}
|\det\Gamma_\omega(\mathrm{i} \omega x) - \det\Gamma_0(\mathrm{i} \omega x)|
&\leq \omega c_{\gamma,n},
\end{align}
where $c_{\theta,n}$ and $c_{\gamma,n}$ denote the constants
\begin{equation*}
c_{\theta,n} = (n+2^{n+1})(n+1) \prod_{j=1}^{n-1}\frac{1}{j!}~~~\text{and}~~
c_{\gamma,n} = (n+2) \prod_{j=1}^{n}\frac{1}{j!}.
\end{equation*}
\end{proposition}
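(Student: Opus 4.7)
The strategy is to apply Proposition~\ref{prop:detAmdetB} with $A = \Theta_\omega(\mathrm{i}\omega x)$ and $B = \Theta_0(\mathrm{i}\omega x)$ to prove~\eqref{eq:thetaerr}, and analogously with $A = \Gamma_\omega(\mathrm{i}\omega x)$ and $B = \Gamma_0(\mathrm{i}\omega x)$ to prove~\eqref{eq:gammaerr}. This reduces the task to (a) entry-wise upper bounds on the two matrices, and (b) entry-wise upper bounds on their differences, all of which will be assembled from the divided-difference estimates~\eqref{eq:upperboundonddzk} and Proposition~\ref{prop:ddexpdifbound}.

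\textbf{Entry-wise estimates.} For the shared block of the first $n$ columns, the $(k,j)$ entry of $H_\omega$ is a divided difference of $k+j$ nodes of the form $\mathrm{i}\omega y$ with $y\in[-1,1]$, so~\eqref{eq:upperboundonddzk} and Proposition~\ref{prop:ddexpdifbound} yield
\[
|(H_\omega)_{k,j}|,\, |(H_0)_{k,j}| \leq \tfrac{1}{(k+j-1)!},
\qquad
|(H_\omega - H_0)_{k,j}| \leq \tfrac{\omega}{(k+j-1)!}.
\]
For the last column of $\Theta$, using $|\mathrm{i}\omega x - \mathrm{i}\omega x_\ell| \leq 2\omega$ one obtains $|\theta_\omega(\mathrm{i}\omega x)_k| \leq (2\omega)^{n+1-k}$ and $|\theta_0(\mathrm{i}\omega x)_k| \leq \omega^{n+1-k}$; a direct expansion of the product versus the monomial gives $|\theta_\omega(\mathrm{i}\omega x)_k - \theta_0(\mathrm{i}\omega x)_k| \leq (2^{n+1-k} + 1)\,\omega^{n+1-k}$, which vanishes for $k=n+1$ and is bounded by $(2^{n+1-k}+1)\omega$ for $k \leq n$ under $\omega \leq 1$. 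For the last column of $\Gamma$, each entry $\gamma_\omega(\mathrm{i}\omega x)_k$ is itself a divided difference of $n+k+1$ nodes of the form $\mathrm{i}\omega y$ with $y\in[-1,1]$, so~\eqref{eq:upperboundonddzk} and Proposition~\ref{prop:ddexpdifbound} give
\[
|\gamma_\omega(\mathrm{i}\omega x)_k|,\, |\gamma_0(\mathrm{i}\omega x)_k| \leq \tfrac{1}{(n+k)!},
\qquad
|\gamma_\omega(\mathrm{i}\omega x)_k - \gamma_0(\mathrm{i}\omega x)_k| \leq \tfrac{2\omega}{(n+k)!}.
\]

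\textbf{Assembly and main obstacle.} Feeding the entry-wise bounds into Proposition~\ref{prop:detAmdetB}, the decisive simplification is the uniform inequality $1/(\sigma(j)+j-1)! \leq 1/j!$, which holds for every permutation $\sigma$ and every $j \geq 1$ since $\sigma(j) \geq 1$. For each of the $(n+1)!$ permutations $\sigma$ and each column index $k \in \{1,\dots,n+1\}$ where the difference is placed, the remaining $H$-columns contribute a product of reciprocal factorials, the $\theta$- or $\gamma$-column contributes the magnitude estimates above, and exactly one column contributes an extra factor $\omega$ from a difference estimate. Summing over $k$ and $\sigma$ and grouping terms yields bounds of the shape $\omega \cdot (\text{combinatorial prefactor}) \cdot \prod 1/j!$. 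The principal difficulty is the combinatorial bookkeeping needed to arrive at exactly the stated constants $c_{\theta,n}$ and $c_{\gamma,n}$: one must argue that after omitting the column carrying the difference, the product of the remaining factorials is uniformly bounded by $\prod_{j=1}^{n-1} 1/j!$ for $\Theta$ and by $\prod_{j=1}^{n} 1/j!$ for $\Gamma$, while the prefactor $(n+2^{n+1})(n+1)$ in $c_{\theta,n}$ arises by summing the geometric series $\sum_k 2^{n+1-k}$ together with the $n+1$ choices of column position, and the prefactor $(n+2)$ in $c_{\gamma,n}$ arises by combining the $n+1$ column positions with the factor $2$ from the $\gamma$-difference estimate.
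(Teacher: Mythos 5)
Your proposal follows essentially the same route as the paper: Proposition~\ref{prop:detAmdetB} applied to the two matrix pairs, the entry-wise bounds $|(H_\omega)_{\ell j}|\leq (H_0)_{\ell j}=1/(\ell+j-1)!$ and $|(H_\omega-H_0)_{\ell j}|\leq\omega (H_0)_{\ell j}$ from~\eqref{eq:upperboundonddzk} and Proposition~\ref{prop:ddexpdifbound}, the bound $(2^{n+1-\ell}+1)\omega$ on the $\theta$-column differences, the bound $2\omega/(n+\ell)!$ on the $\gamma$-column differences, and the reduction $\prod_j 1/(\sigma(j)+j-1)!\leq\prod_j 1/j!$. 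The one loose point is your description of the final count: in the paper the factor $n+1$ in $c_{\theta,n}$ comes from $(n+1)!$ permutations multiplied by the absorbed factor $1/n!$ (so that $(n+1)!\prod_{j=1}^{n}1/j!=(n+1)\prod_{j=1}^{n-1}1/j!$), and the $2^{n+1}$ comes from bounding the single $\theta$-column difference term uniformly over rows by $2^{n+1}\omega$, not from summing a geometric series over column positions; your stated accounting would not literally reproduce $(n+2^{n+1})(n+1)$, though the ingredients you have assembled do suffice to obtain it.
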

\begin{proof}
We recall
\begin{equation*}
\Theta_\omega(\mathrm{i}\omega x)=[H_\omega|\theta_\omega(\mathrm{i}\omega x)]~~~\text{and}~~
\Gamma_\omega(\mathrm{i}\omega x)=[H_\omega|\gamma_\omega(\mathrm{i}\omega x)]\in\mathbb{C}^{n+1\times n+1}.
\end{equation*}
from~\eqref{eq:ip.linerrgenall} and~\eqref{eq:ip.linerrgenall} for $\omega\geq 0$, and we also refer to~\eqref{eq:H0theta0gamma0} for the case $\omega=0$. We first remark some results for the matrix entries of $H_\omega$ and $H_0$ which are used to derive the upper bounds~\eqref{eq:thetaerr} and~\eqref{eq:gammaerr} further below.

The matrix $H_0$ has the entries
\begin{equation}\label{eq:Theta0jl}
(H_0)_{\ell j} = \frac{1}{(\ell+j-1)!},~~~\ell=1,\ldots,n+1,~~j=1,\ldots,n.
\end{equation}
For the matrix $H_\omega$ the entries correspond to divided differences of $\mathrm{e}^{z}$ at nodes $\mathrm{i} \omega x_k$, namely, with the notation from~\eqref{eq:ddshortw},
\begin{equation}\label{eq:Thetawjlfirstn}
(H_\omega)_{\ell j}
=\exp_{n-\ell+2,n+j+1}^\omega =\exp[\mathrm{i} \omega x_{n-\ell+2},\mathrm{i} \omega x_{n-\ell+3},\ldots,\mathrm{i} \omega x_{n+j+1}],
\end{equation}
where $\ell=1,\ldots,n+1$ and $j=1,\ldots,n$. Thus, the entry $(H_\omega)_{\ell j}$ corresponds to a divided difference over $\ell+j$ interpolation nodes. Due to~\eqref{eq:upperboundonddzk}, the entries in~\eqref{eq:Thetawjlfirstn} satisfy
\begin{equation}\label{eq:HwljtoH0lj}
|(H_\omega)_{\ell j}|
\leq\frac{1}{(\ell+j-1)!}
= (H_0)_{\ell j}.
\end{equation}
Moreover, making use of Proposition~\ref{prop:ddexpdifbound} we observe
\begin{equation}\label{eq:HwminusH0}
\left|(H_\omega)_{\ell j} - (H_0)_{\ell j}\right|
\leq \omega (H_0)_{\ell j}.
\end{equation}
Let $\sigma\in\mathrm{S}_{n+1}$ denote a permutation similar as in Proposition~\ref{prop:detAmdetB}, then the product over entries $(H_0)_{\sigma(j)j}$ from~\eqref{eq:Theta0jl} satisfies
\begin{equation}\label{eq:defkappaH0}
\prod_{j=1}^{n} (H_0)_{\sigma(j)j} \leq \prod_{j=1}^{n}\frac{1}{j!}.
\end{equation}

We proceed to apply Proposition~\ref{prop:detAmdetB} for the matrices $A=\Theta_\omega(\mathrm{i} \omega x)$ and $B=\Theta_0(\mathrm{i} \omega x)$ to show~\eqref{eq:thetaerr}. We introduce the notation $K_{k,\sigma}$ to re-write the inner sum in~\eqref{eq:detAdetBdif0}, resulting in
\begin{subequations}
\begin{equation}\label{eq.ip.detAdetBsum1nwithK}
|\det A - \det B|
\leq \sum_{\sigma\in\mathrm{S}_{n+1}} \left(\sum_{k=1}^n K_{\sigma,k} + K_{\sigma,n+1} \right),
\end{equation}
where
\begin{equation}
K_{\sigma,k} := 
\left|A_{\sigma(k)k} - B_{\sigma(k)k}\right|
\prod_{j=1}^{k-1}\left|A_{\sigma(j)j}\right|
\prod_{j=k+1}^{n+1}\left|B_{\sigma(j)j}\right|,~~~k=1,\ldots,n+1.
\end{equation}
\end{subequations}
Thus, for a fixed permutation $\sigma\in\mathrm{S}_{n+1}$, the inner sum over $k=1,\ldots,n$ in~\eqref{eq.ip.detAdetBsum1nwithK} corresponds to
\begin{equation}\label{eq.ip.detAdetBsum1n}
\sum_{k=1}^n K_{\sigma,k} 
= \sum_{k=1}^{n} \left(\left|A_{\sigma(k)k} - B_{\sigma(k)k}\right|
\prod_{j=1}^{k-1}\left|A_{\sigma(j)j}\right|
\prod_{j=k+1}^{n+1}\left|B_{\sigma(j)j}\right|\right).
\end{equation}
Substituting $A=\Theta_\omega(\mathrm{i} \omega x)$ and $B=\Theta_0(\mathrm{i} \omega x)$ in the right-hand side therein, we obtain
\begin{equation}\label{eq:firstntermsE10}
\sum_{k=1}^{n}\left(
\left|(H_\omega)_{\sigma(k)k} - (H_0)_{\sigma(k)k}\right|
\prod_{j=1}^{k-1}\left|(H_\omega)_{\sigma(j)j}\right|
\prod_{j=k+1}^{n}\left|(H_0)_{\sigma(j)j}\right| \left|(\theta_0(\mathrm{i} \omega x))_{\sigma(n+1)}\right|\right).
\end{equation}

As a consequence of~\eqref{eq:HwljtoH0lj},~\eqref{eq:HwminusH0} and~\eqref{eq:defkappaH0} the sum in~\eqref{eq:firstntermsE10}, and respectively,~\eqref{eq.ip.detAdetBsum1n}, is bounded from above by
\begin{equation}\label{eq:firstntermsE11}
\sum_{k=1}^n K_{\sigma,k} \leq n\omega \left|(\theta_0(\mathrm{i} \omega x))_{\sigma(n+1)}\right| \prod_{j=1}^{n}\frac{1}{j!}.
\end{equation}
Since we assume $\omega\leq 1$ and $x\in[-1,1]$, the entries of $\theta_0$ as in~\eqref{eq:theta0entries} satisfy 
\begin{equation}\label{eq:theta0np1colbounded}
|(\theta_0(\mathrm{i} \omega x))_{\ell}|
= |(\omega x)^{n+1-\ell}|\leq 1,~~~~\ell=1,\ldots,n+1,
\end{equation}
and~\eqref{eq:firstntermsE11} further simplifies to
\begin{equation}\label{eq:firstntermsE12}
\sum_{k=1}^n K_{\sigma,k} \leq n\omega \prod_{j=1}^{n}\frac{1}{j!}.
\end{equation}

We proceed to treat the second inner term in~\eqref{eq.ip.detAdetBsum1nwithK} for a fixed $\sigma\in\mathrm{S}_{n+1}$, i.e.,
\begin{equation}\label{eq.ip.detAdetBtermnp1}
K_{\sigma,n+1} = \left|A_{\sigma(n+1),n+1} - B_{\sigma(n+1),n+1}\right|
\prod_{j=1}^{n}\left|B_{\sigma(j)j}\right|.
\end{equation}
Substituting $A=\Theta_\omega(\mathrm{i} \omega x)$ and $B=\Theta_0(\mathrm{i} \omega x)$ therein, and making use of~\eqref{eq:defkappaH0}, we observe
\begin{equation}\label{eq:firstntermsE20}
K_{\sigma,n+1}
\leq \left|(\theta_\omega(\mathrm{i} \omega x))_{\sigma(n+1)} - (\theta_0(\mathrm{i} \omega x))_{\sigma(n+1)}\right|
\prod_{j=1}^{n}\frac{1}{j!}.
\end{equation}
When the index $\sigma(n+1)$ corresponds to $\ell = 1,\ldots,n$, the deviation of the respective entries of $\theta_\omega$~\eqref{eq:rintdefthetaw} and $\theta_0$~\eqref{eq:theta0entries} is bounded by
\begin{equation}\label{eq:ip.thetadiff}
\begin{aligned}
|(\theta_\omega(\mathrm{i} \omega x))_{\ell} - (\theta_0(\mathrm{i} \omega x))_{\ell}|
&=\left|(\mathrm{i}\omega)^{n+1-\ell}\prod_{k=1}^{n+1-\ell}( x-x_k)-(\mathrm{i} \omega x)^{n+1-\ell}\right|\\
&
\leq (2^{n+1-\ell}+1) \omega^{n+1-\ell} \leq 2^{n+1} \omega,
\qquad\ell=1,\ldots,n.
\end{aligned}
\end{equation}
Since
\begin{equation*}
|(\theta_\omega(\mathrm{i} \omega x))_{n+1} - (\theta_0(\mathrm{i} \omega x))_{n+1}| = 0,
\end{equation*}
we may also apply the upper bound in~\eqref{eq:ip.thetadiff} for the case $\sigma(n+1)=n+1$. Thus, the upper bound in~\eqref{eq:firstntermsE20} simplifies to
\begin{equation}\label{eq:firstntermsE21}
K_{\sigma,n+1} \leq 2^{n+1}\omega\prod_{j=1}^{n}\frac{1}{j!}.
\end{equation}
Combining~\eqref{eq.ip.detAdetBsum1nwithK} for $A=\Theta_\omega(\mathrm{i} \omega x)$ and $B=\Theta_0(\mathrm{i} \omega x)$ with~\eqref{eq:firstntermsE12} and~\eqref{eq:firstntermsE21}, we arrive at
\begin{equation}\label{eq:detAdetBcase10}
|\det \Theta_\omega(\mathrm{i} \omega x) - \det \Theta_0(\mathrm{i} \omega x)|
\leq \omega(n+2^{n+1})\prod_{j=1}^{n}\frac{1}{j!} \sum_{\sigma\in\mathrm{S}_{n+1}} 1
\end{equation}
Since the number of elements in $\mathrm{S}_{n+1}$ corresponds to $(n+1)!$, i.e.,
\begin{equation}\label{eq:sumSnpermutation}
\sum_{\sigma\in\mathrm{S}_{n+1}} 1 = (n+1)!,
\end{equation}
the upper bound~\eqref{eq:detAdetBcase10} shows~\eqref{eq:thetaerr}.

\smallskip
We proceed to show~\eqref{eq:gammaerr} in a similar manner. For $A=\Gamma_\omega(\mathrm{i} \omega x)$ and $B=\Gamma_0(\mathrm{i} \omega x)$ and a fixed permutation $\sigma\in\mathrm{S}_{n+1}$, the inner sum in~\eqref{eq.ip.detAdetBsum1n} corresponds to
\begin{equation*}
\begin{aligned}
\sum_{k=1}^n K_{\sigma,k} = 
\sum_{k=1}^{n}&\left(
\left|(H_\omega)_{\sigma(k)k} - (H_0)_{\sigma(k)k}\right| \phantom{\prod_{j=1}^{k-1}}\right.\\
&\quad \left.
\prod_{j=1}^{k-1}\left|(H_\omega)_{\sigma(j)j}\right|
\prod_{j=k+1}^{n}\left|(H_0)_{\sigma(j)j}\right| \left|(\gamma_0(\mathrm{i} \omega x))_{\sigma(n+1)}\right|\right).
\end{aligned}
\end{equation*}
Similar to~\eqref{eq:firstntermsE11}, this sum is bounded by
\begin{equation}\label{eq:firstntermsE11g}
\sum_{k=1}^n K_{\sigma,k} \leq n\omega \left|(\gamma_0(\mathrm{i} \omega x))_{\sigma(n+1)}\right| \prod_{j=1}^{n}\frac{1}{j!}.
\end{equation}
Entries of $\gamma_0$~\eqref{eq:theta0entries} are of the form
\begin{equation}\label{eq:gamma0entries}
(\gamma_0(\mathrm{i} \omega x))_{\ell}=\exp[0^{n+\ell},\mathrm{i} \omega x],~~~\ell=1,\ldots,n+1.
\end{equation}
Thus, using~\eqref{eq:upperboundonddzk} we observe
\begin{equation}\label{eq:gamma0entriesbounded}
|(\gamma_0(\mathrm{i} \omega x))_\ell|
=|\exp[0^{n+\ell},\mathrm{i} \omega x]| \leq \frac{1}{(n+\ell)!}.
\end{equation}
For $\ell=1,\ldots,n+1$, we have $1/(n+\ell)!\leq 1/(n+1)!$, and thus,
and the upper bound in~\eqref{eq:firstntermsE11g} is further bounded by
\begin{equation}\label{eq:firstntermsE12g}
\sum_{k=1}^n K_{\sigma,k} \leq \frac{n\omega}{(n+1)!} \prod_{j=1}^{n}\frac{1}{j!}  .
\end{equation}
On the other hand, proceeding similar to~\eqref{eq:firstntermsE20} the term in~\eqref{eq.ip.detAdetBtermnp1} is bounded from above by
\begin{equation}\label{eq:firstntermsE20g}
K_{\sigma,n+1} \leq \left|(\gamma_\omega(\mathrm{i} \omega x))_{\sigma(n+1)} - (\gamma_0(\mathrm{i} \omega x))_{\sigma(n+1)}\right|
\prod_{j=1}^{n}\frac{1}{j!},
\end{equation}
where entries of $\gamma_\omega$~\eqref{eq:ip.rintdefgammaw} correspond to
\begin{equation*}
(\gamma_\omega(\mathrm{i} \omega x))_\ell=\exp[\mathrm{i} \omega x_{n+2-\ell},\ldots,\mathrm{i} \omega x_{2n+1},\mathrm{i} \omega x],~~~\ell=1,\ldots,n+1.
\end{equation*}
Utilizing~\eqref{eq:ddexpto1k} for the divided differences therein, we observe that these entries satisfy
\begin{equation*}
\left| (\gamma_\omega(\mathrm{i} \omega x))_\ell - \frac{1}{(n+\ell)!} \right|\leq \frac{\omega}{(n+\ell)!},~~~x\in[-1,1].
\end{equation*}
In a similar manner, the entries $(\gamma_0(\mathrm{i} \omega x))_{\ell}$ as in~\eqref{eq:gamma0entries} satisfy
\begin{equation*}
\left| (\gamma_0(\mathrm{i} \omega x))_{\ell} - \frac{1}{(n+\ell)!} \right|\leq \frac{\omega}{(n+\ell)!},~~~x\in[-1,1].
\end{equation*}
Thus, using triangular inequality we arrive at
\begin{equation*}
|(\gamma_\omega(\mathrm{i} \omega x))_{\ell}-(\gamma_0(\mathrm{i} \omega x))_{\ell}| \leq \frac{2\omega}{(n+\ell)!},
\end{equation*}
and~\eqref{eq:firstntermsE20g} is further bounded by 
\begin{equation}\label{eq:firstntermsE21g}
K_{\sigma,n+1} \leq \frac{2\omega}{(n+1)!}\prod_{j=1}^{n}\frac{1}{j!}.
\end{equation}
Combining~\eqref{eq.ip.detAdetBsum1nwithK} for $A=\Gamma_\omega(\mathrm{i} \omega x)$ and $B=\Gamma_0(\mathrm{i} \omega x)$ with~\eqref{eq:firstntermsE12g} and~\eqref{eq:firstntermsE21g}, we observe
\begin{equation}\label{eq:detAdetBcase10g}
|\det \Gamma_\omega(\mathrm{i} \omega x) - \det \Gamma_0(\mathrm{i} \omega x)|
\leq \frac{\omega(n+2)}{(n+1)!}\prod_{j=1}^{n}\frac{1}{j!}\sum_{\sigma\in\mathrm{S}_{n+1}} 1.
\end{equation}
Combining this with~\eqref{eq:sumSnpermutation}, we conclude~\eqref{eq:gammaerr}.

\end{proof}

\begin{proposition}
For $x\in[-1,1]$ we have
\begin{equation}\label{eq:ip.remPadefromratint}
\frac{\det\Gamma_\omega(\mathrm{i}\omega x)}{\widehat{p}_\omega(\mathrm{i}\omega x)}
= \frac{(-1)^{n+1} (n!)^2}{(2n)!(2n+1)!} + \mathcal{O}(\omega),~~~\omega\to0^+.
\end{equation}
\end{proposition}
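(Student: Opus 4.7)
The plan is to reduce the identity to the analogous statement for the diagonal Pad\'e approximation using the perturbation bounds already established in Proposition~\ref{prop:ratintasymThetaGamma}, and then extract the constant $(-1)^{n+1}(n!)^2/((2n)!(2n+1)!)$ from the Pad\'e error expansion~\eqref{eq:leadingordererrorpade}. First, I would apply Proposition~\ref{prop:ratintasymThetaGamma} to replace $\det\Theta_\omega(\mathrm{i}\omega x)$ and $\det\Gamma_\omega(\mathrm{i}\omega x)$ by $\det\Theta_0(\mathrm{i}\omega x) + \mathcal{O}(\omega)$ and $\det\Gamma_0(\mathrm{i}\omega x) + \mathcal{O}(\omega)$, respectively, uniformly in $x\in[-1,1]$. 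The pair $(\widehat{p}_0, v_0) = (\det\Theta_0, \det\Gamma_0)$ is a valid denominator/remainder pair for the $(n,n)$-Pad\'e approximation, satisfying $\widehat{p}_0^\dag(z) - \widehat{p}_0(z)\mathrm{e}^z = z^{2n+1}v_0(z)$. Since the diagonal Pad\'e approximation has minimal degree $n$ and is holomorphic at $z=0$, $\widehat{p}_0(0) = \det\Theta_0(0) \neq 0$, and hence $\det\Theta_\omega(\mathrm{i}\omega x)$ remains bounded away from zero for sufficiently small $\omega > 0$, uniformly in $x$.

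Second, I would extract the leading constant from \eqref{eq:leadingordererrorpade}. Writing
\begin{equation*}
\mathrm{e}^z - \widehat{r}(z) = -\frac{\widehat{p}_0^\dag(z) - \widehat{p}_0(z)\mathrm{e}^z}{\widehat{p}_0(z)} = -\frac{z^{2n+1} v_0(z)}{\widehat{p}_0(z)},
\end{equation*}
and comparing with \eqref{eq:leadingordererrorpade} after division by $z^{2n+1}$ gives
\begin{equation*}
\frac{v_0(z)}{\widehat{p}_0(z)} = (-1)^{n+1}\frac{(n!)^2}{(2n)!(2n+1)!} + \mathcal{O}(|z|),\qquad |z|\to 0.
\end{equation*}
Setting $z = \mathrm{i}\omega x$ with $x\in[-1,1]$ (so $|z|\le\omega$) yields
\begin{equation*}
\frac{\det\Gamma_0(\mathrm{i}\omega x)}{\det\Theta_0(\mathrm{i}\omega x)} = (-1)^{n+1}\frac{(n!)^2}{(2n)!(2n+1)!} + \mathcal{O}(\omega),
\end{equation*}
uniformly in $x$. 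Note that the scaling ambiguity in the Pad\'e denominator cancels in the quotient, so the identity is independent of our particular choice of representative.

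Finally, I would combine both steps through the elementary identity
\begin{equation*}
\frac{\det\Gamma_\omega}{\det\Theta_\omega} - \frac{\det\Gamma_0}{\det\Theta_0}
= \frac{(\det\Gamma_\omega-\det\Gamma_0)\det\Theta_0 - \det\Gamma_0(\det\Theta_\omega-\det\Theta_0)}{\det\Theta_\omega\,\det\Theta_0},
\end{equation*}
evaluated at $z = \mathrm{i}\omega x$. The numerator is $\mathcal{O}(\omega)$ by Proposition~\ref{prop:ratintasymThetaGamma} together with the uniform boundedness of $\det\Theta_0(\mathrm{i}\omega x)$ and $\det\Gamma_0(\mathrm{i}\omega x)$ in $\omega,x$, while the denominator is bounded away from zero. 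Hence the difference is $\mathcal{O}(\omega)$, which together with the second step yields~\eqref{eq:ip.remPadefromratint}. No step is genuinely difficult here: the only care required is maintaining uniformity in the interpolation nodes $x_1,\ldots,x_{2n+1}\in[-1,1]$, which is automatic since the bounds in Proposition~\ref{prop:ratintasymThetaGamma} are node-independent and the Pad\'e-side ingredients involve no nodes at all.
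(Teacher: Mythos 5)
Your proposal is correct and follows essentially the same route as the paper's proof: both extract the constant by identifying $(\det\Theta_0,\det\Gamma_0)$ with the Pad\'e denominator/remainder pair and comparing with the expansion~\eqref{eq:leadingordererrorpade}, and both control the difference of quotients via the perturbation bounds of Proposition~\ref{prop:ratintasymThetaGamma} together with a lower bound on $|\det\Theta_\omega(\mathrm{i}\omega x)|$. The only cosmetic difference is that you justify the nonvanishing of the denominator via $\widehat{p}_0(0)\neq 0$ and continuity, whereas the paper deduces $|\det\Theta_\omega|\geq|\det\Theta_0|/2$ directly from the bound~\eqref{eq:thetaerr}; both are valid.
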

\begin{proof}
We recall that the Pad\'e approximation $\widehat{r}=\widehat{p}^\dag/\widehat{p}$ corresponds to the rational interpolant to $\mathrm{e}^z$ with the interpolation node of multiplicity $2n+1$.
For the Pad\'e approximant, the linearized interpolation problem~\eqref{eq:linintinproofphat} with negative sign for $z=\mathrm{i} \omega x$ writes
\begin{equation}\label{eq:ip.lininterpolationPade}
\widehat{p}(\mathrm{i} \omega x) \mathrm{e}^{\mathrm{i} \omega x} - \widehat{p}^\dag(\mathrm{i} \omega x)  = -(\mathrm{i} \omega x)^{2n+1} v_\omega(\mathrm{i} \omega x),
\end{equation}
where we assume that $\widehat{p}$ corresponds to the representation~\eqref{eq:rintdenomdetTpfull}.
Since $\widehat{p}$ is non-zero on the imaginary axis, we may divide by $\widehat{p}(\mathrm{i} \omega x)$ and substitute $\det\Gamma_0(\mathrm{i}\omega x)$ from~\eqref{eq:ip.linerrgenall} for $v_\omega(\mathrm{i} \omega x)$ to obtain
\begin{equation*}
\mathrm{e}^{\mathrm{i} \omega x} - \widehat{r}(\mathrm{i}\omega x)
=  -(\mathrm{i}\omega x)^{2n+1}\frac{\det\Gamma_0(\mathrm{i}\omega x)}{\widehat{p}(\mathrm{i}\omega x)},
\end{equation*}
Comparing with the asymptotic error~\eqref{eq:leadingordererrorpade} for $z=\mathrm{i}\omega x$,
\begin{equation}
(\mathrm{i} \omega x)^{2n+1}\frac{\det\Gamma_0(\mathrm{i}\omega x)}{\widehat{p}(\mathrm{i}\omega x)}
= (-1)^{n+1}  \frac{(n!)^2 (\mathrm{i} \omega x)^{2n+1}}{(2n)!(2n+1)!} + \mathcal{O}(\omega^{2n+2}),~~~\omega\to {0+},
\end{equation}
we derive
\begin{equation}
\frac{\det\Gamma_0(\mathrm{i}\omega x)}{\widehat{p}(\mathrm{i}\omega x)}
=  \frac{(-1)^{n+1} (n!)^2}{(2n)!(2n+1)!} + \mathcal{O}(\omega).
\end{equation}
Thus, we have
\begin{equation}\label{eq:ip.remPadefromratintp1}
\frac{\det\Gamma_\omega(\mathrm{i}\omega x)}{\widehat{p}_\omega(\mathrm{i}\omega x)}
- \frac{(-1)^{n+1} (n!)^2}{(2n)!(2n+1)!} 
= \frac{\det\Gamma_\omega(\mathrm{i}\omega x)}{\det\Theta_\omega(\mathrm{i}\omega x)}
- \frac{\det\Gamma_0(\mathrm{i}\omega x)}{\det\Theta_0(\mathrm{i}\omega x)} + \mathcal{O}(\omega).
\end{equation}
To conclude~\eqref{eq:ip.remPadefromratint}, we proceed to show
\begin{equation}\label{eq:gammaadivthetaOnow}
\left|\frac{\det\Gamma_\omega(\mathrm{i}\omega x)}{\widehat{p}_\omega(\mathrm{i}\omega x)} - \frac{\det\Gamma_0(\mathrm{i}\omega x)}{\widehat{p}(\mathrm{i}\omega x)}\right| \leq \mathcal{O}(\omega),~~~\omega\to 0^+.
\end{equation}
We first recall $\widehat{p}_\omega(\mathrm{i}\omega x)=\det \Theta_\omega(\mathrm{i} \omega x)$ and $\widehat{p}(\mathrm{i}\omega x)=\det \Theta_0(\mathrm{i} \omega x)$. To simplify our notation we ignore the function argument $\mathrm{i}\omega x$ in the following.
The deviation in~\eqref{eq:gammaadivthetaOnow} corresponds to
\begin{equation*}
\frac{\det\Gamma_\omega}{\det\Theta_\omega} - \frac{\det\Gamma_0}{\det\Theta_0}
= \frac{(\det\Gamma_\omega - \det\Gamma_0)\det\Theta_0 + \det\Gamma_0(\det\Theta_0 - \det\Theta_\omega)}{\det\Theta_0 \det\Theta_\omega}.
\end{equation*}
Using the upper bounds~\eqref{eq:thetaerr} and~\eqref{eq:gammaerr}, we observe
\begin{equation*}
\left|(\det\Gamma_\omega - \det\Gamma_0)\det\Theta_0 + \det\Gamma_0(\det\Theta_0 - \det\Theta_\omega) \right|
\leq \omega (c_{\gamma,n}|\det\Theta_0| + c_{\theta,n}|\det\Gamma_0|).
\end{equation*}
For a fixed degree $n$ and $\omega$ sufficiently small the upper bound~\eqref{eq:thetaerr} entails
\begin{equation*}
|\det\Theta_\omega| \geq  |\det\Theta_0| - c_{\theta,n} \omega
\geq |\det\Theta_0|/2,
\end{equation*}
which further leads to
\begin{equation*}
\left|\frac{\det\Gamma_\omega}{\det\Theta_\omega} - \frac{\det\Gamma_0}{\det\Theta_0}\right| \leq 
\frac{ 2\omega (c_{\gamma,n}|\det\Theta_0| + c_{\theta,n}|\det\Gamma_0|)}
{|\det\Theta_0|^2}
= \mathcal{O}(\omega).
\end{equation*}
This shows~\eqref{eq:gammaadivthetaOnow}, and together with~\eqref{eq:ip.remPadefromratintp1} we conclude~\eqref{eq:ip.remPadefromratint}.
\end{proof}

To proceed we recall the notation
\begin{equation*} \tag{\ref{eq:errnormnotation}}
\|r_\omega-\exp(\omega\cdot)\|
= \max_{x\in[-1,1]} |r_\omega(\mathrm{i} x)-\mathrm{e}^{\mathrm{i} \omega x}|.
\end{equation*}

\begin{proposition}\label{prop:ratintasymerr}
The rational interpolant $r_\omega$ to $\mathrm{e}^{\omega z}$ with interpolation nodes $\mathrm{i} x_1,\ldots,\mathrm{i} x_{2n+1}\in\mathrm{i}\mathbb{R}$, where $x_j\in[-1,1]$, has the asymptotic error
\begin{equation}\label{eq:interr}
\|r_\omega-\exp(\omega\cdot)\| 
= \max_{x\in[-1,1]}\left|\prod_{j=1}^{2n+1}(x-x_j) \right|  \frac{(n!)^2 \omega^{2n+1}}{(2n)!(2n+1)!}+\mathcal{O}(\omega^{2n+2}),
~~~\omega\to 0^+.
\end{equation}
\end{proposition}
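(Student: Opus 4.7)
The plan is to start from the linearized interpolation identity~\eqref{eq:linintinproofphat} for $\widehat{p}_\omega$, evaluate it on the imaginary axis, and combine it with the key asymptotic~\eqref{eq:ip.remPadefromratint} which already encodes the ``Pad\'e-like'' leading constant $(n!)^2/((2n)!(2n+1)!)$. Concretely, substituting $z=\mathrm{i}\omega x$ in~\eqref{eq:linintinproofphat} and dividing by $\widehat{p}_\omega(\mathrm{i}\omega x)$ (which is nonzero on the imaginary axis for small $\omega$ by Proposition~\ref{prop:ratintasymdenom}, since the limit $\widehat{p}$ has all zeros in the open right half-plane), I get
\begin{equation*}
r_\omega(\mathrm{i} x) - \mathrm{e}^{\mathrm{i}\omega x}
= \frac{\widehat{p}_\omega^\dag(\mathrm{i}\omega x)}{\widehat{p}_\omega(\mathrm{i}\omega x)} - \mathrm{e}^{\mathrm{i}\omega x}
= (\mathrm{i}\omega)^{2n+1}\prod_{j=1}^{2n+1}(x-x_j)\,\frac{\det\Gamma_\omega(\mathrm{i}\omega x)}{\widehat{p}_\omega(\mathrm{i}\omega x)},
\end{equation*}
where I used the identification of $v_\omega$ with $\det\Gamma_\omega$ from~\eqref{eq:ip.linerrgen} and of $\widehat{p}_\omega$ with $\det\Theta_\omega$ from~\eqref{eq:rintdenomdetT}.

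Next I would invoke the asymptotic identity~\eqref{eq:ip.remPadefromratint}, which states
\begin{equation*}
\frac{\det\Gamma_\omega(\mathrm{i}\omega x)}{\widehat{p}_\omega(\mathrm{i}\omega x)}
= \frac{(-1)^{n+1}(n!)^2}{(2n)!(2n+1)!} + \mathcal{O}(\omega),\qquad \omega\to 0^+.
\end{equation*}
Taking absolute values and using $|(\mathrm{i}\omega)^{2n+1}|=\omega^{2n+1}$ yields the pointwise expansion
\begin{equation*}
|r_\omega(\mathrm{i} x)-\mathrm{e}^{\mathrm{i}\omega x}|
= \omega^{2n+1}\Bigl|\prod_{j=1}^{2n+1}(x-x_j)\Bigr|\left(\frac{(n!)^2}{(2n)!(2n+1)!} + \mathcal{O}(\omega)\right).
\end{equation*}
The delicate step is to argue that the $\mathcal{O}(\omega)$ remainder is uniform in $x\in[-1,1]$ and uniform in the choice of interpolation nodes $x_j\in[-1,1]$; this uniformity is guaranteed by Proposition~\ref{prop:ratintasymThetaGamma} (which bounds $|\det\Theta_\omega-\det\Theta_0|$ and $|\det\Gamma_\omega-\det\Gamma_0|$ by $\omega$ times constants depending only on $n$), together with the compactness of $[-1,1]^{2n+1}$ and the continuity of $\widehat{p}(\mathrm{i}\omega x)$, so $|\widehat{p}_\omega(\mathrm{i}\omega x)|$ is bounded below by a positive constant uniformly in $x$ for $\omega$ small.

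Finally, taking the maximum over $x\in[-1,1]$ of the pointwise expansion gives~\eqref{eq:interr}. The main technical obstacle is precisely the uniformity of the $\mathcal{O}(\omega)$ remainder: one must check that the bound is independent of $x$ (which follows from continuity and compactness once $\widehat{p}_\omega$ is uniformly bounded away from zero on the compact set $\mathrm{i}\omega[-1,1]$) and does not degrade when the maximum is interchanged with the remainder estimate. Once this is confirmed, the rest of the argument is a direct consequence of~\eqref{eq:ip.remPadefromratint} and the linearized interpolation identity.
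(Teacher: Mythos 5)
Your proposal is correct and follows essentially the same route as the paper: substitute $z=\mathrm{i}\omega x$ into the linearized interpolation identity, divide by $\widehat{p}_\omega(\mathrm{i}\omega x)$, invoke the asymptotic quotient formula~\eqref{eq:ip.remPadefromratint}, and take the maximum over $x\in[-1,1]$. Your explicit attention to the uniformity of the $\mathcal{O}(\omega)$ remainder in $x$ and in the nodes (via Proposition~\ref{prop:ratintasymThetaGamma} and compactness) is a point the paper treats more tersely, but it is the same argument.
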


\begin{proof}
Let $\widehat{r}_\omega=\widehat{p}^\dag_\omega/\widehat{p}_\omega$ denote the rational interpolant to $\mathrm{e}^{z}$ with interpolation nodes $\mathrm{i}\omega x_1,\ldots,\mathrm{i}\omega x_{2n+1}\in\mathrm{i}\mathbb{R}$. With the remainder $v_\omega = \det\Gamma_\omega$ from~\eqref{eq:ip.linerrgenall}, the linearized interpolation problem~\eqref{eq:linintinproofphat} for $z=\mathrm{i}\omega x$ writes
\begin{equation}\label{eq:linerrorphat}
\widehat{p}_\omega^\dag (\mathrm{i} \omega x) - \mathrm{e}^{\mathrm{i} \omega x} \widehat{p}_\omega (\mathrm{i} \omega x) = (\mathrm{i} \omega)^{2n+1}\prod_{j=1}^{2n+1} (x-x_j)\det\Gamma_\omega(\mathrm{i} \omega x).
\end{equation}
Since $\widehat{p}_\omega$ converges to the denominator of the Pad\'e approximation $\widehat{p}$ which is non-zero on the imaginary axis, and since $r_\omega(z) = \widehat{p}_\omega^\dag(\omega z)/\widehat{p}_\omega (\omega z)$ as in~\eqref{eq:ip.rfromphat}, the linearized error~\eqref{eq:linerrorphat} entails
\begin{equation*}
r_\omega (\mathrm{i} x) - \mathrm{e}^{\mathrm{i} \omega x}  = (\mathrm{i} \omega)^{2n+1}\prod_{j=1}^{2n+1} (x-x_j)\frac{\det\Gamma_\omega(\mathrm{i} \omega x)}{\widehat{p}_\omega (\mathrm{i} \omega x)}.
\end{equation*}
Making use of~\eqref{eq:ip.remPadefromratint} for a given $x\in[-1,1]$ and taking the absolute value, we observe
\begin{equation*}
|r_\omega (\mathrm{i} x) - \mathrm{e}^{\mathrm{i} \omega x}| = \omega^{2n+1}\left|\prod_{j=1}^{2n+1} (x-x_j)\right| \frac{(n!)^2}{(2n)!(2n+1)!} + \mathcal{O}(\omega),~~~\omega\to0^+.
\end{equation*}
Taking the maximum over $x\in[-1,1]$, we conclude~\eqref{eq:interr}.
\end{proof}

\begin{proposition}\label{prop:ratintasymlowererrbound}
Let $r_\omega$ denote a rational interpolant to $\mathrm{e}^{\omega z}$ with interpolation nodes $\mathrm{i} x_1(\omega),\ldots,\mathrm{i} x_{2n+1}(\omega)$, where $x_j(\omega)\in[-1,1]$ may depend on $\omega$, then
\begin{equation}\label{eq:asymerrlowerbound}
\|r_\omega-\exp(\omega\cdot)\|
\geq\frac{2(n!)^2 }{(2n)!(2n+1)!} \left(\frac{\omega}{2}\right)^{2n+1}+\mathcal{O}(\omega^{2n+2}),~~~\omega\to0^+.
\end{equation}
\end{proposition}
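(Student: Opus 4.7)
The plan is to combine the asymptotic error representation from Proposition~\ref{prop:ratintasymerr} with the minimality property of Chebyshev nodes recalled in~\eqref{eq:Chebmin}. The complication, compared with the setting of Proposition~\ref{prop:ratintasymerr}, is that here the interpolation nodes $x_j(\omega)$ may depend on $\omega$, so one cannot simply insert the leading-order formula node-by-node.

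First, I would establish that the asymptotic expansion~\eqref{eq:interr} is \emph{uniform} in the choice of interpolation nodes $x_1,\ldots,x_{2n+1}\in[-1,1]$. Tracing the proof of Proposition~\ref{prop:ratintasymerr}, this expansion rests on identity~\eqref{eq:ip.remPadefromratint} together with the determinant bounds in Proposition~\ref{prop:ratintasymThetaGamma}. The constants $c_{\theta,n}$ and $c_{\gamma,n}$ appearing there, as well as the factors $|\det\Theta_0(\mathrm{i}\omega x)|$ and $|\det\Gamma_0(\mathrm{i}\omega x)|$, depend only on $n$ and not on the nodes, and all the intermediate bounds (e.g.~\eqref{eq:HwljtoH0lj}, \eqref{eq:HwminusH0}, \eqref{eq:theta0np1colbounded}) are formulated uniformly over $x_j\in[-1,1]$. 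Equivalently, since $[-1,1]^{2n+1}$ is compact, the remainder term in~\eqref{eq:interr} admits a node-independent $\mathcal{O}(\omega^{2n+2})$ bound. This is exactly the fact used in the proof of Proposition~\ref{prop:inodestoCheb}, cf.~\eqref{eq:interrorremboundeddelta}.

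Next, I would apply the Chebyshev minimum property from~\eqref{eq:Chebmin}: for any real numbers $x_1,\ldots,x_{2n+1}\in[-1,1]$,
\begin{equation*}
\max_{x\in[-1,1]}\left|\prod_{j=1}^{2n+1}(x-x_j)\right| \;\geq\; 2^{-2n}.
\end{equation*}
Combining this with the uniform version of~\eqref{eq:interr}, evaluated at the (possibly $\omega$-dependent) nodes $x_j(\omega)$, yields
\begin{equation*}
\|r_\omega-\exp(\omega\cdot)\|
\;\geq\; 2^{-2n}\,\frac{(n!)^2\,\omega^{2n+1}}{(2n)!(2n+1)!} + \mathcal{O}(\omega^{2n+2}),
\qquad \omega\to 0^+.
\end{equation*}
Rewriting $2^{-2n}=2\cdot 2^{-(2n+1)}$ and absorbing the power of $2$ into $(\omega/2)^{2n+1}$ gives~\eqref{eq:asymerrlowerbound}.

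The main obstacle is the first step: one must be careful that the $\mathcal{O}(\omega^{2n+2})$ remainder in~\eqref{eq:interr} can be chosen independently of the nodes, so that substituting $x_j=x_j(\omega)$ is legitimate. Once this uniformity is in hand, the Chebyshev minimum property~\eqref{eq:Chebmin} does the rest immediately, without any analysis specific to the unitary best approximation.
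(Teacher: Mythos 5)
Your proposal is correct and follows essentially the same route as the paper: establish that the $\mathcal{O}(\omega^{2n+2})$ remainder in~\eqref{eq:interr} is uniform over node choices in the compact set $[-1,1]^{2n+1}$, then invoke the Chebyshev minimality~\eqref{eq:Chebmin} and rewrite $2^{-2n}\omega^{2n+1}=2(\omega/2)^{2n+1}$. The only cosmetic difference is that you justify the uniformity by tracing the node-independent constants in Proposition~\ref{prop:ratintasymThetaGamma}, whereas the paper phrases it as a compactness/contradiction argument on the limit superior; the underlying content is the same.
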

\begin{proof}
To proof this assertions in full detail we first remark that, using the classical definition of the $\mathcal{O}$-notation the asymptotic bound~\eqref{eq:asymerrlowerbound} is equivalent to
\begin{equation}\label{eq:prunitarybestasymp2}
\limsup_{\omega \to 0^+} \frac{\frac{2(n!)^2}{(2n)!(2n+1)!} \left( \frac{\omega }{2} \right)^{2n+1}  
- \|r_\omega-\exp(\omega\cdot)\|}{\omega^{2n+2}} <\infty.
\end{equation}

We first consider rational interpolants $\widetilde{r}_\omega$ to $\mathrm{e}^{\omega z}$ with fixed interpolation nodes.
Following Proposition~\ref{prop:ratintasymerr}, the rational interpolant $\widetilde{r}_\omega$ to $\mathrm{e}^{\mathrm{i} \omega x}$ with given interpolation nodes $\mathrm{i} x_1,\ldots,\mathrm{i} x_{2n+1}$, where $x_j\in[-1,1]$, has the asymptotic error representation
\begin{equation}\label{eq:asymerrinterinproof1}
\|\widetilde{r}_\omega-\exp(\omega\cdot)\| 
= \max_{x\in[-1,1]}\left|\prod_{j=1}^{2n+1}(x-x_j) \right|  \frac{(n!)^2 \omega^{2n+1}}{(2n)!(2n+1)!}+\mathcal{O}(\omega^{2n+2}).
\end{equation}
Substituting the lower bound from~\eqref{eq:Chebmin} in~\eqref{eq:asymerrinterinproof1}, we observe
\begin{equation}\label{eq:asymbound2fixednodes}
\|\widetilde{r}_\omega-\exp(\omega\cdot)\| 
\geq \frac{2(n!)^2}{(2n)!(2n+1)!} \left( \frac{\omega }{2} \right)^{2n+1}+\mathcal{O}(\omega^{2n+2}),
\end{equation}
which proves~\eqref{eq:prunitarybestasymp2} for the case of fixed nodes. It remains to show that this holds true when interpolation nodes depend on $\omega$.
For the case of fixed interpolation nodes, the asymptotic bound~\eqref{eq:asymbound2fixednodes} corresponds to
\begin{equation}\label{eq:asymerrinterinproof2}
\limsup_{\omega \to 0^+} \frac{\frac{2(n!)^2}{(2n)!(2n+1)!} \left( \frac{\omega }{2} \right)^{2n+1} - \|\widetilde{r}_\omega-\exp(\omega\cdot)\| }{\omega^{2n+2}}
=: K(x_1,\ldots,x_n) < \infty.
\end{equation}
Thus, the limit superior $K(x_1,\ldots,x_n)$ therein is bounded from above for any set of fixed nodes $x_1,\ldots,x_{2n+1}$ with $x_j\in[-1,1]$, i.e., $(x_1,\ldots,x_{2n+1}) \in [-1,1]^{2n+1}$ where $[-1,1]^{2n+1}$ denotes the $2n+1$-dimensional Cartesian product of the interval $[-1,1]$. 
Since $[-1,1]^{2n+1}$ is a compact set, we find a set of interpolation nodes $(\widehat{x}_{1},\ldots,\widehat{x}_{2n+1}) \in [-1,1]^{2n+1}$ for which $K(x_1,\ldots,x_n)$ attains its maximum, i.e.,
\begin{equation*}
\widehat{K}
:= \max_{(x_{1},\ldots,x_{2n+1}) \in [-1,1]^{2n+1}} K(x_1,\ldots,x_n) 
=  K(\widehat{x}_{1},\ldots,\widehat{x}_{2n+1})<\infty.
\end{equation*}
Thus, for the maximum $\widehat{K}$ as above, a given set of nodes $x_1,\ldots,x_n\in[-1,1]^{2n+1}$, and $\varepsilon>0$ there exists $\widetilde{\omega}_\varepsilon>0$ s.t.\ the respective rational interpolant $\widetilde{r}_\omega$ satisfies
\begin{equation}\label{eq:asymerrinterinproof3}
\frac{\frac{2(n!)^2}{(2n)!(2n+1)!} \left( \frac{\omega }{2} \right)^{2n+1} - \|\widetilde{r}_\omega-\exp(\omega\cdot)\| }{\omega^{2n+2}}
< \widehat{K} +\varepsilon,~~~\omega\in(0,\widetilde{\omega}_\varepsilon).
\end{equation}
Due to compactness arguments, we find $\omega_\varepsilon$ s.t.\ this holds true for any set of nodes in $[-1,1]^{2n+1}$.

Under these considerations, we prove~\eqref{eq:prunitarybestasymp2} by contradiction. Let $x_1(\omega),\ldots,x_{2n+1}(\omega)$ be a given set of nodes where $x_j(\omega)\in[-1,1]$ depends on $\omega$, and let $r_\omega$ correspond to the rational interpolant to $\mathrm{e}^{\omega z}$ with interpolation nodes $\mathrm{i} x_1(\omega),\ldots,\mathrm{i} x_{2n+1}(\omega)$.
We assume~\eqref{eq:prunitarybestasymp2}, and respectively,~\eqref{eq:asymerrinterinproof2}, does not hold true. Then there exists a sequence $\{\omega_j>0\}_{j\in\mathbb{N}}$ with $\omega_j\to 0$ for $j\to \infty$ and
\begin{equation*}
\frac{\frac{2(n!)^2}{(2n)!(2n+1)!} \left( \frac{\omega_j }{2} \right)^{2n+1}  
- \| r_{\omega_j} - \exp(\omega_j\cdot)\| }{\omega_j^{2n+2}} \to \infty,~~~~j\to \infty.
\end{equation*}
However, this is contradictory to~\eqref{eq:asymerrinterinproof3} since the upper bound therein holds true for $\omega\in(0,\omega_\varepsilon)$ uniformly in the choice of the interpolation nodes.
Thus,~\eqref{eq:prunitarybestasymp2} holds true for the case that interpolation nodes depend on $\omega$, which completes this proof.
\end{proof}

\section{Technicalities for \texorpdfstring{$\omega\to(n+1)\pi^-$}{w to (n+1)pi-}}\label{sec:proofswtonp1pi}

In the present appendix we provide proofs to propositions~\ref{prop:rforwtonp1pi},~\ref{prop:polesconvwtonp1pi} and~\ref{prop:nodesconvwtonp1pi}, which correspond to the case $\omega\to(n+1)\pi^-$. The idea behind these proofs is motivated by the representation $r(\textrm{i} x) = \textrm{e}^{\textrm{i} g(x)}$ of unitary rational functions and the fact that the arc tangent functions in $g(x)$ converge to step functions in specific cases. In particular, for $\varepsilon\to0^+$ and $y\in\mathbb{R}$,
\begin{equation}\label{eq:arctantostepfct}
\arctan \frac{y}{\varepsilon} \to
\left\{\begin{array}{rll}
-\frac{\pi}{2},&~~~&y<0,\\
0,&& y=0,~~\text{and}\\
\frac{\pi}{2},&&y<0.
\end{array}\right.
\end{equation}
This relation is especially utilized in the auxiliary propositions~\ref{prop:appendixBauxgeps1},~\ref{prop:appendixBauxgeps2} and~\ref{prop:appendixBauxgeps3} in various forms.

Numerical illustrations of phase functions together with the corresponding phase errors are also shown in figures~\ref{fig:ip.geps} and~\ref{fig:ip.gnp1pi} to visually support proofs of the present section.
In Fig.~\ref{fig:ip.geps}, the function $g_\varepsilon$ from~\eqref{eq:defgxi} below, which corresponds to a phase function with pre-assigned poles, is illustrated together with its phase error. This function behaves similar to the phase function $g_\omega$ of the unitary best approximation in the case $\omega\approx(n+1)\pi$, for which a numerical illustration is provided in Fig.~\ref{fig:ip.gnp1pi}.

\subsection*{Pre-assigned poles and the proof to  Proposition~\ref{prop:rforwtonp1pi}}
Proposition~\ref{prop:rforwtonp1pi} shows how to construct a unitary rational function $r$ which has an approximation error strictly smaller than two for $\omega$ sufficiently close to $(n+1)\pi$.

We recall that a unitary rational function has a representation $r(\textrm{i} x) = \textrm{e}^{\textrm{i} g(x)}$ where $g$ is of the form~\eqref{eq:defg} and $s_j=\xi_j+\textrm{i} \mu_j$ therein correspond to the poles of $r$. For the proof of Proposition~\ref{prop:rforwtonp1pi} we use pre-assigned poles. In particular, this proof relies on phase functions of the form
\begin{equation}\label{eq:defgxi}
g_\varepsilon(x) := 2\sum_{j=1}^n \arctan \frac{x-\mu_j}{\varepsilon},~~~\text{with}~~\mu_j:=-1+\frac{2j}{n+1},
\end{equation}
and transformations of this function. We highlight that $\xi_j$ is replaced by $\varepsilon>0$ in this representation, where $\varepsilon$ is be specified in the proof of Proposition~\ref{prop:rforwtonp1pi} further below.

We proceed with an auxiliary proposition.
\begin{proposition}[Approaching step functions, part 1]
\label{prop:appendixBauxgeps1}
Let $\varepsilon>0$ be sufficiently small, then the following inequalities hold true for $g_\varepsilon$ in~\eqref{eq:defgxi}.
\begin{itemize}
\begin{subequations}
\item For $x\leq \mu_1-\tfrac{1}{n+1}$,
\begin{equation}\label{eq:gxi1}
-n\pi<g_\varepsilon (x) \leq -n\pi + 3\varepsilon n(n+1),
\end{equation}
\item for $x\in[\mu_k - \tfrac{1}{n+1},\mu_k + \tfrac{1}{n+1}]$ with $k=1,\ldots,n$,
\begin{equation}\label{eq:ip.gxi2}
\left| g_\varepsilon(x) -\left( 2\arctan\frac{x-\mu_k}{\varepsilon} + (-n+2k-1) \pi \right)\right | \leq  3\varepsilon n(n+1),
\end{equation}
\item and for $x\geq \mu_n+\tfrac{1}{n+1}$,
\begin{equation}\label{eq:gxi3}
n\pi - 3 \varepsilon n(n+1) \leq g_\varepsilon(x)\leq n\pi.
\end{equation}
\end{subequations}
\end{itemize}
\end{proposition}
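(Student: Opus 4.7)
The proof plan is to exploit the basic asymptotic relation
\[
|2\arctan(y) - \operatorname{sgn}(y)\,\pi| \leq \frac{2}{|y|}, \qquad y \neq 0,
\]
which follows directly from the identity $\arctan(y) + \arctan(1/y) = \operatorname{sgn}(y)\,\pi/2$ together with $|\arctan(1/y)| \leq 1/|y|$. Applied termwise to the sum defining $g_\varepsilon$, this reduces each $2\arctan\bigl((x-\mu_j)/\varepsilon\bigr)$ to $\operatorname{sgn}(x-\mu_j)\,\pi$ with a remainder bounded by $2\varepsilon/|x-\mu_j|$ whenever $x \neq \mu_j$.

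First I would establish the spacing estimate: since $\mu_{j+1}-\mu_j = 2/(n+1)$, for any $x \in [\mu_k - 1/(n+1),\,\mu_k + 1/(n+1)]$ and any $j \neq k$ one has $|x-\mu_j| \geq (2|j-k|-1)/(n+1) \geq 1/(n+1)$, and $\operatorname{sgn}(x-\mu_j) = \operatorname{sgn}(j-k)\cdot(-1)$, i.e.\ $+1$ for $j<k$ and $-1$ for $j>k$. This decoupling is the heart of the argument and the only place where the interval $[\mu_k - 1/(n+1),\mu_k+1/(n+1)]$ actually enters.

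With this in hand, the middle inequality~\eqref{eq:ip.gxi2} follows by keeping the $j=k$ term of $g_\varepsilon$ exact, replacing every other $2\arctan((x-\mu_j)/\varepsilon)$ by $\operatorname{sgn}(x-\mu_j)\,\pi = \pm\pi$, summing the signs to obtain $(k-1)\pi - (n-k)\pi = (2k-1-n)\pi$, and controlling the remainder via the triangle inequality:
\[
\sum_{j\neq k} \frac{2\varepsilon}{|x-\mu_j|} \;\leq\; (n-1)\cdot 2\varepsilon(n+1) \;\leq\; 3\varepsilon\, n(n+1).
\]
(A slightly sharper estimate using $|x-\mu_j|\geq (2|j-k|-1)/(n+1)$ is available but the trivial bound $|x-\mu_j|\geq 1/(n+1)$ already suffices.)

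For the outer inequalities~\eqref{eq:gxi1} and~\eqref{eq:gxi3}, I would observe that when $x \geq \mu_n + 1/(n+1)$ we have $x-\mu_j \geq (2(n-j)+1)/(n+1) > 0$ for every $j$, so each term $2\arctan((x-\mu_j)/\varepsilon)$ is strictly positive and strictly less than $\pi$, yielding immediately the strict upper bound $g_\varepsilon(x) < n\pi$ (hence $\leq n\pi$). The lower bound $g_\varepsilon(x) \geq n\pi - 3\varepsilon n(n+1)$ is then obtained by applying the asymptotic estimate above to each term and summing, using the same trivial bound $|x-\mu_j| \geq 1/(n+1)$. The case $x \leq \mu_1 - 1/(n+1)$ is symmetric: all signs flip and the bounds become $-n\pi < g_\varepsilon(x) \leq -n\pi + 3\varepsilon n(n+1)$.

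There is no real obstacle here, since all ingredients are elementary; the only mild subtlety is bookkeeping the sign pattern and arranging the distance bounds so that the $(n-1)$ remainder terms each contribute at most $2\varepsilon(n+1)$, producing the advertised constant $3n(n+1)$ with room to spare. The resulting estimates are intentionally coarse because they only need to be good enough to drive the step-function limit~\eqref{eq:arctantostepfct} in the subsequent propositions.
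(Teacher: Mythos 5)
Your proposal is correct and follows essentially the same route as the paper: both arguments hinge on the reflection identity $\arctan(y)+\arctan(1/y)=\operatorname{sgn}(y)\pi/2$ to replace each off-center term $2\arctan((x-\mu_j)/\varepsilon)$ by $\pm\pi$ with an error of at most $2\varepsilon(n+1)$ (using the spacing bound $|x-\mu_j|\geq 1/(n+1)$), keep the $j=k$ term exact, and sum the signs to get $(-n+2k-1)\pi$. The only difference is cosmetic — you package the elementary estimate as the uniform inequality $|2\arctan(y)-\operatorname{sgn}(y)\pi|\leq 2/|y|$, which sidesteps the paper's Taylor-expansion step and its "sufficiently small $\varepsilon$" caveat for that particular bound, but the decomposition, the constants, and the sign bookkeeping are identical.
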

\begin{proof}
These results rely on properties of the arc tangent function. Since $\arctan y$ is monotonically increasing in $y$ we note that for $j\in\{1,\ldots,n\}$ and $x\geq\mu_j+\tfrac{1}{n+1}$,
\begin{equation*}
2\arctan \frac{x-\mu_j}{\varepsilon } \geq 2\arctan \frac{1}{(n+1)\varepsilon } = \pi - 2\arctan((n+1)\varepsilon).
\end{equation*}
Asymptotically for $\varepsilon\to0^+$, this lower bound satisfies
\begin{equation*}
\pi - 2\arctan((n+1)\varepsilon)
= \pi - 2\varepsilon (n+1) + \mathcal{O}(\varepsilon^3).
\end{equation*}
Thus, for a sufficiently small $\varepsilon>0$,
\begin{subequations}\label{eq:ip.wnp1c0}
\begin{equation}\label{eq:ip.wnp1c12}
2\arctan \frac{x-\mu_j}{\varepsilon }
\geq \pi - 3\varepsilon (n+1),~~~\text{for $x\geq\mu_j+\frac{1}{n+1}$},
\end{equation}
and $j\in\{1,\ldots,n\}$ uniformly.
Analogously, for $x\leq\mu_j-\frac{1}{n+1}$ we observe
\begin{equation*}
2\arctan \frac{x-\mu_j}{\varepsilon } \leq -\pi +2\arctan((n+1)\varepsilon),
\end{equation*}
and under the conditions of~\eqref{eq:ip.wnp1c12} we obtain
\begin{equation}\label{eq:ip.wnp1c11}
2\arctan \frac{x-\mu_j}{\varepsilon }
\leq -\pi + 3\varepsilon (n+1),~~~\text{for $x\leq\mu_j-\frac{1}{n+1}$},
\end{equation}
\end{subequations}
and $j\in\{1,\ldots,n\}$ uniformly.

\bigskip
We proceed to simplify $g_\varepsilon(x)$ for $x \in [\mu_k - \tfrac{1}{n+1},\mu_k + \tfrac{1}{n+1}]$ with $k\in\{1,\ldots,n\}$ fixed.
The function $g_\varepsilon$ in~\eqref{eq:defgxi} corresponds to
\begin{equation}\label{eq:ip.gxi1}
g_\varepsilon(x) = 2\sum_{j=1}^{k-1} \arctan \frac{x-\mu_j}{\varepsilon}
+ 2\arctan\frac{x-\mu_k}{\varepsilon}
+ 2\sum_{j=k+1}^{n} \arctan \frac{x-\mu_j}{\varepsilon} .
\end{equation}
For the sum containing $j=1,\ldots,k-1$ we note $x > \mu_j + \tfrac{1}{n+1}$, and thus,~\eqref{eq:ip.wnp1c12} implies
\begin{equation*}
2\sum_{j=1}^{k-1} \arctan \frac{x-\mu_j}{\varepsilon} = (k-1)\pi - \delta_a(x),~~~\text{where}~~0<\delta_a(x)\leq 3\varepsilon (k-1)(n+1),
\end{equation*}
and for the sum with $j=n-k,\ldots,n$ we note $x < \mu_j-\tfrac{1}{n+1}$, and thus,~\eqref{eq:ip.wnp1c11} implies
\begin{equation*}
2\sum_{j=k+1}^{n} \arctan \frac{x-\mu_j}{\varepsilon} = -(n-k)\pi + \delta_b(x),~~~\text{where}~~0<\delta_b(x)\leq 3\varepsilon (n-k)(n+1).
\end{equation*}
Thus, for $x\in[\mu_k - \tfrac{1}{n+1},\mu_k + \tfrac{1}{n+1}]$, the function $g_\varepsilon$ satisfies to~\eqref{eq:ip.gxi2}.

Moreover, for $x\leq \mu_1-1/(n+1)$ we may apply~\eqref{eq:ip.wnp1c11} for each sum term in~\eqref{eq:defgxi}, which yields~\eqref{eq:gxi1}. 
In a similar manner we show~\eqref{eq:gxi3} for the case $x\geq \mu_n+1/(n+1)$ by applying~\eqref{eq:ip.wnp1c12} for each sum term in~\eqref{eq:defgxi}.
\end{proof}

\begin{proof}[\bf Proof of Proposition~\ref{prop:rforwtonp1pi}]
We first fix $\omega = (n+1)\pi$, and we proceed to specify $\varepsilon>0$ s.t.\ $g_\varepsilon$ as in~\eqref{eq:defgxi} satisfies
\begin{equation}\label{eq:ip.condphaseerreps}
\max_{x\in[-1+n\varepsilon,1-n\varepsilon]}|g_\varepsilon(x)-\omega x | < \pi.
\end{equation}
The function $g_\varepsilon(x)-\omega x$ is also referred to as phase error in the present proof. For an illustration of $g_\varepsilon$ from~\eqref{eq:defgxi} and its phase error we also refer to Fig.~\ref{fig:ip.geps}.

In the present proof we require $\varepsilon$ sufficiently small s.t.\ the inequalities of Proposition~\ref{prop:appendixBauxgeps1} hold true.
In addition, we assume
\begin{equation}\label{eq:ip.wnp1c1}
\varepsilon < \frac{\min\left\{2\arctan \frac{1}{n}, 1\right\}}{3n(n+1)}.
\end{equation}
\begin{subequations}\label{eq:ip.wnp1c1entails}
We highlight some consequences of~\eqref{eq:ip.wnp1c1}. In particular,
\begin{equation}\label{eq:ip.wnp1c2}
 3 \varepsilon n(n+1) < 2\arctan \frac{1}{n} = \pi - 2\arctan n.
\end{equation}
Moreover, since $\arctan 1/n < \arctan n $, the assumption~\eqref{eq:ip.wnp1c1} further entails 
\begin{equation}\label{eq:ip.wnp1c3}
3\varepsilon n(n+1) < 2\arctan n,
\end{equation}
and $ 3 \varepsilon n(n+1) <1 $ implies
\begin{equation}\label{eq:ip.wnp1c4}
(3+\pi) \varepsilon n(n+1) < 3\pi \varepsilon n(n+1) <\pi.
\end{equation}
\end{subequations}

We proceed to prove~\eqref{eq:ip.condphaseerreps} by showing
\begin{equation}\label{eq:ip.condphaseerrepsx}
|g_\varepsilon(x) - \omega x|<\pi,
\end{equation}
separately for $x$ located in the sub-intervals specified below.  Namely,
\begin{equation*}
\begin{array}{ll}
\text{case 1},~~~&  \text{for $x \in[-1+ n\varepsilon, \mu_1 - \tfrac{1}{n+1}]$ and},\\
\text{case 2},&\text{for $x \in [\mu_k - \tfrac{1}{n+1},\mu_k + \tfrac{1}{n+1}]$ with $k\in\{1,\ldots,n\}$ where further}\\
\text{case 2a},&\text{covers $x\in[\mu_k-\tfrac{1}{n+1},\mu_k-\varepsilon n]$,}\\
\text{case 2b},&\text{covers $x\in[\mu_k-\varepsilon n,\mu_k]$,}\\
\text{case 2c},&\text{covers $x\in[\mu_k,\mu_k+\varepsilon n]$, and}\\
\text{case 2d},&\text{covers $x\in[\mu_k+\varepsilon n,\mu_k + \tfrac{1}{n+1}]$, and finally,}\\
\text{case 3},& \text{for $x \in[\mu_n + \tfrac{1}{n+1}, 1 - n\varepsilon]$}.\\
\end{array}
\end{equation*}
We remark that~\eqref{eq:ip.wnp1c1} entails $ \varepsilon n < 1/(n+1) $, and thus, these sub-intervals are non-empty.

 \begin{figure}
\centering
\includegraphics{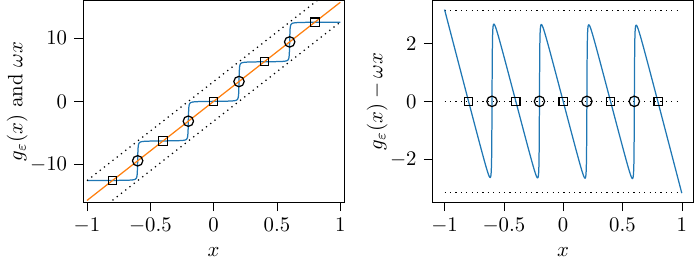}
\caption{
This figure illustrates the function $g_\varepsilon$~\eqref{eq:defgxi} and its phase error for $n=4$, $\varepsilon=2\cdot10^{-3}$, and $\omega=(n+1)\pi$. {\bf left:} The curved line corresponds to $g_\varepsilon(x)$, the middle diagonal line illustrates $\omega x$, and the dotted-limes show $\omega x+\pi$ and $\omega x-\pi$.
In combination the symbols ($\square$) and ($\circ$) mark $\omega x$ for the points $t_j=-1+j/(n+1)$ with $j=1,\ldots,2n+1$, where the ($\circ$) symbols particularly mark $\omega \mu_j$ for the points $\mu_j$ from~\eqref{eq:defgxi}. {\bf right:} This plot shows the phase error $g_\varepsilon(x)-\omega x$. The dotted lines illustrate $\pi$, $0$ and $-\pi$ as a reference. Similar to the left plot, the ($\square$) and ($\circ$) symbols mark the points $-1+j/(n+1)$ on the $x$-axis. As discussed in the proof of Proposition~\ref{prop:rforwtonp1pi}, for a sufficiently small $\varepsilon>0$ and $x\in[-1+n\varepsilon,1-n\varepsilon]$ the phase error is bounded by $\pi$ in absolute value.}
\label{fig:ip.geps}
\end{figure}

\noindent{\bf Case 1.} We consider $ x \in[-1+ n\varepsilon, \mu_1 - \tfrac{1}{n+1}]$.
Evaluating $\omega x$ at the boundaries of this interval we observe
\begin{equation}\label{eq:ip.atantransepsi}
-(n+1)\pi + n(n+1)\pi \varepsilon \leq \omega x\leq  -n\pi.
\end{equation}
Moreover, the inequalities~\eqref{eq:gxi1} apply for the current case.
Combining the lower bound in~\eqref{eq:gxi1} with the upper bound in~\eqref{eq:ip.atantransepsi}, we arrive at
\begin{equation}\label{eq:ip.case1gxwlowerb}
g_\varepsilon (x) - \omega x \geq 0.
\end{equation}
Combining the upper bound in~\eqref{eq:gxi1} with the lower bound in~\eqref{eq:ip.atantransepsi}, we conclude
\begin{equation*}
g_\varepsilon (x) - \omega x \leq \pi + (3-\pi)\varepsilon n(n+1) < \pi.
\end{equation*}
Together with~\eqref{eq:ip.case1gxwlowerb}, this shows~\eqref{eq:ip.condphaseerrepsx} for case 1.

\noindent{\bf Case 2.}
We recall that we assume $\varepsilon$ to sufficiently small s.t.\ Proposition~\ref{prop:appendixBauxgeps1} holds true.
Thus,~\eqref{eq:gxi1} applies for the case $x \in [\mu_k - \tfrac{1}{n+1},\mu_k + \tfrac{1}{n+1}]$ with $k\in\{1,\ldots,n\}$.

\noindent{\bf Case 2a.} We now more specifically consider $x\in[\mu_k-\tfrac{1}{n+1},\mu_k-\varepsilon n]$. Evaluating $\omega x$ at the boundaries of this interval, we observe
\begin{equation}\label{eq:ip.case2wx}
(-n+2k-2)\pi \leq \omega x \leq (-n+2k-1)\pi - \varepsilon n(n+1)\pi.
\end{equation}
We remark
\begin{equation}\label{eq:ip.arctannepsi}
2\arctan \frac{x-\mu_k}{\varepsilon } = -2\arctan n, ~~~\text{for $x=\mu_k-n\varepsilon$}.
\end{equation}
Thus, since arc tangents function is strictly larger than $-\pi/2$ and monotonically increasing, the current case entails
\begin{equation}\label{eq:ip.arctannepsibounds}
-\pi < 2\arctan \frac{x-\mu_k}{\varepsilon } \leq -2\arctan n.
\end{equation}
Combining~\eqref{eq:ip.gxi2} and~\eqref{eq:ip.arctannepsibounds}, we observe
\begin{equation}\label{eq:ip.case2gxupperb}
g_\varepsilon(x) \leq (-n+2k-1) \pi - 2\arctan n +3\varepsilon n(n+1),
\end{equation}
and
\begin{equation}\label{eq:ip.case2gxlowerb}
g_\varepsilon(x) > (-n+2k-2) \pi  - 3 \varepsilon n(n+1).
\end{equation}
Combining the lower bound in~\eqref{eq:ip.case2wx} with~\eqref{eq:ip.case2gxupperb}, and making use of the assumption~\eqref{eq:ip.wnp1c3}, we conclude
\begin{equation*}
g_\varepsilon(x)-\omega x
\leq  \pi - 2\arctan n +3\varepsilon n(n+1) < \pi.
\end{equation*}
Furthermore, the upper bound in~\eqref{eq:ip.case2wx} together with~\eqref{eq:ip.case2gxlowerb} yields
\begin{equation*}
g_\varepsilon(x)-\omega x  > -\pi +(\pi- 3) \varepsilon n(n+1) > -\pi.
\end{equation*}
Thus, we conclude~\eqref{eq:ip.condphaseerrepsx} for case 2a.

\noindent{\bf Case 2b.} We consider $x\in[\mu_k-\varepsilon n,\mu_k]$. In this case, $\omega x$ is bounded by
\begin{equation}\label{eq:ip.case3wx}
(-n+2k-1)\pi - \varepsilon n(n+1)\pi \leq \omega x \leq (-n+2k-1)\pi.
\end{equation}
Making use of~\eqref{eq:ip.arctannepsi} and that $\arctan$ is monotonically increasing, we have
\begin{equation*}
-2\arctan n \leq 2\arctan \frac{x-\mu_k}{\varepsilon } \leq 0.
\end{equation*}
Combining this with~\eqref{eq:ip.gxi2}, we arrive at
\begin{equation}\label{eq:ip.case3gxupperb}
g_\varepsilon(x) \leq (-n+2k-1) \pi +3\varepsilon n(n+1),
\end{equation}
and
\begin{equation}\label{eq:ip.case3gxlowerb}
g_\varepsilon(x) \geq (-n+2k-1) \pi - 2\arctan n - 3 \varepsilon n(n+1).
\end{equation}
Combining~\eqref{eq:ip.case3gxupperb} with the lower bound in~\eqref{eq:ip.case3wx}, and making use of the assumption~\eqref{eq:ip.wnp1c4},
\begin{equation}\label{eq:ip.case3phaseerrupperb}
g_\varepsilon(x)-\omega x \leq 3\varepsilon n(n+1)  + \varepsilon n(n+1)\pi < \pi.
\end{equation}
In a similar manner,~\eqref{eq:ip.case3gxlowerb}, the upper bound in~\eqref{eq:ip.case3wx} and
assumption~\eqref{eq:ip.wnp1c3} entail
\begin{equation}\label{eq:ip.case3phaseerrlowerb}
g_\varepsilon(x)-\omega x \geq  - 2\arctan n - 3 \varepsilon n(n+1) > -\pi.
\end{equation}
The inequalities~\eqref{eq:ip.case3phaseerrupperb} and~\eqref{eq:ip.case3phaseerrlowerb} show that~\eqref{eq:ip.condphaseerrepsx} holds true for $x$ associated with case~2b.

We only provide a sketch of the proof for the remaining cases, since these cases are treated analogously to previous cases.
\begin{itemize}
\item {\bf Case 2c} with $x\in[\mu_k,\mu_k+\varepsilon n]$. Similar to case 2b, the upper bound~\eqref{eq:ip.condphaseerrepsx} follows as a result of
\begin{align*}
&(-n+2k-1)\pi \leq \omega x \leq (-n+2k-1)\pi + \varepsilon n(n+1)\pi,\\
&g_\varepsilon(x)\leq (-n+2k-1) \pi + 2\arctan n + 3 \varepsilon n(n+1) ,~~~\text{and}\\
&g_\varepsilon(x)\geq (-n+2k-1) \pi - 3 \varepsilon n(n+1) .
\end{align*}

\item {\bf Case 2d,} $x\in[\mu_k+\varepsilon n,\mu_k+\tfrac{1}{n+1}]$. Similar to case 2a, for the respective $x$ the upper bound~\eqref{eq:ip.condphaseerrepsx} follows from
\begin{align*}
&(-n+2k-1)\pi + \varepsilon n(n+1)\pi \leq \omega x \leq(-n+2k)\pi,\\
&g_\varepsilon(x)\leq (-n+2k) \pi + 3 \varepsilon n(n+1)  ,~~~\text{and}\\
&g_\varepsilon(x)\geq (-n+2k-1) \pi + 2\arctan n - 3 \varepsilon n(n+1).
\end{align*}
\item {\bf Case 3.} Consider $x \in [\mu_n + \tfrac{1}{n+1}, 1 - n\varepsilon]$.
Similar to case 1,~\eqref{eq:ip.condphaseerrepsx} holds true as a result of
\begin{align*}
& n\pi \leq \omega x \leq (n+1)\pi - n(n+1)\pi \varepsilon,~~~\text{and}\\
&n\pi - 3 \varepsilon n(n+1) \leq g_\varepsilon(x)\leq n\pi~~~\text{from~\eqref{eq:gxi3}}.
\end{align*}
\end{itemize}
For a sufficiently small $\varepsilon>0$ we have shown that~\eqref{eq:ip.condphaseerrepsx} holds true $x\in[-1+n\varepsilon,1-n\varepsilon]$, which implies that~\eqref{eq:ip.condphaseerreps} holds true for $g_\varepsilon$ and $\omega=(n+1)\pi$ fixed. For the re-scaled parameter $\widetilde{\omega} := (1-n\varepsilon)(n+1)\pi$ and the transformed function $\widetilde{g}_\varepsilon(x) := g_\varepsilon\left(x(1-n\varepsilon)\right)$, the upper bound~\eqref{eq:ip.condphaseerreps} entails
\begin{equation}\label{eq:ip.phaseerrtransformpi}
\max_{x\in[-1,1]} |\widetilde{g}_\varepsilon(x)-\widetilde{\omega} x| 
= \max_{x\in[-1+n\varepsilon,1-n\varepsilon]} |g_\varepsilon(x)-\omega x| < \pi.
\end{equation}
The transformed function $\widetilde{g}_\varepsilon$ has the representation
\begin{equation}\label{eq:ip.defgxitilde}
\widetilde{g}_\varepsilon(x) 
= 2\sum_{j=1}^n \arctan \frac{x-\widetilde{\mu}_j}{\widetilde{\varepsilon}},~~~\text{where}~~
\widetilde{\mu}_j = \frac{\mu_j}{1-n\varepsilon},~~~\text{and}~~
\widetilde{\varepsilon} = \frac{\varepsilon}{1-n\varepsilon}.
\end{equation}
Thus, $\widetilde{r}_\varepsilon(\textrm{i} x):= \textrm{e}^{\textrm{i} \widetilde{g}_\varepsilon(x)}$ satisfies $\widetilde{r}_\varepsilon\in\mathcal{U}_n$, and following Proposition~\ref{prop:approxertophaseerrinequ}, the upper bound~\eqref{eq:ip.phaseerrtransformpi} implies $\|\widetilde{r}_\varepsilon-\exp(\omega \cdot)\|<2 $.
Especially, $\widetilde{r}_\varepsilon$ has the poles~\eqref{eq:ip.rtildewtonp1pipoles} and satisfies the assertion of the present proposition, which completes the proof.
\end{proof}

\subsection*{Unitary best approximations in the limit \texorpdfstring{$\omega\to(n+1)\pi^-$}{w to (n+1)pi-}}

For the remainder of the present appendix we consider the unitary best approximation $r_\omega\in\mathcal{U}_n$ to $\mathrm{e}^{\mathrm{i}\omega x}$ for a fixed degree $n$ and $\omega\in(0,(n+1)\pi)$. We recall $ r_\omega(\textrm{i} x) = \textrm{e}^{\textrm{i} g_\omega(x)}$, where the phase function $g_\omega$ relies on the poles of $r_\omega$, i.e., $s_j(\omega) = \xi_j(\omega) + \textrm{i} \mu_j(\omega)$.
For the phase $g_\omega(x)$ we recall the representation~\eqref{eq:defg},
\begin{equation}\label{eq:ip.gwagain}
g_\omega(x) = 2\sum_{j=1}^n\arctan \frac{x-\mu_j(\omega)}{\xi_j(\omega)}.
\end{equation}
Comparing with~\eqref{eq:defg}, we remark that $\theta=0$ for the unitary best approximation due to symmetry properties. Moreover, we recall $\xi_j(\omega)\neq 0$ for $j=1,\ldots ,n$ since the best approximation has minimal degree $n$.

Moreover, following Proposition~\ref{prop:omega0}, for $\omega\in(0,(n+1)\pi)$ the phase error of the unitary best approximation is strictly smaller than $\pi$, i.e.,
\begin{equation}\label{eq:ip.g1mgm1step0}
|g_\omega(x)-\omega x| < \pi,~~~\text{for $x\in[-1,1]$}.
\end{equation}
To simplify our notation we also write
\begin{equation}\label{eq:stepfctomegatodelta}
\omega = (n+1)\pi - \delta_\omega,
\end{equation}
where the case $\omega\in(0,(n+1)\pi)$ implies $\delta_\omega\in(0,(n+1)\pi)$ and $\omega\to (n+1)\pi^-$ is equivalent to $\delta_\omega\to 0^+$.

We need the following auxiliary propositions before proving Proposition~\ref{prop:polesconvwtonp1pi}.
\begin{proposition}\label{prop:gbminusgalowerbound}
For given points $\alpha,\beta\in[-1,1]$ the upper bound~\eqref{eq:ip.g1mgm1step0} implies
\begin{equation}\label{eq:gbminusgalowerbound}
g_\omega(\beta)-g_\omega(\alpha) > ((\beta-\alpha)(n+1) -2)\pi - (\beta-\alpha) \delta_\omega.
\end{equation}
\end{proposition}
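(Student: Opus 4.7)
The proof plan is to exploit the two-sided bound~\eqref{eq:ip.g1mgm1step0} separately at the endpoints $\alpha$ and $\beta$, and then subtract. Since $|g_\omega(x)-\omega x|<\pi$ for all $x\in[-1,1]$, applying the lower bound at $\beta$ yields $g_\omega(\beta) > \omega\beta - \pi$, while applying the upper bound at $\alpha$ yields $g_\omega(\alpha) < \omega\alpha + \pi$. Subtracting gives the single key inequality
\begin{equation*}
g_\omega(\beta) - g_\omega(\alpha) > \omega(\beta-\alpha) - 2\pi.
\end{equation*}

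Next I would substitute the notation $\omega = (n+1)\pi - \delta_\omega$ from~\eqref{eq:stepfctomegatodelta} into the right-hand side:
\begin{equation*}
\omega(\beta-\alpha) - 2\pi = \bigl((n+1)\pi - \delta_\omega\bigr)(\beta-\alpha) - 2\pi = \bigl((\beta-\alpha)(n+1) - 2\bigr)\pi - (\beta-\alpha)\delta_\omega,
\end{equation*}
which is exactly the claimed bound~\eqref{eq:gbminusgalowerbound}.

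There is no serious obstacle here: the proof is a one-line consequence of Proposition~\ref{prop:omega0} and the notation~\eqref{eq:stepfctomegatodelta}, with the only ``choice'' being to use the lower-bound side of the phase-error inequality at $\beta$ and the upper-bound side at $\alpha$ so that the difference $g_\omega(\beta)-g_\omega(\alpha)$ is bounded \emph{below}. I would note explicitly that the strict inequality in~\eqref{eq:gbminusgalowerbound} is inherited from the strict inequality in~\eqref{eq:ip.g1mgm1step0} and that $\alpha,\beta$ need not be ordered, since the estimate is symmetric in form once written out in terms of $\beta-\alpha$.
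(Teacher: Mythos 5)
Your proposal is correct and follows exactly the paper's argument: apply the lower side of \eqref{eq:ip.g1mgm1step0} at $\beta$ and the upper side at $\alpha$, subtract to get $g_\omega(\beta)-g_\omega(\alpha) > \omega(\beta-\alpha)-2\pi$, and substitute $\omega=(n+1)\pi-\delta_\omega$. Nothing is missing.
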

\begin{proof}
Substituting $x=\beta$ and $x=\alpha$ in~\eqref{eq:ip.g1mgm1step0} we observe $g_\omega(\beta) -\omega \beta > -\pi$ and $g_\omega(\alpha) -\omega \alpha <\pi$, respectively. Thus,
\begin{equation*}
g_\omega(\beta) - g_\omega(\alpha) > (\beta-\alpha) \omega -2\pi.
\end{equation*}
Substituting~\eqref{eq:stepfctomegatodelta} for $\omega$ therein, we conclude~\eqref{eq:gbminusgalowerbound}.
\end{proof}

\begin{proposition}[Approaching step functions, part 2]
\label{prop:appendixBauxgeps2}
Let points $b_1<b_2$, a distance $\zeta>0$ and a tolerance $\kappa>0$ be given. Let $\{\omega_k\}_{k\in\mathbb{N}}$ denote a sequence with $\omega_k\to (n+1)\pi^-$ for $k\to\infty$. Let $g_\omega$ be given as in~\eqref{eq:ip.gwagain} with underlying poles $s_1(\omega),\ldots,s_n(\omega)$. Moreover, for the poles $s_j(\omega)=\xi_j(\omega)+\textrm{i}\mu_j(\omega)$, $j=1,\ldots,n$, we assume
$\xi_j(\omega_k)\to0^+$ for $k\to\infty$, and $\mu_j(\omega_k) \notin [b_2-\zeta,b_1+\zeta]$ for $k\in\mathbb{N}$. Then, for a sufficiently large $k$, we have
\begin{equation}\label{eq:ip.gwb2b1diff2}
0<g_{\omega_k}(\beta)-g_{\omega_k}(\alpha)< \kappa,~~~\text{for $\alpha<\beta$ with $[\alpha,\beta]\subset[b_1-\zeta/2,b_2+\zeta/2]$}.
\end{equation}
\end{proposition}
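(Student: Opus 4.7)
The plan is to exploit the representation $g_{\omega}(x) = 2\sum_{j=1}^n \arctan((x-\mu_j(\omega))/\xi_j(\omega))$ together with~\eqref{eq:arctantostepfct}, which tells us that as $\xi_j(\omega_k) \to 0^+$ each summand degenerates to a scaled step function, essentially constant away from its centre $\mu_j(\omega_k)$. I would carry out the argument in three steps.

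\textbf{Step 1: monotonicity and positivity.} By hypothesis $\xi_j(\omega_k) \to 0^+$, so for $k$ large enough every $\xi_j(\omega_k) > 0$. Hence each term $x \mapsto 2\arctan((x - \mu_j(\omega_k))/\xi_j(\omega_k))$ is strictly increasing on $\mathbb{R}$, and consequently so is $g_{\omega_k}$. This immediately gives $g_{\omega_k}(\beta) > g_{\omega_k}(\alpha)$ for $\alpha<\beta$, which is the left-hand inequality in~\eqref{eq:ip.gwb2b1diff2}.

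\textbf{Step 2: uniform step-function behaviour of each summand.} Fix a pole index $j$. By hypothesis $\mu_j(\omega_k)$ is separated from the interval $I := [b_1 - \zeta/2, b_2 + \zeta/2]$ by at least $\zeta/2$, and crucially this separation has a sign that is \emph{constant in $x\in I$}: either $x-\mu_j(\omega_k) \geq \zeta/2$ for all $x\in I$, or $x-\mu_j(\omega_k) \leq -\zeta/2$ for all $x\in I$. Combined with $\xi_j(\omega_k) \to 0^+$, this forces $(x-\mu_j(\omega_k))/\xi_j(\omega_k) \to \pm \infty$ \emph{uniformly} in $x\in I$. Since $\arctan$ is continuous and bounded with limits $\pm\pi/2$, the total variation of $x\mapsto 2\arctan((x-\mu_j(\omega_k))/\xi_j(\omega_k))$ on $I$, namely
\begin{equation*}
2\arctan\!\left(\tfrac{b_2+\zeta/2-\mu_j(\omega_k)}{\xi_j(\omega_k)}\right) - 2\arctan\!\left(\tfrac{b_1-\zeta/2-\mu_j(\omega_k)}{\xi_j(\omega_k)}\right),
\end{equation*}
tends to $0$ as $k\to\infty$.

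\textbf{Step 3: summation and conclusion.} Summing the $n$ variations obtained in Step~2 bounds the total variation of $g_{\omega_k}$ on $I$, which therefore tends to $0$. Monotonicity of $g_{\omega_k}$ together with $[\alpha,\beta]\subset I$ yields
\begin{equation*}
0 < g_{\omega_k}(\beta) - g_{\omega_k}(\alpha) \leq g_{\omega_k}(b_2+\zeta/2) - g_{\omega_k}(b_1-\zeta/2) \xrightarrow{k\to\infty} 0,
\end{equation*}
uniformly in the choice of $[\alpha,\beta]\subset I$. For $k$ sufficiently large the right-hand side is $<\kappa$, which yields~\eqref{eq:ip.gwb2b1diff2}.

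The only delicate point is the uniformity claim in Step~2: one must justify that $\arctan((x-\mu_j(\omega_k))/\xi_j(\omega_k))$ converges to its limiting constant $\pm\pi/2$ \emph{uniformly} on $I$, not merely pointwise. This follows because the numerator is bounded away from zero with a fixed sign while the strictly positive denominator shrinks to zero, so the quotient diverges to $\pm\infty$ uniformly in $x\in I$; everything else is monotonicity and an $n$-term sum, which are routine.
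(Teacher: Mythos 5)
Your proposal is correct and follows essentially the same route as the paper's proof: positivity from monotonicity of each arctan term once $\xi_j(\omega_k)>0$, and smallness of the increment from the fact that each term is uniformly within a vanishing tolerance of $\pm\pi$ on $[b_1-\zeta/2,b_2+\zeta/2]$ because the $\mu_j(\omega_k)$ stay at distance $\geq\zeta/2$ from that interval while $\xi_j(\omega_k)\to0^+$. The paper sandwiches $g_{\omega_k}$ between two constants differing by $2n\widehat{\kappa}$ whereas you sum the endpoint variations of the $n$ terms, but for monotone functions these are the same bookkeeping.
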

\begin{proof}
Since we assume $\xi_j(\omega_k)\to0^+$ for $k\to\infty$, we may assume $\xi_j(\omega_k)>0$ for a sufficiently large $k$. In this case all terms in the sum of $g_{\omega_k}(x)$~\eqref{eq:ip.gwagain} are monotonically increasing in $x$, and certainly, the lower bound in~\eqref{eq:ip.gwb2b1diff2} holds true since $\alpha<\beta$.

Moreover, arc tangents functions of the form $\arctan( y/\xi_j(\omega_k))$ for $y\in\mathbb{R}$ approach Heaviside step functions similar to~\eqref{eq:arctantostepfct}. In particular, for given $\widehat{\kappa}>0$ and $\zeta>0$ we find sufficiently large $k$ s.t.
\begin{subequations}
\begin{equation}\label{eq:ip.arctanyoverxijygrz}
\frac{\pi}{2} - \widehat{\kappa} < \arctan \frac{y}{\xi_j(\omega_k)} < \frac{\pi}{2} ,
~~~\text{for $y>\frac{\zeta}{2}$},
\end{equation}
and
\begin{equation}\label{eq:ip.arctanyoverxijylez}
-\frac{\pi}{2} < \arctan \frac{y}{\xi_j(\omega_k)} < -\frac{\pi}{2} + \widehat{\kappa},
~~~\text{for $y< -\frac{\zeta}{2}$}.
\end{equation}
\end{subequations}

Since we assume $\mu_j(\omega_k)\not\in [b_1-\zeta,b_2+\zeta]$ for $j=1,\ldots,n$ and $k\in\mathbb{N}$, points $x\in[b_1-\zeta/2,b_2+\zeta/2]$ have at least a distance of $\zeta/2$ to any $\mu_j(\omega_k)$. Thus, for sufficiently large $k$ and $\mu_j(\omega_k) < b_1-\zeta$ the inequalities in~\eqref{eq:ip.arctanyoverxijygrz} yield
\begin{subequations}\label{eq:ip.arctanyoverxijygrz2}
\begin{equation}
\frac{\pi}{2} - \widehat{\kappa} < \arctan \frac{x-\mu_j(\omega_k)}{\xi_j(\omega_k)} < \frac{\pi}{2} ,~~~\text{for $x\in\left[b_1-\frac{\zeta}{2},b_2+\frac{\zeta}{2}\right]$},
\end{equation}
and in a similar manner, for $\mu_j(\omega_k) > b_2+\zeta$ the inequalities~\eqref{eq:ip.arctanyoverxijylez} imply
\begin{equation}
-\frac{\pi}{2} < \arctan \frac{x-\mu_j(\omega_k)}{\xi_j(\omega_k)} < -\frac{\pi}{2} + \widehat{\kappa},~~~\text{for $x\in\left[b_1-\frac{\zeta}{2},b_2+\frac{\zeta}{2}\right]$}.
\end{equation}
\end{subequations}

In particular, for a sufficiently large $k$ these inequalities hold true uniformly in $j$. Assume $k$ fixed s.t.~\eqref{eq:ip.arctanyoverxijygrz2} holds true, and assume $\widehat{m}$ points $\mu_j(\omega_k)$ are smaller than $b_1-\zeta$ and $n-\widehat{m}$ are larger than $b_2+\zeta$.
Then, evaluating the inequalities~\eqref{eq:ip.arctanyoverxijygrz2} for the sum terms in $g_{\omega_k}$, we observe
\begin{equation*}
(2\widehat{m}-n)\pi - 2\widehat{m}\widehat{\kappa} 
< g_{\omega_k}(x)
< (2\widehat{m}-n)\pi + 2(n-\widehat{m})\widehat{\kappa} ,~~~\text{for $x\in\left[b_1-\frac{\zeta}{2},b_2+\frac{\zeta}{2}\right]$}.
\end{equation*}
In particular, this holds true for points $\alpha$ and $\beta\in [b_1-\zeta/2, b_2+\zeta/2]$, and we arrive at
\begin{equation*}
g_{\omega_k}(\beta) - g_{\omega_k}(\alpha) < 2n\widehat{\kappa}.
\end{equation*}
For a given tolerance $\kappa>0$, we may use $\widehat{\kappa}=\kappa/(2n)$ to concludes~\eqref{eq:ip.gwb2b1diff2}.
\end{proof}

\begin{proposition}\label{prop:arctantermconditions}
For $\delta\in(0,\pi/2)$ and $\xi\neq 0$, necessary conditions for
\begin{equation}\label{eq:arctan1pmu1mmuinpial}
\arctan\frac{1-\mu}{\xi } + \arctan\frac{1+\mu}{\xi } > \pi - \delta
\end{equation}
are $\mu\in(-1,1)$, and
\begin{equation}\label{eq:arctan1pmu1mmuxino}
0 < \xi < 2 \tan\frac{\delta}{2}.
\end{equation}
\end{proposition}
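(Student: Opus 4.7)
The plan is to first pin down the sign of $\xi$ and the location of $\mu$ from the size of the two arctangents, and then to apply the tangent addition formula to extract a quadratic inequality in $\xi$. Since $\delta \in (0,\pi/2)$, the right-hand side satisfies $\pi - \delta > \pi/2$. Each of $\arctan\frac{1-\mu}{\xi}$ and $\arctan\frac{1+\mu}{\xi}$ lies in $(-\pi/2,\pi/2)$; if either were nonpositive, the other would need to exceed $\pi/2$, which is impossible. Hence both arctangents are strictly positive, i.e.\ $\frac{1-\mu}{\xi}>0$ and $\frac{1+\mu}{\xi}>0$. A brief case split on the sign of $\xi$ rules out $\xi<0$ (it would require $\mu>1$ and $\mu<-1$ simultaneously), leaving $\xi>0$ and $\mu\in(-1,1)$.

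For the bound on $\xi$, set $\alpha := \arctan\frac{1-\mu}{\xi}$ and $\beta := \arctan\frac{1+\mu}{\xi}$, both in $(0,\pi/2)$, so that $\alpha+\beta\in(\pi-\delta,\pi)\subset(\pi/2,\pi)$. The addition formula yields
\begin{equation*}
\tan(\alpha+\beta) = \frac{\frac{1-\mu}{\xi}+\frac{1+\mu}{\xi}}{1-\frac{(1-\mu)(1+\mu)}{\xi^2}} = \frac{2\xi}{\xi^2+\mu^2-1}.
\end{equation*}
Since $\alpha+\beta\in(\pi/2,\pi)$, $\tan(\alpha+\beta)<0$, which forces $\xi^2+\mu^2<1$. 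Since $\tan$ is increasing on $(\pi/2,\pi)$ and $\tan(\pi-\delta)=-\tan\delta$, the inequality $\alpha+\beta>\pi-\delta$ is equivalent to $\tan(\alpha+\beta)>-\tan\delta$. Multiplying by the negative quantity $\xi^2+\mu^2-1$ flips the sign and gives
\begin{equation*}
2\xi < \tan\delta\,(1-\mu^2-\xi^2) \leq \tan\delta\,(1-\xi^2),
\end{equation*}
that is, $\tan\delta\cdot\xi^2 + 2\xi - \tan\delta < 0$.

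Solving this quadratic in $\xi$ and using $\xi>0$ yields
\begin{equation*}
\xi < \frac{-1+\sqrt{1+\tan^2\delta}}{\tan\delta} = \frac{\sec\delta - 1}{\tan\delta} = \frac{1-\cos\delta}{\sin\delta} = \tan\frac{\delta}{2},
\end{equation*}
where the last equality uses the half-angle identities $1-\cos\delta = 2\sin^2(\delta/2)$ and $\sin\delta=2\sin(\delta/2)\cos(\delta/2)$. In particular $\xi<\tan(\delta/2)<2\tan(\delta/2)$, which yields~\eqref{eq:arctan1pmu1mmuxino}. There is no genuine obstacle here; the only mildly delicate points are keeping track of the sign when multiplying the inequality by $\xi^2+\mu^2-1$ and verifying that $\alpha+\beta$ really lands in $(\pi/2,\pi)$ so that the tangent addition formula is applied on a single monotonic branch.
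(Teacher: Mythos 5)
Your proof is correct, and it takes a genuinely different route from the paper's. The paper first reduces~\eqref{eq:arctan1pmu1mmuinpial}, after establishing $\xi>0$ and $\mu\in(-1,1)$ exactly as you do, to the equivalent condition $\arctan\frac{\xi}{1-\mu}+\arctan\frac{\xi}{1+\mu}<\delta$ via the reflection identity $\arctan y+\arctan(1/y)=\pi/2$ for $y>0$, and then argues by monotonicity that $\xi\geq 2\tan\frac{\delta}{2}$ would force each of the two terms above $\delta/2$; no addition formula or quadratic is needed. You instead apply the tangent addition formula on the branch $(\pi/2,\pi)$ and solve the resulting quadratic $\tan\delta\cdot\xi^2+2\xi-\tan\delta<0$. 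Your branch bookkeeping is sound: both arctangents lie in $(0,\pi/2)$, so $\alpha+\beta\in(\pi-\delta,\pi)\subset(\pi/2,\pi)$, the sign of the denominator $\xi^2+\mu^2-1$ is forced to be negative, and the inequality flip upon multiplying is handled correctly. What your approach buys is a strictly sharper necessary condition, $\xi<\tan\frac{\delta}{2}$ (tight at $\mu=0$, as one checks directly from $2\arctan(1/\xi)>\pi-\delta$), together with the byproduct $\xi^2+\mu^2<1$; both imply the paper's weaker bound~\eqref{eq:arctan1pmu1mmuxino}, which is all that is used downstream in the proof of Proposition~\ref{prop:polesconvwtonp1pi}. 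What the paper's argument buys is brevity and the avoidance of any case analysis on where $\tan$ is being inverted.
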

\begin{proof}
The assumption $\delta\in(0,\pi/2)$ entails $\pi-\delta>\pi/2$. Since the arc tangents function attains values strictly smaller than $\pi/2$, a necessary condition for~\eqref{eq:arctan1pmu1mmuinpial} is
\begin{equation}\label{eq:ip.muxicond00}
\arctan \frac{1-\mu}{\xi} >0~~~\text{and}~~\arctan \frac{1+\mu}{\xi} >0,
\end{equation}
Since the arc tangents function is odd and monotonically increasing we have $\arctan y>0$ if and only if $y>0$, and~\eqref{eq:ip.muxicond00} is equivalent to
\begin{equation}\label{eq:ip.muxicond0}
\frac{1-\mu}{\xi} >0~~~\text{and}~~\frac{1+\mu}{\xi} >0.
\end{equation}
In case of $\xi<0$, these inequalities are equivalent to $ 1 <\mu$ and $ \mu < -1 $, which is contradictory. On the other hand, for $\xi>0$ the conditions~\eqref{eq:ip.muxicond0} are equivalent to $\mu<1$ and $\mu >-1$. Thus,~\eqref{eq:arctan1pmu1mmuinpial} requires $\xi>0$ and $\mu\in(-1,1)$, and we consider this case in the remainder of the proof. Making use of the identity
\begin{equation*}
\arctan\frac{1-\mu}{\xi } + \arctan\frac{1+\mu}{\xi } 
= \pi - \left(\arctan\frac{\xi}{1-\mu} + \arctan\frac{\xi}{1+\mu}\right),
\end{equation*}
we observe that~\eqref{eq:arctan1pmu1mmuinpial} holds true if and only if
\begin{equation}\label{eq:ip.arctanboundalpha}
\arctan\frac{\xi}{1-\mu} + \arctan\frac{\xi}{1+\mu}<\delta.
\end{equation}
We note that
\begin{equation*}
\arctan\frac{\xi}{1\pm\mu} = \frac{\delta}{2}~~~\text{for}~~
\xi = (1\pm\mu) \tan\frac{\delta}{2}
< 2\tan\frac{\delta}{2}.
\end{equation*}
Since the arc tangents function is strictly monotonically increasing,
\begin{equation*}
\arctan\frac{\xi}{1\pm\mu} > \frac{\delta}{2}~~~\text{for}~~
\xi \geq 2\tan\frac{\delta}{2},
\end{equation*}
and thus, $\xi < 2\tan (\delta/2)$ is a necessary conditions for~\eqref{eq:ip.arctanboundalpha} to hold true, which carries over to~\eqref{eq:arctan1pmu1mmuinpial}.
\end{proof}

\begin{proof}[\bf Proof of Proposition~\ref{prop:polesconvwtonp1pi}]
We write $s_j(\omega)=\xi_j(\omega) + \textrm{i} \mu_j(\omega)$ for the poles of the unitary best approximation $r_\omega$ with $r_\omega(\mathrm{i} x)=\mathrm{e}^{\mathrm{i} g_\omega(x)}$.
We first show $\xi_j(\omega)\to 0^+$ for $j=1,\ldots,n$ and $\omega\to(n+1)\pi^-$, respectively, $\delta_\omega\to0^+$.
Following Proposition~\ref{prop:gbminusgalowerbound}, the phase of the best approximation satisfies
\begin{equation}\label{eq:ip.g1mgm1}
g_\omega(1)-g_\omega(-1) >  2n\pi - 2\delta_\omega.
\end{equation}
Substituting~\eqref{eq:ip.gwagain} for $g_\omega$, and making use of the identity
\begin{equation*}
- \arctan \frac{-1-\mu_j(\omega)}{\xi_j(\omega)} = \arctan \frac{1+\mu_j(\omega)}{\xi_j(\omega)},
\end{equation*}
we evaluate the left-hand side of~\eqref{eq:ip.g1mgm1} to
\begin{equation}\label{eq:ip.g1mgm1step2}
g_\omega(1)-g_\omega(-1) = 2\sum_{j=1}^n \left( \arctan \frac{1-\mu_j(\omega)}{\xi_j(\omega)} + \arctan \frac{1+\mu_j(\omega)}{\xi_j(\omega)}\right).
\end{equation}
The $\arctan$ function attains values smaller than $\pi/2$, which implies that for a fixed $j\in\{1,\ldots,n\}$,
\begin{equation*}
g_\omega(1)-g_\omega(-1) < 2(n-1)\pi + 2\left(\arctan \frac{1-\mu_j(\omega)}{\xi_j(\omega)} + \arctan \frac{1+\mu_j(\omega)}{\xi_j(\omega)}\right).
\end{equation*}
Thus, a necessary condition for~\eqref{eq:ip.g1mgm1} is that each sum term in~\eqref{eq:ip.g1mgm1step2} is strictly larger than $\pi - \delta_\omega$, i.e.,
\begin{equation}\label{eq:arctan1pmu1mmuinpialsum}
\arctan \frac{1-\mu_j(\omega)}{\xi_j(\omega)} + \arctan \frac{1+\mu_j(\omega)}{\xi_j(\omega)}
> \pi - \delta_\omega,~~~j=1,\ldots,n.
\end{equation}
We may assume $\delta_\omega \in (0,\pi/2)$ to deduce necessary conditions for~\eqref{eq:arctan1pmu1mmuinpialsum} from Proposition~\ref{prop:arctantermconditions}, i.e., $\mu_j(\omega)\in(-1,1)$ and
\begin{equation}\label{eq:ip.xiuniformbounded}
0 < \xi_j(\omega) < 2\tan\frac{\delta_\omega}{2}.
\end{equation}
We recall that $\delta_\omega\to 0^+ $ is equivalent to $\omega\to (n+1)\pi^-$. Thus,~\eqref{eq:ip.xiuniformbounded} entails
\begin{equation}\label{eq:ip.xiwtozero}
\xi_j(\omega) \to 0^+,~~~\text{for $j=1,\ldots,n$ and $\omega\to (n+1)\pi^-$.}
\end{equation}

\medskip
To proceed with the proof of $\mu_j(\omega)\to -1+2j/(n+1)$ we first show the following statement.
\begin{enumerate}[label=($\star$)]
\item\label{item:ip.muininterval} Assume a point $a\in [-1+1/(n+1), 1-1/(n+1)]$ and a tolerance $\zeta>0$ to be given. To simplify our notation we define $b_1 := a-\frac{1}{n+1}$ and $b_2 := a+\frac{1}{n+1}$.
Then, for a sufficiently small $\delta_\omega>0$ there exists at least one index $j\in\{1,\ldots,n\}$ s.t.\ $\mu_j(\omega)$ has a distance smaller than $1/(n+1)+\zeta$ to the point $a$, i.e., $\mu_j(\omega) \in [b_1-\zeta, b_2+\zeta ]$.
\end{enumerate}
We prove this statement by contradiction. Let the point $a$, respectively, $b_1$ and $b_2$, and $\zeta>0$ be fixed as above and assume the statement~\ref{item:ip.muininterval} is wrong. Then there exists a sequence $\{\omega_k\in(0,(n+1)\pi)\}_{k\in\mathbb{N}}$ with $\omega_k\to (n+1)\pi^-$ for $k\to \infty$ and $\mu_j(\omega_k)\not\in  [b_1-\zeta, b_2+\zeta]$ for $j=1,\ldots,n$ and $k\in\mathbb{N}$.

For $a\in [-1+1/(n+1), 1-1/(n+1)]$ at least one of the boundaries of the interval $[b_1-\zeta,b_2+\zeta]$ is located in $[-1,1]$. We assume this holds true for the right boundary of this interval in the following, i.e., $b_2+\zeta\leq 1$. Otherwise, similar arguments hold true for $b_1-\zeta\geq -1$.
Due to Proposition~\ref{prop:gbminusgalowerbound}, the phase $g_\omega$ satisfies
\begin{equation}\label{eq:ip.gwb2b1diff1}
g_{\omega}(b_2+\zeta/2)-g_{\omega}(b_1)
> \frac{\zeta (n+1)\pi}{2} + \mathcal{O}(\delta_\omega),
~~~\text{for $\omega\to(n+1)\pi^-$}.
\end{equation}

Since $\xi_j(\omega)\to 0^+$ for $\omega\to (n+1)\pi^-$ and $j=1,\ldots,n$ as in~\eqref{eq:ip.xiwtozero}, we particularly have $\xi_j(\omega_k)\to 0^+$ for $\{\omega_k\}_{k\in\mathbb{N}}$ from above and $k\to \infty$. Moreover, this holds true uniformly in $j=1,\ldots,n$.
Proposition~\ref{prop:appendixBauxgeps2} implies that, for a given $\widehat{\kappa}>0$ and $k$ sufficiently large,
\begin{equation*}
g_{\omega_k}(b_2+\zeta/2)-g_{\omega_k}(b_1)< \widehat{\kappa}.
\end{equation*}
We may choose $\widehat{\kappa}$ arbitrary small, in particular, $\widehat{\kappa}< \zeta (n+1)\pi/2$, and thus, this yields a contradiction to~\eqref{eq:ip.gwb2b1diff1} for $k\to\infty$, respectively, $\delta_{\omega_k} \to 0^+$.
We conclude that~\ref{item:ip.muininterval} holds true.

To proceed we assume that $\mu_1(\omega),\ldots,\mu_n(\omega)$ are numbered in ascending order, and we introduce the notation $\ell_j(\omega)$ for the distance between pairs of neighboring points $\mu_j(\omega)$. Namely,
\begin{equation*}
\begin{aligned}
&\ell_1(\omega):=\mu_1(\omega)+1,~~~\ell_{n+1}(\omega):=1-\mu_n(\omega),~~~\text{and}\\
& \ell_{j+1}(\omega):=\mu_{j+1}(\omega)-\mu_j(\omega),~~~\text{for $j=1,\ldots,n-1$}.
\end{aligned}
\end{equation*}
As a consequence of the statement~\ref{item:ip.muininterval}, for a given $\zeta>0$ and $\omega$ sufficiently close to $(n+1)\pi$ the distances between the points $\mu_j$ (and at the boundary) are bounded by
\begin{equation}\label{eq:ip.musdistance0}
\ell_j(\omega) < \frac{2}{n+1}+2\zeta,~~~\text{for $j=1,\ldots,n+1$}.
\end{equation}
We remark that a stricter version of this results holds true at the boundary, i.e., replacing $2\zeta$ by $\zeta$, however, the upper bounds~\eqref{eq:ip.musdistance0} are sufficiently strict in the present proof. Since the distances $\ell_1,\ldots,\ell_{n+1}$ span the interval $[-1,1]$, we further have
\begin{equation}\label{eq:ip.sumljistwo}
2 = \sum_{j=1}^{n+1}\ell_j(\omega).
\end{equation}
Thus, for $\ell_1,\ldots,\ell_{n+1}$ the upper bounds~\eqref{eq:ip.musdistance0} imply
\begin{equation*}
\ell_k(\omega) = 2 - \sum_{j\neq k}\ell_j(\omega) 
> 2 - \frac{2n}{n+1}-2n\zeta,~~~k=2,\ldots,n.
\end{equation*}
Together with~\eqref{eq:ip.musdistance0}, and since $2-2n/(n+1) = 2/(n+1)$, this yields the inequalities
\begin{equation}\label{eq:ip.ljsenclosed}
\frac{2}{n+1} - 2n\zeta
< \ell_k(\omega)
< \frac{2}{n+1} + 2\zeta,~~~\text{for $k=1,\ldots,n+1$}.
\end{equation}
Since~\eqref{eq:ip.ljsenclosed} holds true for arbitrary small $\zeta>0$ if $\omega$ is sufficiently close to $(n+1)\pi$, we conclude that distances between $\mu_j$'s and at the boundary satisfy
\begin{equation*}
\ell_k(\omega)\to \frac{2}{n+1},~~~k=1,\ldots,n+1,~~~\text{for $\omega\to(n+1)\pi^-$}.
\end{equation*}
In particular, for the $k=1$ and $k=n+1$ this implies
\begin{equation*}
\mu_1(\omega) \to -1 + \frac{2}{n+1},~~~\text{and}~~
\mu_n(\omega) \to 1 - \frac{2}{n+1},~~~\text{for $\omega\to(n+1)\pi^-$},
\end{equation*}
and repeating the same arguments for $\mu_2$, $\mu_{n-1}$, etc., we arrive at
\begin{equation*}
\mu_j(\omega) \to -1+\frac{2j}{n+1},~~~\text{for $j=1,\ldots,n$, and $\omega\to(n+1)\pi^-$}.
\end{equation*}
Together with~\eqref{eq:ip.xiwtozero}, this completes the proof of Proposition~\ref{prop:polesconvwtonp1pi}.
\end{proof}

We proceed with some notation before proving Proposition~\ref{prop:nodesconvwtonp1pi}.
Let $\zeta>0$ be given and sufficiently small.
To simplify our notation, we define
\begin{equation}\label{eq:ip.defajbjforinodes}
\begin{aligned}
&t_{j}=-1+\frac{j}{n+1},~~~a_{j}=t_j-\zeta,~~~b_j = t_j+\zeta,~~~\text{for $j=1,\ldots,2n+1$,}\\
&\text{and}~~~b_0=-1+\zeta,~~~\text{and}~~a_{2n+2} = 1-\zeta.
\end{aligned}
\end{equation}
For a sketch of these points we refer to Fig.~\ref{fig:sketchabeta}.
While $\zeta>0$ can be arbitrary small, we assume at least $\zeta<1/(2(n+1))$ s.t.\ $b_{j-1}<a_j$ for $j=1,\ldots,2n+2$. 

\begin{figure}
\centering
\includegraphics{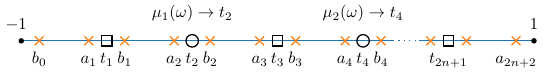}
\caption{
A sketch for the points $t_j$, $a_j$ and $b_j$ from~\eqref{eq:ip.defajbjforinodes}. In addition, the points $\mu_j(\omega)$ refer to the imaginary parts of the poles of the unitary best approximation to $\textrm{e}^{\textrm{i}\omega x}$, and following Proposition~\ref{prop:polesconvwtonp1pi}, the imaginary parts $\mu_j(\omega)$ converge to $t_{2j}$ for $j=1,\ldots,n$ and $\omega\to(n+1)\pi^-$. The ($\square$) and ($\circ$) symbols are used for the points $t_j=-1+j/(n+1)$ in the present figure and in figures~\ref{fig:ip.geps} and~\ref{fig:ip.gnp1pi} in a similar manner.}
\label{fig:sketchabeta}
\end{figure}

\begin{proposition}[Approaching step functions, part 3]
\label{prop:appendixBauxgeps3}
Let $r_\omega(\textrm{i} x)=\textrm{e}^{\textrm{i} g_\omega(x)}$ denote the best approximation to $\textrm{e}^{\textrm{i} \omega x}$ in $\mathcal{U}_n$ for a given degree $n$ and $\omega\in(0,(n+1)\pi)$. Then, for a given tolerance $\kappa>0$ and $\omega$ sufficiently close to $(n+1)\pi$,
\begin{equation}\label{eq:ip.gomegaonstraigs}
|g_\omega(x)-(-n+2k-2)\pi| < \kappa,~~~\text{for $x\in[b_{2(k-1)},a_{2k}]$ and $k=1,\ldots,n+1$},
\end{equation}
where $a_{2k}$ and $b_{2(k-1)}$ refer to~\eqref{eq:ip.defajbjforinodes} for a sufficiently small $\zeta>0$.
\end{proposition}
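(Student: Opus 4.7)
The plan is to leverage Proposition~\ref{prop:polesconvwtonp1pi} on the asymptotic location of the poles, together with the pointwise limit $\arctan(y/\xi)\to\pm\pi/2$ for $y$ bounded away from $0$ as $\xi\to0^+$. In spirit this is a strengthening of Proposition~\ref{prop:appendixBauxgeps2}, upgrading a ``nearly constant on the interval'' bound to an absolute-value bound, which becomes possible now that Proposition~\ref{prop:polesconvwtonp1pi} pins down the exact limiting locations of the $\mu_j(\omega)$ and thus lets us count signs. I would first order the poles $s_j(\omega)=\xi_j(\omega)+\mathrm{i}\mu_j(\omega)$ of $r_\omega$ by ascending imaginary part so that, by Proposition~\ref{prop:polesconvwtonp1pi}, $\xi_j(\omega)\to 0^+$ and $\mu_j(\omega)\to t_{2j}$ as $\omega\to(n+1)\pi^-$ for each $j=1,\ldots,n$.

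Next, for a fixed $k\in\{1,\ldots,n+1\}$ I would partition the $n$ poles into those lying to the left and those lying to the right of the interval $[b_{2(k-1)},a_{2k}]$. For $j\leq k-1$, since $t_{2j}\leq t_{2(k-1)}=b_{2(k-1)}-\zeta$, the convergence $\mu_j(\omega)\to t_{2j}$ will yield $\mu_j(\omega)<b_{2(k-1)}-\zeta/2$ once $\omega$ is close enough to $(n+1)\pi$, so that $x-\mu_j(\omega)>\zeta/2$ holds uniformly for $x\in[b_{2(k-1)},a_{2k}]$. Symmetrically, for $j\geq k$ the inequality $t_{2j}\geq t_{2k}=a_{2k}+\zeta$ will give $x-\mu_j(\omega)<-\zeta/2$ on the same interval.

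Combining these positional bounds with $\xi_j(\omega)\to0^+$ and a standard arctangent estimate of the type~\eqref{eq:ip.arctanyoverxijygrz}--\eqref{eq:ip.arctanyoverxijylez}, for any $\widehat{\kappa}>0$ I can arrange, uniformly in $x\in[b_{2(k-1)},a_{2k}]$, that the $k-1$ terms $\arctan((x-\mu_j(\omega))/\xi_j(\omega))$ with $j\leq k-1$ each lie within $\widehat{\kappa}$ of $\pi/2$, and that the $n-k+1$ terms with $j\geq k$ each lie within $\widehat{\kappa}$ of $-\pi/2$. Substituting into the representation $g_\omega(x)=2\sum_{j=1}^n\arctan((x-\mu_j(\omega))/\xi_j(\omega))$ and adding up then places $g_\omega(x)$ within $2n\widehat{\kappa}$ of $(k-1)\pi-(n-k+1)\pi=(-n+2k-2)\pi$, so choosing $\widehat{\kappa}=\kappa/(2n)$ yields~\eqref{eq:ip.gomegaonstraigs}.

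The main obstacle will be a small piece of uniformity bookkeeping: we need a single threshold for $\omega$ close to $(n+1)\pi$ that simultaneously makes every $|\mu_j(\omega)-t_{2j}|<\zeta/2$ and every $\xi_j(\omega)$ small enough for the arctangent estimate to hold uniformly in $x$, across all $n$ indices $j$ and all $n+1$ choices of $k$ in the statement. Since only finitely many conditions are imposed and each follows directly from Proposition~\ref{prop:polesconvwtonp1pi}, this step is routine rather than substantive, and no new ideas beyond those already present in the proofs of Proposition~\ref{prop:appendixBauxgeps1} and Proposition~\ref{prop:appendixBauxgeps2} should be required.
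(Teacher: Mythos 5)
Your proposal is correct and follows essentially the same route as the paper's proof: invoke Proposition~\ref{prop:polesconvwtonp1pi} to localize $\mu_j(\omega)$ near $t_{2j}$ with $\xi_j(\omega)\to0^+$, split the poles into the $k-1$ lying left of $[b_{2(k-1)},a_{2k}]$ and the $n-k+1$ lying right of it, apply the arctangent step-function estimates uniformly in $x$, and sum with $\widehat{\kappa}=\kappa/(2n)$. No meaningful differences.
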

\begin{proof}
Let $\zeta>0$ correspond to $\zeta$ in~\eqref{eq:ip.defajbjforinodes}.
For the poles $s_j(\omega) = \xi_j(\omega) + \textrm{i} \mu_j(\omega)$, 
Proposition~\ref{prop:polesconvwtonp1pi} shows
\begin{equation*}
\xi_j(\omega)\to 0^+~~~\text{and}~~\mu_j(\omega) \to t_{2j},~~~\text{for $j=1,\ldots,n$ and $\omega\to(n+1)\pi^-$.}
\end{equation*}
In particular, for $\omega$ sufficiently close to $(n+1)\pi$ the points $\mu_j(\omega)$ are enclosed by $a_{2j}$ and $b_{2j}$ with an additional distance of $\zeta/2$, i.e.,
\begin{equation*}
a_{2j} + \frac{\zeta}{2} < \mu_j(\omega)
< b_{2j} - \frac{\zeta}{2},~~~j=1,\ldots,n.
\end{equation*}
Convergence of arc tangents function to a Heaviside step function~\eqref{eq:arctantostepfct} implies that, for a given $\widehat{\kappa}>0$ and $\omega$ sufficiently close to $(n+1)\pi$,
\begin{subequations}\label{eq:ip.atantostepfctpart31}
\begin{equation}\label{eq:ip.atantostepfctpart31a}
\frac{\pi}{2} - \widehat{\kappa}
< \arctan \frac{x-\mu_j(\omega)}{\xi_j(\omega)}
< \frac{\pi}{2},
~~~\text{for $x > b_{2j}$ with $j=1,\ldots,n$},
\end{equation}
and
\begin{equation}\label{eq:ip.atantostepfctpart31b}
-\frac{\pi}{2} 
< \arctan \frac{x-\mu_j(\omega)}{\xi_j(\omega)}
< -\frac{\pi}{2} + \widehat{\kappa},
~~~\text{for $x < a_{2j}$ with $j=1,\ldots,n$}.
\end{equation}
\end{subequations}
In the following we assume the points $\mu_1(\omega),\ldots,\mu_n(\omega) $ are in ascending order. For $\omega$ sufficiently close to $(n+1)\pi$ and $k\in\{1,\ldots,n+1\}$ and $x\in[b_{2(k-1)},a_{2k}]$, the inequality~\eqref{eq:ip.atantostepfctpart31a} applies for $\mu_1(\omega),\ldots,\mu_{k-1}(\omega)$ and~\eqref{eq:ip.atantostepfctpart31b} applies for $\mu_k(\omega),\ldots,\mu_{n}(\omega)$. Thus, substituting the respective inequality~\eqref{eq:ip.atantostepfctpart31} for each arc tangents term in $g_\omega(x)$ and $x\in[b_{2(k-1)},a_{2k}]$, we conclude
\begin{equation*}
(-n+2k-2)\pi - 2(k-1)\widehat{\kappa}
< g_\omega(x)
< (-n+2k-2)\pi + 2(n-k+1)\widehat{\kappa}.
\end{equation*}
Thus, for a given $\kappa>0$ and $\widehat{\kappa} = \kappa/(2n)$ we conclude~\eqref{eq:ip.gomegaonstraigs}.
\end{proof}

\begin{figure}
\centering
\includegraphics{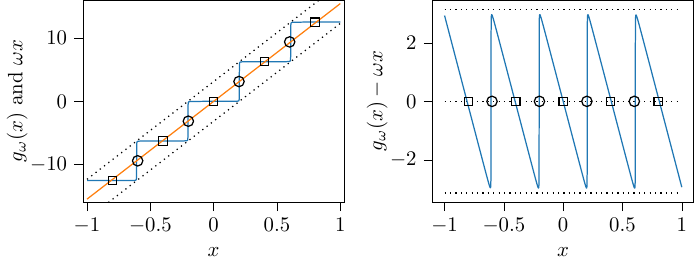}
\caption{
This figure illustrates the phase function $g_\omega$ of the unitary best approximation $r(\textrm{i} x)\approx \textrm{e}^{\textrm{i} \omega x}$, $r\in\mathcal{U}_n$ and its phase error for $n=4$ and $\omega=15.5$. Thus, in this example $\omega$ is close to $(n+1)\pi\approx15.71$. {\bf left:} The curved line corresponds to $g_\omega(x)$, the middle diagonal line illustrates $\omega x$, and the dotted-limes show $\omega x+\pi$ and $\omega x-\pi$.
In combination the symbols ($\square$) and ($\circ$) mark $\omega x$ for the points $t_j=-1+j/(n+1)$ with $j=1,\ldots,2n+1$, whereof $g$ behaves similar to a step functions with steps around the points $-1+2j/(n+1)$ marked by ($\circ$) for $j=1,\ldots,2n$. {\bf right:} This plot shows the phase error $g_\omega(x)-\omega x$ over $x$. The dotted lines illustrate $\pi$, $0$ and $-\pi$ as a reference. Similar to the left plot, the ($\square$) and ($\circ$) symbols mark the points $-1+j/(n+1)$ on the $x$-axis. The phase error attains absolute values close to $\pi$, but still strictly smaller than $\pi$, and can be described as a saw tooth function.}
\label{fig:ip.gnp1pi}
\end{figure}

To visually support the proof of Proposition~\ref{prop:nodesconvwtonp1pi} which is stated below, we also refer to Fig.~\ref{fig:ip.gnp1pi} where we illustrate the phase function and phase error of the unitary best approximation for a numerical example, i.e., $n=4$ and $\omega=15.5$. Thus $(n+1)\pi\approx 15.71$, and we have the case $\omega\approx (n+1)\pi$. In the left plot therein, we observe that the phase function behaves similar to a step function with steps around the points $t_{2j}=-1+2j/(n+1)$ for $j=1,\ldots,n$.

\begin{proof}[\bf Proof of Proposition~\ref{prop:nodesconvwtonp1pi}]
We proceed with the notation $\delta_\omega$ as in~\eqref{eq:stepfctomegatodelta} and $t_j$, $a_j$, $b_j$ and $\zeta>0$ as in~\eqref{eq:ip.defajbjforinodes}. Evaluating $\omega x$ at points $x=a_j$, we observe 
\begin{align}
\omega a_j &= ((n+1)\pi - \delta_\omega)\left(-1+\frac{j}{n+1} - \zeta\right)\notag\\
&= (-n+j-1)\pi - \zeta (n+1)\pi + \mathcal{O}(\delta_\omega),~~~j=1,\ldots,2n+2, \label{eq:inodesconv.omegaaj}
\end{align}
for $\delta_\omega\to0^+$.
In a similar manner, we have
\begin{equation}\label{eq:inodesconv.omegabj}
\omega b_j
= (-n+j-1)\pi + \zeta (n+1)\pi + \mathcal{O}(\delta_\omega),~~~j=0,\ldots,2n+1.
\end{equation}

{\bf Interpolation nodes.} From Corollary~\ref{cor:errattainsmax} we recall that the interpolation nodes $\mathrm{i}x_1(\omega),\ldots,\mathrm{i}x_{2n+1}(\omega)$ are directly related to the zeros $x_1(\omega),\ldots,x_{2n+1}(\omega)$ of the phase error.
Provided $k\in\{0,\ldots,n\}$ is fixed, the identity~\eqref{eq:inodesconv.omegaaj} for $j=2k+1$ implies
\begin{equation*}
(-n+2k)\pi = \omega a_{2k+1} + \zeta (n+1)\pi + \mathcal{O}(\delta_\omega),
\end{equation*}
and combining this with~\eqref{eq:ip.gomegaonstraigs} from Proposition~\ref{prop:appendixBauxgeps3}, we observe
\begin{subequations}\label{eq:ip.phaserroronstraights}
\begin{equation}\label{eq:ip.phaserroronstraightsa}
| g_\omega(a_{2k+1})-\omega a_{2k+1} - \zeta (n+1)\pi|
< \kappa  +\mathcal{O}(\delta_\omega),~~~k=0,\ldots,n.
\end{equation}
In a similar manner,~\eqref{eq:ip.gomegaonstraigs} and~\eqref{eq:inodesconv.omegabj} entail
\begin{equation}\label{eq:ip.phaserroronstraightsb}
| g_\omega(b_{2k+1})-\omega b_{2k+1} + \zeta (n+1)\pi|
< \kappa  +\mathcal{O}(\delta_\omega),~~~k=0,\ldots,n,
\end{equation}
\end{subequations}
As a consequence of Proposition~\ref{prop:appendixBauxgeps3}, the inequalities in~\eqref{eq:ip.phaserroronstraights} hold true for arbitrary small $\kappa>0$.
In particular, for $\delta_\omega$ sufficiently small~\eqref{eq:ip.phaserroronstraights} reveals $ g_\omega(x)-\omega x>0$ for $x=a_{2k+1}$ and $ g_\omega(x)-\omega x <0$ for $x=b_{2k+1}$.
Thus, the phase error $g_\omega(x)-\omega x$ has a zero between $a_{2k+1}$ and $b_{2k+1}$. We refer to this zero as $x_{2k+1}(\omega)$, i.e.,
\begin{equation}\label{eq:ip.zerosxjoddenclose}
x_{2k+1}(\omega) \in (a_{2k+1},b_{2k+1})~~~k=0,\ldots,n,
\end{equation}
and this holds true for all $\omega$ sufficiently close to $(n+1)\pi$.

We proceed to consider the phase error for $x=a_{2k}$ and $x=b_{2k}$ with $k=1,\ldots,n$. Combining~\eqref{eq:ip.gomegaonstraigs} and~\eqref{eq:inodesconv.omegaaj} we observe
\begin{subequations}\label{eq:ip.phaserrornearmax}
\begin{equation}\label{eq:ip.phaserrornearmaxa}
|g_\omega(a_{2k})-\omega a_{2k} + \pi - \zeta (n+1)\pi|
< \kappa + \mathcal{O}(\delta_\omega),
\end{equation}
and in a similar manner,~\eqref{eq:ip.gomegaonstraigs} and~\eqref{eq:inodesconv.omegabj} entail
\begin{equation}
|g_\omega(b_{2k})-\omega b_{2k}  - \pi + \zeta (n+1)\pi|
< \kappa + \mathcal{O}(\delta_\omega).
\end{equation}
\end{subequations}
Thus, for a sufficiently small $\kappa$ and $\delta_\omega$, the phase error is strictly negative at $a_{2k}$ and strictly positive at $b_{2k}$, and certainly, the phase error attains a zero in between these points. We refer to this zero as $x_{2k}$, i.e.,
\begin{equation}\label{eq:ip.zerosxjevenenclose}
x_{2k}(\omega)\in(a_{2k}, b_{2k}),~~~k=1,\ldots,n,
\end{equation}
and this holds true for all $\omega$ sufficiently close to $(n+1)\pi$. We recall that the phase error of the unitary best approximation $r_\omega\in\mathcal{U}_n$ has exactly $2n+1$ zeros for $\omega\in(0,(n+1)\pi)$. Following~\eqref{eq:ip.zerosxjoddenclose} and~\eqref{eq:ip.zerosxjevenenclose}, the phase error has zeros $x_j\in(a_j,\beta_j)$ for $j=1,\ldots,2n+1$ and $\omega$ sufficiently close to $(n+1)\pi$. In this case, the points $x_1,\ldots,x_{2n+1}$ exactly correspond to the zeros of the phase error.

Since we find a zero $x_j\in(a_j,\beta_j)$, and this holds true for an arbitrary small $\zeta>0$, and $ a_j<t_j<\beta_j$ with $ a_j,\beta_j\to t_j$ for $\zeta\to0^+$, we conclude
\begin{equation}\label{eq:ip.xjconvergetoetaj}
x_{j}(\omega) \to t_j = -1+\frac{j}{n+1},~~~j=1,\ldots,2n+1,~~~\delta_\omega\to0^+.
\end{equation}
In particular, the zeros of the phase error $x_j(\omega)$ provide the interpolation nodes $\mathrm{i}x_j(\omega)$ of $r_\omega$, and~\eqref{eq:ip.xjconvergetoetaj} shows the first assertion of Proposition~\ref{prop:nodesconvwtonp1pi}.

{\bf Equioscillation points.}
Consider the parameters $\zeta$ and $\kappa$, and $\delta_\omega$ sufficiently small s.t.\ the zeros $x_1(\omega),\ldots,x_{2n+1}(\omega)$ of the phase error satisfy $x_j(\omega)\in(a_j,b_j)$ as in~\eqref{eq:ip.zerosxjoddenclose} and~\eqref{eq:ip.zerosxjevenenclose}. Let $\eta_1(\omega),\ldots,\eta_{2n+2}(\omega)$ denote the equioscillation points of the phase error with $\eta_1(\omega)=-1$ and $\eta_{2n+2}(\omega)=1$ (see Theorem~\ref{thm:bestapprox}). The remaining $2n$ equioscillation points are located in $(-1,1)$, more precisely, these points are enclosed by pairs of neighboring zeros of the phase error, i.e.,
\begin{equation}\label{eq:ip.eopointsenclosed}
\eta_j(\omega)\in(x_{j-1}(\omega),x_j(\omega))~~~\text{for $j=2,\ldots,2n+1$}.
\end{equation}

Let $K_\omega$ denote the maximum of the phase error in absolute value over $[-1,1]$, i.e.,
\begin{equation*}
K_\omega := \max_{x\in[-1,1]} |g_\omega(x) - \omega x|.
\end{equation*}
We recall that the phase error attains its extreme values at equioscillation points, namely, from Proposition~\ref{prop:eo1max},
\begin{equation}\label{eq:ip.phaseerrateo}
g_\omega(\eta_{j}(\omega))-\omega \eta_{j}(\omega) = (-1)^{j+1} K_\omega ,~~~\text{for $j=1,\ldots,2n+2$}.
\end{equation}

As a consequence of propositions~\ref{prop:approxertophaseerrinequ} and~\ref{prop:omega0}, $K_\omega\in(0,\pi)$ if and only if $\omega\in(0,(n+1)\pi)$. In particular, $K_\omega \geq \pi$ for $\omega\geq (n+1)\pi$. Due to continuity of $K_\omega$ in $\omega$ (see Corollary~\eqref{cor:phaseerrcont}), we conclude $K_\omega\to \pi^-$ for $\omega\to(n+1)\pi^-$, equivalently, $\delta_\omega\to0^+$. In particular, we may choose $\delta_\omega$ sufficiently small s.t.\ for a given $\zeta>0$,
\begin{equation}\label{eq:ip.Koclosetopi}
\pi - \frac{\zeta n\pi}{2} < K_\omega < \pi.
\end{equation}
For a fixed $k\in\{1,\ldots,n+1\}$, the absolute value of the phase error satisfies the upper bound
\begin{equation}\label{eq:phaseerrorextendedboundn2kpi}
|g_\omega(x) - \omega x| < |g_\omega(x) - (-n+2k-2)\pi| + |\omega x - (-n+2k-2)\pi|,
\end{equation}
due to the triangular inequality.
Combining~\eqref{eq:inodesconv.omegaaj} for $j=2(k-1)$ and~\eqref{eq:inodesconv.omegabj} for $j=2k$, we observe
\begin{equation*}
(-n+2k-3)\pi + \zeta (n+1)\pi + \mathcal{O}(\delta_\omega) < \omega x <  (-n+2k-1)\pi - \zeta (n+1)\pi + \mathcal{O}(\delta_\omega),
\end{equation*}
for $x\in(b_{2(k-1)},a_{2k})$, and thus,
\begin{equation}\label{eq:wxextendedboundn2kpi}
|\omega x - (-n+2k-2)\pi| < \pi - \zeta (n+1)\pi + \mathcal{O}(\delta_\omega),~~~x\in(b_{2(k-1)},a_{2k}).
\end{equation}
Applying~\eqref{eq:ip.gomegaonstraigs} and~\eqref{eq:wxextendedboundn2kpi} to~\eqref{eq:phaseerrorextendedboundn2kpi}, we arrive at
\begin{equation*}
|g_\omega(x) - \omega x| < \pi - \zeta(n+1)\pi +\kappa + \mathcal{O}(\delta_w),~~~x\in(b_{2(k-1)},a_{2k}),~~k=1,\ldots,n+1.
\end{equation*}
For $\zeta>0$ given, we may choose $\kappa \leq \zeta n\pi$ s.t.\ this upper bound simplifies to
\begin{equation*}
|g_\omega(x) - \omega x| < \pi - \zeta n\pi + \mathcal{O}(\delta_w),~~~\text{for $x\in(b_{2(k-1)},a_{2k})$},
\end{equation*}
for $k=1,\ldots,n+1$. In particular, we may choose $\delta_\omega>0$ sufficiently small s.t.
\begin{equation}\label{eq:ip.phaserrdstpiinab}
|g_\omega(x) - \omega x| < \pi - \frac{\zeta n\pi}{2},~~~\text{for $x\in(b_{2(k-1)},a_{2k})$}.
\end{equation}
Combining~\eqref{eq:ip.phaseerrateo},~\eqref{eq:ip.Koclosetopi} and~\eqref{eq:ip.phaserrdstpiinab}, we conclude that for $\zeta>0$ given and $\delta_\omega$ sufficiently small the intervals $(b_{2(k-1)},a_{2k})$, $k=1,\ldots,n+1$, contain no equioscillation points. Thus,~\eqref{eq:ip.eopointsenclosed} specifies to
\begin{equation*}
\eta_{2k}(\omega) \in [a_{2k}, x_{2k}(\omega))~~~\text{and}~~\eta_{2k+1}(\omega) \in (x_{2k}(\omega),b_{2k}],~~~k=1,\ldots,n.
\end{equation*}
In particular, we observe
\begin{equation*}
\eta_{2k}(\omega), \eta_{2k+1}(\omega) \in[a_{2k},b_{2k}],~~~k=1,\ldots,n,
\end{equation*}
and since
\begin{equation*}
a_{2k}, b_{2k}\to t_{2k} = -1 + \frac{2k}{n+1},~~~\text{for $k=1,\ldots,n$, and $\zeta\to 0^+$,}
\end{equation*}
we conclude the assertion on equioscillation points for the limit $\omega\to(n+1)\pi^-$.
\end{proof}

\newcommand{\etalchar}[1]{$^{#1}$}

\end{document}